\newtheorem{thm}{Theorem}[section]
\newtheorem {asp}{Assumption}[section]
\newtheorem{lm}{Lemma}[section]
\newtheorem{rmk}{Remark}[section]
\newtheorem{deff}{Definition}[section]
\newtheorem{clm}{Claim}[section]
\newtheorem{prop}{Proposition}[section]
\theoremstyle{definition}
\theoremstyle{remark}
\numberwithin{equation}{section}
\DeclareMathOperator{\suppo}{supp}
\DeclareMathOperator{\Conv}{Conv}
\newcommand{\eps}{\varepsilon}
\newcommand{\M}{\mathcal{M}}
\newcommand{\F}{\mathcal{F}}
\newcommand{\E}{\mathbb{E}}
\newcommand{\BX}{\mathbf{X}}
\newcommand{\bx}{\mathbf{x}}
\newcommand{\bc}{\mathbf{c}}
\newcommand{\bp}{\mathbf{p}}
\newcommand{\bq}{\mathbf{q}}
\newcommand{\bz}{\mathbf{z}}
\newcommand{\N}{\mathbb{N}}
\newcommand{\CN}{\mathcal{N}}
\newcommand{\Se}{\mathcal{S}}
\newcommand{\PP}{\mathbb{P}}
\newcommand{\K}{\mathcal{K}}
\newcommand{\R}{\mathbb{R}}
\newcommand{\Lom}{\mathcal{L}}
\newcommand{\U}{\mathcal{U}}
\newcommand{\wtd}{\widetilde}
\numberwithin{equation}{section}
\newcommand{\bed}{\begin{displaymath}}
\newcommand{\eed}{\end{displaymath}}
\newcommand{\bea}{\bed\begin{array}{rl}}
\newcommand{\eea}{\end{array}\eed}
\newcommand{\barray}{\begin{array}{ll}}
\newcommand{\earray}{\end{array}}
\def\disp{\displaystyle}
\newcommand{\1}{\boldsymbol{1}}
\newcommand{\0}{\boldsymbol{0}}
\newcommand{\bdelta}{\boldsymbol{\delta}}
\newcommand{\dist}{\mathrm{dist}}
\def\bar{\overline}
\def\hat{\widehat}
\def\a.s{\text{\;a.s.\;}}
\title[Population dynamics]{Population dynamics under random switching}
\author[A. Hening]{Alexandru Hening }
\address{Department of Mathematics\\
Texas A\&M University\\
Mailstop 3368\\
College Station, TX 77843-3368\\
United States
}
\email{ahening@tamu.edu}
 \author[S. Sabharwal]{Siddharth Sabharwal}
 \address{Department of Mathematics\\
Texas A\&M University\\
Mailstop 3368\\
College Station, TX 77843-3368\\
United States
}
\email{siddhutifr93@tamu.edu }
\keywords{Kolmogorov system; ergodicity; Lyapunov exponent; stochastic environment; piecewise deterministic Markov process}
\subjclass[2010]{92D25, 37H15, 60J05, 60J99}
\begin{document}
\begin{abstract}
Populations interact non-linearly and are influenced by environmental fluctuations. In order to have realistic mathematical models, one needs to take into account that the environmental fluctuations are inherently stochastic. Often, environmental stochasticity is modeled by systems of stochastic differential equations. However, this type of stochasticity is not always the best suited for ecological modeling. Instead, biological systems can be modeled using piecewise deterministic Markov processes (PDMP). For a PDMP the process follows the flow of a system of ordinary differential equations for a random time, after which the environment switches to a different state, where the dynamics is given by a different system of differential equations. Then this is repeated. The current paper is devoted to the study of the dynamics of $n$ populations described by $n$-dimensional Kolmogorov PDMP. We provide sharp conditions for persistence and extinction, based on the invasion rates (Lyapunov exponents) of the ergodic probability measures supported on the boundary of the positive orthant $\partial \R_+^{n,\circ}$. In order to showcase the applicability of our results, we apply the theory in some interesting ecological examples.

\end{abstract}
\maketitle
\tableofcontents
\section{Introduction}

It is almost never the case that a species is isolated and does not get to interact with other species in its ecosystem. As such, one of the fundamental concerns of ecology is trying to see when interacting species coexist or, if coexistence of all species is not possible, determining exactly which species persist and which go extinct.

One cannot ignore the effects of the environment on species interactions. Given that the environmental conditions fluctuate, usually in a random and unpredictable fashion, it is key to have robust mathematical models that can include both random environmental fluctuations and the nonlinear intraspecies and interspecies interactions.
 
There are examples when biotic effects can result in species going extinct and the effects of random environmental fluctuations lead to a reversal of extinction into coexistence. In other situations, deterministic systems in which all species persist lead to extinctions when one takes into account environmental stochasticity. Recently, there have been important results and breakthroughs by modeling the dynamics using discrete or continuous time Markov processes and analyzing their asymptotic behavior (\cite{ C00, ERSS13, EHS15, LES03, SLS09, SBA11, BEM07, BS09, BHS08, CM10, CCA09, FS24, B23}).

One way of analyzing the coexistence of species is by computing the average per-capita growth rate of a population when rare. If this growth rate is positive the respective population tends to increase when rare, while if it is negative the population tends to decrease and goes extinct. If the ecosystem consists of only two populations then coexistence is ensured if each population can invade when it is rare and the other population is stationary (\cite{T77, CE89, EHS15}).

There is a well established general theory for coexistence for deterministic models (\cite{H81, H84, HJ89}).  Starting with the important work \cite{H81} it has been shown that a sufficient condition for persistence is the existence of a fixed set of weights associated with the interacting populations such that this weighted combination of the population's invasion rates is positive for any invariant measure supported by the boundary.

Recently there has been a flurry of papers studying the effects of environmental stochasticity on continuous-time SDE models. In \cite{BHS08} the authors found that if a deterministic continuous-time model satisfies the above persistence criterion then under some weak assumptions the corresponding stochastic differential equation with a small diffusion term has a positive stationary distribution concentrated on the positive global attractor of the deterministic system. For general stochastic differential equations with arbitrary levels of noise sufficient conditions for persistence and extinction can be found in \cite{SBA11, HN18, HNC21, HNS22, B23, FS24}.

The aim of this paper is to prove similar results for piecewise deterministic Markov processes (PDMP). Some partial results have been obtained for simple models with two or three species (\cite{BL16, HS19, HN20, HNHW23}). However, in these cases there are only two or three ergodic invariant probability measures on the boundary and as such the analysis is simpler. General persistence results for compact state spaces can be found in \cite{B23}. In this paper we prove persistence and extinction results for compact and non-compact state spaces and therefore provide a significant advancement in the field of stochastic population dynamics.

Piecewise deterministic Markov processes have been used recently to prove some very interesting facts about biological populations. For example in \cite{BL16, HN20} the authors look at a two dimensional Lotka-Volterra system in a fluctuating environment. They show that the random switching between two environments that are both favorable to the same species can lead to the extinction of this favored species or to the coexistence of the two competing species. Our analysis generalizes the framework of \cite{BL16} to higher-dimensional nonlinear systems that do not necessarily live in a compact set.

The paper is organized as follows. In section \ref{s:pdmp} we present our framework, the problems we study, the different assumptions and the main results.  In Section \ref{s:app} we showcase some interesting examples. 
In Section \ref{s:prep} we prove some preliminary results regarding the process. For example, we show that the process is Markov-Feller, that, that there is a well-defined strong solution $(\BX(t),r(t))$ for all $t>0$, that this solution is pathwise unique and that there is tightness for certain families of (random) measures. Section \ref{s:perm} is devoted to the study of conditions under which $(\BX(t),r(t))$ exhibits persistence. In particular, we prove when the process converges to its unique invariant probability measure on $\R_+^{n,\circ}\times \CN$ where $\R_+^{n,\circ}:=(0,\infty)^n$ and $\CN=\{1,\dots,n_0\}$. In Theorem \ref{thm3.1} we show that, under certain assumptions, the transition probability of $(\BX(t),r(t))$ converges in total variation to the a unique invariant probability measure of $(\BX(t),r(t))$ on $\R_+^{n,\circ}\times\CN$ exponentially fast. In Section \ref{s:extin} we analyze when one or multiple species go extinct. First, we show in Theorem \ref{thm4.1} that if there exists an invariant probability measure living on the boundary that is an \textit{attractor}, and the boundary is accessible, then the process converges almost surely to the boundary in a weak sense. If we do not know that the boundary is accessible, we prove in Theorem \ref{t:exxx} that the probability of converging exponentially fast to an attracting subspace of the boundary can be made arbitrarily close to one by starting the process close enough to that subspace. Under a few extra assumptions we show in Theorem \ref{thm4.2} that for every accessible attractor $\mu$ on the boundary the process converges with strictly positive probability to $\mu$.
Proofs of certain lemmas from Section \ref{s:prep} and Section \ref{s:extin} appear in Appendix \ref{a:inv measures} and Appendix \ref{extinction lemmas}.

We want to mention that this work is indebted to \cite{B23} which has general results for stochastic persistence, without treating the extinction cases, and looks in depth at PDMP in compact state spaces. During the writing of this paper we also became aware of the nice extinction results from \cite{FS24}, which make use of a different approach.

\section{Piecewise Deterministic Markov Processes}\label{s:pdmp}

In order to take into account environmental fluctuations and their effect on the persistence or extinction of species, one approach is to study the uniform persistence for non-autonomous differential equations \citep{T00, ST11, MSZ04}. A different approach is to consider systems that have random environmental perturbations. One way to do this is by studying stochastic differential equations \citep{SBA11, HN18, HNC21, HNS22}. Another possibility is to look at stochastic equations driven by a Markov chain. These are piecewise deterministic Markov processes of the form
\begin{equation}\label{e1-pdm}
\frac{dX_i}{dt}(t)=X_i(t)f_i(\BX(t),r(t)), i=1,\dots,n
\end{equation}
where $r(t)$ is a cadlag process taking values in a finite state space
$\CN=\{1,\dots,n_0\}.$ Here $\BX(t):=(X_1(t),\dots,X_n(t))$ are the species densities at time $t\geq 0$ and $f_i(\bx,k)$ quantifies the fitness of species $i$ when the densities are $\bx=(x_1,\dots,x_n)$ and the environmental state is $k$.

The switching intensity of $r(t)$ can depend on the state of $\BX(t)$ and is given by

\begin{equation}\label{e:tran}\begin{array}{ll}
&\disp \PP\{r(t+\Delta)=j~|~r(t)=i, \BX(s),r(s), s\leq t\}=q_{ij}(\BX(t))\Delta+o(\Delta) \text{ if } i\ne j \
\hbox{ and }\\
&\disp \PP\{r(t+\Delta)=i~|~r(t)=i, \BX(s),r(s), s\leq t\}=(1+q_{ii}(\BX(t))\Delta+o(\Delta).\end{array}\end{equation}
The above equation says that the probability that the process $r(t)$ jumps from $r(t)=i$ to the state $j\neq i$ in the small time $\Delta$ is approximately equal to $q_{ij}(\BX(t))\Delta$.

Here $q_{ii}(\bx):=-\sum_{j\ne i}q_{ij}(\bx)$ quantifies the probability of staying in the same state $i$. We will assume that the function $q_{ij}(\bx,k)$ is a bounded continuous function for each $i,j\in\CN$
and $Q(\bx)=(q_{ij}(\bx))_{n_0\times n_0}$ is irreducible for all $\bx\in\R^n_+$.  

It is well-known \citep{davis1984piecewise, BBMZ15, B23} that a process $(\BX(t),r(t))$ satisfying \eqref{e1-pdm} and \eqref{e:tran}
is a Markov process with generator acting on functions $G:\R_+^n\times\CN\mapsto\R$ that are continuously differentiable in $\bx$ for each $k\in\CN$ as
\begin{equation}\label{e:gen}
\Lom G(\bx, k)=\sum_{i=1}^n x_if_i(\bx,k)\frac{\partial G}{\partial x_i}(\bx,k)+\sum_{l\in\CN}q_{kl}(\bx)G(\bx,l).
\end{equation}

Important theoretical work analyzing PDMP has been done recently. Under various irreducibility and H{\"o}rmander type conditions \cite{BH12, BBMZ15} have found principles under which there exists a unique invariant probability measure for certain PDMP. The convergence in total variation, at an exponential rate, to this unique invariant measure has been shown when the state space is compact \citep{BBMZ15} and in the one-dimensional case there are results about the smoothness of the density of the invariant measure \citep{bakhtin2018smooth}. If there is no irreducibility, one can use an associated deterministic control system to find the support of invariant probability measures \citep{benaim2017supports, B23}. The asymptotic behavior of a PDMP his not determined just by the vector fields $f_i(\bx,k)$, but can drastically change according to the switching rates $q_{ij}(\bx)$. There are many interesting examples of how switching leads to rich, and sometimes counterintuitive, dynamical behavior \citep{BBMZ15, BL16, HS19, HN20, benaim2019random}.

We are interested in proving persistence and extinction results for the system \eqref{e1-pdm}. In order to accomplish this we modify and extend the methods from \cite{HN18, HNC21}, which treated  stochastic differential equations, while also making use of results about PDMP \cite{BBMZ15} and stochastic persistence \cite{B23}. We have not used the methods from \cite{FS24}, an approach that could also yield extinction results for PDMP.

Let 
\[
\Se_0 := \{(\bx,k)\in \R_+^n\times \CN~:~\min_i x_i = 0\}
\]
be the extinction set and for $\eta>0$ define the $\eta$-neighborhood of the extinction set
\[
\Se_\eta := \{(\bx,k)\in \R_+^n\times \CN~:~\min_i x_i \leq \eta\}.
\]
We will denote by $\Se_\eta^c$ the complement of $\Se_\eta$ in $\R_+^n$. We also define $\partial \R_+^n:= \{~\min_i x_i = 0\}$ so that $\Se_0 = \partial \R_+^n \times \CN$.

We next explain what we mean by persistence and extinction in our setting. The first definition of persistence, due to \cite{chesson1982stabilizing}, says that the probability that species are uniformly bounded away from extinction, $X_i\geq \delta$, as time increases, can be arbitrarily close to 1 in the sense that no matter how small one picks $\eps>0$, the probability can be made greater or equal to $1-\eps$. Moreover, the bound $\delta = \delta(\eps)>0$ does not depend on the initial conditions.
\begin{deff}
The process $\BX$ is stochastically persistent in probability if for any $\eps>0$, there exists $\delta>0$ such that
\begin{equation}
\liminf\limits_{t\to\infty} \PP_{\bx,k}\left\{X_i(t)\geq\delta, i=1,\dots,n\right\}>1-\eps, \;\mathbf{x}\in\R^{n,\circ}_+.
\end{equation}
\end{deff}
For the next definition of persistence we will use the randomized occupation measures 
$$\wtd \Pi_t(\cdot,k):=\dfrac1t\int_0^t\1_{\{\BX(s)\in\cdot, r(s)=k\}}ds,\,\,k\in\CN,t>0.$$
If $A$ is a measurable set then $\wtd \Pi_t(A,k)$ measures the fraction of the time $\BX(t)$ spends in the set $A$ and in environment $k$ up to time $t$. 
\begin{deff}
The process $(\BX(t),r(t))$ is almost surely stochastically persistent if for all $\eps>0$ there exists $\delta>0$ such that with probability one
\[
\liminf_{t\to\infty} \wtd \Pi_t(\Se_\delta^c)>1-\eps.
\]
\end{deff}
This definition says that the fraction of time spent by all species densities above the threshold $\delta$ converges to $1$ as $\delta\to 0$. We next look at extinction. First, note that extinction can only happen asymptotically as $t\to\infty$ in our setting as for any $\bx=(x_1,\dots,x_n)$ with $x_i>0, i=1,\dots,n$ we will have 
$$\PP_{\bx,k}\left\{X_i(t)>0, t\geq 0\right\}=1.$$
See Lemma \ref{lm2.0} for a proof of this fact.
\begin{deff}
If $(\BX(0), r(0))=(\bx,k)\in \R^{n,\circ}_+\times\CN $ we say the population $X_i$ goes extinct with probability $p_{\bx,k}>0$ if
\[
\PP_{\bx,k}\left\{\lim_{t\to\infty}X_i(t)=0\right\}=p_{\bx,k}.
\]
We say the population $X_i$ goes extinct if for all $\bx\in\R^{n,\circ}_+, k\in\CN$
\[
\PP_{\bx,k}\left\{\lim_{t\to\infty}X_i(t)=0\right\}=1.
\]
\end{deff}

We will make the following assumptions throughout the paper.

\begin{asp}\label{a1-pdm}
The transition rate $q_{ij}(\bx)$ is a bounded continuous function for each $i,j\in\CN$ and $Q(\bx)=(q_{ij}(\bx))_{n_0\times n_0}$ is irreducible for all $\bx\in\R^n_+$. 
	The functions $f_i(\cdot):\R^n_+\to\R, i=1,\dots,n$ are locally Lipschitz and there is a function $F(\bx,k):\R^n\times\CN\mapsto\R$ that is continuously differentiable in $\bx$ for each $k\in\CN$. Moreover, there exist constants $c,\delta_0, \gamma_b>0$ such that for every $k\in\CN$
	\begin{equation}\label{e:F_growth}
	    F(\bx,k)\geq c(1+\|\bx\|),~ \bx\in \R_+^{n,\circ}, 
	\end{equation}
and    
	\begin{equation}\label{e:sup}
	\limsup_{\|\bx\|\to\infty}\left[\frac{\Lom F(\bx,k)}{F(\bx,k)}+\gamma_b+ \gamma_b\left(\max_{i}\{|f_i(\bx,k)|\}\right)\right]< 0.
	\end{equation}
	
\end{asp}

\begin{rmk}
Note that \eqref{e:sup} is equivalent to the weaker condition
    \begin{equation}\label{e:sup2}
	\limsup_{\|\bx\|\to\infty}\left[\frac{\Lom F(\bx,k)}{F(\bx,k)}+ \delta_0\left(\max_{i}\{|f_i(\bx,k)|\}\right)\right]< 0.
	\end{equation}

\end{rmk}
\begin{rmk}
We note that by the continuity of $F$ we can extend \eqref{e:F_growth} to hold in any subspace.  If we take, for example, $\bx=(\bar \bx, \bx_n)$, let $x_n>0$ and $\bar \bx \in \R^{n-1,\circ}_+$
then since we have for all $\bx_n>0$
 $$\frac{F(\bar\bx, \bx_n,k)}{(1+\|(\bar \bx, \bx_n)\|)}\geq c$$
by letting $\bx_n\to 0$ and using the continuity of the left-hand side we will get that \eqref{e:F_growth} holds on $\R^{n-1,\circ}\times \{0\}$.
\end{rmk}

\begin{rmk}
     The growth assumption \eqref{e:F_growth} can be relaxed to being able to find some $0<\bar \delta<1$ such that $$F(\bx,k)\geq c(1+\|\bx\|)^{\bar\delta},~ \bx\in \R_+^{n,\circ}.$$
\end{rmk}
\begin{rmk}
	If we assume that there is
	a vector $\bc\in\R^{n,\circ}_+$ such that $\Lom (1+\bc^T\bx)=\sum_ic_ix_if(x_i,k)<0$ when $\bx$ is sufficiently large for any $k\in\CN$,
	then the solution will eventually enter a compact set and never leave it.
	In that case we would only work on a compact set and things would be easier. We note that this situation is satisfied in many biologically relevant examples. 
    
Assumption \ref{a1-pdm} is such that for some states, the solution may go to infinity - the solution $\BX$ is tight without requiring $\BX$ to live in a compact set. See the examples from Subsections \ref{s:1d_explosion} and \ref{s:2d_explosion}.
\end{rmk}
We also need to enforce the following assumption.
\begin{asp}\label{a:bound}
Assume that there exists $M_F>0$ such that for all $\bx\in \R^n_+$ and all $k,l\in \CN$ 
\[
\frac{F(\bx,k)}{F(\bx, l)}\leq M_F.
\]
\end{asp}
\begin{rmk}
Note that the above also implies there is a strictly positive lower bound, as clearly we get that  $\bx\in \R^n_+$ and all $k,l\in \CN$ 
\[
0<\frac{1}{M_F}\leq\frac{F(\bx,k)}{F(\bx, l)}\leq M_F.
\]
\end{rmk}
\begin{rmk}
Assumption \ref{a:bound} ensures that we can use the strong law of large numbers for certain martingales in order to show that almost surely
\[
\lim_{t\to\infty}\frac{M(t)}{t} = 0.
\]
In specific problems and models weaker assumptions could be found. It is useful to keep in mind the results of \cite{liptser1980strong} when trying to prove strong laws of large numbers for local martingales.
\end{rmk}
Since piecewise deterministic Markov processes can be quite degenerate, it is not always clear that every point from the state space or from the boundary is accessible nor that invariant probability measures always exist. We define certain concepts related to which states are accessible or reachable by the process, as well as to the existence of unique invariant probability measures in certain subspaces. 

For any fixed $k\in\CN$ let $\Phi_t^k(\cdot)$ be the flow associated with
the ODE
\begin{equation}\label{e:ODE}
    \frac{dX_i}{dt}(t)=X_i(t)f(X_i(t),k),i=1,\dots,n.
\end{equation}
That is, $\Phi_t^k(\bx,k)$
is the solution at time $t$
of the ODE \eqref{e:ODE}
with initial value $\BX(0)=\bx$. Define the orbit set
$$\gamma_+(\bx,k)=\left\{\phi_{t_n}^{k_m}\circ\cdots\circ\phi_{t_1}^{k_1}(\bx,k):, m\in \N, t_l\geq 0, k_l\in\CN: l=1,\dots,m\right\}
$$
and its closure by $\bar \gamma_+(\bx,k)$. The points from $\gamma^+(\bx,k)$ are those which are reachable from a concatenation of paths that are solutions to ODEs of the form \eqref{e:ODE}.
We denote the intersection of the closures of all the orbit sets by 
\begin{equation}\label{e:Gamma}
\Gamma=\bigcap_{\bx\in\R^{n,\circ}_+} \bar{\gamma_+(\bx,k)}.
\end{equation}
These are the points which are reachable, or infinitely close to being reachable, using concatenations of paths from fixed environments, from all starting points $\bx\in \R^{n,\circ}$.
Let $\M$ denote the set of ergodic invariant probability measures of $(\BX(t),r(t))$ whose support is contained in
$\partial\R^n_+\times\CN$. The convex combinations of the ergodic measures will be invariant probability measures living on $\partial\R^n_+\times\CN$ - this set will be denoted by $\Conv(\M)$. Note that by the results from \cite{B23} the number of ergodic measures, $|\M|$, is always finite.

For any species $i$  define $\Se^i:=\{\bx\in \R_+^n~:~ x_i>0\}$ to be the subset of the state space for which species $i$ has a strictly positive density. Let $\mu$ be an ergodic measure. Since $\Se^i$ is an invariant set, that is, if $\BX(0)\in \Se^i$ then almost surely $\BX(t)\in \Se^i, t\geq 0$, it follows that $\mu(\Se^i)\in \{0,1\}.$ As a result one can define the species support of $\mu$ by
\begin{equation}\label{e:species_support}
I_\mu : =\{\{i_1,\dots,i_u\} ~:~ \mu(\Se^{i_l})=1, l=1,\dots,u \}.
\end{equation}

\begin{deff}
For any invariant probability measure $\mu$ we define the invasion rate of the $i$th species, also called the external Lyapunov exponent, by
\begin{equation}\label{e:invasion_rate}
\lambda_i(\mu):= \sum_{k\in\CN}\int_{\partial\R^n_+}f_i(\bx, k)\mu(d\bx,k)   
\end{equation}
\end{deff}

\begin{rmk}
The intuition behind an invasion rate is the following. By It\^o's formula we have
\[
\frac{\ln X_i(t)}{t} = \frac{\ln X_i(0)}{t} + \frac{1}{t}\int_0^t f_i(\BX(s),r(s))\,ds.
\]
If $\BX(t)$ is close to the support of an ergodic invariant measure $\mu$ for a long time, then by ergodicity and the Feller property the quantity
\[
\frac{1}{t}\int_0^t f_i(\BX(s),r(s))\,ds
\]
can be approximated with the average with respect to $\mu$
\[
\lambda_i(\mu):= \sum_{k\in\CN}\int_{\partial\R^n_+}f_i(\bx, k)\mu(d\bx,k)   
\]
while for $t\to\infty$
\[
\frac{\ln X_i(0)}{t} \to 0.
\]
One can also show that $\lambda_i(\mu)$ will give us the long-term log-growth rate of $X_i(t)$ if $\BX(t)$ is close to the support of $\mu$. This shows why $\lambda_i(\mu)$ can be called the Lyapunov exponents of $\mu$. 
\end{rmk}

We note in particular that by Lemma \ref{lm2.3} below, the invasion rates are well-defined. The following assumption guarantees the persistence of all the species. 
\begin{asp}\label{a2-pdm}
	For any $\mu\in\Conv(\M)$,
	$$\max_{i=1,\dots,n}\left\{\sum_{k\in\CN}\int_{\partial\R^n_+}f_i(\bx, k)\mu(d\bx,k)\right\}>0.$$
\end{asp}
\begin{rmk}\label{r:ergodic_minmax}
Loosely speaking this condition ensures that the boundary $\partial \R_+^n$ will repel the process, or that all the invariant probability measures living on the boundary will be repelers. Biologically this means that for any invariant measure living on the boundary there is at least one species $i$ which has a positive invasion rate. 

The minmax principle shows \citep{SBA11,B23} that Assumption \ref{a2-pdm} is equivalent to the existence of positive weights $\mathbf p = (p_1,\dots,p_n)>0$ such that
\begin{equation}\label{e.pp}
\min\limits_{\mu\in\M}\left\{\sum_{i=1}^n p_i\lambda_i(\mu)\right\}>0.
\end{equation}
This is very useful because Assumption \ref{a2-pdm} has to be checked for infinitely many measures, since usually $|\Conv(\M)|=\infty$ while \eqref{e.pp} only has to be checked for finitely many ergodic measures since $|\M|<\infty.$
\end{rmk}

\begin{deff}
\label{def:doeblin}
We say that $\bx^* \in\R_+^{n,\circ}$  is a Doeblin point if there exist a neighborhood $U$ of $\bx^* $, a non zero  measure $\xi$ on $\R_+^{n,\circ}\times \CN$,  and $T > 0$  such that for all $(\bx,k) \in U$ we have
\begin{equation}
\label{eq:Doeblin}
P_{T}(\bx,k,\cdot)  \geq \xi(\cdot),
\end{equation}
where $P_{T}(\bx,k,\cdot)$ is the transition probability function of the process $(\BX(t),r(t))$. 
\end{deff}
If a Doeblin point exists one can show \citep{BBMZ15, B23} that there can be at most one invariant probability measure on $\R_+^{n,\circ}\times\CN$.

Let $G^i(\bx):=(x_1f_1(\bx,i),\dots,x_nf_n(\bx,i))$ and $\mathcal{F}_0(\bx) = \{G^i(\bx)-G^j(\bx), i,j\in \CN\}$ and define recursively $ \mathcal{F}_k(\bx) = \mathcal{F}_{k-1}(\bx)\cup \{[G^i, V](\bx), V\in  \mathcal{F}_{k-1}(\bx), j\in \CN\}$ where $[X,Y]$ stands for the Lie bracket of two smooth vector vector fields $V, W: \R_+^n\to\R_+^n$ defined for $\bx\in \R_+^n$ by
\[
[V,W](\bx):= DW(\bx) V(\bx) - DV(\bx)W(\bx)
\]
where $DV(\bx)$ is the Jacobian of $V$ at the point $\bx$.
We let $ \mathcal{F}_k (\bx)$ be the vector field in $\R_+^n$ spanned by $\{V(\bx), V\in  \mathcal{F}_k\}$. 

 We say that \eqref{e1-pdm} satisfies the \textbf{strong bracket condition} or the \textbf{strong H{\"o}rmander condition} if there is $\bx_0\in \R_+^{n,\circ}$ such that 
\begin{equation}\label{e:Horm}
\text{span} (\F_k(\bx_0)) = \R^n
\end{equation}
for some $k\in \N$.

If we want to make sure there is a stronger type of coexistence, in the sense that the system converges to a unique invariant probability measure where all species are present, we need the following technical assumption.
\begin{asp}\label{a3-pdm}
	The set $\Gamma$ from \eqref{e:Gamma} is nonempty and
	there exists an $\bx_0\in\Gamma$ that satisfies the strong bracket condition \eqref{e:Horm}.
\end{asp}
We can now state our main persistence result.

\begin{thm}\label{thm5.1}
Suppose that Assumptions \ref{a1-pdm}, \ref{a:bound} and \ref{a2-pdm} hold.
There exist $\theta>0$, $T^*>0$ $n^*\in \N$ and $\kappa=\kappa(\theta,T^*)\in(0,1)$, $\tilde K=\tilde K(\theta,T^*)>0$   such that
\begin{equation}\label{e:lya2}
\E_{\bx,k} V^\theta(\BX(n^*T^*), r(n^*T^*))\leq \kappa V^\theta(\bx,k)+\tilde K\,\text{ for all }\, (\bx,k)\in\R^{n,\circ}_+\times\CN.
\end{equation}
As a result,
$(\BX(t),r(t))$ is stochastically persistent in probability and almost surely stochastically persistent. 

In addition, if Assumption \ref{a3-pdm} is satisfied, then there is a unique probability measure $\pi^*$ on $\R^{n,\circ}_+\times\CN$ and the convergence of the transition probability of $(\BX(t),r(t))$ in total variation to $\pi^*$ on $\R^{n,\circ}_+$ is
exponentially fast, that is, there exists $\gamma_D>0$ such that
\begin{equation}
	\lim\limits_{t\to\infty} e^{\gamma_D t}\|P_{(\BX,r)}(t, \mathbf{x},k, \cdot)-\pi^*(\cdot)\|_{\text{TV}}=0, \;\mathbf{x}\in\R^{n,\circ}_+, k\in\CN.
	\end{equation}
For any initial value $\mathbf{x}\in\R^{n,\circ}_+, k\in\CN$ and any $\pi^*$-integrable function $f$ we have
\begin{equation}\label{slln2}
\PP_{\bx,k}\left\{\lim\limits_{T\to\infty}\dfrac1T\int_0^Tf\left(\BX(t),r(t)\right)dt=\sum_{k\in\CN}\int_{\R_+^{n,\circ}}f(\mathbf{u},k)\pi^*(d\mathbf{u},k)\right\}=1.
\end{equation}

\end{thm}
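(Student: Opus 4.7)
My plan is to construct a Foster–Lyapunov function $V$ on $\R_+^{n,\circ}\times\CN$ that simultaneously penalizes proximity to the boundary (using the positive weights supplied by Assumption \ref{a2-pdm}) and growth at infinity (using $F$ from Assumption \ref{a1-pdm}), prove the geometric drift \eqref{e:lya2} for $V^\theta$, and then combine this drift with a minorization coming from Assumption \ref{a3-pdm} to run a Harris-type argument. By the minmax principle recalled in Remark \ref{r:ergodic_minmax}, Assumption \ref{a2-pdm} furnishes weights $\mathbf p=(p_1,\dots,p_n)>0$ and a constant $\lambda^*>0$ with $\sum_i p_i\lambda_i(\mu)\geq 2\lambda^*$ for every $\mu\in\M$. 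The candidate is
\[
V(\bx,k) := F(\bx,k)\prod_{i=1}^n (x_i\wedge 1)^{-p_i},
\]
which, by Assumption \ref{a:bound}, is comparable uniformly in $\bx$ across the discrete coordinate $k$.

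The drift analysis splits into two regimes. For $\|\bx\|$ large, the bound \eqref{e:sup} of Assumption \ref{a1-pdm} forces $\Lom F/F$ sufficiently negative to dominate $\gamma_b\sum_i|f_i|$ and, after choosing $\theta>0$ small, also the contribution of $\prod_i(x_i\wedge 1)^{-\theta p_i}$; a direct Dynkin computation then yields $\E_{\bx,k} V^\theta(\BX(T^*),r(T^*))\leq \kappa_1 V^\theta(\bx,k)$ for $\|\bx\|$ exceeding some $R_1$. For $\bx$ in a compact set but close to $\partial\R_+^n$, I need to establish
\[
\frac{1}{T^*}\E_{\bx,k}\int_0^{T^*}\sum_{i=1}^n p_i f_i(\BX(s),r(s))\,ds \;\geq\; \lambda^*,
\]
uniformly in the relevant neighborhood for $T^*$ large; combined with the Dynkin identity $\ln X_i(t)-\ln x_i=\int_0^t f_i(\BX(s),r(s))\,ds$ this supplies the boundary-side contraction of $V^\theta$. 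Patching the two regimes by iterating over $n^*$ blocks of length $T^*$ absorbs the remaining intermediate compact region into the additive constant $\tilde K$, yielding \eqref{e:lya2}.

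Once \eqref{e:lya2} is in hand, iterating gives $\sup_{m\geq 0}\E_{\bx,k}V^\theta(\BX(mn^*T^*))<\infty$; since the sublevel sets of $V^\theta$ lie inside $\Se_\delta^c\cap\{\|\bx\|\leq R\}$, Markov's inequality gives persistence in probability with a quantitative $\delta(\eps)$, and the standard Birkhoff/occupation-measure argument of \cite{SBA11, B23} promotes this to almost-sure persistence. For the second half, Assumption \ref{a3-pdm} combined with the results of \cite{BBMZ15, B23} produces a Doeblin point in $\Gamma$ and hence a minorization of $P_{T}$ on a neighborhood of that point; paired with the drift \eqref{e:lya2}, this triggers a Harris-type theorem (Meyn–Tweedie, or Hairer–Mattingly), delivering the unique invariant $\pi^*$ on $\R_+^{n,\circ}\times\CN$, geometric convergence $e^{\gamma_D t}\|P_t(\bx,k,\cdot)-\pi^*\|_{\text{TV}}\to 0$, and, via ergodicity of $\pi^*$, the SLLN \eqref{slln2}.

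The main obstacle is the uniform near-boundary lower bound on $T^{*-1}\E_{\bx,k}\int_0^{T^*}\sum_i p_i f_i\,ds$: for $\min_i x_i>0$ the randomized occupation measures $\wtd\Pi_{T^*}$ charge the interior, so one must transfer the positive-weighted invasion-rate inequality from the boundary-supported measures in $\Conv(\M)$ to a full neighborhood of $\partial\R_+^n$ (in the compact part). The remedy is to combine tightness of occupation measures from Section \ref{s:prep}, the Feller property, the observation that any weak limit of such measures as $\dist(\bx,\Se_0)\to 0$ lies in $\Conv(\M)$, and the finiteness $|\M|<\infty$, in the spirit of \cite{SBA11, HN18, B23}. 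The secondary delicate point is choosing $\theta>0$ small enough that the higher-order corrections in the expansion of $\Lom V^\theta/V^\theta$ do not spoil the sign of the leading drift; this is exactly what forces \eqref{e:lya2} to be stated for $V^\theta$ rather than $V$.
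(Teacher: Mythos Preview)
Your outline follows the same two-regime Lyapunov strategy as the paper, and the large-$\|\bx\|$ part, the patching via the strong Markov property, and the Harris/Meyn--Tweedie conclusion are all as in the paper. There is, however, a genuine gap in the near-boundary step.

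From the averaged lower bound
\[
\frac{1}{T^*}\,\E_{\bx,k}\!\int_0^{T^*}\!\sum_i p_i f_i(\BX(s),r(s))\,ds \;\geq\;\lambda^*
\]
and the identity $\ln X_i(T^*)-\ln x_i=\int_0^{T^*} f_i\,ds$ you only get control of $\E_{\bx,k}\bigl[\ln V(\BX(T^*),r(T^*))\bigr]$, not of $\E_{\bx,k}\bigl[V^\theta(\BX(T^*),r(T^*))\bigr]$. Jensen's inequality goes the wrong way here, so the sentence ``this supplies the boundary-side contraction of $V^\theta$'' is not justified. The paper makes precisely this point in the Remark following Proposition~\ref{prop2.1}. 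Your fallback, ``higher-order corrections in the expansion of $\Lom V^\theta/V^\theta$'', does not rescue the argument either: the generator computation \eqref{gentheta} gives a pointwise inequality $\Lom V^\theta\leq -\theta\gamma_b V^\theta$ only for $\|\bx\|>M$ (this is \eqref{et1.1}); near the boundary the drift $\Phi$ is negative only in time average, not pointwise, so no instantaneous generator bound is available there.

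What the paper actually does for the boundary side is an exponential-moment argument. Writing $\ln V(\BX(T),r(T))=\ln V(\bx,k)+G(T)$, it establishes uniform two-sided bounds $\E_{\bx,k}e^{G(T)}\leq e^{HT}$ (from \eqref{e1-lm2.2}) and $\E_{\bx,k}e^{-G(T)}\leq C(\bx)e^{HT}$ (estimate \eqref{e3.5}), then applies the log-Laplace lemma (Lemma~\ref{lm2.5}) to bound the second derivative of $\theta\mapsto\ln\E_{\bx,k}e^{\theta G(T)}$ uniformly on $\{\|\bx\|\leq M\}\times[T^*,n^*T^*]$. Since the first derivative at $\theta=0$ equals $\E_{\bx,k}G(T)\leq -\tfrac34\rho^* T$ (by the Feller extension of the boundary estimate, Lemma~\ref{lm3.1}), a Taylor expansion gives $\E_{\bx,k}e^{\theta G(T)}\leq e^{-\frac12\rho^* T\theta}$ for $\theta$ small, which is exactly the boundary-side contraction of $V^\theta$ (Proposition~\ref{prop2.1}). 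This exponential-moment step is the missing ingredient in your sketch.
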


We next give some more definitions which are required for the extinction results. 
For a subset $I$ of $\{1,\dots,n\}$, denote 
$I^c:=\{1,\dots,n\}\setminus I$ and define
$$
\R_+^{I}:=\left\{\bx\in\R_+: x_i=0\text{ if }i\in I^c \right\},
$$
$$
\R_+^{I,\circ}:=\left\{\bx\in\R^n_+:x_i=0\text{ if }i\in I^c \text{ and }  x_i>0 ~\text{ if }i\in I\right\},
$$
and
$$\partial \R_+^I:=\left\{\bx=(x_1,\dots,x_n)\in\R_+^n:  x_i=0\text{ if }i\in I^c\text{ and } x_i=0\text{ for at least one }i\in I\right\}.$$
Note that for the case $I=\emptyset$,
$\R_+^I=\R_+^{I,\circ}=\{\0\}.$
Denote by
$\M^I, \M^{I,\circ}, \partial M^I$  the sets of ergodic measures on $\R_+^I,\R_+^{I,\circ}$
and $\partial \R_+^I$ respectively.

Consider $\pi\in\M\setminus\{\bdelta^*\}$.
We note that by the definition of the species support $I_\pi$, given by \eqref{e:species_support}, this is the minimal subset of $\{1,\dots,n\}$,  such that
$\pi\left(\R_+^{I_\pi,\circ}\right)=1$. We can then define $\R_+^\pi:=\R_+^{I_\pi}$ to be the subspace which supports the ergodic measure $\pi$.

Next, we look at some conditions that are needed for the extinction results.


\begin{asp}\label{a4-pdm}
	There exists a subset $I\subset\{1,\dots,n\}$ such that
	\begin{equation}\label{ae3.1}
	\max_{i\in I_\pi^c,\; \pi\in\M^{I,\circ}}\{\lambda_i(\pi)\}<0.
	\end{equation}
	If $I\neq \emptyset$, we assume further that
	\begin{equation}\label{ae3.2}
	\max_{i\in I}\{\lambda_i(\nu)\}>0,
	\end{equation}
	for any $\nu\in \Conv(\partial \M^I)$.
\end{asp}
\begin{deff}\label{d:attractors} Let $\M_1$ be the set of ergodic invariant probability measure on $\partial\R^n_+\times\CN$
satisfying Assumption \ref{a4-pdm}. The ergodic measures which do not satisfy the condition of being `transversal attractors' will be denoted by $\M_2:=\M\setminus \M_1$.
\end{deff}

\begin{thm}\label{thm5.2}
 Suppose Assumptions \ref{a1-pdm}, \ref{a:bound} and  \ref{a4-pdm} hold and $\R_+^I$ is accessible. Then for any $\delta<\delta_0$
and any $\bx\in\R^{n,\circ}_+$ we have
\begin{equation}\label{e.extinction}
\lim_{t\to\infty}\E_{\bx,k} \bigwedge_{i=1}^n X_i^{\delta}(t)=0,
\end{equation}
where $\bigwedge_{i=1}^n a_i=\min_{i=1,\dots,n}\{a_i\}.$
\end{thm}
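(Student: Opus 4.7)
The plan is to adapt the extinction scheme from \cite{HN18, HNC21} to the PDMP framework, using the Foster--Lyapunov structure built into Assumption \ref{a1-pdm} together with a carefully chosen weighted-product test function.

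First I would invoke the minmax principle recalled in Remark \ref{r:ergodic_minmax}. By \eqref{ae3.1}, $\lambda_i(\pi)<0$ for every $i\in I^c$ and every $\pi\in\M^{I,\circ}$ (because $I^c\subseteq I_\pi^c$), and since $|\M^{I,\circ}|<\infty$, there exist positive weights $(p_i)_{i\in I^c}$ with $\sum_{i\in I^c} p_i=1$ and a constant $\eta>0$ such that
\begin{equation*}
\sum_{i\in I^c} p_i\lambda_i(\mu)\le -3\eta \quad\text{for every }\mu\in\Conv(\M^{I,\circ}).
\end{equation*}
When $I\neq\emptyset$, Theorem \ref{thm5.1} applied to the sub-system restricted to $\R_+^I\times\CN$ (whose persistence hypothesis \eqref{ae3.2} is re-expressed via the minmax principle) yields stochastic persistence in $\R_+^I$ and a unique invariant probability measure on $\R_+^{I,\circ}\times\CN$. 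Next, set $\Psi_\delta(\bx):=\prod_{i\in I^c} x_i^{p_i\delta}$ for $\delta\in(0,\delta_0)$. Since $\Psi_\delta$ is $k$-independent, the switching part of $\Lom$ vanishes and
\begin{equation*}
\Lom\Psi_\delta(\bx,k)=\delta\,\Psi_\delta(\bx)\sum_{i\in I^c} p_i f_i(\bx,k).
\end{equation*}
The weighted AM--GM inequality applied to the coordinates $\{x_i\}_{i\in I^c}$ gives $\bigwedge_{i=1}^n x_i^\delta\le\bigwedge_{i\in I^c} x_i^\delta\le\prod_{i\in I^c} x_i^{p_i\delta}=\Psi_\delta(\bx)$, so it suffices to prove $\E_{\bx,k}\Psi_\delta(\BX(t),r(t))\to 0$.

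The key dynamical input is now an ergodic-averaging estimate near $\R_+^I$. Upper semicontinuity of $\mu\mapsto\sum_{i\in I^c}p_i\lambda_i(\mu)$ on $\Conv(\M^{I,\circ})$, the Feller property, and the subsystem ergodicity from the previous step produce a neighborhood $N$ of $\R_+^I$ and a deterministic $T>0$ such that, with high probability,
\begin{equation*}
\frac{1}{T}\int_s^{s+T}\sum_{i\in I^c} p_i f_i(\BX(u),r(u))\,du\le -2\eta
\end{equation*}
whenever the process stays in $N$ over $[s,s+T]$. Coupled with the formula for $\Lom\Psi_\delta$, an exponential-martingale (Gronwall-type) argument supplies a multiplicative contraction $\E[\Psi_\delta(\BX(s+T),r(s+T))\,\1_G\mid\mathcal{F}_s]\le e^{-\eta\delta T}\Psi_\delta(\BX(s))$ on the good event $G$ of staying in $N$. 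Accessibility of $\R_+^I$ gives positive probability of entering $N$ in bounded time from any $(\bx,k)\in\R_+^{n,\circ}\times\CN$, and iterating, in the spirit of the arguments behind Theorem \ref{thm4.1} and Theorem \ref{t:exxx}, turns this into occupation of $N$ with density approaching $1$, delivering $\E_{\bx,k}\Psi_\delta(\BX(t),r(t))\to 0$.

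The main obstacle I expect is handling the non-compact state space. During excursions where $\|\BX(t)\|$ is large, the sign of $\Lom\Psi_\delta/\Psi_\delta$ is not controlled and $\E\Psi_\delta$ could grow transiently. This is where Assumption \ref{a:bound} and the exponential decay in \eqref{e:sup} become essential: they produce uniform moment bounds of the type guaranteed by \eqref{e:lya2}, which let the ergodic-average estimate survive integration against excursion tails. A secondary subtlety, active when $I\neq\emptyset$, is that one must rule out trapping on lower-dimensional faces $\partial\R_+^I$; condition \eqref{ae3.2} is precisely what prevents this, but it must be used together with the sub-system conclusion of Theorem \ref{thm5.1} so that the empirical measures of the process near $\R_+^I$ genuinely approach elements of $\M^{I,\circ}$ rather than of $\partial\M^I$.
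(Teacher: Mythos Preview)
Your test function $\Psi_\delta(\bx)=\prod_{i\in I^c}x_i^{p_i\delta}$ is the heart of the problem. Compare it with the paper's
\[
U_\theta(\bx,k)=\sum_{i\in I^c}\Bigl[F(\bx,k)\,x_i^{\check p}\Big/\prod_{j\in I}x_j^{\hat p_j}\Bigr]^\theta.
\]
The factor $F(\bx,k)$ is what produces the drift inequality $\Lom U_\theta\le -\theta\gamma_b U_\theta$ for $\|\bx\|>M$, so the contraction survives excursions to infinity without any appeal to \eqref{e:lya2}. The denominator $\prod_{j\in I}x_j^{\hat p_j}$ encodes \eqref{ae3.2} directly into the test function: the paper chooses the weights so that $\sum_{i\in I}\hat p_i\lambda_i(\nu)-\check p\max_{i\in I^c}\lambda_i(\nu)>3\rho_e$ holds for \emph{every} $\nu\in\M^I$, including $\nu\in\partial\M^I$. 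Your combination $\sum_{i\in I^c}p_i\lambda_i(\mu)$ is negative only on $\Conv(\M^{I,\circ})$; near $\partial\R_+^I$ some $\lambda_i(\mu)$ with $i\in I^c$ may be positive, and $\Lom\Psi_\delta/\Psi_\delta=\delta\sum_{i\in I^c}p_if_i$ offers no repulsion from that face. Invoking Theorem \ref{thm5.1} on the restricted subsystem does not help: that controls the process \emph{on} $\R_+^I$, not the full $n$-dimensional process hovering near $\partial\R_+^I$.

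There is also a structural gap in the final step. The paper does not prove $\E_{\bx,k}U_\theta(\BX(t))\to 0$ from every initial condition. It shows, via a truncated supermartingale $\tilde U_\theta=U_\theta\wedge\varsigma$ and a stopping-time decomposition with $\tau=\inf\{t:\|\BX(t)\|\le M\}$ and $\xi=\inf\{t:U_\theta(\BX(t))\ge\varsigma\}$, that $\PP_{\bx,k}\{X_i(t)\to 0,\ i\in I^c\}\ge 1-\eps$ for $\bx$ close to $\R_+^I$; accessibility upgrades this to positive probability from every $\bx\in\R_+^{n,\circ}$, whence there is \emph{no invariant measure} on $\R_+^{n,\circ}\times\CN$. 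The conclusion \eqref{e.extinction} then follows indirectly: $\{P(t,\bx,k,\cdot)\}_{t\ge 0}$ is tight, every subsequential limit is an invariant measure supported on $\partial\R_+^n\times\CN$, and $\bigwedge_i x_i^\delta$ vanishes there. Your plan to iterate the local contraction and obtain $\E\Psi_\delta\to 0$ directly would require uniform control of $\Psi_\delta$ on the bad events (leaving the neighborhood $N$, or $\|\BX\|$ large), which your function does not provide.
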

Sometimes it is hard to prove that the boundary is accessible. The following theorem shows that even in that case, if we start close enough to the boundary we will converge exponentially fast to it with high probability. 
\begin{thm}\label{t:exxx_i}
Suppose Assumptions \ref{a1-pdm} and \ref{a:bound} hold. If $I$ satisfies Assumption \ref{a4-pdm} there exists
$\beta_I>0$ such that, for any a compact set $\K^I\subset
\Se^{I}_+$,
we have
$$
\lim_{\dist(\bz,\K^I)\to0, \bz\in
S^\circ}\PP_\bz\left\{\lim_{t\to\infty}\dfrac{\ln X_i(t)}t\leq
-\beta_I, i\in I^c\right\}=1
$$
\end{thm}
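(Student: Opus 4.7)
The plan is to reduce the claim to an averaging estimate for the reduced PDMP $(\BX^I(t),r(t))$ living on $\R_+^{I,\circ}\times\CN$, and then transfer that estimate to the full process via the Feller property and iterate with the strong Markov property. Integrating $\dot X_i = X_i f_i(\BX,r)$ yields the pathwise identity
\[
\ln X_i(t)-\ln X_i(0)=\int_0^t f_i(\BX(s),r(s))\,ds,\qquad i\in I^c,
\]
so the conclusion is equivalent to producing, for any prescribed $\eps>0$ and any $\bz\in S^\circ$ close enough to $\K^I$, a $\PP_{\bz}$-event of probability at least $1-\eps$ on which $\frac{1}{t}\int_0^t f_i(\BX(s),r(s))\,ds \le -2\beta_I$ for every $i\in I^c$ and all large $t$. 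Since $\M^{I,\circ}$ is finite by \cite{B23} and \eqref{ae3.1} is strict, one can fix $\beta_I>0$ with $\lambda_i(\pi)\le -3\beta_I$ for every $i\in I^c$ and $\pi\in\M^{I,\circ}$.

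Next, I would apply Theorem \ref{thm5.1} to the reduced process: its persistence hypothesis is precisely \eqref{ae3.2}, so the reduced PDMP is almost-surely stochastically persistent and its empirical occupation measures converge to $\Conv(\M^{I,\circ})$. Combined with compactness of $\K^I$, the Feller property, and continuity of $f_i$, this yields, for any $\eta>0$, a deterministic $T=T(\eta)$ and a slightly enlarged compact $\K^I_1\subset\R_+^{I,\circ}$ such that, uniformly in $\bz^I\in\K^I$ and $k\in\CN$,
\[
\PP_{\bz^I,k}\!\left\{\frac{1}{T}\int_0^T f_i(\BX^I(s),r(s))\,ds<-\tfrac{5}{2}\beta_I\text{ for all } i\in I^c,\ \BX^I(T)\in\K^I_1\right\}\ge 1-\eta.
\]
The Feller property of the joint process $(\BX(t),r(t))$ (established in Section \ref{s:prep}), together with local Lipschitzness of $f_i$, makes the above probability continuous in the initial condition, so it stays at least $1-2\eta$ for any $\bz\in S^\circ$ sufficiently close to $\K^I$, with the additional consequence that $\ln X_i(T)-\ln X_i(0)\le -2\beta_I T$ for all $i\in I^c$ on the same event.

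Finally, I would iterate via the strong Markov property: on the good event, at time $T$ the full process is again within some small distance of the compact $\K^I_1\subset\R_+^{I,\circ}$, and the $I^c$-components are strictly smaller than initially (by the factor $e^{-2\beta_I T}$). Applying the one-step estimate successively with $\K^I_n$ in place of $\K^I$ yields a nested family of good events; thanks to the persistence of the reduced system, the sequence $\K^I_n$ can be kept inside a fixed compact subset of $\R_+^{I,\circ}$ (this is where the tightness furnished by Assumptions \ref{a1-pdm} and \ref{a:bound} is crucial), so $T$ and the $1-\eta$ bound do not degrade with $n$. Because the $I^c$-components shrink geometrically, the per-step failure probabilities can be made summable, and a union-bound argument over $n$ then produces a single event of probability at least $1-\eps$ on which $\ln X_i(nT)\le \ln X_i(0)-2\beta_I nT$ for every $n\ge 0$. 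Interpolating continuously using the uniform bound $\sup_{s\in[nT,(n+1)T]}|\ln X_i(s)-\ln X_i(nT)|\le T\sup_K|f_i|<\infty$ on the compact $K$ containing the orbit then gives $\limsup_{t\to\infty}\ln X_i(t)/t\le -2\beta_I\le -\beta_I$, as required. The main obstacle is keeping the iteration uniform across all $n$: the compacts $\K^I_n$, the time $T$, and the probability bound must not deteriorate as $n$ grows, which is exactly what the persistence hypothesis \eqref{ae3.2} for the reduced process, combined with Assumptions \ref{a1-pdm} and \ref{a:bound}, is designed to secure.
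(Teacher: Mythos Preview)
Your approach is conceptually natural but differs substantially from the paper's, and it contains a genuine gap in the iteration step.

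The paper does \emph{not} argue by averaging $\frac{1}{T}\int_0^T f_i$ along the reduced process and then iterating via Feller continuity. Instead it builds, in Proposition~\ref{prop4.1} and Theorem~\ref{thm4.1}, the Lyapunov function
\[
U_\theta(\bx,k)=\sum_{i\in I^c}\left[F(\bx,k)\,\dfrac{x_i^{\check p}}{\prod_{j\in I} x_j^{\hat p_j}}\right]^\theta
\]
and proves a contraction $\E_{\bx,k}U_\theta(\BX(T),r(T))\le e^{-0.5\theta\rho_e T}U_\theta(\bx,k)$ on the relevant region, together with a rough growth bound (Lemma~\ref{lm3.3}). From this one obtains, on the event $\{\tilde\xi=\infty\}$ of probability at least $1-\eps$, the geometric estimate $\sup_{t\in[\ell n_eT_e,(\ell+1)n_eT_e]}U_i^\theta(\BX(t),r(t))\le\rho_1^{\ell+1}$ for all large $\ell$ via Markov's inequality and Borel--Cantelli. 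The bound $U_i^\theta\ge x_i^{\theta\check p}/C_U^\theta$ then converts this directly into $\limsup_{t\to\infty}\frac{\ln X_i(t)}{t}\le-\beta_I$. The point is that the Lyapunov function already encodes the geometric decay, so no separate iteration with a union bound is needed.

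Your scheme, by contrast, proposes to apply a one-step estimate with failure probability $\le 2\eta$ at each stage and then take a union bound over all $n$. The sentence ``Because the $I^c$-components shrink geometrically, the per-step failure probabilities can be made summable'' is the crux, and it is not justified: shrinking $X_i$ for $i\in I^c$ brings the full process closer to the reduced one, but the one-step failure probability for the reduced process is still $\eta$ and does not get smaller. With constant $\eta$ per step, the union bound over infinitely many steps is vacuous. To make the failure probabilities summable you would need either a Lyapunov/supermartingale structure (which is exactly what the paper supplies through $U_\theta$) or a sequence $\eta_n\downarrow 0$ with correspondingly growing $T_n$, and the latter route would require careful uniform control that you have not indicated.

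A second gap is the interpolation step: you write $\sup_{s\in[nT,(n+1)T]}|\ln X_i(s)-\ln X_i(nT)|\le T\sup_K|f_i|$ ``on the compact $K$ containing the orbit'', but under Assumptions~\ref{a1-pdm} and~\ref{a:bound} the process need not remain in any compact set, so $\sup|f_i|$ along the orbit is not a priori finite. The paper handles this through the $F$-factor inside $U_\theta$ together with the growth bound of Lemma~\ref{lm3.3}, which controls excursions to infinity on each time block without assuming the orbit is bounded.
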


\begin{asp}\label{a5-pdm}
Let $S$ be the family of subsets $I$ satisfying the Assumption \ref{a4-pdm}. We assume either that $S^c:=2^{\{1,\dots,n\}}\setminus S$ is empty, where $2^{\{1,\dots,n\}}$ denotes the family of all subsets of $\{1,\dots,n\}$
or that
$$\max_{i=1,\dots,n}\left\{\lambda_i(\nu)\right\}>0\text{ for any }\nu\in\Conv\left(\displaystyle\bigcup_{J\notin S}\M^{J,\circ}\right).$$
\end{asp}
Let $\U(\omega)$ be the weak$^*$-limit set of the
randomized occupation measure
$$\wtd \Pi_t(\cdot,k):=\dfrac1t\int_0^t\1_{\{\BX(s)\in\cdot, r(s)=k\}}ds,\,\,k\in\CN,t>0.$$
We want to give more exact extinction results, so that we can say exactly which species persist and which go extinct. The following result says that if a subspace $\R_+^I$ has only ergodic attractors $\mu$ in its interior $\R_+^{I,\circ}$ and is accessible, then, with strictly positive probability, the process converges to a convex combination of the attracting ergodic measures. Moreover, the sum of all these probabilities has to be equal to one. 
\begin{thm}\label{thm5.3}
	Suppose that Assumptions \ref{a1-pdm}, \ref{a:bound}, \ref{a4-pdm} and \ref{a5-pdm} are satisfied and $\M^1\neq \emptyset$. Suppose furthermore that $\bigcup_{I\in S} S^I_+$ is accessible.
Then for any $\bx\in\R^{n,\circ}_+$
\begin{equation}\label{e0-thm4.2}
\sum_{I\in S} P_{\bx,k}^I=1
\end{equation}
where for $\bx\in\R^{n,\circ}_+, k\in \CN, I \in S$

$$P_{\bx,k}^I:=\PP_{\bx,k}\left\{\emptyset\neq\U(\omega)\subset\Conv\left(\M^{I,+}\right)
~\text{and}~\lim_{t\to\infty}\frac{\ln
X_j(t)}{t}\in\left\{\lambda_j(\mu):\mu\in\Conv\left(\M^{I,+}\right)\right\},
j\notin I\right\}.$$
Moreover, if $\R_+^I $ is accessible from $(\bx,k)$ then $P_{\bx,k}^I>0$.
\end{thm}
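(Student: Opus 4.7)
The plan is to combine three ingredients: (i) tightness of the empirical occupation measures $\wtd\Pi_t$, obtained from the Foster--Lyapunov bound \eqref{e:sup} in Assumption \ref{a1-pdm} exactly as in the proof of Theorem \ref{thm5.1}; (ii) an escape argument ruling out contributions from faces $J \notin S$, powered by Assumption \ref{a5-pdm} together with the minmax principle of Remark \ref{r:ergodic_minmax}; and (iii) the near-boundary convergence result Theorem \ref{t:exxx_i} applied to each accessible attractor $\R_+^I$, $I \in S$. Standard Markov--Feller theory gives that every $\mu \in \U(\omega)$ is an invariant probability measure of $(\BX(t),r(t))$ on $\R^n_+\times\CN$, and ergodic decomposition writes any such $\mu$ as a convex combination $\mu = \sum_{J\subseteq\{1,\dots,n\}} \alpha_J \mu_J$ with $\mu_J \in \Conv(\M^{J,\circ})$, $\alpha_J\ge 0$ and $\sum_J\alpha_J=1$. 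The core task is therefore to determine which $J$ can appear with positive weight.

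Suppose for contradiction that some $\mu \in \U(\omega)$ assigns positive weight to $\bigcup_{J\notin S}\M^{J,\circ}$; then the renormalized sum $\nu$ lies in $\Conv\bigl(\bigcup_{J\notin S}\M^{J,\circ}\bigr)$ and, by Assumption \ref{a5-pdm}, $\max_i \lambda_i(\nu)>0$. The minmax principle (Remark \ref{r:ergodic_minmax}) applied to the repelling subsystem would then yield positive weights $p_i$ with $\sum_i p_i\lambda_i(\nu')>0$ for every $\nu'\in \Conv\bigl(\bigcup_{J\notin S}\M^{J,\circ}\bigr)$. Mirroring the Khasminskii-style construction in the proof of Theorem \ref{thm5.1}, I would build an auxiliary function roughly of the form $V_\theta(\bx,k) = F(\bx,k)^\theta \prod_{i} x_i^{p_i \theta}$ with small $\theta>0$, whose generator satisfies $\Lom V_\theta/V_\theta \ge \gamma>0$ in a neighborhood of $\supp\nu$, while the global bound \eqref{e:sup} supplies the outer control needed to keep $V_\theta$ integrable. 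Dynkin's formula then forces trajectories to escape neighborhoods of $\supp\nu$ at a definite rate, contradicting $\nu$ being a weak-$*$ cluster point of $\wtd\Pi_t$. Thus $\alpha_J=0$ for every $J\notin S$ almost surely, and $\U(\omega)\subset\Conv\bigl(\bigcup_{I\in S}\M^{I,+}\bigr)$. This is the main obstacle: the weights $p_i$ depend on which face $J\notin S$ is being considered, so the Lyapunov function must be glued across the various repelling faces in a way that remains compatible with both the switching generator \eqref{e:gen} and the global growth bound \eqref{e:sup}.

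Given $\U(\omega)\subset\Conv\bigl(\bigcup_{I\in S}\M^{I,+}\bigr)$, Theorem \ref{t:exxx_i} shows that once the process enters a sufficiently small neighborhood of a compact $\K^I \subset \Se^I_+$, it converges exponentially to $\R_+^I$ with probability arbitrarily close to one, and $X_j(t)\to 0$ for $j\in I^c$ with controlled Lyapunov exponent. Since the supports of $\M^{I,+}$ for distinct $I$ lie on disjoint faces and an orbit whose occupation measure converges cannot straddle two disjoint attracting sets in the limit, $\U(\omega)$ concentrates on $\Conv(\M^{I,+})$ for a single $I\in S$ almost surely. The pathwise convergence
\[
\frac{\ln X_j(t)}{t} = \frac{\ln X_j(0)}{t} + \frac{1}{t}\int_0^t f_j(\BX(s),r(s))\,ds \longrightarrow \lambda_j(\mu), \qquad j\notin I,
\]
then follows from the weak-$*$ convergence of $\wtd\Pi_t$ to the limit $\mu$ together with continuity of $f_j$ on neighborhoods of $\supp\mu$. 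Summing over $I\in S$ yields \eqref{e0-thm4.2}. For the positivity claim, if $\R_+^I$ is accessible from $(\bx,k)$ then I would use accessibility and the Feller property of the semigroup to produce $\tau>0$ and a neighborhood $U$ of the compact set $\K^I\subset\Se^I_+$ from Theorem \ref{t:exxx_i} with $\PP_{\bx,k}\{(\BX(\tau),r(\tau))\in U\}>0$; applying the strong Markov property at the first entry time into $U$ and then Theorem \ref{t:exxx_i} inside $U$ gives $P^I_{\bx,k}>0$.
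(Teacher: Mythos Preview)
Your outline has the right shape, but two of the three ingredients do not work as stated. For ruling out faces $J\notin S$, you propose a Lyapunov function and flag ``gluing across faces'' as the main obstacle; this is a red herring, since the minmax principle already furnishes a \emph{single} weight vector $\bq$ with $\sum_i q_i\lambda_i(\nu)>0$ for every $\nu\in\Conv(\M^2)$. More to the point, the paper does not try to prove $\alpha_J=0$ for $J\notin S$ at this stage. It uses instead the direct contradiction argument of Lemma~\ref{lm4.9}: from $\tfrac{\ln F(\BX(t),r(t))}{t}\to 0$ one gets $\limsup_{t\to\infty}\tfrac{\ln X_i(t)}{t}\le 0$ for all $i$, so if some limit $\pi\in\U(\omega)$ lay \emph{entirely} in $\Conv(\M^2)$ then $\sum_i q_i\tfrac{\ln X_i(t_m)}{t_m}\to\sum_i q_i\lambda_i(\pi)>0$ would contradict this. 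For a mixed $\pi=(1-\rho)\pi_1+\rho\pi_2$ with $\pi_1\in\Conv(\M^1)$ and $\rho\in(0,1)$, neither your construction nor this argument eliminates the $\pi_2$-piece; the paper simply carries forward the weaker conclusion $\rho<1$.

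The second and more serious gap is your sentence ``an orbit whose occupation measure converges cannot straddle two disjoint attracting sets in the limit.'' Nothing you have written excludes limits of the form $\alpha\mu_{I_1}+(1-\alpha)\mu_{I_2}$ with distinct $I_1,I_2\in S$; this is precisely what must be proved. The paper's route is a hitting-time argument: by Lemma~\ref{lm4.7} (persistence on $\R_+^I$) there is a compact $\K_I^0\subset\R_+^{I,\circ}$ with $\mu(\K_I^0)>0$ for every $\mu\in\Conv(\M^{I,\circ})$, and since every $\pi\in\U(\omega)$ has a nontrivial $\M^1$-component, $\wtd\Pi_t$ must charge $\bigcup_{I\in S}\K_I^0$. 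Hence the trajectory almost surely enters a thin slab $\K_I^{\ell,\Delta}$ around some $\K_I^0$, where the supermartingale estimate \eqref{e:ext_U_theta_I} behind Theorems~\ref{thm4.1} and \ref{t:exxx} gives $\PP\{X_j(t)\to 0,\ j\in I^c\}>1-\eps$; the strong Markov property then yields $\sum_{I\in S}P^I_{\bx,k}>1-\eps$, and one lets $\eps\downarrow 0$. You also omit Lemma~\ref{lm4.9}, which uses the repelling condition \eqref{ae3.2} to upgrade $\U(\omega)\subset\Conv(\M^I)$ to $\U(\omega)\subset\Conv(\M^{I,\circ})$, i.e., to exclude the lower-dimensional faces of $\R_+^I$ once the process has been trapped near it.
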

If one ensures that subspaces have at most one interior attractor, we get the following extinction result as an immediate corollary.
 \begin{thm}\label{t:ex_one_intro}
Suppose that Assumptions \ref{a1-pdm}, \ref{a:bound}, \ref{a4-pdm} and \ref{a5-pdm} are satisfied and $\M^1\neq \emptyset$. Suppose furthermore that $\bigcup_{I \in S} S^I_+$ is accessible and that every subset $I\in S$ is such that $\M^{I,\circ}=\{\mu_I\}$. Then for all $(\bx,k)\in \R_+^{n,\circ}\times\CN$ we have 
$$
\sum_{I\in S} P_{\bx,k}^I=1
$$
where
$$P_{\bx,k}^I:=\PP_{\bx,k}\left\{\U(\omega)=\{\mu_I\}\,\text{ and }\,\lim_{t\to\infty}\dfrac{\ln X_i(t)}t=\lambda_i(\mu_I)<0, i\in I^c\right\}.$$ 
Moreover, if $\R_+^I $ is accessible from $(\bx,k)$ then $P_{\bx,k}^I>0$.

\end{thm}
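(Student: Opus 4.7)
The plan is to derive this from Theorem \ref{thm5.3} by specializing its conclusions to the single-attractor case. Since Assumptions \ref{a1-pdm}, \ref{a:bound}, \ref{a4-pdm}, \ref{a5-pdm} all hold and $\bigcup_{I\in S}S^I_+$ is accessible, Theorem \ref{thm5.3} applies directly and yields $\sum_{I\in S}P^I_{\bx,k}=1$, where
\[
P^I_{\bx,k}=\PP_{\bx,k}\!\left\{\emptyset\neq\U(\omega)\subset\Conv\!\left(\M^{I,+}\right),\ \lim_{t\to\infty}\frac{\ln X_j(t)}{t}\in\{\lambda_j(\mu):\mu\in\Conv(\M^{I,+})\},\ j\notin I\right\}.
\]
The additional hypothesis here is only that $\M^{I,\circ}=\{\mu_I\}$ for every $I\in S$, so it suffices to show that the events defining $P^I_{\bx,k}$ in Theorem \ref{thm5.3} coincide, up to a null set, with those defining $P^I_{\bx,k}$ in the present corollary.

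The key observation is that with $\M^{I,\circ}=\{\mu_I\}$, the set of interior ergodic invariant measures on $\R_+^{I,\circ}$ is a singleton, and therefore $\Conv(\M^{I,+})=\{\mu_I\}$ (any convex combination of a single measure is that measure). From this two simplifications follow at once. First, the requirement $\emptyset\neq\U(\omega)\subset\Conv(\M^{I,+})$ forces $\U(\omega)=\{\mu_I\}$, because a nonempty subset of a singleton equals that singleton. Second, the set $\{\lambda_j(\mu):\mu\in\Conv(\M^{I,+})\}$ collapses to the single value $\lambda_j(\mu_I)$, so the limit condition becomes $\lim_{t\to\infty}\ln X_j(t)/t=\lambda_j(\mu_I)$ for all $j\in I^c$.

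It remains to verify the strict sign $\lambda_j(\mu_I)<0$ for $j\in I^c$. This is immediate from Assumption \ref{a4-pdm}: for $\mu_I\in\M^{I,\circ}$ we have $I_{\mu_I}=I$, so $I^c\subset I_{\mu_I}^c$, and \eqref{ae3.1} gives $\lambda_j(\mu_I)<0$ for every $j\in I^c$. Thus each event in Theorem \ref{thm5.3} corresponding to $I\in S$ equals the event appearing in the statement of Theorem \ref{t:ex_one_intro}, so the probabilities match and their sum is $1$. The positivity claim under accessibility of $\R_+^I$ from $(\bx,k)$ carries over verbatim from Theorem \ref{thm5.3}.

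I do not expect a main obstacle here since the corollary is genuinely a cosmetic strengthening of Theorem \ref{thm5.3} under the uniqueness assumption; the only thing one has to be mildly careful about is to verify that the ``nonempty'' clause in the definition of $P^I_{\bx,k}$ in Theorem \ref{thm5.3} is compatible with the singleton conclusion, which is ensured by tightness of the occupation measures established in Section \ref{s:prep} together with Assumption \ref{a1-pdm}.
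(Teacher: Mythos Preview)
Your proposal is correct and matches the paper's approach: the paper simply records this result as an immediate corollary of Theorem~\ref{thm5.3} (equivalently Theorem~\ref{thm4.2}), and you have written out precisely the specialization that makes this immediate. Your unpacking of why $\Conv(\M^{I,+})=\{\mu_I\}$ collapses the event and why $\lambda_j(\mu_I)<0$ follows from \eqref{ae3.1} is exactly what is implicit in the paper's one-line proof.
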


\section{Applications}\label{s:app}
In this section we will showcase our general results by applying them in some interesting ecological models.

\subsection{Competitive Lotka-Volterra models}
A common ecological two-species model is one in which two species compete for resources according to Lotka-Volterra functional responses \citep{wangersky1978lotka}. In our setting, this model is given by
\begin{equation}\label{e3-ex1}
\begin{split}
\frac{dX_1}{dt}(t)&=X_1(t)[a_1(r(t))-b_1(r(t))X_1(t)-c_1(r(t))X_2(t)]\\
\frac{dX_2}{dt}(t)&=X_2(t)[a_2(r(t))-b_2(r(t))X_2(t)-c_2(r(t))X_1(t)].
\end{split} 
\end{equation}
For simplicity we assume that the switching process $r(t)$ takes values in $\CN=\{1,2\}$, the switching rates are constant, and $r(t)$ is irreducible. Here $a_i(k)$ is the per-capita growth rate of species $i$ in environment $k$, $b_i(k)$ the intraspecific competition rate of species $i$ in environment $k$ and $c_i(k)$ the interspecific competition rate of species $j$ on species $i$ in environment $k$.
Suppose that $a_i(k)>0, b_i(k)\geq 0, c_i(k)\geq 0$ for $i,k=1,2$ and that the switching intensities of $r(t)$, satisfy $q_{12}(\bx)>0$ and $q_{21}(\bx)>0$ for all
$\bx\in\R^2_+$.

One can easily check that $F(x,k) = \alpha(k)(1+x_1+x_2)$, for $\alpha(k)>0$ for $k\in \CN$ will satisfy Assumptions \ref{a1-pdm} and \ref{a:bound} so that our results can be applied.

Since at $\bx=(0, 0)$, we have
$$\lambda_1(\delta_0)= \sum_{k=1}^2\int_{(0,0)} \left(a_1(k)-b_1(k)x_1-c_1(k)x_2\right)\,\delta_0(d\bx)=\sum_{k=1}^2 a_1(k)>0$$
and
$$\lambda_2(\delta_0)= \sum_{k=1}^2\int_{(0,0)} \left(a_2(k)-b_2(k)x_1-c_2(k)x_2\right)\,\delta_0(d\bx)=\sum_{k=1}^2 a_2(k)>0$$
one can show by \cite{BL16}, or by Theorem \ref{thm5.1}, that there are invariant probability measures $\mu_i, i=1,2$ whose supports are contained in $\R^\circ_{i+}\times\CN$, $i=1,2$ where $\R^\circ_{i+}= \{\bx\in \R^2_+ ~|~ x_i>0, x_j=0, j\neq i\}$.
Let
$$\lambda_2(\mu_1)=\sum_{k=1}^2\int_{\R^{\circ}_{1+}}[a_2(k)-c_2(k)x_1]\mu_1(d\bx,k)$$
and
$$\lambda_1(\mu_2)=\sum_{k=1}^2\int_{\R^{\circ}_{2+}}[a_1(k)-c_1(k)x_2]\mu_2(d\bx,k).$$

Note that these invasion rates can be computed explicitly but involve  special functions \citep{BL16}. We get the following classification of the long-term behavior of the process.
\begin{itemize}
\item Suppose $\lambda_1(\mu_2)<0, \lambda_2(\mu_1)<0$. Then, using Theorem \ref{t:ex_one_intro}, we have $p^{\bx,k}_1+p^{\bx,k}_2=1$ where
$$p^{\bx,k}_i=\PP_{\bx,k}\left\{X_i\to \mu_i, \lim_{t\to\infty}\dfrac{\ln X_j(t)}t=\lambda_j(\mu_i), j\in\{1,2\}\setminus\{i\}\right\}.$$
If $\bar\gamma_+(\bx)\cap \R^{\circ}_{i+}\ne\emptyset$ then $p^{\bx,k}_i>0$.
Note that in some specific cases we can know when
$\bar\gamma_+(\bx)\cap \R^{\circ}_{i+}\ne\emptyset$
(see \cite{BL16, DDY14}).
\item Suppose $\lambda_j(\mu_i)<0$ for one $i\in\{1,2\}$ and $\lambda_i(\mu_j)>0: j\in\{1,2\}\setminus\{i\}$. Then,  using Theorem \ref{t:ex_one_intro}, we see that $X_j(t)$ converges to $0$ almost surely with the exponential rate $\lambda_j(\mu_i)$ for any initial condition $(\bx,k)\in\R^{2,\circ}_+\times\{1,2\}$. The occupation measure converges almost surely for all initial conditions $(\bx,k)\in\R^{2,\circ}_+\times\{1,2\}$ to $\mu_i$.
\item Suppose $\lambda_1(\mu_2)>0, \lambda_2(\mu_1)>0$. Assume that at least one of the following two conditions holds,
\begin{itemize}
\item[i)] $\dfrac{b_1(1)}{b_2(1)}\ne \dfrac{b_1(2)}{b_2(2)}$
\item[ii)] $\dfrac{c_1(1)}{c_2(1)}\ne \dfrac{c_1(2)}{c_2(2)}.$
\end{itemize}\
It is shown in \cite[Theorem 4.1]{BL16} that $\Gamma\ne\emptyset$ and there is a point $m_0\in\Gamma$ satisfying the strong bracket condition.
By Theorem \ref{thm5.2}, the transition probability of the process $(\BX(t),r(t))$ on $\R^{2,\circ}_+\times\CN$
converges exponentially fast to a unique invariant probability measure on $\R^{2,\circ}_+\times\CN$ in total variation.
\end{itemize}
The above conclusions coincide with the main results of \cite{BL16}. 
\begin{rmk}
We can also recover and generalize the results from the more general setting of \cite{DDY14} where the authors consider
\begin{equation}\label{e:LV_comp}
\begin{split}
\frac{dX_1}{dt}(t)&=X_1(t) f_1(X_1(t),X_2(t),r(t))\\
\frac{dX_2}{dt}(t)&=X_2(t) g(X_1(t),X_2(t),r(t))
\end{split}
\end{equation}
under the assumptions that the process remains in a compact set and that for all $x_1, x_2>0, k\in \CN$ one has $\frac{\partial f}{\partial x_1}(x_1, 0,k)<0, \frac{\partial g}{\partial x_1}(0, x_2,k)<0, f(0,0,k)>0, g(0,0,k)>0, \limsup_{x_1\to \infty} f(x_1,0,k)<0, \limsup_{x_2\to \infty} g(0,x_2,k)<0$.
\end{rmk}

\subsection{Predator-prey models}
If instead we look at predator-prey systems of Lotka Volterra type \citep{lotka2002contribution, volterra1928variations, wangersky1978lotka}, we get the system of the form
\begin{equation}\label{e:pp}
\begin{split}
\frac{dX_1}{dt}(t)&=X_1(t)[a_1(r(t))-b_1(r(t))X_1(t)-c_1(r(t))X_2(t)]\\
\frac{dX_2}{dt}(t)&=X_2(t)[-a_2(r(t))-b_2(r(t))X_2(t)+c_2(r(t))X_1(t)]
\end{split}
\end{equation}

Here we assume that $r(t)\in\CN=\{1,2\}$, and $a_i(k),b_i(k),c_i(k)>0$ for $i=1,2$ and $k\in\CN$. The meaning of the coefficients is as follows: $a_1(k)$ is the per-capita growth rate of the prey, $a_2(k)$ is the per-capita death rate of the predator, $b_1(k), b_2(k)$ are the intraspecific competition rates of prey and predators, $c_1(k)$ is the attack rate of the predator on the prey and $c_2(k)$ is the rate at which the predator makes use of the prey to sustain itself. 
\begin{rmk}
If we have $\frac{a_1(k)}{b_1(k)}\leq\frac{a_2(k)}{c_2(k)}$ for at least one $k\in\CN$, then accessibility of $(0,\infty)\times\{0\}$ follows trivially, since at least one of the vector fields can access the set $(0,\infty)\times\CN$.
\end{rmk}

\begin{rmk}\label{pp_strong}
Denote the vector fields for $k\in\{1,2\}$ by  
\[
V(k)=\begin{bmatrix}
x_1A_1^{k} \\
x_2A_2^{k} 
\end{bmatrix}=\begin{bmatrix}
x_1\left(a_1(k)-b_1(k)x_1-c_1(k)x_2\right) \\
x_2\left(-a_2(k)-b_2(k)x_2+c_2(k)x_1\right).
\end{bmatrix}
\]
We have 
\[
V(1)-V(2)=\begin{bmatrix}
x_1(A_1^{1}-A_1^2) \\
x_2(A_2^{1}-A_2^2). 
\end{bmatrix}
\]
Now computing the Lie Bracket we get 
\[
[V(1),V(1)-V(2)]=\begin{bmatrix}
x_1^2\left(A_1^1b_1(2)-A_1^2b_1(1)\right) +x_1x_2\left(A_2^1c_1(2)-A_2^2c_1(1)\right) \\
x_2^2\left(A_2^1b_2(2)-A_2^2b_2(1)\right)+x_1x_2\left(A_1^2c_1(2)-A_1(1)c_2(2) 
 \right). 
\end{bmatrix}
\] To verify that the vector fields $V(1)-V(2)$ and $[V(1),V(1)-V(2)]$ are linearly independent it is enough to verify that the determinant of the matrix 
\[
M = \begin{bmatrix}
V(1)-V(2) & [V(1),V(1)-V(2)] \\ 
\end{bmatrix}
\]
is nonzero. Note that
\[
det(M)=Dx_1^2x_2+ Ex_1x_2^2
\]
where
\begin{equation}
\begin{split}
D=&-c_2(2)\left[\left(A_1^1-\frac{A_1^2}{2}\right)^2 + \left(\left(\frac{b_1(1)}{c_2(2)}\right)(A_2^2-A_2^1)A_1^2-\left(\frac{A_1^2}{2}\right)^2 \right)   \right]\\
&-c_2(1)\left[\left(A^2_1-\frac{A_1^1}{2}    \right)^2+ \left(\left(\frac{b_1(2)}{c_2(1)}\right)(A_2^1-A_2^2)A_1^1-\left(\frac{A_1^1}{2}\right)^2      \right) \right ]\\
E=&-c_1(2)\left[\left(A_2^1-\frac{A_2^2}{2}   \right)^2 + \left( \left(\frac{b_2(1)}{c_1(2)}\right)(A_1^1-A_1^2)A_2^2-\left(\frac{A_2^2}{2}  \right)^2   \right)         \right]\\
&-c_1(1)\left[\left(A_2^2-\frac{A_2^1}{2} \right)^2 + \left(\left(\frac{b_2(2)}{c_1(1)}\right)(A_1^2-A_1^1)A_2^1-\left(\frac{A_2^1}{2}\right)^2  \right)       \right].
\end{split}    
\end{equation}
So unless $D=E=0$, we have $det(M)\neq 0$, at all points except possibly the straight line $Dx_1+Ex_2=0$. 
    
\end{rmk}

\begin{rmk}\label{pp_fp}
If the fixed points of vector fields $V_1$, and $V_2$ are $\bx_1=(x_{11},x_{12})$ and $\bx_2=(x_{21},x_{22})$. Then if we have for some $i\in\{1,2\}$ 
\[
Dx_{i1} + Ex_{i2}\neq 0
\]
and $(x_{i1},x_{i2})$ is a globally attracting fixed point. Then there exists a fixed point which is globally attracting and the strong bracket condition holds. 
\end{rmk}

\begin{rmk}\label{pp_ext}
Since the model describes predator prey dynamics, if there exists an invariant probability measure $\mu'$ on $\partial \R^2_+$, then $supp(\mu')\subset  (0,\infty)\times \{0\}$.  Then Proposition 2.1 from \cite{BL16} applies, so if $\frac{a_1(1)}{b_1(1)}\neq \frac{a_1(2)}{b_1(2)}$, then  
\begin{equation}\label{e:h1}
\mu'(dx,1)=h_1(x) dx = C(p_1,p_2,\gamma_1,\gamma_2)\frac{p_1|x-p_2|^{\gamma_2}|p_1-x|^{\gamma_1-1}}{x^{1+\gamma_1+\gamma_2}} dx
\end{equation}
\begin{equation}\label{e:h2}
\mu'(dx,2)=h_2(x) dx = C(p_1,p_2,\gamma_1,\gamma_2)\frac{p_2|x-p_2|^{\gamma_2-1}|p_1-x|^{\gamma_1}}{x^{1+\gamma_1+\gamma_2}} dx
\end{equation}
where $p_1=\frac{a_1(1)}{b_1(1)}$, $p_2=\frac{a_1(2)}{b_1(2)}$, $\gamma_1=\frac{q_{12}}{a_1(1)}$ and $\gamma_2=\frac{q_{21}}{a_1(2)}$, and $supp(\mu')\in [p_1,p_2]$ if $p_1<p_2$.

\end{rmk}

Since the switching process $r(t)$ is irreducible and aperiodic, there is a unique invariant distribution $\nu=(\nu_1,\nu_2)=\left(\frac{q_{21}}{q_{12}+q_{21}},\frac{q_{12}}{q_{12}+q_{21}}\right)$ on $\CN$. Hence we have an invariant measure supported on $\{(0,0)\}\times\CN$ $\mu_0=\delta_0\times \nu$. 
\[
\lambda_1(\mu_0)=\sum_{k=1}^2\int \left(a_1(k)-b_1(k)x_1-c_1(k)x_2\right) \mu_0(dx,k) = a_1(1)\left(\frac{q_{21}}{q_{12}+q_{21}}\right) + a_1(2)\left(\frac{q_{12}}{q_{12}+q_{21}}\right)>0
\]
\[
\lambda_2(\mu_0)=\sum_{k=1}^2\int \left(-a_2(k)-b_2(k)x_1-c_2(k)x_2\right) \mu_0(dx,k) = -a_2(1)\left(\frac{q_{21}}{q_{12}+q_{21}}\right) - a_2(2)\left(\frac{q_{12}}{q_{12}+q_{21}}\right)<0
\]
If we restrict our model to $\{0\}\times\R^{\circ}_+$. Then $\lambda_2(\mu_0)<0$, then by Theorem \ref{thm5.3}, $X_2(t)\to 0$ a.s, as we expect since the predator dies out without the prey. There is no invariant measure supported on $\{0\}\times \R_+^{\circ}$.\\
Now if we restrict our model to $\R^{\circ}_+\times\{0\}\times\CN$. Since $\lambda_1(\mu_0)>0$, then by Theorem \ref{thm5.1} there exists a unique invariant measure $\mu_1$ supported in $\R^{\circ}_+\times\{0\}\times\CN$. If $\frac{a_1(1)}{b_1(1)}=\frac{a_1(2)}{b_1(2)}=p$, then $\mu_1=\delta_p\otimes \nu$. Hence we can compute $\lambda_2(\mu_1)$ as
 \[
    \lambda_2(\mu_1)= (c_2(1)p-a_2(1))\left(\frac{q_{21}}{q_{12}+q_{21}}\right) + (c_2(2)p-a_2(2))\left(\frac{q_{12}}{q_{21}+q_{12}}\right).
\]
If $\frac{a_1(1)}{b_1(1)}<\frac{a_1(2)}{b_1(2)}$, then from  
\cite{bakhtin2015regularity}, $\mu_1$ is absolutely continuous and given by a smooth density supported in $\left(\frac{a_1(1)}{b_1(1)},\frac{a_1(2)}{b_1(2)}\right)$, and vice-versa when $\frac{a_1(1)}{b_1(1)}>\frac{a_1(2)}{b_1(2)}$. We can find the density of $\mu_1$ with respect to Lebesgue measure using \cite{BL16} as in Remark \ref{pp_ext}, and use it to compute $\lambda_2(\mu_1)$ as

\begin{equation}
\begin{split}
\lambda_2(\mu_1)=\int_{\frac{a_1(1)}{b_1(1)}}^{\frac{a_1(2)}{b_1(2)}} (c_2(1)x-a_2(1)) h_1(x) dx + \int_{\frac{a_1(1)}{b_1(1)}}^{\frac{a_1(2)}{b_1(2)}} (c_2(2)x-a_2(2))h_2(x) dx    
\end{split}    
\end{equation}
Based on the sign of $\lambda_2(\mu_1)$ we have the following classification of the dynamics.

\begin{itemize}

\item If $\lambda_2(\mu_1)>0$ then Assumption \ref{a2-pdm} is satisfied, and if in addition the conditions of Remark \ref{pp_strong} and Remark \ref{pp_fp} are satisfied then Assumption \ref{a3-pdm} also holds. This allows us to apply Theorem \ref{thm5.1} and get that there is a unique invariant probability measure $\pi^*$ in $\R^{2,\circ}_+$ such that for any initial conditions $X_1(0), X_2(0), r(0))=(x_1, x_2, k)\in \R_+^{2,\circ}\times \CN$ the transition probability of $(X_1(t),X_2(t),r(t))$, converges to $\pi^*$ in total variation exponentially fast. 

\item If we have instead $\lambda_2(\mu_1)<0$ then Assumptions \ref{a4-pdm} and \ref{a5-pdm} are satisfied. Hence by Theorem \ref{thm5.2}, we see that for any initial conditions $X_1(0), X_2(0), r(0))=(x_1, x_2, k)\in \R_+^{2,\circ}\times \CN$ one has $X_2(t)\to 0$ and $X_1(t)\to \mu_1$ almost surely. 

\end{itemize}

\begin{rmk}
The paper of \cite{NN14} studies two-species predator-prey models under switching dynamics in greater generality. They are able to show that the predator goes extinct if $\lambda_2(\mu_1)<0$, without directly considering the accessiblity of the set $(0,\infty)\times \{0\}$. 

\end{rmk}

\subsection{Single Species Dynamics- stability versus explosion.} \label{s:1d_explosion} 
In the following example, we have an isolated species that does not experience any competition in the environment $k=1$ and grows (or decays) linearly while it can possibly experience intra-competition in the environment $k=2$. 
Suppose the dynamics is given by 
\begin{equation}
\frac{dX}{dt}(t)=X(t)f(X(t),r(t))
\end{equation}
with
$$f(x,k)=\begin{cases}
\displaystyle  a_1(1) &\text{if }       k=1\\[.1truein]
\displaystyle  a_1(2)-b_1(2)x  &\text{if } k=2.
\end{cases}$$
We define the switching rate matrix $Q(x)=(q_{ij}(x))_{2\times 2}$, where $q_{12}(x),q_{21}(x)>0$, $a_1(1),b_1(2)>0$, and $a_1(2)\neq 0$. 
Additionally, assume that there are constants $C,c >0$ such that for all $x>0$ we have
$$ 0<c\leq\min\{q_{12}(x),q_{21}(x)\},$$ 
and 
$$ \max\{q_{12}(x),q_{21}(x)\}\leq C<\infty.$$
\begin{rmk}\label{non_comp_1D}
The dynamics of this model is not restricted to any compact subset of $\R_+$. Since the process $r(t)\in \CN$ is an irreducible Markov chain then if $r(0)\neq 1$, then $r(T)=1$ for some $T>0$, and since the waiting time in the state $r(t)=1$ is exponentially distributed with mean at least $\frac{1}{C}$, and since $X(t)$ grows exponentially when $r(t)=1$, the process escapes any compact set with positive probability.     
\end{rmk}

Define the function 

$$F(x,k)=\begin{cases}
\displaystyle  \alpha(1+\sqrt{x}) &\text{if }       k=1\\[.1truein]
\displaystyle  \alpha\beta(1+\sqrt{x})  &\text{if } k=2
\end{cases} $$
for constants $\alpha>0$, and $\beta\in (0,1)$. If we have the condition $\displaystyle c-\frac{a_1(1)}{2}>0$, then we have for $k=1$ that

\begin{equation}
\begin{split}
\limsup_{x\to\infty}&\left[\frac{\Lom F(x,1)}{F(x,1)}+ \delta_0\left(\max_{i}\{|f_i(\bx,1)|\}\right)\right]\\
=&\limsup_{x\to\infty}\frac{\left(xa_1(1)\left(\frac{\alpha}{2\sqrt{x}}\right)+q_{12}(x)\alpha(\beta-1)(1+\sqrt{x})+ \delta_0a_1(1)\alpha(1+\sqrt{x})\right)}{\alpha(1+\sqrt{x})}\\
\leq&\limsup\frac{\left( \left(\frac{a_1(1)}{2}+\delta_0a_1(1)+c(\beta-1)\right)\alpha\sqrt{x}- c\alpha(1-\beta) + \delta_0a_1(1)\alpha \right)}{\alpha(1+\sqrt{x})}.   
\end{split}
\end{equation}
Since $\displaystyle c-\frac{a_1(1)}{2}>0$, for $\beta\in (0,1)$, and $\delta_0>0$,  small enough, we get that 
\begin{equation}
\begin{split}
\left[\frac{\Lom F(x,1)}{F(x,1)}+ \delta_0\left(\max_{i}\{|f_i(\bx,1)|\}\right)\right] <\left(\frac{a_1(1)}{2(1-\beta)}-c + 2\delta_0\left(\frac{a_1(1)}{1-\beta}\right)         \right)<0.
\end{split}  
\end{equation}
For $k=2$, we have in similar fashion

\begin{equation}
\begin{split}
&\limsup_{x\to\infty}\left[\frac{\Lom F(x,2)}{F(x,2)}+ \delta_0\left(\max_{i}\{|f_i(x,2)|\}\right)\right]\\
&\leq\limsup_{x\to\infty} \frac{\left(\frac{\alpha a_1(2)\beta}{2}\sqrt{x}-\frac{\alpha\beta b_1(2)}{2}x^{3/2}+C(1-\beta)\alpha(1+\sqrt{x}) + \delta_0(b_1(2)x-1)\alpha\beta(1+\sqrt{x})\right)}{\alpha\beta(1+\sqrt{x})}\\
&\leq\limsup_{x\to\infty}\frac{\left( \alpha\beta b_1(2)\left(\delta_0-\frac{1}{2}\right)x^{3/2}+ \alpha\beta\frac{ a_1(2)}{2}\sqrt{x}+C\alpha(1-\beta)(1+\sqrt{x}) +\delta_0b_1(2)\alpha\beta x -\delta_0\alpha\beta(1+\sqrt{x}) \right)}{\alpha\beta(1+\sqrt{x})}
\end{split}    
\end{equation}
As a result, if $\delta_0$ is small enough, it follows that
\begin{equation}
\limsup_{x\to\infty} \left[\frac{\Lom F(x,2)}{F(x,2)}+ \delta_0\left(\max_{i}\{|f_i(x,2)|\}\right)\right]<0. 
\end{equation}

As a result we can see that Assumption \ref{a1-pdm} is satisfied, since \eqref{e:sup2} holds for $\delta_0$ small enough. The function $F$ is defined in such a way that Assumption \ref{a:bound} is satisfied trivially. 

\begin{rmk}

We can also consider the fully linear system with 
$$f(x,k)=\begin{cases}
\displaystyle  a_1(1) &\text{if }       k=1\\[.1truein]
\displaystyle  a_1(2)  &\text{if } k=2.
\end{cases}$$
where $a_1(1)>0, a_1(2)<0$. This means that the species grows linearly in environment $k=1$ and dies out (also linearly) in environment $k=1$. One can show that Assumption \ref{a1-pdm} follows if we have $\displaystyle c-\frac{a_1(1)}{2}>0$.
\end{rmk}

\begin{rmk}\label{r:support}
For this model it is easy to see that there are only two possible scenarios for the accessibility of $\{0\}$, one when $a_1(2)>0$, and the other when $a_1(2)<0$. If $a_1(2)>0$, then $\{0\}$ is not accessible and $\Gamma = \left[\frac{a_1(2)}{b_1(2)},\infty\right)$. The accessibility of $\{0\}$ follows trivially if $a_1(2)<0$, since then for any initial condition $X_2(0)>0$ we have $X_2(t)\to 0$ as $t\to\infty$ in the fixed environment $k=2$ and in this case $\Gamma=[0,\infty)$.    
\end{rmk}

\begin{lm}\label{1d_density}
Assume the switching rates are constant. If $\lambda(\delta_0\times\nu)>0$, then there exists a unique invariant measure $\mu$ supported on $\Gamma^\circ\subset (0,\infty)$, such that $\mu$ is absolutely continuous, with densities given by
\begin{equation}
\begin{split}
\mu(dx,1)=& h_1(x) dx\\
\mu(dx,2)=& h_2(x) dx.
\end{split}
\end{equation}
Here $h_1(x),h_2(x)$ are smooth functions supported on $\Gamma$. Moreover:
\begin{enumerate}
    \item If $a_1(2)>0$ then 
\begin{equation}
\begin{split}
h_1(x)=& C_1 \frac{(b_1(2)x-a_1(2))^{\frac{q_{21}}{a_1(2)}}}{x^{1+ \frac{q_{12}}{a_1(1)}+\frac{q_{21}}{a_1(2)} }} \\
h_2(x)=&C_1 \frac{a_1(1)(b_1(2)x-a_1(2))^{\frac{q_{21}}{a_1(2)}-1}}{x^{1+ \frac{q_{12}}{a_1(1)}+\frac{q_{21}}{a_1(2)} }} 
\end{split}    
\end{equation}
for $x> \frac{a_1(2)}{b_1(2)}$ and zero otherwise. The constant $C_1$ is determined by
\[
\int_{\frac{a_1(2)}{b_1(2)}}^\infty (h_1(x)+ h_2(x))\,dx=1
\]
\item If $a_1(2)<0$ then
\begin{equation}
\begin{split}
h_1(x)=&C_1\frac{1}{x^{1+\frac{q_{21}}{|a_1(2)|}-\frac{q_{12}}{a_1(1)}}(b_1(2)x+a_1(2))^{\frac{q_{21}}{a_1(2)}}}\\
h_2(x)=&C_1\frac{a_1(1)}{x^{1+\frac{q_{21}}{|a_1(2)|}-\frac{q_{12}}{a_1(1)}}(b_1(2)x+a_1(2))^{1+\frac{q_{21}}{a_1(2)}}}
\end{split}    
\end{equation}
for $x>0$ and zero otherwise. The constant $C_1$ is determined by
\[
\int_{0}^\infty (h_1(x)+ h_2(x))\,dx=1.
\]

\end{enumerate}
\end{lm}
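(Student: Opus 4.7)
The plan is to derive the densities by solving the stationary Fokker--Planck system for the PDMP. Existence and uniqueness of $\mu$ on $\Gamma^\circ\times\CN$ follow from Theorem~\ref{thm5.1}: the hypothesis $\lambda(\delta_0\times\nu)>0$ supplies Assumption~\ref{a2-pdm}, while the strong bracket condition (Assumption~\ref{a3-pdm}) is automatic in this one-dimensional setting since $V_1(x)-V_2(x)=x(b_1(2)x-a_1(2))$ is nonzero, and hence spans $\R$, at any interior point away from the fixed point of $V_2$. The support $\Gamma$ is read off from Remark~\ref{r:support}. Absolute continuity with smooth densities $h_k$ on $\Gamma^\circ$ is provided by the one-dimensional regularity results of \cite{bakhtin2018smooth}.

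Given smoothness, the pair $(h_1,h_2)$ must satisfy the stationary forward equation $\mathcal{L}^*(h_1,h_2)=0$, that is
\begin{align*}
\tfrac{d}{dx}\bigl(x f(x,1)h_1\bigr) &= -q_{12}h_1+q_{21}h_2,\\
\tfrac{d}{dx}\bigl(x f(x,2)h_2\bigr) &= \phantom{-}q_{12}h_1-q_{21}h_2.
\end{align*}
Adding the two equations shows that $xf(x,1)h_1(x)+xf(x,2)h_2(x)$ is constant on $\Gamma^\circ$. Evaluating at the left endpoint of $\Gamma$ forces this constant to vanish: in case (1), both summands vanish at $x=a_1(2)/b_1(2)$, since $f(x,2)=0$ there and the density $h_1$ (as the computation below confirms) vanishes with positive exponent $q_{21}/a_1(2)$; in case (2), the prefactor $x$ kills both terms as $x\to 0^+$ once one uses an a priori boundedness of the densities near zero. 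Consequently
$$h_2(x)=\frac{a_1(1)\,h_1(x)}{b_1(2)x-a_1(2)},$$
expressing $h_2$ algebraically in terms of $h_1$.

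Substituting this relation into the first equation eliminates $h_2$ and produces the scalar linear ODE
$$\frac{h_1'(x)}{h_1(x)}=-\frac{1}{x}-\frac{q_{12}}{a_1(1)\,x}+\frac{q_{21}}{x(b_1(2)x-a_1(2))}.$$
Applying partial fractions to the last term as $-\bigl(q_{21}/a_1(2)\bigr)(1/x)+\bigl(q_{21}/a_1(2)\bigr)\cdot b_1(2)/(b_1(2)x-a_1(2))$ and integrating gives
$$h_1(x)=C_1\,\frac{(b_1(2)x-a_1(2))^{q_{21}/a_1(2)}}{x^{\,1+q_{12}/a_1(1)+q_{21}/a_1(2)}},$$
which is the claimed formula in case (1). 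In case (2), with $a_1(2)<0$, the same integration goes through with the exponents rearranged in terms of $|a_1(2)|$ and yields the stated densities on $(0,\infty)$. The formula for $h_2$ then follows at once from the algebraic relation above, and the normalization $C_1>0$ is fixed by $\int_\Gamma(h_1+h_2)\,dx=1$; convergence of this integral at the singular endpoint and at infinity is exactly what the positivity of $\lambda(\delta_0\times\nu)$ controls, via the sign of the relevant exponents.

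The main obstacle is the boundary analysis: (a) identifying $\Gamma$ correctly in each case via the control-theoretic picture of the two ODE flows in Remark~\ref{r:support}, and (b) justifying $C=0$ either through the explicit boundary vanishing of $xf(x,k)h_k(x)$ (case (1)) or via an a priori near-zero integrability estimate (case (2)). Once these two points are secured, the rest of the proof is a short explicit ODE integration, with the partial-fraction step being the only bookkeeping required.
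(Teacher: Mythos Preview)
Your approach is essentially the paper's: both derive the stationary Fokker--Planck system, observe that the total flux $xf(x,1)h_1+xf(x,2)h_2$ is constant, set that constant to zero, and integrate the resulting first-order linear ODEs. The only substantive difference is how the vanishing of the flux constant is handled. You try to argue it from boundary behavior, but your case~(1) argument is circular (you invoke the exponent $q_{21}/a_1(2)$ on $h_1$, which is the output of the computation, to justify an input to it), and your case~(2) argument appeals to an ``a priori boundedness of the densities near zero'' that is neither stated nor proved anywhere. The paper simply sets $C^*=0$ as an ansatz, checks that the resulting densities are positive, integrable, and satisfy the compatibility condition \eqref{int_cond}, and then lets uniqueness of the invariant measure (already secured by Theorem~\ref{thm5.1}) do the work of ruling out any other value of $C^*$. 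That route is cleaner and avoids the boundary subtleties you flag as the main obstacle. On a cosmetic level, you substitute the algebraic relation $h_2=a_1(1)h_1/(b_1(2)x-a_1(2))$ directly and solve a single ODE for $h_1$, whereas the paper decouples into two first-order ODEs with separate integrating factors and matches the constants afterward; your version is slightly more economical but the content is identical.
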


\begin{rmk}
Note that if $a_1(2)>0$ then it is always true that $\lambda(\delta_0\times\nu)>0$.    
\end{rmk}

\begin{proof}
The strong bracket condition holds trivially, so Assumption \ref{a3-pdm} holds, and we have by Theorem \ref{thm5.1} that there exists a unique invariant measure $\mu\in (0,\infty)\times \CN$  Then by Proposition 3.17 from \cite{BBMZ15} the support of the measure should be $\Gamma^\circ$ and by Theorem 1 from \cite{BH12} $\mu$ has a density with respect to Lebesgue measure. The result from \cite{bakhtin2015regularity} shows the smoothness of the density.  

Since $\mu$ is an invariant distribution, we know that for any $\phi\in C_c^{\infty}(\Gamma^{\circ})$ the following holds

$$\sum\int P_t \phi(x,k) \mu(dx,k) = \sum\int \phi(x,k)  \mu(dx,k)   $$
Hence $\displaystyle  \sum\int \Lom \phi(x,k) \mu(dx,k)=0$ for any $\phi\in C_c^{\infty}(\Gamma^{\circ})$. As a result, we have 

\begin{equation}
\begin{split}
0=&\int \left(xf(x,2)\frac{d\Psi(x,2)}{dx} + q_{21}\left(\Psi(x,1)-\Psi(x,2) \right)\right)h_2(x) dx \\
+&\int \left( xf(x,1)\frac{d\Psi(x,1)}{dx}+ q_{12}\left(\Psi(x,2)-\Psi(x,1)  \right)\right) h_1(x) dx.  
\end{split}    
\end{equation}
Next, taking $\Psi(x,2)=g(x)+c$ for $g\in C_c^{\infty}(0,\infty)$, $c\in \R$ and $\Psi(x,1)=0$, etc., we get the pair of equations
\begin{equation}\label{coupled_2}
\begin{split}
0=&\left((a_1(2)x-b_1(2)x^2)h_2(x)\right)' + q_{21}h_2(x)-q_{12}h_1(x)\\
0=&(a_1(1)xh_1(x))'+ q_{12}h_1(x)-q_{21}h_2(x)
\end{split}    
\end{equation}
along with the condition 
\begin{equation}\label{int_cond}
\int_{supp(\mu)}q_{12}h_1(x)-q_{21}h_2(x) dx =0.  
\end{equation}
Next, we differentiate two cases, according to the sign of $a_{1}(2).$

\textbf{Case I:} $a_1(2)>0$. 

From \eqref{coupled_2}, we can see that 
\[
\left(b_1(2)x^2-a_1(2)x)h_2(x)\right)' = (a_1(1)x h_1(x))'
\]
Then there exists $C^*\in \R$ such that

\begin{equation}\label{un_couple}
\left(b_1(2)x^2-a_1(2)x\right)h_2(x) -a_1(1)xh_1(x) =C^*   
\end{equation}

Combining \eqref{un_couple} and \eqref{coupled_2} we get a pair of linear equations. 

\begin{equation}
\begin{split}
&h_1'(x)+\left(\frac{q_{12}+a_1(1)}{a_1(1)x}-\frac{q_{21}}{b_1(2)x^2-a_1(2)x}\right)h_1(x)=\frac{q_{21}C^*}{a_1(1)x^2(b_1(2)x-a_1(2))}\\
&h_2'(x)+\left(\frac{2b_1(2)x-(q_{21}+a_1(2))}{b_1(2)x^2-a_1(2)x}+\frac{q_{12}}{a_1(1)x}\right)h_2(x)=\frac{q_{12}C^*}{a_1(1)x^2(b_1(2)x-a_1(2))}
\end{split}    
\end{equation}
To get the solutions, we have the following integrating factors

\begin{equation}\label{int_factor}
\begin{split}
I_1(x) =& x^{\frac{q_{12}+a_1(1)}{a_1(1)} + \frac{q_{21}}{a_1(2)}} \cdot (b_1(2)x - a_1(2))^{-\frac{q_{21}}{a_1(2)}}\\
I_2(x) =& x^{\frac{q_{12}}{a_1(1)} + \frac{q_{21} + a_1(2)}{a_1(2)}}
\cdot (b_1(2)x - a_1(2))^{1-\frac{q_{21}}{a_1(2)}}.
\end{split}
\end{equation}
The general solutions will be given by
\begin{equation}\label{gen_sol}
\begin{split}
h_1(x)=&\frac{1}{I_1(x)}\left(\int \frac{q_{21}C^*I_1(x)}{a_1(1)x^2(b_1(2)x-a_1(2))} dx   + C_1     \right)\\
h_2(x)=&\frac{1}{I_2(x)}\left(\int \frac{q_{12}C^*I_2(x)}{a_1(1)x^2(b_1(2)x-a_1(2))} dx   + C_2     \right).
\end{split}    
\end{equation}
Now taking $C^*=0$, we can see that, 
\[
h_1(x)= \frac{C_1}{x^{\gamma}(b_1(2)x-a_1(2))^{-\frac{q_{21}}{a_1(2)}}}
\]
\[
h_2(x)=\frac{C_2}{x^{\gamma}(b_1(2)x-a_1(2))^{1-\frac{q_{21}}{a_1(2)}}}
\]
where $\gamma =1 + \frac{q_{12}}{a_1(1)}+\frac{q_{21}}{a_1(2)}=1 + \frac{\lambda(\delta_0\times\nu)(q_{12}+q_{21})}{a_1(1)a_1(2)} $. Since $\lambda(\delta_0\times\nu)>0$, we know that the solutions are integrable functions. 

Now observe that for $C_2=C_1a_1(1)$ equation \eqref{un_couple} is satisfied. Now, we fix $C_1>0$, such that we have 
\[
\int_{\frac{a_1(2)}{b_1(2)}}^{\infty}h_1(x) dx  + \int_{\frac{a_1(2)}{b_1(2)}}^{\infty} h_2(x) dx =1.
\]
To verify this is a valid solution, we check that \eqref{int_cond} holds. 
The integrals become
\begin{equation}
\begin{split}
\int_{\frac{a_1(2)}{b_1(2)}}^{\infty}h_1(x) dx=&C_1 \frac{(b_1(2))^{\beta}}{(a_1(2))^{\beta-\alpha}}B(\alpha+1, \beta-\alpha)\\
\int_{\frac{a_1(2)}{b_1(2)}}^{\infty} h_2(x) dx=&C_1\frac{(b_1(2))^{\beta}a_1(1)}{(a_1(2))^{\beta-\alpha+1}}B(\alpha,\beta-\alpha+1)
\end{split}    
\end{equation}
showing that \eqref{int_cond} is verified. The density functions are therefore given by
\begin{equation}
\begin{split}
h_1(x)=& C_1 \frac{(b_1(2)x-a_1(2))^{\frac{q_{21}}{a_1(2)}}}{x^{1+ \frac{q_{12}}{a_1(1)}+\frac{q_{21}}{a_1(2)} }} \\
h_2(x)=&C_1 \frac{a_1(1)(b_1(2)x-a_1(2))^{\frac{q_{21}}{a_1(2)}-1}}{x^{1+ \frac{q_{12}}{a_1(1)}+\frac{q_{21}}{a_1(2)} }}. 
\end{split}    
\end{equation}

\textbf{Case II:} $a_1(2)<0$

For this case we assume that
\begin{equation}\label{persis_cond}
\lambda(\delta_0\times\nu)(q_{12}+q_{21})=a_1(1)q_{21}-|a_1(2)|q_{12}>0.     
\end{equation}

The differential equations we get in this case, following the same ideas as in Case 1, are given by
\begin{equation}
\begin{split}
&h_1'(x)+\left(\frac{q_{12}+a_1(1)}{a_1(1)x}-\frac{q_{21}}{b_1(2)x^2+|a_1(2)|x}\right)h_1(x)=\frac{q_{21}C^*}{a_1(1)x^2(b_1(2)x+|a_1(2)|)}\\
&h_2'(x)+\left(\frac{2b_1(2)x-(q_{21}-|a_1(2)|)}{b_1(2)x^2+|a_1(2)|x}+\frac{q_{12}}{a_1(1)x}\right)h_2(x)=\frac{q_{12}C^*}{a_1(1)x^2(b_1(2)x+|a_1(2))|}.
\end{split}    
\end{equation}

To get the solutions, we have the following integrating factors

\begin{equation}\label{int_factor2}
\begin{split}
I_1(x) =& x^{\frac{q_{12}+a_1(1)}{a_1(1)} - \frac{q_{21}}{|a_1(2)|}} \cdot (b_1(2)x +  |a_1(2)|)^{\frac{q_{21}}{|a_1(2)|}}\\
I_2(x) =& x^{\frac{q_{12}}{|a_1(1)|} + \frac{-q_{21} + |a_1(2)|}{|a_1(2)|}}
\cdot (b_1(2)x + |a_1(2)|)^{1+\frac{q_{21}}{|a_1(2)|}}.
\end{split}
\end{equation}
The general solutions become 
\begin{equation}\label{gen_sol2}
\begin{split}
h_1(x)=&\frac{1}{I_1(x)}\left(\int \frac{q_{21}C^*I_1(x)}{a_1(1)x^2(b_1(2)x+|a_1(2)|)} dx   + C_1     \right)\\
h_2(x)=&\frac{1}{I_2(x)}\left(\int \frac{q_{12}C^*I_2(x)}{a_1(1)x^2(b_1(2)x+|a_1(2)|)} dx   + C_2     \right).
\end{split}    
\end{equation}
Taking $C^*=0$, we have that 
\[
h_1(x) = \frac{C_1}{x^{\tau}(b_1(2)x+|a_1(2)|)^{\frac{q_{21}}{|a_1(2)|}}}
\]
\[
h_2(x)= \frac{C_2}{x^{\tau}(b_1(2)x+|a_1(2)|)^{1+\frac{q_{21}}{|a_1(2)|}}   } 
\]
where $\tau= 1-\left(\frac{q_{21}}{|a_1(2)|}-\frac{q{12}}{a_1(1)} \right)=1-\frac{\lambda(\delta_0)(q_{12}+q_{21})}{a_1(1)|a_1(2)|}$.
We see that \eqref{un_couple} holds if we take $C_2=a_1(1)C_1$. 
Also since we have $\lambda(\delta_0\times\nu)>0$, we get that $\displaystyle \frac{\lambda(\delta_0\times\nu)(q_{12}+q_{21})}{a_1(1)q_{12}}=q_{21}-|a_1(2)|>0$, which implies $\frac{q_{21}}{|a_1(2)|}>1$. As a result, due to \eqref{persis_cond}, we get that $h_1,h_2$ are integrable on $(0,\infty)$. 

We find the constant $C_1>0$ by setting
\[
\int_0^{\infty} h_1(x) + \int_0^{\infty}h_2(x) dx =1.
\]
Now since the functions $h_1,h_2$ have been defined precisely, we need to check \eqref{int_cond} to verify that they are indeed the desired density functions.

\begin{equation}
\begin{split}
\int_0^{\infty} h_1(x)dx=&C_1\frac{(b_1(2))^{\tau-1}}{(|a_1(2)|)^{\frac{q_{21}}{|a_1(2)|}+\tau-1}}B\left(\frac{q_{21}}{|a_1(2)|}-\frac{q_{12}}{a_1(1)}, \frac{q_{12}}{a_1(1)}\right)\\
\int_0^{\infty}h_2(x) dx=&C_1\frac{(b_1(2))^{\tau-1}a_1(1)}{(|a_1(2)|)^{\frac{q_{21}}{|a_1(2)|}+\tau}}B\left(\frac{q_{21}}{|a_1(2)|}-\frac{q_{12}}{a_1(1)},\frac{q_{12}}{a_1(1)}+1   \right).  
\end{split}    
\end{equation}
Using the property of Beta integrals, we can see that \eqref{int_cond} is indeed verified. The density functions are give  by

\begin{equation}
\begin{split}
h_1(x)=&C_1\frac{1}{x^{1+\frac{q_{21}}{|a_1(2)|}-\frac{q_{12}}{a_1(1)}}(b_1(2)x+|a_1(2)|)^{\frac{q_{21}}{|a_1(2)|}}}, x>0\\
h_2(x)=&C_1\frac{a_1(1)}{x^{1+\frac{q_{21}}{|a_1(2)|}-\frac{q_{12}}{a_1(1)}}(b_1(2)x+|a_1(2)|)^{1+\frac{q_{21}}{|a_1(2)|}}}, x>0.
\end{split}    
\end{equation}
\end{proof}

Assume the switching rates are independent of the position $\bx$ and $r(t)$ is irreducible. Let $\nu=(\nu_1,\nu_2)$ be the unique invariant measure of $r(t)$ on $\CN$. This can be computed explicitly as 

$$ (\nu_1,\nu_2)=\left(\frac{q_{21}}{q_{12}+q_{21}},\frac{q_{12}}{q_{12}+q_{21}}  \right).$$ 

As a result we can compute the invasion rate into $\delta_0\times\nu$ as
$$\lambda(\delta_0\times\nu)=\nu_1a_1(1)+\nu_2a_1(2)= \frac{q_{21}}{q_{12}+q_{21}}a_1(1) + \frac{q_{12}}{q_{12}+q_{21}} a_1(2).$$ 

\begin{figure}
    \centering
    \includegraphics[width=0.75\linewidth]{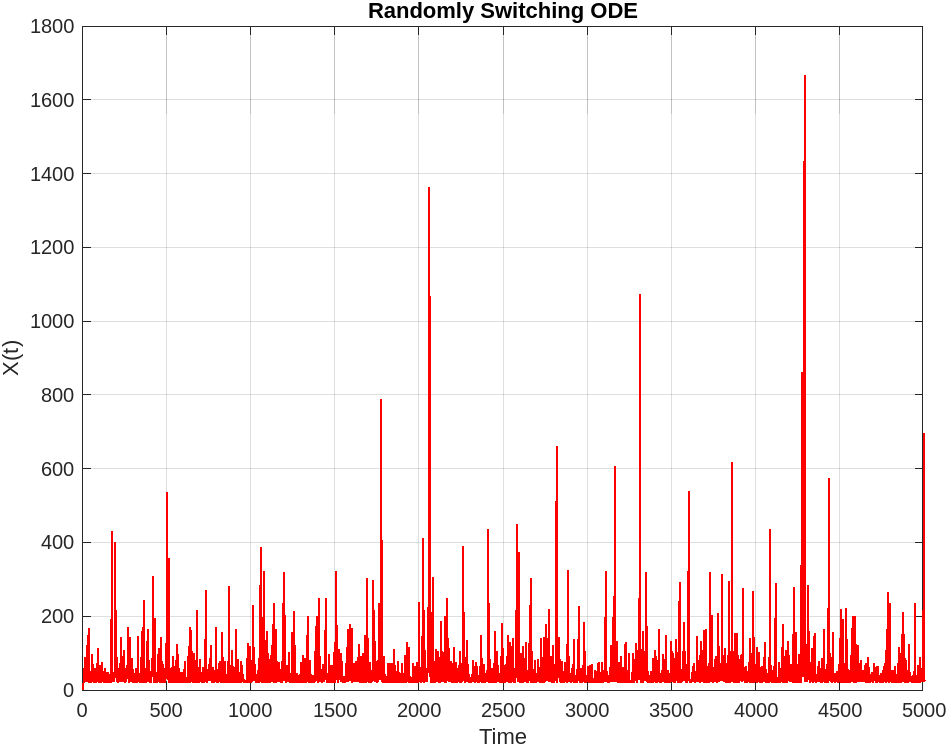}
    \caption{This is a sample path of the process $(X(t),r(t))$ from \ref{s:1d_explosion} , with the coefficients, $a_1(1)=0.5$, $a_1(2)=1$, and $b_1(2)=0.05$. The switching rates $q_{12}=q_{21}=2$, and $X(0)=1$. In this setting $\lambda(\delta_0\times\nu)>0$.}
    \label{1D-a_12pos}
\end{figure}

\begin{figure}
    \centering
    \begin{subfigure}{0.47\textwidth}
    \centering
    \includegraphics[width=\textwidth]{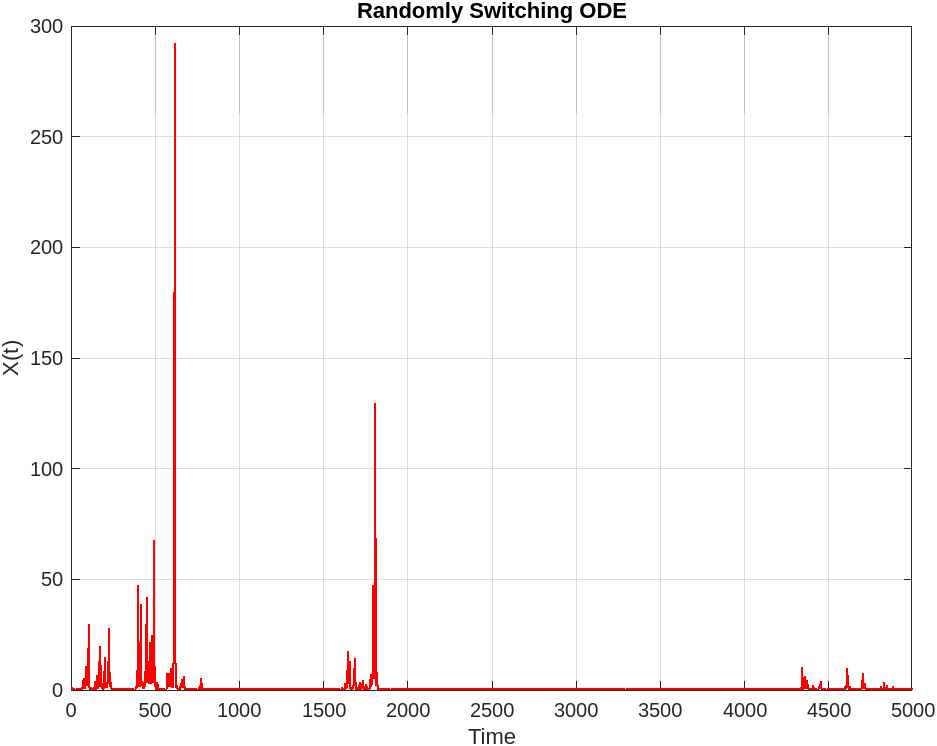}
\caption{$\lambda(\delta_0\times\nu)<0$}  
    \end{subfigure}
      \begin{subfigure}{0.47\textwidth}
    \centering
    \includegraphics[width=\textwidth]{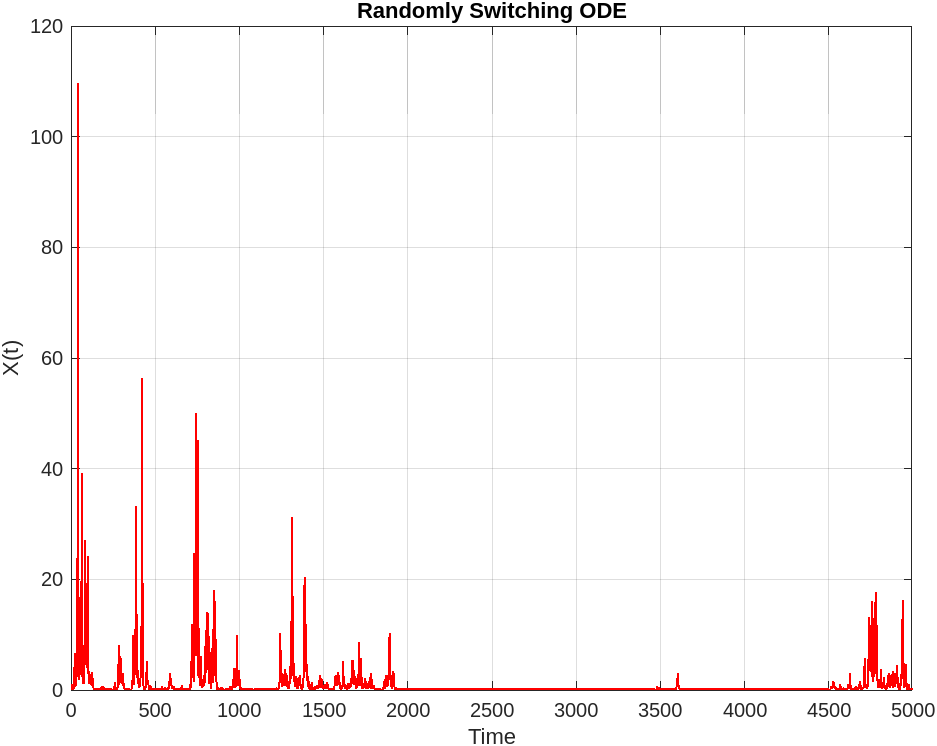}
\caption{$\lambda(\delta_0\times\nu)>0$}  
    \end{subfigure}        
    \caption{The figures shown here have sample paths for the process $(X(t),r(t))$ governed by \ref{s:1d_explosion} when $a_1(2)<0$. On the left we have the sample path corresponding to coefficients $a_1(1)=0.5$, $a_1(2)=-0.505$ and $b_1(2)=0.05$, making $\lambda(\delta_0\times\nu)<0$. On the right we see the sample path for the coefficients $a_1(1)=0.5$, $a_1(2)=0.45$ and $b_1(2)=0.05$, making $\lambda(\delta_0\times\nu)>0$. For both plots we have taken the switching rates $q_{12}=q_{21}=2$, and $X(0)=1$. }
    \label{1D-a_12neg}
\end{figure}

We get the following classification of the long-term dynamics
\begin{itemize}
    \item If $a_1(2)>0$, then $\lambda(\delta_0\times\nu)>0$, and therefore Assumption \ref{a2-pdm} holds. Since Assumption \ref{a3-pdm} is satisfied trivially, it follows from Theorem \ref{thm5.1}, that we have persistence, and there exists a unique invariant distribution $\mu\in \R_+\times\CN$ such that the transition probability measure of the process $(X(t),r(t))$ converges to $\mu$ in total variation. 
    
    \item If $a_1(2)<0$, and $q_{21}>-\frac{q_{12}a_1(2)}{a_1(1)}$, then $\lambda(\delta_0\times\nu)>0$, and we get the same conclusion as in the $a_1(1)>0$ case. This is interesting because it implies that switching between explosion and decay of $X(t)$, can lead to stability. 
    \item If $a_1(2)<0$, and $q_{21}<-\frac{q_{12}a_1(2)}{a_1(1)}$, then $\lambda(\delta_0\times\nu)<0$. Assumptions \ref{a4-pdm} and \ref{a5-pdm} are satisfied and by Theorem \ref{thm5.3} for any $x>0, k\in\CN$ we have extinction, so that
    $$\PP_{x,k}\left(\lim_{t\to\infty}\frac{\ln(X(t))}{t}=\lambda(\delta_0\times\nu)<0\right)=1.$$

\end{itemize}

\subsection{2 Species Competition Dynamics-stability vs explosion.}  \label{s:2d_explosion}

Given that $r(t)\in\CN=\{1,2\}$, and $i\in\{1,2\}$ we consider the following 2-species model 
\begin{equation}\label{e:2d_expl}
\begin{cases}
\frac{dX_1}{dt}(t)=X_1(t)f_1(\BX(t),r(t)) \\
\frac{dX_2}{dt}(t)=X_2(t)f_2(\BX(t),r(t)) 
\end{cases}
\end{equation}
where
$$f_1(x_1,x_2,k)=\begin{cases}
\displaystyle  a_1(1) &\text{if }       k=1\\[.1truein]
\displaystyle  (a_1(2)-b_1(2)x_1-c_1(2)x_2)  &\text{if } k=2,
\end{cases} $$
and
$$f_2(x_1,x_2,k)=\begin{cases}
\displaystyle  a_2(1) &\text{if }       k=1\\[.1truein]
\displaystyle  (a_2(2)-b_2(2)x_2-c_2(2)x_1)  &\text{if } k=2.
\end{cases} $$
We define $Q(x)=(q_{ij}(x))_{2\times 2}$, where $q_{12}(x),q_{21}(x)>0$. Furthermore, we assume that
$$ c\leq\min\{q_{12}(x),q_{21}(x)\}$$ 
$$ \max\{q_{12}(x),q_{21}(x)\}\leq C $$

\begin{rmk}
Using the same idea as in Remark \ref{non_comp_1D}, we can see that there exists no compact subset $K$ of $\R^2_+$, such that if $(X_1(0),X_2(0),r(t))\in K$, then $(X_1(t),X_2(t),r(t))\in K$ for all $t\geq 0$ almost surely. 
\end{rmk}

Define the function
$$F(\bx,k)=\begin{cases}
\displaystyle \alpha(1+\sqrt{x_1}+\sqrt{x_2})  &\text{if }       k=1\\[.1truein]
\displaystyle \alpha\beta(1+\sqrt{x_1}+\sqrt{x_2})   &\text{if } k=2
\end{cases} $$
Where $\alpha>0$, and $\beta\in (0,1)$.

If we have  $\displaystyle c-\frac{\max\{a_1(1),a_2(1)\}}{2}>0$ then we have for $k=1$
\begin{equation}
\begin{split}
\left[\frac{\Lom F(\bx,1)}{F(\bx,1)}+ \delta_0\left(\max_{i}\{|f_i(\bx,1)|\}\right)\right] &= \frac{a_1(1)\alpha\sqrt{x_1}+ a_2(1)\alpha\sqrt{x_2}-2c\alpha(1-\beta)(1+\sqrt{x_1}+\sqrt{x_2})}{\alpha(1+\sqrt{x_1}+\sqrt{x_2})}\\
&+\frac{2\delta_0\max\{a_1(1),a_2(1)\}\alpha(1+\sqrt{x_1}+\sqrt{x_2})}{\alpha(1+\sqrt{x_1}+\sqrt{x_2})}.
\end{split}
\end{equation}
Now if we take $\displaystyle c^*=\min\biggl\{2c-\frac{a_1(1)}{1-\beta},2c-\frac{a_2(1)}{1-\beta}\biggr\}$, then for $\delta_0>0$ and $\beta\in(0,1)$ small enough we have $c^*-\delta_0\max\{a_1(1),a_1(2)\}>0$ and therefore

$$\limsup_{\|x\|\to\infty}\left[\frac{\Lom F(\bx,1)}{F(\bx,1)}+ \delta_0\left(\max_{i}\{|f_i(\bx,1)|\}\right)\right] <0.
$$
This shows that if $\displaystyle c-\frac{\max\{a_1(1),a_2(1)\}}{2}>0$ then Assumption \ref{a1-pdm} is satisfied by the model. From the definition of $F$ it follows immediately that Assumption \ref{a:bound} is also satisfied. 

\begin{rmk}\label{2Dexp_strong}
Define $V(1)=x_1f_1(X(t),1)\partial_{x_1}+ x_2f_2(X(t),2)\partial_{x_2}$, and $V(2)=x_1f_1(X(t),1)\partial_{x_1}+ x_2f_2(X(t),2)\partial_{x_2}$. Then 
\[
V(1)-V(2)=\begin{bmatrix}
(a_1(1)-a_1(2))x_1+b_1(2)x_1^2+c_1(2)x_1x_2 \\
(a_2(1)-a_2(2))x_2+b_2(2)x_2^2+c_2(2)x_1x_2 
\end{bmatrix}
\]

\[
[V(1),V(1)-V(2)]= \begin{bmatrix}
a_1(1)b_1(2)x_1^2 + a_2(1)c_1(2)x_1x_2 \\
a_2(1)b_2(2)x_2^2 + a_1(1)c_2(2)x_1x_2
\end{bmatrix}
\]
Taking  $\displaystyle M= \begin{bmatrix}
V(1)-V(2)   &   [V(1),V(1)-V(2)]\\    
\end{bmatrix}$
we have the following expression for the determinant
\[
det(M)= Bx_1^2x_2^2 + A_1x_1x_2^2 +A_2x_1^2x_2,
\]
where $A_1,A_2,B$ are given by 
\begin{equation}
\begin{split}
A_1=& (a_1(1) - a_1(2))a_2(1)b_2(2) - a_2(1)c_1(2)(a_2(1) - a_2(2))\\
A_2=& (a_1(1) - a_1(2))a_1(1)c_2(2) - a_1(1)b_1(2)(a_2(1) - a_2(2))\\
B  =& (a_2(1) - a_1(1))b_1(2)b_2(2) + (a_1(1) - a_2(1))c_1(2)c_2(2).
\end{split}
\end{equation}
Then, unless $A_1=A_2=B=0$, the strong bracket condition is satisfied everywhere except the set of points where $ Bx_1^2x_2^2 + A_1x_1x_2^2 +A_2x_1^2x_2=0$, which either does not lie in $\R^{2,\circ}_+$, or could be a horizontal line, a vertical line, or a segment of a hyperbola. 

\end{rmk}

\begin{rmk}\label{exp_access}
We assume that for the unique equilibrium point $\bx'=(x_1',x_2')$ of the vector field $V_2$, we have $b_1(2)b_2(2)-c_1(2)c_2(2)>0$, which makes $\bx'$ is globally attracting using the isocline analysis given in \cite{HofSig98}. For any $\eps>0$ let $\bx^{\eps}$ be the unique equilibrium point of the vector field $V_{\eps}=\eps V_1+ (1-\eps)V_2$. Since $b_1(2)b_2(2)-c_1(2)c_2(2)>0$, then $\bx^{\eps}$ is also globally attracting. Let $S$ be the line segment joining $\bx'$ and $\bx^{\eps_0}$ for some $\eps\in (0,1/2)$. If $b_2(2)a_1(1)-c_1(2)a_2(1)\neq 0$, and $b_1(2)a_2(1)-c_2(2)a_1(1)\neq 0$, then there exists at least one point on the line $S$ that does not intersect the set $\{(x_1,x_2)| Bx_1^2x_2^2+A_1x_1x_2^2+A_2x_1^2x_2=0\}$. As a result there is point $\bx_0\in S$ where the strong bracket condition holds, and this point is accessible from any other point from $\R_+^{2,\circ}$. 
\end{rmk}

\begin{rmk}\label{fig3_coefs}
The switching rates chosen for both (A) and (B) in Figure \ref{2D_pdmp} are $q_{12}=q_{21}=2$. The coefficients for (A) are 
$$f_1(X_1,X_2,k)=\begin{cases}
 X_1   &\text{ if }  k=1\\
  X_1 - 0.75X_1^2 - 0.05X_1X_2 &\text{ if } k=2
\end{cases} $$
$$
f_2(X_1,X_2,k)=\begin{cases}
 X_2/2   &\text{ if }  k=1\\
  4X_2 - 0.25X_2^2 - 0.025X_1X_2  &\text{ if } k=2.  
\end{cases}
$$
With the choice of coefficients in (A), Assumption \ref{a3-pdm} is satisfied.The conditions of Remarks \ref{2Dexp_strong} and \ref{exp_access} hold, implying that Assumption \ref{a3-pdm} is satisfied. We can therefore apply Theorem \ref{thm5.1} to get the persistence of species $X_1$ and $X_2$.\\
The coefficients chosen for (B) are 
$$f_1(X_1,X_2,k)=\begin{cases}
 (0.95)X_1   &\text{ if }  k=1\\
  (80/9)X_1 - 8X_1^2 - 7.5X_1X_2 &\text{ if } k=2
\end{cases} $$
$$
f_2(X_1,X_2,k)=\begin{cases}
 X_2   &\text{ if }  k=1\\
  10X_2 - 8X_2^2 - 10X_1X_2  &\text{ if } k=2.
\end{cases}
$$
With this choice of coefficients in (B), the Assumptions \ref{a4-pdm}, \ref{a5-pdm} are satisfied, and the set $(0,\infty)\times \{0\}$ is accessible, so by Theorem \ref{thm5.3}, the species $X_2$ goes extinct.    
\end{rmk}

\begin{figure}
    \centering
    \begin{subfigure}{0.47\textwidth}
    \centering
    \includegraphics[width=\textwidth]{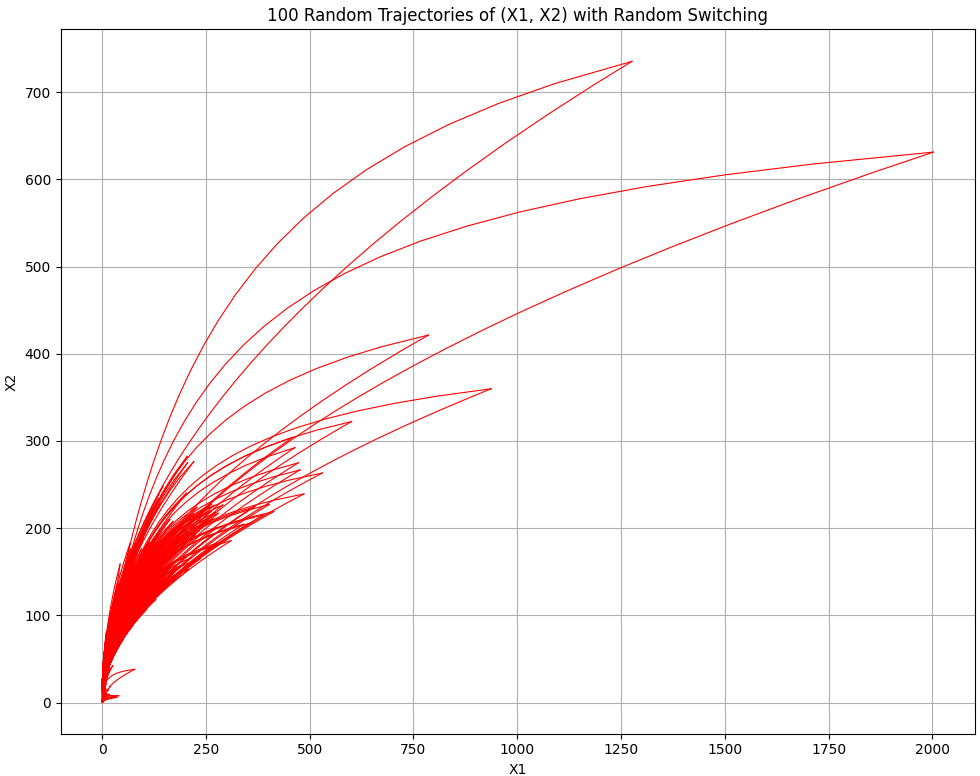}
\caption{}  
    \end{subfigure}
      \begin{subfigure}{0.47\textwidth}
    \centering
    \includegraphics[width=\textwidth]{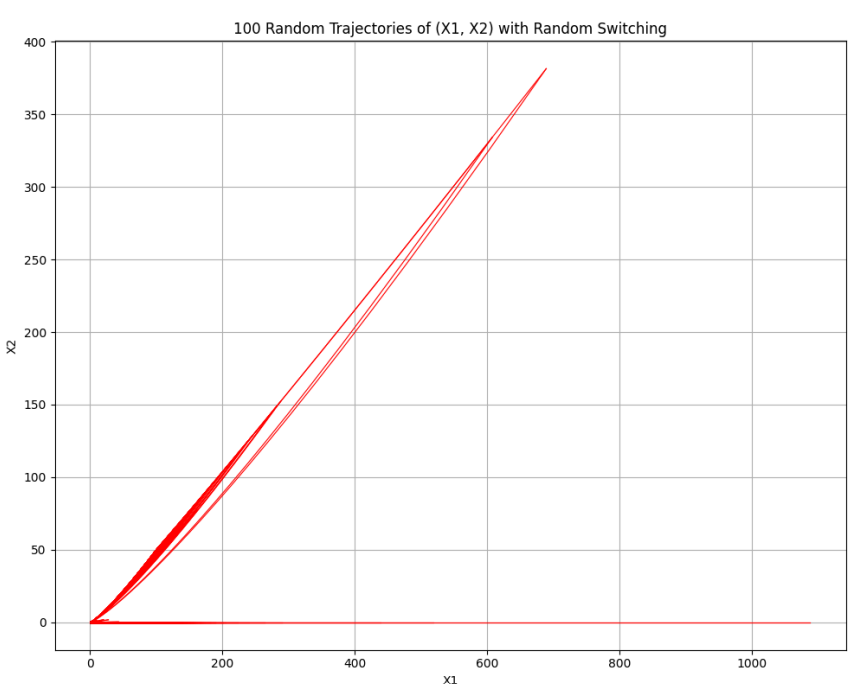}
\caption{}  
    \end{subfigure}        
    \caption{The figures show 100 sample paths of the process from \eqref{e:2d_expl} with $t_{max}=5000$. On the left, the initial condition is $(X_1(0),X_2(0))=(1,1)$, and we see that the sample paths tend to concentrate away from the boundary, showing the persistence of species. On the right, the initial condition chosen is $(50,100)$, the sample paths start away from the boundary, but they concentrate near the $x$ axis, showing extinction of species $X_2$. The coefficients and switching rates chosen are given in Remark \ref{fig3_coefs}.}
    \label{2D_pdmp}
\end{figure}

Let $\nu=(\nu_1,\nu_2)$ be the unique invariant measure on $\CN$, then we know  $\displaystyle (\nu_1,\nu_2)=\left(\frac{q_{21}}{q_{12}+q_{21}},\frac{q_{12}}{q_{12}+q_{21}}  \right)$.
 Then for $\mu_0=\delta_{(0,0)}\times\nu$, we have $$\lambda_1(\mu_0)=\frac{q_{21}a_1(1)}{q_{21}+q_{12}}+\frac{q_{12}a_1(2)}{q_{21}+q_{12}},$$
 and
 $$\lambda_2(\mu_0)=\frac{q_{21}a_2(1)}{q_{12}+q_{21}}+\frac{q_{12}a_2(2)}{q_{12}+q_{21}}.$$
If we restrict our model \ref{e:2d_expl} to $(0,\infty)\times\{0\}\times \CN$ and $\lambda_1(\mu_0)>0$, then by Theorem \ref{thm5.1} there is a unique invariant measure $\mu_1$ supported in $(0,\infty)\times\{0\}\times \CN$, and its density can be explicitly found from Lemma \ref{1d_density}. If we have $\lambda_1(\mu_0)<0$, then there is no invariant measure supported in $(0,\infty)\times\{0\}\times \CN$. Similarly, if $\lambda_2(\mu_0)>0$, we have a unique invariant measure $\mu_2$, supported on $\{0\}\times(0,\infty)\times \CN$.

Using Lemma \ref{1d_density} we can compute $\lambda_2(\mu_1),\lambda_1(\mu_2)$ in terms of the parameters $a_i(k),b_i(k),q_{12},q_{21}$ . The invasion rates are 
\[
\lambda_2(\mu_1) =\int_{supp(\mu_1)} a_2(1) h_{11}(x)dx + \int_{supp(\mu_1)} (a_2(2)-c_2(2)x) h_{12}(x) dx,
\]
and
\[
\lambda_1(\mu_2) =\int_{supp(\mu_2)} a_1(1) h_{21}(x)dx + \int_{supp(\mu_2)} (a_1(2)-c_1(2)x) h_{22}(x) dx.
\]
Where $h_{ik}(x)$ for $i,k\in \{1,2\}$, are smooth functions such that $h_{ik}(x) dx =\mu_i(dx,k)$.

Next, we classify the dynamics.
\begin{itemize}
 \item If either of the following conditions hold
 \begin{enumerate}
     \item $\lambda_1(\mu_0)>0$, $\lambda_2(\mu_0)>0$, $\lambda_2(\mu_1)>0$, $\lambda_1(\mu_2)>0$.
     \item $\lambda_1(\mu_0)>0$, $\lambda_2(\mu_0)<0$, $\lambda_2(\mu_1)>0$
     \item $\lambda_1(\mu_0)<0$, $\lambda_2(\mu_0)>0$, $\lambda_1(\mu_2)>0$
 \end{enumerate}
      then Assumption \ref{a2-pdm} is satisfied, and if the conditions in Remark \ref{exp_access} hold, then Assumption \ref{a3-pdm} is satisfied. We can then use Theorem \ref{thm5.1} to conclude that species $X_1,X_2$ persist and that there exists a unique invariant measure $\pi^*$ supported on $\R_+^{2,\circ}\times\CN$. Moreover, the transition probabilities of the process converge to $\pi^*$ in total variation.
\item If $\lambda_1(\mu_0),\lambda_2(\mu_0)>0$ and $\lambda_2(\mu_1)<0,\lambda_1(\mu_2)<0$, then Assumption \ref{a4-pdm} is satisfied, hence by Theorem \ref{t:exxx_i} since both $\mu_1$ and $\mu_2$ are attractors we get that if we start close enough to either of them with high probability the process converges to that attractor. If in addition $\partial \R_+^2$ is accessible, since Assumption \ref{a5-pdm} is satisfied, we can use Theorem \ref{thm5.3} to conclude that for all $(\bx,k)\in\R^{2,\circ}_+\times\CN$, 
    \[
    \PP_{\bx,k}\{X_2(t)\to \mu_2, X_1(t)\to 0\}+ \PP_{\bx,k}\{X_1(t)\to \mu_1, X_2(t)\to 0\}=1.
    \]

\item If $\lambda_1(\mu_0)>0,\lambda_2(\mu_0)<0$, and $\lambda_2(\mu_1)<0$, then Assumptions \ref{a4-pdm} and  \ref{a5-pdm} are satisfied, so by Theorem \ref{t:exxx_i}, since $\mu_1$ is an attractor, $X_2\to 0$ with high probability if $(X_1(0),X_2(0))$ is sufficiently close to the support of $\mu_1$. If additionally we also have that $(0,\infty)\times \{0\}$ is accessible, then by Theorem \ref{thm5.3} we get 
\[
\PP_{\bx,k}\{X_1\to \mu_1, X_2\to 0\} =1. 
\]

    \item If $\max\{\lambda_1(\mu_0),\lambda_2(\mu_0)\}<0$, then the set $\{(0,0)\}$ is trivially accessible and $\M=\{\delta_{(0,0)}\}$, so by Theorem \ref{thm5.3} we have that for all $(\bx,k)\in\R^{2,\circ}_+\times\CN$, 
    $$ \PP_{\bx,k}\{(X_1(t),X_2(t))\to (0,0)\}=1.$$ 
   
\end{itemize}

\subsection{3d competitive Lotka Volterra models}\label{3D_LV} Consider the piecewise deterministic system 
\begin{equation}\label{e:3d_comp}
\begin{cases}
dX_1(t)=X_1(t)[a_1(r(t))-b_1(r(t))X_1(t)-c_1(r(t))X_2(t)-d_1(r(t))X_3(t)]dt\\
dX_2(t)=X_2(t)[a_2(r(t))-b_2(r(t))X_2(t)-c_2(r(t))X_3(t)-d_2(r(t))X_1(t)]dt\\
dX_3(t)=X_3(t)[a_3(r(t))-b_3(r(t))X_3(t)-c_3(r(t))X_1(t)-d_3(r(t))X_2(t)]dt
\end{cases}
\end{equation}
where $r(t)\in\CN=\{1,2\}$. Assuming $a_i(k),b_i(k),c_i(k),d_i(k)>0$ for $i\in \{1,2,3\}$ and $k\in\CN$.  $Q(x)=(q_{ij}(x))_{2\times2}$ is an irreducible matrix, of the switching intensities, where $q_{12}(x),q_{21}(x)>0$ for all $x\in \R^3_+. $. \\
Taking $F(x)=\alpha(k)(1+x_1+x_2+x_3)$, for $\alpha(k)>0$, $k\in \CN$, assumptions \ref{a1-pdm}, and \ref{a:bound} are satisfied.\\
Since the switching process $r(t)$ is irreducible and aperiodic, there exists a unique invariant measure $\displaystyle\nu=(\nu_1,\nu_2)=\left(\frac{q_{21}}{q_{12}+q_{21}},\frac{q_{12}}{q_{21}+q_{12}}\right)$ on $\CN$. Hence the process has an invariant measure $\displaystyle \mu_0=\delta_{(0,0,0)}\times\CN$.\\
We can see that 
\[
\lambda_i(\mu_0)=a_i(1)\left(\frac{q_{21}}{q_{12}+q_{21}}   \right) + a_i(2) \left(\frac{q_{12}}{q_{12}+q_{21}}\right)>0. 
\]
Since $\lambda_i(\mu_0)>0$, then if we restrict our model on $\R_{i+}$ for $i\in \{1,2,3\}$, then by Theorem \ref{thm5.1} (or \cite{BL16}), there exist invariant measures $\mu_i$ supported on $\R^{\circ}_{i+}\times \CN$, for $i\in \{1,2,3\}$. Then the invasion rates for $\mu_1$ are given by 
    \[
    \lambda_2(\mu_1)=\sum_{k=1}^2 \int \left(a_2(k)-d_2(k)x \right) \mu_1(dx,k)
    \]
    \[
    \lambda_3(\mu_1)=\sum_{k=1}^2\int \left( a_3(k)-c_3(k)x\right) \mu_1(dx,k).    
    \]
    Similarly we can define invasion rates for $\mu_2$ and $\mu_3$ respectively. The invasion rates can be computed explicitly \citep{BL16}.   
   Now, if we restrict our model to $\R^2_{12,+}$, then if  $\lambda_2(\mu_1)>0,\lambda_1(\mu_2)>0$, and either of the following conditions hold
   \begin{enumerate}
       \item $\displaystyle \frac{b_1(1)}{b_2(1)}\neq \frac{b_1(2)}{b_2(2)}$
       \item $\displaystyle \frac{c_1(1)}{c_2(1)}\neq \frac{c_1(2)}{c_2(2)}$
   \end{enumerate}
then by Theorem \ref{thm5.1}, or by \cite{BL16}, there is a unique invariant measure $\mu_{12}$  supported in $\R^{2,\circ}_{12,+}\times \CN$. Using similar conditions we can also show the existence of invariant measures $\mu_{23}$ and $\mu_{31}$ supported on $\R^{2,\circ}_{23,+}\times \CN$ and $\R^{2,\circ}_{31,+}\times \CN$.

If we make the additional assumption that only the birth rates are affected by switching, that is, $b_i(k)=b_i$,  $c_i(k)=c_i$ and $d_i(k)=d_i$, and $b_1b_2-c_1d_2, b_2b_3-c_2d_3\neq 0$ things simplify and we can compute the invasion rates explicitly using the same idea as in example 2.2 from \cite{HN18}. Since we know that $\lambda_i(\mu)=0$ if $i\in I_{\mu}$ by Lemma \ref{lm2.3}, we have

\begin{equation}\label{equi_12}
\begin{split}
\lambda_1(\mu_{12})=&\sum_{k=1}^2\int \left( a_1(k)-b_1x_1-c_1x_2 \right) \mu_{12}(dx,k)=0\\
\lambda_2(\mu_{12})=&\sum_{k=1}^2\int \left( a_2(k)-d_2x_1-b_2x_2 \right) \mu_{12}(dx,k)=0.
\end{split}    
\end{equation}
We also know that $\displaystyle \int \mu_{12}(dx,1)=\frac{q_{21}}{q_{12}+q_{21}}$ and $\displaystyle \int \mu_{12}(dx,2)=\frac{q_{12}}{q_{12}+q_{21}}$. The pair of equations \eqref{equi_12} yield a linear system where we are solving for $\displaystyle \sum_{k=1}^2\int x_1 \mu_{12}(dx,k)$ and $\displaystyle \sum_{k=1}^2\int x_2 \mu_{12}(dx,k)$. As a result, we get
\begin{equation}
\begin{bmatrix}
\sum_{k=1}^{2}\int x_1 \mu(dx,k)   \\
\sum_{k=1}^{2}\int x_2 \mu(dx,k)
\end{bmatrix}=\frac{1}{b_1b_2-c_1d_2} \begin{bmatrix}
b_2 & -c_1 \\
-d_2 & b_1 
\end{bmatrix}\begin{bmatrix}
a_1(1)\frac{q_{21}}{q_{12}+q_{21}}+a_1(2)\frac{q_{12}}{q_{12}+q_{21}}\\
a_2(1)\frac{q_{21}}{q_{12}+q_{21}}+a_2(2)\frac{q_{12}}{q_{12}+q_{21}}.
\end{bmatrix}    
\end{equation}
Now we can calculate $\lambda_3(\mu_{12})$. 
\begin{equation}
\begin{split}
\lambda_3(\mu_{12})=&\sum_{k=1}^2\int \left(a_3(k)-c_3x_1-d_3x_2       \right)\mu_{12}(dx,k)\\
               =& \lambda_3(\mu_0)-\left(\frac{b_2\lambda_1(\mu_0)-c_1\lambda_2(\mu_0)}{b_1b_2-c_1d_2}    \right)c_3 - \left(\frac{b_1\lambda_2(\mu_0)-d_2\lambda_1(\mu_0)}{b_1b_2-c_1d_2} \right)d_3\\
\end{split}    
\end{equation}
With the same idea as above we have the following expressions for $\lambda_2(\mu_{31})$, $\lambda_1(\mu_{23})$,
\begin{equation}
\begin{split}
\lambda_1(\mu_{23})=& \lambda_1(\mu_0) - \left(\frac{b_1\lambda_2(\mu_0)-d_2\lambda_1(\mu_0)}{b_1b_2-c_1d_2}    \right)c_1 -\left(\frac{b_2\lambda_3(\mu_0)-d_3\lambda_2(\mu_0)}{b_2b_3-c_2d_3}\right)d_1\\  
\lambda_2(\mu_{31})=& \lambda_2(\mu_0) - \left(\frac{b_2\lambda_3(\mu_0)-d_3\lambda_2(\mu_0)}{b_2b_3-c_2d_3}    \right)c_2 -\left(\frac{b_2\lambda_1(\mu_0)-c_1\lambda_2(\mu_0)}{b_1b_2-c_1d_2}\right) d_2.   
\end{split}    
\end{equation}

\begin{rmk}
The additional assumption to make only the birth rates depend on the switching process $r(t)$ is a special case, where we are able to calculate invasion rates for measures like $\mu_{12},\mu_{23},\mu_{31}$. \end{rmk}

We are now in a position where we can classify the dynamics. Note that we already assumed that all the species survive on their own so that $\lambda_i(\mu_0)>0, i=1, 2, 3.$
\begin{itemize}

    \item  If any of the following conditions hold
    \begin{enumerate}
        \item $\lambda_2(\mu_1)>0,\lambda_3(\mu_1)>0$, $\lambda_3(\mu_2)>0$,$\lambda_1(\mu_2)>0$, $\lambda_1(\mu_3)>0,\lambda_2(\mu_3)>0$, $\lambda_3(\mu_{12})>0$,$\lambda_1(\mu_{23})>0$,$\lambda_2(\mu_{31})>0$. 
        \item $\lambda_2(\mu_1)>0,\lambda_3(\mu_1)>0$, $\lambda_3(\mu_2)>0$,$\lambda_1(\mu_2)<0$, $\lambda_1(\mu_3)>0,\lambda_2(\mu_3)>0$, $\lambda_1(\mu_{23})>0$,$\lambda_2(\mu_{31})>0$.
        \item $\lambda_3(\mu_2)>0,\lambda_1(\mu_2)>0$, $\lambda_1(\mu_3)>0$,$\lambda_2(\mu_3)<0$, $\lambda_2(\mu_1)<0,\lambda_3(\mu_1)>0$,$\lambda_2(\mu_{31})>0$.
        \item $\lambda_1(\mu_3)>0,\lambda_2(\mu_3)<0$, $\lambda_2(\mu_1)<0$,$\lambda_3(\mu_1)>0$, $\lambda_3(\mu_2)<0,\lambda_1(\mu_2)>0$, $\lambda_2(\mu_{31})>0$.
       
\end{enumerate}
       then Assumption \ref{a2-pdm} is satisfied and the species $X_1,X_2,X_3$ are almost surely stochastically persistent, that is for any $\epsilon>0$ there exists a $\delta>0$ such that $\limsup_{t\to\infty} \tilde\Pi_t(\mathcal{S}^c_{\delta})>1-\epsilon$. If we also know that the accessible set $\Gamma\neq \phi$, and there is a point $x_0\in \Gamma$ such that the strong bracket condition holds at $x_0$, then there exists a unique invariant measure $\pi^*$ supported in $\R^{3,\circ}_+$, such that the transition probability measures converge to $\pi^*$ in total variation. 
       
       \item We assume that if any of the ergodic measures $\mu_{12},\mu_{23},\mu_{31}$ exist, then $\lambda_3(\mu_{12}),\lambda_2(\mu_{31}),\lambda_1(\mu_{23})\neq 0$. In addition, if any of the following conditions hold
       \begin{enumerate}
        \item $\lambda_2(\mu_1)>0,\lambda_3(\mu_1)>0$, $\lambda_3(\mu_2)>0$,$\lambda_1(\mu_2)>0$, $\lambda_1(\mu_3)>0,\lambda_2(\mu_3)>0$, $\min\{\lambda_3(\mu_{12}),\lambda_1(\mu_{23}),\lambda_2(\mu_{31})\}<0$,  . 
        \item $\lambda_2(\mu_1)>0,\lambda_3(\mu_1)>0$, $\lambda_3(\mu_2)>0$,$\lambda_1(\mu_2)<0$, $\lambda_1(\mu_3)>0,\lambda_2(\mu_3)>0$, $\min\{\lambda_1(\mu_{23}),\lambda_2(\mu_{31})\}<0$, .
        \item $\lambda_3(\mu_2)>0,\lambda_1(\mu_2)>0$, $\lambda_1(\mu_3)>0$,$\lambda_2(\mu_3)<0$, $\lambda_2(\mu_1)<0,\lambda_3(\mu_1)>0$,$\lambda_2(\mu_{31})<0$.
        \item $\lambda_1(\mu_3)>0,\lambda_2(\mu_3)<0$, $\lambda_2(\mu_1)<0$,$\lambda_3(\mu_1)>0$, $\lambda_3(\mu_2)<0,\lambda_1(\mu_2)>0$, $\lambda_2(\mu_{31})<0$.   
        \item $\lambda_2(\mu_1)<0$, $\lambda_3(\mu_1)<0$, $\lambda_3(\mu_2)>0$, $\lambda_2(\mu_3)>0$, $\lambda_1(\mu_{23})<0$.
        \item $\lambda_2(\mu_1)<0$, $\lambda_3(\mu_1)<0$, $\lambda_3(\mu_2)>0$, $\lambda_2(\mu_3)>0$, $\lambda_1(\mu_{23})>0$.
        \item $\lambda_2(\mu_1)<0$, $\lambda_3(\mu_1)<0$, $\lambda_3(\mu_2)<0$, $\lambda_1(\mu_2)<0$,   $\lambda_2(\mu_3)>0$.
        \item  $\lambda_3(\mu_1)<0$, $\lambda_2(\mu_1)<0$, $\lambda_3(\mu_2)<0$, $\lambda_1(\mu_2)<0$,   $\lambda_2(\mu_3)<0$, $\lambda_1(\mu_3)<0$.
       \end{enumerate}
       then assumptions \ref{a4-pdm} and \ref{a5-pdm} are satisfied for at least one subset $I\subset\{1,2,3\}$, then one or more of the ergodic measures on the boundary are transversal attractors for our process $(X_1(t),X_2(t),r(t))$. Let $S$ be the set of subsets $I$ such that \ref{a4-pdm} is satisfied, then by Theorem \ref{t:exxx_i}, $X_j\to 0$ for $j\notin I$, with high probability if $(X_1(0),X_2(0))$ is close enough to the support of the ergodic measure corresponding to the set $I$. If we also have that the subspace $\displaystyle\bigcup_{I\in S}\R_+^{I,\circ}$ is accessible for the process, then by Theorem \ref{thm5.3} we have
       \[
       \sum_{I\in S} \PP^I_{\bx,k}=1. 
       \]

\end{itemize}

\begin{rmk}
The classification of the dynamics for the model \eqref{e:3d_comp} uses the ideas of Theorems 3.2 and 3.3 from \cite{HNS22}. The reader is referred to \cite{HNS22} in order to see in more depth the 3d classification of Kolmogorov systems - the theory developed in this paper allows to transfer most of the results of \cite{HNS22} to the PDMP setting. We cannot treat rock-paper-scissor dynamics for \eqref{e:3d_comp} since it violates Assumption \ref{a:bound} and other methods have to be used - the reader is referred to \cite{B23} where this case is treated.   
\end{rmk}

\subsection{Lotka-Volterra food chains}

Here we  recover the results of \cite{Bo23}. The dynamics is given by
\begin{equation}\label{}
\frac{dX_i}{dt}(t) = G^{r(t)}(\BX(t))
\end{equation}
where
\[
G_i^k(\bx) = x_iF_i^k(\bx),
\]
\begin{equation}\label{e:lambda_22}
F_i^k(\bx) =
\begin{cases}
a_{10}(k)-a_{11}x_1-a_{12}x_2 &\text{if } i=1\\
-a_{i0}(k)+a_{i,i-1}x_{i-1}-a_{ii}x_i-a_{i,i+1}x_{i+1}   &\text{if } i=2, \dots,n-1.\\
-a_{n0}(k)+a_{n,n-1}x_{n-1}-a_{nn}x_n &\text{if } i=n,
\end{cases}
\end{equation}
for $k\in\CN=\{1,....N\}$ and $n\geq 3$. It has been shown in \cite{Bo23} that there is a compact set $B\subset \R^n_+$, such that $B\times\CN$ is invariant under $(\BX(t),r(t))$, if $(\BX(0),r(0))\in B\times\CN$. We keep Assumption 1.2 from \cite{Bo23} which says that only the birth rate of the primary producer species ($X_1$) depends on the switching process.

Since the switching process $r(t)$ is irreducible and aperiodic, there exists a unique invariant measure $\nu=(\nu_1,\dots \nu_N)$ on $\CN$. The measure $\mu_0=\delta_{\0}\times\nu$ is ergodic and we can see that 
$\displaystyle\lambda_1(\mu_0)= \sum_{k=1}^N a_{10}(k)\nu_k >0$, and $\lambda_j(\mu_0)=-a_{j0}$ for $j\neq 1$. If we restrict the model to $\R_{1,+}$, then since $\lambda_1(\mu_0)>0$, then by Theorem \ref{thm5.1} there exists a unique invariant measure $\mu_1$ supported on $\R_{1,+}^{\circ}\times \CN$, and the expression for $\lambda_2(\mu_1)$ for $j\neq 1$ is given by 
\[
\lambda_2(\mu_1)= \sum_{j=1}^N \int \left(a_{21}x_1-a_{20}\right) \mu_1(dx,k). 
\]
If $|\CN|=2$, then we can compute $\lambda_2(\mu_1)$ explicitly, using the same idea as in \ref{3D_LV}, since assumption 1.2 in \cite{Bo23} holds. Hence we get $\displaystyle \lambda_2(\mu_1)=\frac{a_{21}\lambda_1(\mu_0)}{a_{11}}-a_{20}>0$. 
We can see that $\lambda_j(\mu_0)=0$ for $j>2$, and also that since $\lambda_i(\mu_0)<0$ for $i\neq 1$, then by Theorem \ref{thm5.3}, there is no invariant distribution supported on $\R_{i,+}^{\circ}\times \CN$. Proceeding similarly, if we restrict the model to $\R^2_{12,+}$, then if $\lambda_2(\mu_1)>0$, then by Theorem \ref{thm5.1} there exists a unique invariant measure $\mu_{12}$ supported in $\R_{12,+}\times \CN$. The expression for $\lambda_3(\mu_{12})$ is given by 
\[
\lambda_3(\mu_{12})= \sum_{j=1}^N\int \left(a_{32}x_2-a_{30}\right) \mu_{12}(dx_1dx_2,k)
\]
If, for simplicity, $|\CN|=2$, we have the following explicit expression for $\lambda_3(\mu_{12})$
\[
\lambda_3(\mu_{12})=a_{32}\left(\frac{a_{21}\lambda_1(\mu_0)-a_{11}a_{20}}{a_{11}a_{22}+ a_{12}a_{21}}\right)-a_{30}.
\]
If we have $\lambda_2(\mu_1)<0$, then by Theorem \ref{thm5.3} there is no invariant measure supported in $\R_{12,+}^{\circ}\times\CN$. Hence, proceeding inductively we can see that there is at most one invariant measure supported in $\R^{I_{k},o}_+$, where $I_k=\{1,\dots, k\}$ for $k\in\{1,\dots, n-1\}$. 

We now classify the dynamics 
\begin{itemize}
\item If we have $\lambda_{k+1}(\mu_{I_k})>0$ where $I_k=\{1, \dots k\}$ for each $k\in \{1,\dots ,n-1\}$, then Assumption \ref{a3-pdm} is satisfied, and since Assumption \ref{a3-pdm} follows from assumption 1.2 in \cite{Bo23}, then by Theorem \ref{thm5.1}, all of the species persist, and there is a unique invariant measure $\pi^*$, supported in $\R^{n,\circ}_+$, such that the transition probability measures converge to $\pi^*$ in total variation. 

\item If for some $k\in \{1,\dots, n-1\}$, we have that $\lambda_{k+1}(\mu_{I_k})<0$, then Assumptions \ref{a4-pdm} and \ref{a5-pdm} are satisfied, and the measure $\mu_{I_k}$ is a transversal attractor. By Theorem \ref{t:exxx_i}, then $X_j(t)\to 0$ at with high probability if $(X(0),r(0))$ is close enough to the support of $\mu_{I_k}$. If additionally we know that $\R^{I_k,\circ}_+$ is accessible for the process then by Theorem \ref{thm5.3}, we have 
\[
\PP_{\bx,k}\{ (X_1,\dots X_k)\to \mu_{I_k}, (X_{k+1},\dots X_n)\to \0 \}=1.
\]
       
\end{itemize}

\section{Preliminary lemmas}\label{s:prep}

In this section we will prove some preliminary things about the process such as well-posedness, uniqueness, and tightness. The proof of the lemmas will be given in Appendix \ref{a:inv measures} but we will provide some ideas and sketches of proofs below.

We first define a Lyapunov type function $V(\bx,k) : \R^{n,\circ}_+\to\R_+$ such that $V(\bx,k) \to \infty$ as $\bx\to \partial \R_+^n$. It will be used to prove various persistence results in Section \ref{s:perm}.

For $\bp=(p_1,\cdots,p_n)\in\R^{n,\circ}_+$, $\|\bp\|\leq\delta_0$,
define the function $V: \R^{n,\circ}_+\to\R_+$ by
\begin{equation}\label{e:V}
V(\bx,k):=\dfrac{F(\bx,k)}{\prod_i x_i^{p_i}}.
\end{equation}
Using the condition \eqref{e:sup} one can define 
\begin{equation}\label{e:H}
\begin{aligned}
H:=\sup\limits_{\bx\in\R^{n}_+,k\in\M}\Bigg\{\dfrac{\Lom F(\bx,k)}{F(\bx,k)}+
\gamma_b+\delta_0\sup_{k\in\M}\{|f_i(\bx,k)|\}\Bigg\}<\infty.
\end{aligned}
\end{equation}

In the first lemma we show that \eqref{e1-pdm} has a unique strong solution which leaves open subspaces of the form $\R^{I,\circ}_+$ invariant. We also prove that the expectation of $V (\BX(t),r(t))$ grows at most exponentially in time. 

\begin{lm}\label{lm2.0}
For any $\bx\in\R^n_+$, there exists a pathwise unique strong solution $(\BX(t),r(t))$ to \eqref{e1-pdm}
with initial value $\BX(0)=\bx, r(0)=k$.
Let $I\subset\{1,\cdots,n\}$ and $\bx\in\R^{I,\circ}_+$
where $$\R^{I,\circ}_+=\left\{\bx\in \R^n_+: x_i=0 \text{ if } i\notin I \text{ and }x_i>0 \text{ if } i\in I\right\}.$$
The solution $(\BX(t),r(t))$ with initial value $\BX(0)=\bx, r(0)=k$ will satisfy $\BX(t)\in\R^{I,\circ}_+, t\geq 0$ with probability 1.
Moreover, for $\bx\in\R^{n,\circ}_+$ and $V$ defined by \eqref{e:V}, we have
\begin{equation}\label{e1-lm2.2}
\E_{\bx,k} V (\BX(t),r(t))\leq \exp( Ht) V (\bx,k).
\end{equation}
\end{lm}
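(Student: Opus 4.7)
The plan is to prove the three claims in sequence: existence/uniqueness of the strong solution, invariance of the open faces $\R^{I,\circ}_+$, and the Gronwall-type bound on $\E V$.

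\textbf{Existence and pathwise uniqueness.} The standard construction of a PDMP is to build the solution piecewise between jumps of $r(t)$. On the interval $[\tau_m,\tau_{m+1})$ between consecutive jumps, the state $r(s)\equiv k$ is constant, and $\BX(s)$ solves the autonomous ODE $\dot X_i = X_i f_i(\BX,k)$; since each $f_i(\cdot,k)$ is locally Lipschitz, local existence and uniqueness is classical. The jump times of $r$ can be constructed by thinning with respect to an independent Poisson random measure using the bounded rates $q_{ij}(\bx)$, so that $(\BX(t),r(t))$ is defined up to the first explosion time $\tau_\infty := \lim_m \tau_m$. Non-explosion is where Assumption \ref{a1-pdm} enters: the growth condition \eqref{e:sup} gives $\Lom F(\bx,k)/F(\bx,k) \leq C$ for some constant $C$, hence $\E F(\BX(t\wedge\tau_n),r(t\wedge\tau_n)) \leq e^{Ct}F(\bx,k)$ by Dynkin; combined with \eqref{e:F_growth} this rules out $\|\BX\|\to\infty$ in finite time. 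Pathwise uniqueness reduces to ODE uniqueness on each deterministic piece, since the jumps of $r(t)$ are driven by the underlying Poisson noise.

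\textbf{Invariance of $\R^{I,\circ}_+$.} This follows from the multiplicative form of the ODE: for each component
\[
X_i(t) = x_i \exp\!\left(\int_0^t f_i(\BX(s),r(s))\,ds\right).
\]
Since $\BX$ stays in a compact set on each $[0,t]$ (by the previous step) and $f_i$ is locally Lipschitz hence locally bounded, the integral in the exponent is finite. Thus the sign of $X_i(0)$ is preserved for all $t\geq 0$, so $X_i(t)=0$ iff $i\notin I$ and $X_i(t)>0$ iff $i\in I$, giving $\BX(t)\in\R^{I,\circ}_+$ almost surely.

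\textbf{Exponential bound on $\E V$.} A direct computation using \eqref{e:gen} and the product form $V = F\prod_i x_i^{-p_i}$ yields
\[
\Lom V(\bx,k) = V(\bx,k)\!\left(\frac{\Lom F(\bx,k)}{F(\bx,k)} - \sum_{i=1}^n p_i f_i(\bx,k)\right),
\]
because the switching part contributes $\sum_l q_{kl}V(\bx,l) = \prod_j x_j^{-p_j}\sum_l q_{kl}F(\bx,l)$, which matches the corresponding piece of $\Lom F/F$ multiplied by $V/F$. Using $|\sum_i p_i f_i|\leq \delta_0 \max_i|f_i|$ (valid when $\|\bp\|\leq\delta_0$ in an appropriate norm) and the positivity $\gamma_b>0$, we obtain $\Lom V \leq H V$ with $H$ as in \eqref{e:H}. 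Applying Dynkin's formula to the stopped process at $\tau_n := \inf\{t:\,\|\BX(t)\|\geq n \text{ or } \min_i X_i(t)\leq 1/n\}$ (where $V$ is bounded), we get
\[
\E V(\BX(t\wedge\tau_n),r(t\wedge\tau_n)) \leq V(\bx,k) + H\int_0^t \E V(\BX(s\wedge\tau_n),r(s\wedge\tau_n))\,ds,
\]
and Gronwall gives $\E V(\BX(t\wedge\tau_n),r(t\wedge\tau_n))\leq e^{Ht}V(\bx,k)$. Letting $n\to\infty$ and using Fatou together with the invariance of $\R^{n,\circ}_+$ established above (which ensures $\tau_n\to\infty$ a.s.) yields \eqref{e1-lm2.2}.

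\textbf{Main obstacle.} The routine computations are the generator identity for $V$ and the Gronwall step; the genuine subtlety is the non-explosion and well-posedness argument, which relies crucially on \eqref{e:sup} to control $F$ along trajectories and on the boundedness of the switching rates to control the jump times. Care must also be taken that the localization sequence $\tau_n\to\infty$ a.s., for which the invariance of $\R^{n,\circ}_+$ (so that $X_i$ never actually hits $0$) is used.
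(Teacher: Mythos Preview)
Your proposal is correct and follows essentially the same route as the paper: piecewise ODE construction between jumps (the paper simply cites \cite{BBMZ15} here), the exponential-representation argument for invariance of $\R_+^{I,\circ}$, and the generator identity $\Lom V = V(\Lom F/F - \sum_i p_i f_i)$ followed by localization, Dynkin, Gronwall, and Fatou. The only minor difference is that you make the non-explosion step explicit via the Lyapunov bound on $F$, whereas the paper is terser there; your treatment is slightly more careful on this point.
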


The next lemma gives us some important bounds in time on the expectation of $F(\BX(t),r(t))$ and $\int_0^t  (F(\BX(s),r(s))) \left[1+f^M(\BX(s)\right]ds$ and also shows that the process $(\BX(t),r(t))$ is a nice Feller process. We will be able to use these results in order to prove the tightness of various families of measures.  
\begin{lm}\label{lm2.2}
There are $H_1, H_2>0$ such that for any $\bx\in\R^n_+, t>0$
\begin{equation}\label{e2-lm2.2}
\E_{\bx,k} (F(\BX(t),r(t))) \leq H_1+(F(\bx,k))e^{-\gamma_bt} 
\end{equation}
and
\begin{equation}\label{e3-lm2.2}
\E_{\bx,k}\int_0^t  (F(\BX(s),r(s))) \left[1+f^M(\BX(s)\right]ds\leq H_2((F(\bx,k)) +t).
\end{equation}
Moreover,  the solution process $(\BX(t),r(t))$ is a Markov-Feller process in $\R^n_+$.
\end{lm}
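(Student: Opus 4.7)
The strategy is to distill a single clean generator inequality from Assumption~\ref{a1-pdm} and then run Dynkin's formula twice, with a stopping-time localization to deal with the unboundedness of $F$. From the limsup condition \eqref{e:sup}, there exist $R,\eta>0$ such that for $\|\bx\|>R$ and every $k\in\CN$,
\[
\Lom F(\bx,k) + \gamma_b F(\bx,k)\bigl(1 + f^M(\bx,k)\bigr) \leq -\eta F(\bx,k) \leq 0,
\]
where $f^M(\bx,k):=\max_i |f_i(\bx,k)|$. On the compact cylinder $\{\|\bx\|\leq R\}\times\CN$ the continuous expression on the left is bounded, so a single constant $K_0>0$ absorbs it, giving the global bound
\[
\Lom F(\bx,k) \leq K_0 - \gamma_b F(\bx,k) - \gamma_b F(\bx,k) f^M(\bx,k), \qquad (\bx,k)\in\R^n_+\times\CN. \quad (\ast)
\]

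For \eqref{e2-lm2.2}, I would introduce the hitting times $\tau_m := \inf\{t\geq 0: \|\BX(t)\|\geq m\}$ (the process is well-defined up to each $\tau_m$ by Lemma~\ref{lm2.0}) and apply Dynkin's formula to the weighted function $e^{\gamma_b t} F$. Since $\Lom(e^{\gamma_b s} F) = e^{\gamma_b s}(\gamma_b F + \Lom F) \leq K_0 e^{\gamma_b s}$ by $(\ast)$, this produces
\[
\E_{\bx,k}\!\left[e^{\gamma_b(t\wedge\tau_m)}F(\BX(t\wedge\tau_m),r(t\wedge\tau_m))\right] \leq F(\bx,k) + \tfrac{K_0}{\gamma_b}\bigl(e^{\gamma_b t}-1\bigr).
\]
Using the linear lower bound $F(\bx,k)\geq c(1+\|\bx\|)$ from \eqref{e:F_growth}, the left side is at least $cm\,\PP_{\bx,k}(\tau_m\leq t)$, which forces $\tau_m\to\infty$ almost surely; Fatou's lemma then yields \eqref{e2-lm2.2} with $H_1 = K_0/\gamma_b$. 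For \eqref{e3-lm2.2} I would apply Dynkin's formula directly to $F$ (no exponential weight) with the same localization. Using $(\ast)$ and dropping the non-negative term $\E F(\BX(t\wedge\tau_m),r(t\wedge\tau_m))$ on the left gives
\[
\gamma_b \E_{\bx,k}\!\int_0^{t\wedge\tau_m} F(\BX(s),r(s))\bigl[1+f^M(\BX(s))\bigr]\,ds \leq F(\bx,k) + K_0 t,
\]
and monotone convergence as $m\to\infty$ delivers \eqref{e3-lm2.2} with $H_2 := \max(1,K_0)/\gamma_b$.

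The Markov property is immediate from the construction of a PDMP as a concatenation of deterministic flows $\Phi_t^k$ with random jumps of the environmental coordinate driven by state-dependent exponential clocks (cf.~\cite{davis1984piecewise}); pathwise uniqueness from Lemma~\ref{lm2.0} makes the construction rigorous. For the Feller property, given $f\in C_b(\R^n_+\times\CN)$, I would couple two processes starting from $(\bx,k)$ and $(\bx',k)$ by sharing the underlying Poisson clock and the uniform variables used to sample the post-jump environment. Because each flow $\Phi_t^{k}$ depends continuously on initial data (local Lipschitzness of the $f_i$) and because $q_{ij}$ is continuous, the inter-jump trajectories stay close and the chosen post-jump environments agree with high probability whenever $\bx'\to\bx$; this forces $(\bx,k)\mapsto P_tf(\bx,k)$ to be continuous.

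The main obstacle is the localization: $F$ and $Ff^M$ are both unbounded on $\R^n_+$, so non-explosion must be proved in tandem with the moment estimate rather than assumed a priori. The key that breaks this circularity is \eqref{e:F_growth}, which turns the localized Dynkin inequality into a concrete tail bound for $\tau_m$; without this linear growth from below, the stopping-time argument would not close and a stronger Lyapunov structure would be required.
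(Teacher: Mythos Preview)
Your proposal is correct and follows essentially the same route as the paper: extract a global generator inequality of the form $\Lom F\leq K_0-\gamma_b F(1+f^M)$ from \eqref{e:sup}, apply Dynkin's formula to $e^{\gamma_b t}F$ for \eqref{e2-lm2.2} and to $F$ for \eqref{e3-lm2.2}, localize with $\tau_m$, and pass to the limit via Fatou/monotone convergence. The only cosmetic differences are that the paper invokes Lemma~\ref{lm2.0} for non-explosion rather than reading it off the Dynkin bound as you do, and for the Feller property the paper simply cites \cite{BBMZ15} and \cite{MAO} instead of giving a coupling sketch.
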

\begin{proof}[Idea behind proof]
The proof of the lemma is straightforward and makes use of Dynkin's formula and the bound \eqref{e:H}.    
\end{proof}
Define the family of measures
\begin{equation}\label{e:Pi_meas}
\Pi^{\bx,k}_t(\cdot):=\dfrac1t\int_0^t\PP_{\bx,k}\{(X(s),r(s))\in\cdot\}ds,\,(\bx,k)\in\R^n_+\times\CN, t>0.
\end{equation}
These measures are important as they are Cesaro-type integral averages involving the distribution of the process on the interval $[0,t]$.

The next result gives us a handle on some properties of the invariant probability measures of the process. The key fact is showing that
\[
\sum_{k\in\CN}\int_{\R^n_+} \Lom \ln F(\bx,k) \mu(d\bx,k)=0
\]
which will then be used in the persistence proofs together with the Feller property in order to show that certain integral averages of $\Lom \ln F(\bx,k)$ are close to zero. We also prove that while at `equilibrium' within the measure $\mu$ the invasion rates of `internal' species are zero and 
\[
\lim_{t\to\infty} \frac{\ln X_i(t)}{t}=\lambda_i(\mu)=0, i\in I_\mu.
\]
\begin{lm}\label{lm2.3}
Let $\mu$ be an invariant probability measure of $\BX$.
Then
$$\sum_{k\in\CN}\int_{\R^n_+}(F(\bx,k)) \left(1+f^M(\bx,k)\right)\mu(d\bx,k)\leq H_2$$
and
$$\sum_{k\in\CN}\int_{\R^n_+} \Lom \ln F \mu(d\bx,k) =\sum_{k\in\CN}\int_{\R^n_+}\left(\dfrac{\Lom F(\bx,k)}{F(\bx,k)}-\sum_{\ell\in\CN} q_{k\ell}(\bx)\left(\frac{F(\bx,\ell)}{F(\bx,k)}-1-\ln\frac{F(\bx,\ell)}{F(\bx,k)} \right)\right)\mu(d\bx,k)=0.$$

Moreover, if $\mu$ is ergodic and $i\in I^\mu$ then
\[
\lambda_i(\mu)=0
\]
and for $\mu$ almost all $(\bx,k)\in supp(\mu) $ if $(\BX(0),r(0))=(\bx,k)$ we have 
\[
\PP_{\bx,k}\left(\lim_{t\to\infty} \frac{\ln(F(\BX(t),r(t)))}{t}=0\right)=1.
\]
and 
\[
\PP_{\bx,k}\left(\lim_{t\to\infty} \frac{\ln X_i(t)}{t}=\lambda_i(\mu)=0\right), i\in I_\mu.
\]
\end{lm}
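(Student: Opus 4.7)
The plan is to establish the three layers of the lemma — the moment bound, the generator identity, and the ergodic asymptotics — in that order, since each step relies on integrability produced by its predecessor.

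For the moment bound I would start from Lemma \ref{lm2.2} and a truncation. Take $F_N:=F\wedge N$; since $F_N$ is bounded it is in $L^1(\mu)$, and invariance gives $\int F_N\,d\mu=\int \E_{\bx,k} F_N(\BX(t),r(t))\,d\mu$. The bound $\E F_N(\BX(t))\le \min(H_1+F(\bx,k)e^{-\gamma_b t},N)$ and dominated convergence as $t\to\infty$ yield $\int F_N\,d\mu\le H_1$ for every $N\ge H_1$; monotone convergence in $N$ then produces $\int F\,d\mu\le H_1<\infty$. With $F$ integrable, I repeat the truncate--Fubini--invariance argument on \eqref{e3-lm2.2} with $G_N:=(F(1+f^M))\wedge N$ to get $t\int G_N\,d\mu\le H_2\int F\,d\mu+H_2 t$; dividing by $t$, letting $t\to\infty$ and then $N\to\infty$ yields $\int F(1+f^M)\,d\mu\le H_2$.

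For the generator identity a direct calculation from \eqref{e:gen} and the chain rule (noting $\sum_\ell q_{k\ell}=0$, which makes the $\ell=k$ summand vanish) gives
\[
\Lom\ln F(\bx,k)=\frac{\Lom F(\bx,k)}{F(\bx,k)}-\sum_{\ell\in\CN}q_{k\ell}(\bx)\left(\frac{F(\bx,\ell)}{F(\bx,k)}-1-\ln\frac{F(\bx,\ell)}{F(\bx,k)}\right),
\]
which is precisely the second equality in the statement. The vanishing of the $\mu$-integral is then the standard invariance criterion $\int \Lom g\,d\mu=0$ applied to $g=\ln F$: Dynkin provides the local martingale $M^{\ln F}(t)=\ln F(\BX(t),r(t))-\ln F(\bx,k)-\int_0^t\Lom\ln F(\BX(s),r(s))\,ds$, and integrating $\E_{\bx,k} g(\BX(t),r(t))=g(\bx,k)+\E_{\bx,k}\int_0^t\Lom g\,ds$ against $\mu$, using invariance on the left and Fubini--stationarity on the right, produces $t\int\Lom\ln F\,d\mu=0$. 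Integrability of $\ln F$ is controlled from below by $\ln c$ and from above by $F/c$ up to constants, hence by Step 1; the jump part of $\Lom\ln F$ is bounded by Assumption \ref{a:bound}, and the drift part is controlled via \eqref{e:H} together with $F(1+|f^M|)\in L^1(\mu)$ from Step 1. A localisation at $\tau_N:=\inf\{t:\|\BX(t)\|\ge N\}$, applied to Dynkin before integrating against $\mu$ and followed by $N\to\infty$, can be used to sidestep any remaining integrability subtlety.

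For the ergodic conclusions I run the process under $\PP_\mu$, making $(\BX(t),r(t))$ stationary and ergodic. For $i\in I_\mu$ Lemma \ref{lm2.0} ensures $X_i(t)>0$ for all $t\ge 0$ a.s., and dividing \eqref{e1-pdm} by $X_i$ integrates pathwise to $\ln X_i(t)=\ln X_i(0)+\int_0^t f_i(\BX(s),r(s))\,ds$. Since $f_i\in L^1(\mu)$ (Step 1 together with $F\ge c$), Birkhoff's ergodic theorem gives
\[
\frac{1}{t}\int_0^t f_i(\BX(s),r(s))\,ds\to\int f_i\,d\mu=\lambda_i(\mu)\quad\PP_\mu\text{-a.s.},
\]
so $t^{-1}\ln X_i(t)\to\lambda_i(\mu)$. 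Stationarity pins this exponent to zero: $\lambda_i(\mu)<0$ would force $X_i(t)\to 0$ a.s.\ and then $X_i(0)\stackrel{d}{=}X_i(t)$ would give $X_i(0)=0$ $\mu$-a.s., contradicting $i\in I_\mu$; $\lambda_i(\mu)>0$ is ruled out symmetrically since $X_i(0)<\infty$ $\mu$-a.s. For the companion limit $t^{-1}\ln F(\BX(t),r(t))\to 0$, Dynkin expands $\ln F(\BX(t))-\ln F(\BX(0))=\int_0^t\Lom\ln F\,ds+M(t)$; Birkhoff combined with Step 2 forces the time average of the first term to zero, and the martingale strong law for the PDMP jump martingale $M$ (whose bracket grows linearly in $t$ by Assumption \ref{a:bound}, cf.\ \cite{liptser1980strong}) gives $M(t)/t\to 0$ a.s. Disintegrating $\PP_\mu=\int\PP_{\bx,k}\,d\mu(\bx,k)$ transports these $\PP_\mu$-a.s.\ statements into $\PP_{\bx,k}$-a.s.\ statements for $\mu$-a.e.\ $(\bx,k)$. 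The main obstacle is the integrability bookkeeping in Step 2 — showing $\Lom\ln F$ itself (not merely its $\mu$-mean) is $\mu$-integrable so that the invariance identity is legitimate — together with the martingale SLLN for the PDMP jump martingale in Step 3; both are ultimately handled by the bound $F(1+|f^M|)\in L^1(\mu)$ from Step 1 and Assumption \ref{a:bound}, but must be threaded carefully through the truncation arguments.
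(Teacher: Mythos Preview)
Your proposal is correct and runs largely parallel to the paper's proof, but the order of attack in Step~2 is genuinely reversed. The paper establishes the identity $\int \Lom \ln F\,d\mu = 0$ \emph{as a consequence} of the pathwise analysis: it applies It\^o's formula, proves the jump martingale is $L^2$ with linearly growing predictable bracket (via Assumption~\ref{a:bound}, whence $M(t)/t\to 0$ by Doob's inequality and Borel--Cantelli), invokes Birkhoff using only that $\Lom\ln F$ is bounded \emph{above} by the constant $H+|\CN|C_0$, and then pins $\lim_{t\to\infty} t^{-1}\ln F(\BX(t),r(t))$ to zero via compact recurrence (Birkhoff forces the path to revisit a compact set on which $\ln F$ is bounded). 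The vanishing of $\int \Lom\ln F\,d\mu$ --- and in particular its finiteness --- then falls out for free. Your direct invariance argument is cleaner in spirit but, as you correctly flag, must secure $\Lom\ln F\in L^1(\mu)$ first; be aware that \eqref{e:H} gives only an \emph{upper} bound on $\Lom F/F$, not the two-sided control you sketch, so the localisation you mention is not optional --- it is precisely how integrability is recovered (the positive part is bounded, and matching both sides of the localised Dynkin identity integrated against $\mu$ as $N\to\infty$ forces the negative part to have finite integral). Your Step~1 (truncate, apply invariance, let $t\to\infty$ then $N\to\infty$) is actually tidier than the paper's route through the occupation measures $\Pi^{\bx,k}_t$. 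Step~3 is essentially the same in both accounts: your stationarity contradiction and the paper's compact-recurrence argument are two phrasings of the fact that an ergodic stationary positive process cannot have a nonzero Lyapunov exponent.
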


\begin{proof}[Idea behind proof]
Using Ito's lemma one has
\begin{equation}\label{e:expl1}
\frac{\ln(F(\BX(t),r(t)))}{t}=\frac{\ln(F(\BX(0),r(0)))}{t}+\frac{1}{t}\int_0^t  \Lom(\ln(F(\BX(s),r(s)))) ds  +  \frac{M(t)}{t}
\end{equation}
where $(M(t))$ is a local martingale. We can show that it is actually a true martingale and that it is $L^2$. The hardest part is showing that the strong law of large numbers holds so that with probability one
\[
\lim_{t\to\infty}\frac{M(t)}{t}=0.
\]
This is done by proving bounds for the predictable quadratic variation by making use of the bound \eqref{a:bound}.
Using Birkhoff's ergodic theorem we will get
\[
\lim_{t\to\infty}\frac{1}{t}\int_0^t \Lom(\ln(F(\BX(s),r(s)))) ds =\sum_{k\in\CN}\int_{\R^n_+} \mathscr{\Lom}(\ln(F(\bx,k)) \mu(dx,k).
\]
Another application of Birkhoff's ergodic theorem can be shown to imply that 
\[
\frac{\ln(F(\BX(t),r(t)))}{t}=0.
\]
Using these facts in \eqref{e:expl1} yields 
\[
\sum_{k\in\CN}\int_{\R^n_+} \Lom \ln F(\bx,k) \mu(d\bx,k)=0
\]
\end{proof}

The next technical lemma allows us to take the limits of integrals with respect to the Cesaro average measures from \eqref{e:Pi_meas}. 
\begin{lm}\label{lm2.4}
Suppose the following
\begin{itemize}
\item  The sequences $(\bx_m,k_m)_{m\in \N}\subset \R_+^n, (T_m)_{m\in \N}\subset \R_+$ are such that $\|\bx_m\|\leq M$, $T_m>1$ for all $m\in \N$ and $\lim_{m\to\infty}T_m=\infty$.

\item The sequence of probability measures $(\Pi^{\bx_m,k_m}_{T_m})_{m\in \N}$ converges weakly to an invariant probability measure
$\pi$.

\item Let $h:\R^n_+\times\CN\to\R$ be a continuous function satisfying
$|h(\bx,k)|<K_h(F(\bx,k))^\delta(1+f^M(\bx,k)))$, $\bx\in \R^n_+$,
for some $K_h\geq 0$, $\delta<1$.
\end{itemize}
Then one has
\[\lim_{m\to\infty}\sum_{k\in\CN}\int_{\R^n_+}h(\bx,k)\Pi^{\bx_m,k_m}_{T_m}(d\bx,k_m)= \sum_{k\in\CN}\int_{\R^n_+}h(\bx,k)\pi(d\bx,k).\]
\end{lm}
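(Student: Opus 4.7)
The plan is a standard truncation/uniform-integrability argument: since $h$ is not bounded, weak convergence does not apply directly, so I will cut $h$ off where $F$ is large, use weak convergence on the bounded continuous piece, and control the tail uniformly in $m$ via the integrability estimates already established in Lemmas \ref{lm2.2} and \ref{lm2.3}.

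First, I would establish the key uniform integrability estimate. Rewriting the time average as a Cesaro integral against $\Pi^{\bx_m,k_m}_{T_m}$, Lemma \ref{lm2.2} gives
\[
\sum_{k\in\CN}\int_{\R^n_+} F(\bx,k)\bigl(1+f^M(\bx,k)\bigr)\,\Pi^{\bx_m,k_m}_{T_m}(d\bx,k)
=\frac{1}{T_m}\E_{\bx_m,k_m}\!\int_0^{T_m}\!F(\BX(s),r(s))(1+f^M(\BX(s)))\,ds
\le H_2\Bigl(\tfrac{F(\bx_m,k_m)}{T_m}+1\Bigr).
\]
Since $\|\bx_m\|\le M$ and $F$ is continuous (hence bounded on $\{\|\bx\|\le M\}\times\CN$) and $T_m>1$, the right-hand side is bounded by a constant $C$ independent of $m$. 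Lemma \ref{lm2.3} gives the analogous bound $\sum_{k}\int F(1+f^M)\,d\pi \le H_2$ for the limit measure.

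Next, for $R>0$ fix a continuous cutoff $\phi_R:\R^n_+\times\CN\to[0,1]$ with $\phi_R\equiv1$ on $\{F\le R\}$ and $\phi_R\equiv0$ on $\{F\ge 2R\}$ (constructed by composing $F$ with a scalar bump, so $h\phi_R$ is bounded and continuous on $\R^n_+\times\CN$). Since $\CN$ is finite and $\Pi^{\bx_m,k_m}_{T_m}\Rightarrow\pi$ weakly, the weak convergence yields
\[
\sum_{k\in\CN}\int h(\bx,k)\phi_R(\bx,k)\,\Pi^{\bx_m,k_m}_{T_m}(d\bx,k)\ \xrightarrow[m\to\infty]{}\ \sum_{k\in\CN}\int h(\bx,k)\phi_R(\bx,k)\,\pi(d\bx,k)
\]
for each fixed $R$. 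To handle the residual parts I use the hypothesis $|h|\le K_h F^\delta(1+f^M)$ together with the observation that on $\{F\ge R\}$ one has $F^\delta = F\cdot F^{\delta-1}\le F/R^{1-\delta}$, giving
\[
\int_{\{F\ge R\}}|h|\,d\Pi^{\bx_m,k_m}_{T_m}\ \le\ \frac{K_h}{R^{1-\delta}}\int F(1+f^M)\,d\Pi^{\bx_m,k_m}_{T_m}\ \le\ \frac{K_h\, C}{R^{1-\delta}},
\]
and the identical bound with $\pi$ and $H_2$ in place of $C$. Because $\delta<1$, both tail integrals tend to $0$ as $R\to\infty$, uniformly in $m$. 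An $\eps/3$-argument, first choosing $R$ large enough so both tails are below $\eps/3$ and then $m$ large enough so the $\phi_R$-truncated integrals are within $\eps/3$, completes the proof.

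The only genuinely substantive step is the uniform tail estimate, and the main obstacle there is producing a uniform-in-$m$ control of $\int F(1+f^M)\,d\Pi^{\bx_m,k_m}_{T_m}$; this is precisely what Lemma \ref{lm2.2} was designed to give, so the argument goes through cleanly. The rest is standard Portmanteau plus truncation; the finite state space $\CN$ makes the passage from a continuous cutoff on $\R^n_+$ to one on $\R^n_+\times\CN$ automatic.
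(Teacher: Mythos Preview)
Your argument is correct and is essentially the same truncation/uniform-integrability proof the paper gives: both split $h$ via a continuous cutoff, use weak convergence on the bounded piece, and control the tail by the uniform bound on $\int F(1+f^M)\,d\Pi^{\bx_m,k_m}_{T_m}$ coming from Lemma \ref{lm2.2}. The only cosmetic difference is that the paper cuts off on spatial balls $B(0,j)$ rather than on sublevel sets $\{F\le R\}$; the growth condition $F(\bx,k)\ge c(1+\|\bx\|)$ makes the two choices interchangeable, and your version is arguably a touch more direct.
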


Finally, the last lemma, which was proven in \cite{B23}, will help us prove uniform estimates for a Taylor expansion.
\begin{lm}\label{lm2.5}
Let $Y$ be a random variable, $\theta_0>0$ a constant, and suppose $$\E \exp(\theta Y)+\E \exp(-\theta Y)\}\leq K_1$$ for any $\theta\in[0,\theta_0]$.
Then the log-Laplace transform
$\phi(\theta)=\ln\E\exp(\theta Y)$
is twice differentiable on $\left[0,\frac{\theta_0}2\right)$ and
$$\dfrac{d\phi}{d\theta}(0)= \E Y,$$
$$0\leq \dfrac{d^2\phi}{d\theta^2}(\theta)\leq K_2\,, \theta\in\left[0,\frac{\theta_0}2\right)$$
 for some $K_2>0$.
\end{lm}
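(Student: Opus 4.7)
The plan is to treat $\phi$ as the cumulant-generating function of $Y$ and exploit the two-sided exponential moment hypothesis to differentiate $M(\theta):=\E\exp(\theta Y)$ under the integral sign twice. The first step is a domination estimate: for $k\in\{0,1,2\}$ and $\theta\in[0,\theta_0/2)$, the quantity $\E|Y|^k\exp(\theta Y)$ is finite and uniformly bounded on this interval. This follows from the elementary inequality
\[
|y|^k\leq C_k\bigl(\exp(\epsilon y)+\exp(-\epsilon y)\bigr), \qquad y\in\R,
\]
with $C_k$ depending on $k$ and $\epsilon$. Taking $\epsilon=\theta_0/2$, both exponents $\theta+\epsilon$ and $\theta-\epsilon$ lie in $[-\theta_0,\theta_0]$, so the hypothesis $\E\exp(\pm\tau Y)\leq K_1$ for $\tau\in[0,\theta_0]$ gives $\E|Y|^k\exp(\theta Y)\leq 2C_kK_1$. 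A standard dominated-convergence argument then yields $M'(\theta)=\E(Y\exp(\theta Y))$ and $M''(\theta)=\E(Y^2\exp(\theta Y))$ on $[0,\theta_0/2)$, each uniformly bounded.

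Since $M(\theta)>0$, the function $\phi=\ln M$ is twice differentiable on the same interval with
\[
\phi'(\theta)=\frac{M'(\theta)}{M(\theta)},\qquad \phi''(\theta)=\frac{M''(\theta)}{M(\theta)}-\left(\frac{M'(\theta)}{M(\theta)}\right)^{2}.
\]
Introducing the exponentially tilted measure $d\PP_\theta=\exp(\theta Y)/M(\theta)\,d\PP$, these formulas read $\phi'(\theta)=\E_\theta[Y]$ and $\phi''(\theta)=\mathrm{Var}_\theta(Y)$. This immediately gives $\phi''(\theta)\geq 0$, and at $\theta=0$ yields $\phi'(0)=\E_0[Y]=\E Y$, handling two of the three conclusions.

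For the uniform upper bound, the crude estimate $\phi''(\theta)\leq \E_\theta[Y^2]=\E[Y^2\exp(\theta Y)]/M(\theta)$ suffices. The numerator is bounded by $2C_2K_1$ from the first step. For the denominator, Jensen's inequality gives $M(\theta)\geq \exp(\theta\E Y)$; since $|\E Y|<\infty$ (from the $k=1$ domination estimate at $\theta=0$) and $\theta$ is confined to $[0,\theta_0/2)$, we obtain $M(\theta)\geq \exp(-(\theta_0/2)|\E Y|)=:c_1>0$. Setting $K_2:=2C_2K_1/c_1$ then produces the desired bound.

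The only non-routine aspect is the uniform-in-$\theta$ lower bound on $M(\theta)$, which is precisely why the restriction to $[0,\theta_0/2)$ rather than $[0,\theta_0)$ is natural: working strictly inside the interval of exponential integrability leaves enough room for the $\epsilon=\theta_0/2$ shift used in the domination estimate, and the boundedness of $\theta$ together with the finiteness of $\E Y$ delivers the positive lower bound for $M$ through Jensen. Everything else is a direct application of differentiation under the integral sign together with the variance interpretation of $\phi''$.
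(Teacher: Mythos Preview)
Your argument is correct. The paper itself does not supply a proof of this lemma; it simply writes ``This has been proven in \cite{B23, HN18}'' and moves on. Your proof is the standard one and is essentially what one finds in those references: domination via $|y|^k\leq C_k(e^{\epsilon y}+e^{-\epsilon y})$ to justify differentiating $M(\theta)$ twice, followed by the tilted-measure interpretation $\phi''(\theta)=\mathrm{Var}_\theta(Y)\geq 0$.

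One small remark on the lower bound for $M(\theta)$: your Jensen argument works, but you should make explicit that $|\E Y|$ is itself controlled by $K_1$ and $\theta_0$ (via the $k=1$ domination estimate), so that the resulting $K_2$ depends only on $(K_1,\theta_0)$ and not on the particular random variable $Y$. This uniformity is exactly what the paper needs in Proposition~\ref{prop2.1}, where the lemma is applied to a whole family $G(T)$ indexed by $(\bx,k,T)$ with a common $K_1$. A slightly cleaner route to the same end is Cauchy--Schwarz: from $1=\E[e^{\theta Y/2}e^{-\theta Y/2}]\leq M(\theta)^{1/2}M(-\theta)^{1/2}$ and $M(-\theta)\leq K_1$ one gets $M(\theta)\geq 1/K_1$ directly, which avoids any reference to $\E Y$.
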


\section{Persistence}\label{s:perm}
This section is devoted to finding conditions under which the process $(\BX(t),r(t))$ exhibits persistence. In particular we show when the process converges to a unique invariant probability measure supported on $\R^{n,\circ}_+$.

We begin the section by giving an overview of the lemmas and propositions and by sketching the proof of the main results.

It is shown in \cite[Lemma 4]{SBA11} by the minmax principle that Assumption \ref{a2-pdm} is equivalent to the existence of $\mathbf p>0$ such that
\begin{equation}\label{e.p}
\min\limits_{\mu\in\M}\left\{\sum_{i}p_i\lambda_i(\mu)\right\}:=2\rho^*>0.
\end{equation}
By rescaling if necessary, we can assume that $\|\bp\|=\delta_0$.

\begin{proof}[Idea of proof of Theorem \ref{thm3.1}]
The main ingredient in proving persistence is showing that there are constants $\theta\in (0,1), \kappa\in (0,1), \tilde K>0, \bar T>0$ such that there is a `contraction' of the semigroup, in the sense that for all $(\bx,k)\in\R^{n,\circ}_+\times\CN$ one has
\begin{equation}\label{e:expl_pers}
\E_{\bx,k} V^\theta(\BX(\bar T), r(\bar T))\leq \kappa V^\theta(\bx,k)+\tilde K
\end{equation}
where 
$$V^\theta(\bx,k) = (V(\bx,k))^\theta= \left(\dfrac{F(\bx,k)}{\prod_i x_i^{p_i}}\right)^\theta.$$

This is done by first proving an estimate of the form \eqref{e:expl_pers} in Proposition \ref{prop2.1} when $\|\bx\|\leq M$. To achieve this we look at
\begin{equation}\label{e:G_expl}
\ln V(\BX(T), r(T))=\ln V(\BX(0), r(0)) + G(T)
\end{equation}
where
\begin{equation}\label{e:G2_expl}
\begin{aligned}
G(T)=\int_0^T\Phi(\BX(t), r(t))dt+ M_G(T). 
\end{aligned}
\end{equation}
Note that it is key here to use the persistence Assumption \ref{a2-pdm} which tells us intuitively that the boundary $\partial \R_+^n$, i.e. the invariant measures living on the boundary, is `repelling'. 
Rewriting sumption \ref{a2-pdm} as
\[
\min\limits_{\mu\in\M}\left\{\sum_{i}p_i\lambda_i(\mu)\right\}:=2\rho^*>0.
\]
allows us to prove in Lemma \ref{lm3.1} that for for any $T$ large enough, and $\bx\in\partial\R^n_+, \|\bx\|\leq M$ one has
\begin{equation}
\dfrac1T\int_0^T\E_{\bx,k}\Phi(\BX(t), r(t))dt\leq-\rho^*.
\end{equation}
This estimate ensures that the constant $\kappa$ from \eqref{e:expl_pers} is sub-unitary.

Using Lemma \ref{lm3.1}, and the fact that $M_G(t)$ is an $L^2$ martingale, together with Lemma \ref{lm2.5} and the Feller property allows one to control $\E_{\bx,k} G(T)$ uniformly for $\|\bx\|\leq M$ close enough to the extinction boundary $\partial \R_+^n$ and for $T$ in a compact interval. This will allow one to finish the proof of Proposition \ref{prop2.1}.

Next, one can show that for large values of $\|\bx\}$, i.e., for $\|\bx\|>M$ one has
\[
\Lom V^\theta (\bx,k)\leq -\theta \gamma_b V^\theta(\bx,k).
\]
This implies that there is a strong enough drift making the process return into compact sets. The strong Markov property allows us to show that \eqref{e:expl_pers} holds for $\|\bx\|>M$. As a result of this and Proposition \ref{prop2.1} we get that  \eqref{e:expl_pers} for all $\bx\in \R_+^{n,\circ}$. Once this is established, one can prove the various persistence results. Note that as we approach $\partial \R_+^n$ or $\bx\to\infty$ we have 
\begin{equation}\label{e:expl_V}
V^\theta (\bx,k) \to \infty,
\end{equation}
which together with  \eqref{e:expl_pers} allows the control at the boundary and at infinity. For example, by \eqref{e:expl_pers} one can show 
\[
\limsup_{t\to\infty} \E_{\bx,k} V^\theta(\BX(t), r(t))\leq \frac{\exp( \theta H\bar T)}{1-\kappa}.
\]
This with \eqref{e:expl_V} will imply that for any $\eps>0$ there exists $\delta>0$ such that for all $(\bx,k)\in\R^{n,\circ}_+\times\CN$
\[
\liminf_{t\to\infty} \PP_{\bx,k} \{\min_i X_i(t)\geq \delta\} \geq 1-\eps,
\]
which shows persistence in probability. 

If Assumption \ref{a3-pdm} holds one can show that the Markov chain $((\BX(\ell \bar T), r(\ell \bar T))$ is irreducible and aperiodic and that every subset of $\R_+^{n,\circ}\times\CN$ is petite. This then allows to use the theory from \cite{MT} in order to show that $((\BX(\ell \bar T), r(\ell \bar T))$ converges to a unique invariant probability measure on $\R_+^{n,\circ}\times\CN$ 

\end{proof}
\begin{lm}\label{lm3.1}
Suppose that Assumption \ref{a2-pdm} holds. Let $\bp$ and $\rho^*$ be as in \eqref{e.p}.
There exists a $T^*>0$ such that, for any $T>T^*$, $\bx\in\partial\R^n_+, \|\bx\|\leq M$ one has
\begin{equation}
\dfrac1T\int_0^T\E_{\bx,k}\Phi(\BX(t), r(t))dt\leq-\rho^*
\end{equation}
where
\begin{equation}
\begin{aligned}
\Phi(\bx,k):=\dfrac{\Lom F(\bx,k)}{F(\bx,k)}-\sum_{\ell\in\CN} q_{k\ell}(\bx)\left(\frac{F(\bx,\ell)}{F(\bx,k)}-1-\ln\frac{F(\bx,\ell)}{F(\bx,k)} \right)
-\sum_{i}p_if_i(\bx,k).
\end{aligned}
\end{equation}
\end{lm}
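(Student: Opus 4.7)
The plan is a compactness-plus-contradiction argument whose engine is the identity from Lemma \ref{lm2.3}. Observe first that $\Phi$ is nothing but $\Lom(\ln F) - \sum_i p_i f_i$, computed via the generator \eqref{e:gen}. Lemma \ref{lm2.3} states that $\sum_k\int \Lom(\ln F)\,d\mu = 0$ for every invariant probability measure $\mu$, while by the definition of the invasion rate, $\sum_k\int\sum_i p_i f_i(\bx,k)\mu(d\bx,k) = \sum_i p_i\lambda_i(\mu)$ whenever $\mu$ is supported on $\partial\R^n_+\times\CN$. Consequently, for any $\mu\in\Conv(\M)$,
$$\sum_{k\in\CN}\int_{\partial\R^n_+}\Phi(\bx,k)\,\mu(d\bx,k) = -\sum_i p_i\lambda_i(\mu) \leq -2\rho^*,$$
the inequality being the minmax principle \eqref{e.p} encoding Assumption \ref{a2-pdm}.

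Suppose the conclusion fails. Then one can extract sequences $T_m\to\infty$, $\bx_m\in\partial\R^n_+$ with $\|\bx_m\|\leq M$ and $k_m\in\CN$ such that
$$\frac{1}{T_m}\int_0^{T_m}\E_{\bx_m,k_m}\Phi(\BX(t),r(t))\,dt > -\rho^* \quad\text{for all } m.$$
Consider the Cesaro measures $\Pi^{\bx_m,k_m}_{T_m}$ from \eqref{e:Pi_meas}. Lemma \ref{lm2.2} yields $\E_{\bx_m,k_m}F(\BX(t),r(t))\leq H_1 + F(\bx_m,k_m)$, uniformly bounded since $\|\bx_m\|\leq M$; combined with the coercivity $F(\bx,k)\geq c(1+\|\bx\|)$ from \eqref{e:F_growth}, this yields tightness of $(\Pi^{\bx_m,k_m}_{T_m})_m$. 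Pass to a weakly convergent subsequence with limit $\pi$. Because $\partial\R^n_+\times\CN$ is closed and invariant under the flow (Lemma \ref{lm2.0}), each $\Pi^{\bx_m,k_m}_{T_m}$ is supported there, and hence so is $\pi$. The Feller property from Lemma \ref{lm2.2} allows the standard Krylov--Bogoliubov argument to show that $\pi$ is invariant, so $\pi\in\Conv(\M)$.

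It remains to pass to the limit inside the integral, which is where Lemma \ref{lm2.4} is applied with $h=\Phi$. The growth hypothesis holds because Assumption \ref{a1-pdm} (specifically \eqref{e:sup}) gives $|\Lom F/F|\leq C_1(1+f^M)$; Assumption \ref{a:bound} forces the ratios $F(\bx,\ell)/F(\bx,k)$ into a compact subset of $(0,\infty)$ on which $r\mapsto r-1-\ln r$ is bounded, and the rates $q_{k\ell}$ are themselves bounded; finally $|\sum_i p_i f_i|\leq \delta_0 f^M$. Combining these, $|\Phi(\bx,k)|\leq C(1+f^M(\bx,k))$, which fits the hypothesis of Lemma \ref{lm2.4} with $\delta=0<1$ (using the positive lower bound on $F$). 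Therefore
$$\lim_{m\to\infty}\frac{1}{T_m}\int_0^{T_m}\E_{\bx_m,k_m}\Phi(\BX(t),r(t))\,dt = \sum_{k\in\CN}\int\Phi(\bx,k)\,\pi(d\bx,k) \leq -2\rho^*,$$
contradicting the strict inequality $>-\rho^*$ along the sequence. The step I expect to be most delicate is verifying the technical growth bound on $\Phi$ so that Lemma \ref{lm2.4} applies (the switching-correction term involving $r-1-\ln r$ requires the two-sided comparability of Assumption \ref{a:bound}), together with ensuring that the weak limit $\pi$ is genuinely a member of $\Conv(\M)$ rather than losing mass at infinity---this is precisely where tightness from Lemma \ref{lm2.2} and the coercivity \eqref{e:F_growth} play the decisive role.
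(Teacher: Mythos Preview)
Your argument is correct and follows essentially the same contradiction-plus-tightness route as the paper: assume failure along a sequence, use Lemma~\ref{lm2.2} for tightness of the Cesaro measures, identify the weak limit as an invariant measure on the boundary, and invoke Lemma~\ref{lm2.4} together with Lemma~\ref{lm2.3} and \eqref{e.p} to reach the contradiction. You in fact supply more detail than the paper---the explicit identification $\Phi=\Lom(\ln F)-\sum_i p_i f_i$, the support argument for $\pi$, and the verification of the growth hypothesis on $\Phi$ needed for Lemma~\ref{lm2.4}---all of which the paper leaves implicit.
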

\begin{proof}
We argue by contradiction. Suppose that the conclusion of this lemma is not true.
Then, we can find $\bx_m\in\partial\R^n_+, \|\bx_m\|\leq M$
and $T_m>0$, $\lim_{m\to\infty} T_m=\infty$
such that
\begin{equation}\label{e3.9}
\dfrac1{T_m}\int_0^{T_m}\E_{\bx_m,k_m}\Phi(\BX(t), r(t))dt>-\rho^*\,,\,k\in\N.
\end{equation}
It follows from Lemma \ref{lm2.2} that  $\left(\Pi^{\bx_m,k_m}_{T_m}\right)_{m\in \N}$ is tight - see the proof of Lemma \ref{lm2.3} in the appendix for the details. As a result
$\left(\Pi^{\bx_m,k_m}_{T_m}\right)_{k\in \N}$ has a convergent subsequence in the weak$^*$-topology.
Without loss of generality, we can suppose that $\left\{\Pi^{\bx_m,k_m}_{T_m}:k\in\N\right\}$
is a convergent sequence in the weak$^*$-topology.

It can be shown (by \cite[Theorem 9.9]{EK09} or by \cite[Proposition 6.4]{EHS15}) that its limit is an invariant probability measure $\mu$ of $(\BX(t),r(t))$.
As a consequence of Lemma \ref{lm2.4}
$$\lim_{m\to\infty}\dfrac1{T_m}\int_0^{T_m}\E_{\bx_m,k_m}\Phi(\BX(t), r(t))dt=\sum_{k\in\CN}\int_{\R^n_+}\Phi(\bx,k)\mu(d\bx,k).$$
In view of Lemma \ref{lm2.3} and \eqref{e.p} we get that
$$\lim_{m\to\infty}\dfrac1{T_m}\int_0^{T_m}\E_{\bx_m,k_m}\Phi(\BX(t), r(t))dt=-\sum_{i=1}^np_i\lambda_i(\mu)\leq -2\rho^*,$$
which contradicts \eqref{e3.9}.
\end{proof}

From now on let $n^*\in\N$ such that
\begin{equation}\label{e:n*}
\gamma_b(n^*-1)>H+1.
\end{equation}

\begin{prop}\label{prop2.1}
Let $V(\cdot)$ be defined by \eqref{e:V} with $\bp$ and $\rho^*$ satisfying \eqref{e.p} and $T^*>0$ satisfying the assumptions of Lemma \ref{lm3.1}.
There are $\theta\in\left(0,\frac{1}2\right)$, $K_\theta>0$, such that for any $T\in[T^*,n^*T^*]$ and $\bx\in\R^{n,\circ}_+, \|\bx\|\leq M$,
\begin{equation}\label{e:V_theta_small_x}
\E_{\bx,k} V^\theta(\BX(T), r(T))\leq \exp(-0.5\theta \rho^*T) V^\theta(\bx,k)+K_\theta.
\end{equation}
\end{prop}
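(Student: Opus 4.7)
\medskip

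\noindent\textbf{Plan of proof.} A direct computation with the generator \eqref{e:gen} and the definition $V(\bx,k)=F(\bx,k)/\prod_i x_i^{p_i}$ shows that $\Lom \ln V(\bx,k) = \Phi(\bx,k)$, the same quantity appearing in Lemma \ref{lm3.1} (the $\sum q_{k\ell}[\tfrac{F(\bx,\ell)}{F(\bx,k)}-1-\ln\tfrac{F(\bx,\ell)}{F(\bx,k)}]$ term arises when one rewrites $\sum_i x_i f_i \partial_i F/F$ in terms of $\Lom F/F$ and the jump contribution). Dynkin's formula then gives the decomposition
\[
\ln V(\BX(T),r(T)) = \ln V(\bx,k) + \int_0^T \Phi(\BX(t),r(t))\,dt + M_G(T),
\]
where $M_G$ is a pure-jump martingale whose jumps at switching times $k\to\ell$ equal $\ln F(\BX(t),\ell)-\ln F(\BX(t),k)$. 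Assumption \ref{a:bound} bounds these jumps by $\ln M_F$, and since $q_{ij}$ is bounded, $M_G$ is in fact a true $L^2$-martingale on $[0, n^*T^*]$. Writing $G(T):=\int_0^T \Phi\,dt+M_G(T)$ and exponentiating yields
\[
\E_{\bx,k} V^\theta(\BX(T),r(T)) = V^\theta(\bx,k)\,\E_{\bx,k}\exp(\theta G(T)).
\]

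The strategy is to apply Lemma \ref{lm2.5} to $Y=G(T)$. The bounded jumps and bounded compensator of $M_G$ furnish an exponential-moment bound on the martingale part; for the drift, the term $\Phi$ is controlled by $\Lom F/F$ plus $\max_i|f_i|$, which by the growth condition \eqref{e:sup2} and the moment estimate \eqref{e3-lm2.2} of Lemma \ref{lm2.2} is uniformly integrable along trajectories starting in $\{\|\bx\|\leq M\}$. One obtains a $\theta_0>0$ and a constant $K_1$ such that
\[
\E_{\bx,k}\exp(\pm\theta_0 G(T)) \leq K_1 \quad \text{for all } \|\bx\|\leq M,\ T\in[T^*,n^*T^*].
\]
Lemma \ref{lm2.5} then yields a constant $K_2$ with $\ln\E_{\bx,k}\exp(\theta G(T)) \leq \theta\,\E_{\bx,k} G(T) + K_2\theta^2$ for $\theta\in[0,\theta_0/2]$, uniformly in $\bx$ and $T$ in the stated ranges.

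Now the drift $\E_{\bx,k} G(T)=\E_{\bx,k}\int_0^T\Phi\,dt$ is controlled from above by Lemma \ref{lm3.1}: on $\partial\R^n_+\cap\{\|\bx\|\leq M\}$ it is $\leq-\rho^*T$. The Feller property of $(\BX(t),r(t))$ (Lemma \ref{lm2.2}) together with the uniform integrability from the previous paragraph propagates this estimate into an $\eta$-neighborhood: there is $\eta>0$ such that $\E_{\bx,k} G(T)\leq -0.75\rho^*T$ whenever $\min_i x_i\leq \eta$, $\|\bx\|\leq M$ and $T\in[T^*,n^*T^*]$. Choosing $\theta\in(0,\theta_0/2)$ small enough that $K_2\theta\leq 0.25\rho^*T^*$ gives $\ln\E\exp(\theta G(T))\leq -0.5\theta\rho^*T$ on this boundary strip, hence $\E V^\theta(\BX(T),r(T))\leq \exp(-0.5\theta\rho^*T)V^\theta(\bx,k)$. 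For the complementary region $\{\min_i x_i > \eta,\ \|\bx\|\leq M\}$, the function $V^\theta$ is bounded by some $C_\eta$, and Lemma \ref{lm2.0} combined with Jensen's inequality (for $\theta<1$) yields $\E V^\theta(\BX(T),r(T))\leq V^\theta(\bx,k)\exp(\theta HT)\leq C_\eta\exp(\theta H n^*T^*)=:K_\theta$. Adding the two bounds gives \eqref{e:V_theta_small_x}. The main obstacle is securing the uniform exponential-moment bound on $G(T)$ needed to invoke Lemma \ref{lm2.5}, since $\BX(t)$ over $[0,T]$ is not confined to the compact set $\{\|\bx\|\leq M\}$; this is precisely where Assumption \ref{a:bound} (controlling the jumps) and the growth condition \eqref{e:sup2} (controlling the drift excursions) are used in tandem.
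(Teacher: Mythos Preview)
Your overall architecture matches the paper's proof: decompose $\ln V(\BX(T),r(T))=\ln V(\bx,k)+G(T)$, invoke Lemma \ref{lm2.5} on $G(T)$, feed in the drift bound from Lemma \ref{lm3.1} plus the Feller property near $\partial\R^n_+$, pick $\theta$ small, and handle the region $\{\min_i x_i>\eta,\ \|\bx\|\leq M\}$ by the trivial $V^\theta$ bound. That part is fine.

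The gap is in how you verify the hypothesis of Lemma \ref{lm2.5}, namely a uniform bound on $\E_{\bx,k}\exp(\pm\theta_0 G(T))$. Your argument treats the drift and the martingale separately: $M_G$ has bounded jumps and rate, so its exponential moments are fine; but for $\int_0^T\Phi\,dt$ you appeal to \eqref{e3-lm2.2}, which only yields an $L^1$ bound on $\int_0^T|\Phi|\,dt$. Uniform integrability does \emph{not} give exponential moments. The issue is real: $\Phi$ is unbounded below (since $\Lom F/F\to-\infty$ as $\|\bx\|\to\infty$ by \eqref{e:sup}), so $-\int_0^T\Phi\,dt$ can be large and there is no pointwise bound available for $\exp(-\theta_0\int_0^T\Phi\,dt)$. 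A H\"older split $\E e^{-\theta_0 G(T)}\leq(\E e^{-2\theta_0\int\Phi})^{1/2}(\E e^{-2\theta_0 M_G})^{1/2}$ leaves the first factor uncontrolled.

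The paper resolves this not by separating drift and martingale but by exploiting the identity $\exp(G(T))=V(\BX(T),r(T))/V(\bx,k)$ directly. The positive moment then comes straight from \eqref{e1-lm2.2}: $\E_{\bx,k}\exp(G(T))\leq e^{HT}$. For the negative moment the paper introduces the auxiliary function $\hat V(\bx,k)=F(\bx,k)\prod_i x_i^{p_i}$, checks that $\Lom\hat V\leq H\hat V$ as well (same computation, flipped sign on the $p_i f_i$ term), so that $\E_{\bx,k}\hat V(\BX(T),r(T))\leq e^{HT}\hat V(\bx,k)$. The algebraic relation $V^{-1}=\hat V\,F^{-2}\leq c^{-2}\hat V$ (using $F\geq c$) then gives
\[
\E_{\bx,k}\exp(-G(T))=\frac{\E_{\bx,k}V^{-1}(\BX(T),r(T))}{V^{-1}(\bx,k)}\leq \Bigl(\frac{F(\bx,k)}{c}\Bigr)^{2}e^{HT},
\]
which is uniformly bounded for $\|\bx\|\leq M$ and $T\in[T^*,n^*T^*]$. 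This is the step you are missing; once you have it, the rest of your argument goes through exactly as you wrote it.
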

\begin{proof}
We have from It\^o's formula that
\begin{equation}\label{e:G}
\ln V(\BX(T), r(T))=\ln V(\BX(0), r(0)) + G(T)
\end{equation}
where
\begin{equation}
\begin{aligned}
G(T)=\int_0^T\Phi(\BX(t), r(t))dt+ M_G(T).
\end{aligned}
\end{equation}
where
$$M_G(T)=\int_0^T\int_{\R_+} \ln\left(\frac{F(\BX(t),\alpha(t)+h(\BX(t),\alpha(t),y))}{F(\BX(t),\alpha(t))}\right)\tilde N(dt,dy)
$$
where $h(x,i,z)=\sum_{j=1}^{n_0} (j-i)I_{\{z\in \Delta_{ij}(x)\}}$, $\Delta_{ij}(x)$ being the consecutive left-closed, right-open intervals of the real line, each having length $q_{ij}(x)$ as described in \cite{YZ10} . The martingale measure $\tilde N(dt,dy) = N(dt,dy) - dt\times dy$ comes from the Poisson measure $N(dt,dy)$ with intensity $dt\times dy$.
Note that $M_G(T)$ is a martingale from \ref{c:a1}.
In view of \eqref{e:G} and \eqref{e1-lm2.2}
\begin{equation}\label{e3.4_2}
\E_{\bx,k} \exp(G(T))=\dfrac{\E_{\bx,k} V (\BX(T), r(T))}{V (\bx,k)}\leq  \exp( HT).
\end{equation}
where $M_G(t)$ is a martingale with predictable quadratic variation
\[
\langle M_G,M_G\rangle_t = \int_0^T\int_{\R_+} \left(\ln\left[ 
 \frac{F(\BX(t),\alpha(t)+h(\BX(t),\alpha(t),y))}{F(\BX(t),r(t))}  \right ]^2    \right) dy dt
\]
We note that the proof that $M_G$ is a martingale appears in the appendix, when we prove the Lemmas from Sections \ref{s:prep} and \ref{s:extin} - see for example the proof of Lemma \ref{lm2.3}.

We define the function $\hat V:\R^{n}_+\times\CN\to \R_+$ as 
\[
\hat V(\bx,k)= F(\bx,k)\left(\prod_{i=1}^n \bx_i^{p_i}\right)
\]
then computing the generator we get 
\[
\Lom(\hat V(\bx,k))= \sum_{i=1}^nf_i(\bx,k)\left(\bx_i\frac{\partial F}{\partial x_i}(\bx,k)+ p_iF(\bx,k)     \right)\left(\prod_{i=1}^n\bx_i^{p_i}\right) + \sum_{l\in\CN}q_{kl}(\bx)\hat V(\bx,k)
\]
we can simplify the above expression to 
\[
\Lom(\hat V(\bx,k))=\Lom(F(\bx,k))\left(\prod_{i=1}^n \bx_i^{p_i}\right)+ \sum_{i=1}^n p_if_i(\bx,k)\hat V(\bx,k)
\]
hence from \eqref{e:H}
\begin{equation}\label{Lvhat}
\Lom(\hat V(\bx,k))\leq H \hat V(\bx,k)    
\end{equation}

Applying Dynkin's Formula to $\phi(t)=\hat V(\BX(t),r(t))$, then 
\begin{equation}\label{vhatDyn}
\E_{\bx,k}(\hat V(\BX(t),r(t)))=\hat V(\bx,k)+ \E_{\bx,k}\left(\int_0^t \Lom(\hat V(\BX(s),r(s))) ds   \right).   
\end{equation}
then using \eqref{Lvhat},\eqref{vhatDyn} and Gronwall's inequality we get
\begin{equation}\label{vhat-1}
\dfrac{\E_{\bx,k} \hat V (\BX(T), r(T))}{\hat V (\bx,k)}\leq  \exp( HT).
\end{equation}
Note that
\begin{equation}\label{vhat-2}
V^{-1}(\bx,k)=\hat V (\bx,k)(F(\bx,k))^{-2}\leq\frac{1}{c^2}( \hat V (\bx,k)).
\end{equation}
Applying \eqref{vhat-2} to \eqref{vhat-1} yields
\begin{equation}\label{e3.5}
\begin{aligned}
\E_{\bx,k} \exp(-  G(T))=&\dfrac{\E_{\bx,k} V^{- 1}(\BX(T), r(T))}{V^{- 1}(\bx,k)}\\
\leq&\dfrac{\E_{\bx,k}\hat V (\BX(T), r(T))}{c^2V^{-1}(\bx,k)}\\
\leq& \dfrac{\E_{\bx,k}\hat V (\BX(T), r(T))}{c^2\hat V (\bx,k)}(F(\bx,k))^{2}\\
\leq& \left(\frac{F(\bx,k)}{c}\right)^{2}\exp(  HT).
\end{aligned}
\end{equation}

By \eqref{e3.4_2} and \eqref{e3.5} the assumptions of Lemma \ref{lm2.5} hold for $G(T)$. Therefore,
there is $\tilde K_2\geq 0$ such that
$$0\leq \dfrac{d^2\tilde\phi_{\bx,k,T}}{d\theta^2}(\theta)\leq \tilde K_2\,\text{ for all }\,\theta\in\left[0,\frac{1}2\right),\, \bx\in\R^{n,\circ}_+, \|\bx\|\leq M, T\in [T^*,n^*T^*]$$
where
$$\tilde\phi_{\bx,k,T}(\theta)=\ln\E_{\bx,k} \exp(\theta G(T)).$$
In view of Lemma \ref{lm3.1} and the Feller property of $(\BX(t),r(t))$,
there exists $\tilde\delta>0$ such that
if $\|\bx\|\leq M$, $\dist(\bx,\partial\R^n_+)<\tilde\delta$ and $T\in [T^*,n^*T^*]$
then
\begin{equation}\label{e3.6}
\begin{aligned}
\E_{\bx,k} G(T)=\int_0^T\E_{\bx,k} \dfrac{\Lom F(\BX(t),r(t)) }{F(\BX(t),r(t))}
-\sum_{i=1}^np_i\int_0^T\E_{\bx,k}f_i(\BX(t),r(t))dt\leq -\dfrac34\rho^*T.
\end{aligned}
\end{equation}
An application of Lemma \ref{lm2.5} yields
$$\dfrac{d\tilde\phi_{\bx,k,T}}{d\theta}(0)=\E_{\bx,k} G(T)\leq -\dfrac34\rho^*T.$$
By a Taylor expansion around $\bx=0$, for $\|\bx\|\leq M, \dist(\bx,\partial\R^n_+)<\tilde\delta, T\in [T^*,n^*T^*]$ and $\theta\in\left[0,\frac{1}2\right)$ we have
$$\tilde\phi_{\bx,k,T}(\theta)\leq -\dfrac34\rho^*T\theta+\theta^2\tilde K_2 .$$
If we choose any $\theta\in\left(0,\frac{1}2\right)$ satisfying
$\theta<\frac{\rho^*T^*}{4\tilde K_2}$, we obtain that
\begin{equation}\label{e3.10}
\tilde\phi_{\bx,k,T}(\theta)\leq -\dfrac12\rho^*T\theta\,\,\text{ for all }\,\bx\in\R^{n,\circ},\|\bx\|\leq M, \dist(\bx,\partial\R^n_+)<\tilde\delta, T\in [T^*,n^*T^*].
\end{equation}
In light of \eqref{e3.10}, we have for such $\theta$ and $\|\bx\|\leq M, 0<\dist(\bx,\partial\R^n_+)<\tilde\delta, T\in [T^*,n^*T^*]$ that
\begin{equation}\label{e3.11}
\dfrac{\E_{\bx,k} V^\theta(\BX(T), r(T))}{V^\theta(\bx,k)}=\exp \tilde\phi_{\bx,k,T}(\theta)\leq\exp(-0.5p^*T\theta).
\end{equation}
In view of \eqref{e1-lm2.2},
we have for $\bx$ satisfying $\|\bx\|\leq M, \dist(\bx,\partial\R^n_+)\geq\tilde\delta$ and $T\in  [T^*,n^*T^*]$ that
\begin{equation}\label{e3.12}
\E_{\bx,k} V^\theta(\BX(T), r(T))\leq \exp(\theta n^*T^*H)\sup\limits_{\|\bx\|\leq M, \dist(\bx,\partial\R^n_+)\geq\tilde\delta}\{V(\bx,k)\}=:K_\theta<\infty.
\end{equation}
Then \eqref{e:V_theta_small_x} follows by combining \eqref{e3.11} and \eqref{e3.12} as follows. For $\bx$ satisfying $\|\bx\|\leq M$ and $T\in  [T^*,n^*T^*]$ we have

\begin{equation*}
\begin{split}
\E_{\bx,k} V^\theta(\BX(T), r(T)) &=1_{\dist(\bx,\partial\R^n_+)\geq\tilde\delta}\E_{\bx,k} V^\theta(\BX(T), r(T)) + 1_{\dist(\bx,\partial\R^n_+)\leq\tilde\delta}\E_{\bx,k} V^\theta(\BX(T), r(T)) \\
&\leq \exp(-0.5p^*T\theta)V^\theta(\bx,k) + K_\theta.
\end{split}
\end{equation*}
\end{proof}

\begin{rmk}
One might be tempted to just use Dynkin's formula in trying to prove this proposition. However, this will not yield the correct result. Assume we use Dynkin's formula to get
\[
 \frac{1}{T}\E_{\bx,k} \ln V^\theta (\BX(T),r(T)) =  \frac{1}{T} \ln V^\theta (\bx,k) +  \frac{1}{T}\theta\E_{\bx,k} \int_0^T \Phi(\BX(t), r(t))dt 
\]
Then for $\|\bx\|\leq M, 0<\dist(\bx,\partial\R^n_+)<\tilde\delta, T\in [T^*,n^*T^*]$ we get from Lemma \ref{lm3.1} and the Feller property that
\begin{equation}
\begin{split}
 \frac{1}{T}\E_{\bx,k}\ln V^\theta (\BX(T),r(T)) \leq \frac{1}{T} \ln V^\theta (\bx,k) - \frac{1}{2}\theta \rho^*
\end{split}
\end{equation}
Now the next natural step would be to use Jensen's inequality but this goes in the wrong direction as
\[
\E_{\bx,k}\ln V^\theta (\BX(T),r(T)) \leq \ln \E_{\bx,k} V^\theta (\BX(T),r(T)).
\]
It is why the significantly more technical proof presented above is required. 
\end{rmk}

The next result is the main persistence theorem.
\begin{thm}\label{thm3.1}
Suppose that Assumptions \ref{a1-pdm}, \ref{a:bound} and \ref{a2-pdm} hold.
Let $\theta$ be as in Proposition \ref{prop2.1}, $n^*$ as in \eqref{e:n*}.
There are $\kappa=\kappa(\theta,T^*)\in(0,1)$, $\tilde K=\tilde K(\theta,T^*)>0$   such that
\begin{equation}\label{e:lya}
\E_{\bx,k} V^\theta(\BX(n^*T^*), r(n^*T^*))\leq \kappa V^\theta(\bx,k)+\tilde K\,\text{ for all }\, (\bx,k)\in\R^{n,\circ}_+\times\CN.
\end{equation}
As a result,
$(\BX(t),r(t))$ is stochastically persistent in probability and almost surely stochastically persistent. 

In addition, if Assumption \ref{a3-pdm} is satisfied, then there is a unique probability measure $\pi^*$ on $\R^{n,\circ}_+\times\CN$ and the convergence of the transition probability of $(\BX(t),r(t))$ in total variation to $\pi^*$ on $\R^{n,\circ}_+$ is
exponentially fast. For any initial value $\mathbf{x}\in\R^{n,\circ}_+$ and any $\pi^*$-integrable function $f$ we have
\begin{equation}\label{slln}
\PP_{\bx,k}\left\{\lim\limits_{T\to\infty}\dfrac1T\int_0^Tf\left(\BX(t),r(t)\right)dt=\sum_{k\in\CN}\int_{\R_+^{n,\circ}}f(\mathbf{u},k)\pi^*(d\mathbf{u},k)\right\}=1.
\end{equation}

\end{thm}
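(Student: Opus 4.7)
The plan is to establish the contraction inequality \eqref{e:lya} in two regimes, then extract all the stated consequences from it using standard Meyn--Tweedie and Foster--Lyapunov machinery, together with the irreducibility tools that Assumption \ref{a3-pdm} provides.

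First I would upgrade Proposition \ref{prop2.1}, which only handles $\|\bx\|\leq M$, to a global estimate by exploiting the dissipativity baked into Assumption \ref{a1-pdm}. A direct computation of the generator gives
\begin{equation*}
\Lom V^\theta(\bx,k) = \theta V^\theta(\bx,k)\Bigl[\frac{\Lom F(\bx,k)}{F(\bx,k)} - \sum_{i}p_i f_i(\bx,k) - \sum_{\ell}q_{k\ell}(\bx)\Bigl(\frac{F(\bx,\ell)}{F(\bx,k)}-1-\ln\frac{F(\bx,\ell)}{F(\bx,k)}\Bigr)\Bigr] + R_\theta(\bx,k),
\end{equation*}
where the remainder $R_\theta$ coming from the jump-part of $V^\theta$ is a higher-order term in $\theta$ and is bounded via Assumption \ref{a:bound}. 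Using $\|\bp\|=\delta_0$ together with \eqref{e:sup} (or the equivalent \eqref{e:sup2} with constant $\delta_0$), one verifies that for $M$ sufficiently large and for $\theta$ small enough,
\begin{equation*}
\Lom V^\theta(\bx,k) \leq -\theta\gamma_b V^\theta(\bx,k), \qquad \|\bx\|>M,\ k\in\CN.
\end{equation*}
Combining this with \eqref{e1-lm2.2} and Dynkin's formula applied at the stopping time $\tau_M:=\inf\{t\geq 0 : \|\BX(t)\|\leq M\}\wedge n^*T^*$, then using the strong Markov property at $\tau_M$ together with the small-$\|\bx\|$ bound \eqref{e:V_theta_small_x} (taking $T=n^*T^*-\tau_M$ and using the choice \eqref{e:n*} to ensure the exponential decay dominates the $e^{HT}$ growth), yields \eqref{e:lya} globally with some $\kappa\in(0,1)$ and $\tilde K>0$. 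This merging of the two regimes is the main technical obstacle, since one must be careful that the $\kappa$ coming from the compact-set contraction and the exponential decay at infinity can be simultaneously chosen with the same $\theta$.

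Once \eqref{e:lya} is in hand, iterating it along the Markov chain $Y_\ell:=(\BX(\ell n^*T^*),r(\ell n^*T^*))$ gives
\begin{equation*}
\E_{\bx,k}V^\theta(Y_\ell)\leq \kappa^\ell V^\theta(\bx,k)+\frac{\tilde K}{1-\kappa},
\end{equation*}
and interpolating over $t\in[\ell n^*T^*,(\ell+1)n^*T^*]$ via \eqref{e1-lm2.2} yields $\sup_{t\geq 0}\E_{\bx,k}V^\theta(\BX(t),r(t))<\infty$ and $\limsup_{t\to\infty}\E_{\bx,k}V^\theta(\BX(t),r(t))\leq \tilde K e^{\theta H n^*T^*}/(1-\kappa)$. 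Since $V^\theta(\bx,k)\to\infty$ both as $\min_i x_i\to 0$ and as $\|\bx\|\to\infty$, Markov's inequality converts this into stochastic persistence in probability. Almost sure stochastic persistence then follows from the $V^\theta$-Lyapunov structure and the ergodic theorem for Cesaro occupation measures in the manner of \cite{B23}: the family $\{\wtd\Pi_t\}_{t\geq 1}$ is almost surely tight, every weak$^*$ limit point is invariant, and the estimate \eqref{e:lya} together with Lemma \ref{lm2.3} forbids such limits from placing mass on $\partial\R^n_+\times\CN$.

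For the final assertions under Assumption \ref{a3-pdm}, the plan is to verify the hypotheses of the Meyn--Tweedie theory \cite{MT} for the discrete skeleton $Y_\ell$. The strong bracket condition \eqref{e:Horm} at a point $\bx_0\in\Gamma$, combined with the support theorems for PDMP from \cite{BH12,BBMZ15} and the accessibility provided by $\bx_0\in\Gamma$, produces a Doeblin point in the sense of Definition \ref{def:doeblin}; hence every compact subset of $\R^{n,\circ}_+\times\CN$ is petite for the skeleton, and the skeleton is $\psi$-irreducible and aperiodic. Inequality \eqref{e:lya} is exactly the geometric drift condition of \cite{MT}, so the $V$-uniform ergodic theorem gives a unique invariant probability measure $\pi^*$ on $\R^{n,\circ}_+\times\CN$ together with geometric convergence in the $V^\theta$-weighted total variation norm; restriction to the test function $\1$ yields exponential convergence in total variation at some rate $\gamma_D>0$ for the skeleton, and the Feller property extends this to the continuous-time semigroup. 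The strong law \eqref{slln} is then the standard Birkhoff ergodic theorem applied to $\pi^*$, with $\pi^*$-integrability of $f$ handled by the $V$-norm bound and $\pi^*(V^\theta)<\infty$, which itself follows by integrating \eqref{e:lya} against $\pi^*$.
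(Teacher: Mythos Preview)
Your outline matches the paper's proof closely: the generator bound for $\|\bx\|>M$, the stopping-time/strong-Markov merging with Proposition~\ref{prop2.1}, the iteration for persistence in probability, the occupation-measure argument for almost-sure persistence, and the Meyn--Tweedie machinery under Assumption~\ref{a3-pdm} are all exactly what the paper does.

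Two points where your sketch is looser than what the paper actually needs. First, the jump remainder $R_\theta$ is not handled perturbatively in $\theta$: the paper writes $\Lom V^\theta$ exactly and observes that the jump contribution is $V^\theta(\bx,k)\sum_{\ell\neq k}q_{k\ell}(\bx)\bigl[(F(\bx,\ell)/F(\bx,k))^\theta-1-\theta(F(\bx,\ell)/F(\bx,k)-1)\bigr]$, which is \emph{nonpositive} for every $\theta\in(0,1)$ by the elementary inequality $y^\theta-1-\theta(y-1)\leq 0$ for $y\geq 0$. So Assumption~\ref{a:bound} is not used here, and no smallness of $\theta$ is required for this step; the bound $\Lom V^\theta\leq -\theta\gamma_b V^\theta$ on $\{\|\bx\|>M\}$ holds for all $\theta\in(0,1)$.

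Second, your merging step ``take $T=n^*T^*-\tau_M$ in Proposition~\ref{prop2.1}'' fails as written when $\tau_M\in((n^*-1)T^*,n^*T^*)$, since then $T<T^*$ lies outside the range where Proposition~\ref{prop2.1} applies. The paper splits into three cases on the event $\{\tau\leq(n^*-1)T^*\}$, $\{(n^*-1)T^*<\tau<n^*T^*\}$, $\{\tau\geq n^*T^*\}$: the first uses Proposition~\ref{prop2.1}, the third uses the Lyapunov decay directly, and the middle case uses only the crude growth bound \eqref{e1-lm2.2}, which is compensated by the factor $\exp(\theta\gamma_b(n^*-1)T^*)$ already accumulated---this is precisely where the choice \eqref{e:n*} enters, via $\gamma_b(n^*-1)-H>0$. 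You clearly anticipated the role of \eqref{e:n*}, but the three-way split is what makes it work.
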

\begin{proof}
By direct calculation, we get

\begin{equation}
\Lom V^{\theta}(\bx,k)=\theta V^{\theta}(\bx,k)\left(\sum_{i=1}^n \frac{\bx_if_i(\bx,k)}{F(\bx,k)}\frac{\partial F}{\partial x_i}(\bx,k)-\sum_{i=1}^n p_if_i(\bx,k)    \right)+ \sum_{l\in\CN}q_{kl}(\bx)V^{\theta}(\bx,l)    
\end{equation}
which implies
\begin{equation}
\begin{split}
\Lom V^{\theta}(\bx,k)&=\theta V^{\theta}(\bx,k)\left(\sum_{i=1}^n \frac{\bx_if_i(\bx,k)}{F(\bx,k)}\frac{\partial F}{\partial x_i}(\bx,k)-\sum_{i=1}^n p_if_i(\bx,k)    \right)+ \sum_{l\in\CN}q_{kl}(\bx)V^{\theta}(\bx,l)\\
&= \theta V^{\theta}(\bx,k)\left(\frac{\Lom F(\bx,k)}{F(\bx,k)}- \sum_{i=1}^n p_if_i(\bx,k) \right) - \sum_{l\in\CN}q_{kl}(\bx) \theta V^{\theta}(\bx,k)\frac{F(\bx,l)}{F(\bx,k)} + \sum_{l\in\CN}q_{kl}(\bx)V^{\theta}(\bx,l)
\end{split}
\end{equation}
This can be rewritten as
\begin{equation}\label{gentheta}
\begin{split}
\Lom V^{\theta}(\bx,k)=&\theta V^{\theta}(\bx,k)\left(\frac{\Lom F(\bx,k)}{F(\bx,k)}- \sum_{i=1}^n p_if_i(\bx,k) \right) - \theta V^{\theta}(\bx,k)\sum_{l\neq k}q_{kl}(\bx)\left(\frac{F(\bx,l)}{F(\bx,k)}-1\right)\\
+&V^{\theta}(\bx,k)\left(\sum_{l\neq k}q_{kl}(\bx)\left(\left(\frac{F(\bx,l)}{F(\bx,k)}\right)^{\theta}-1\right)  \right)  \\
=&\theta V^{\theta}(\bx,k)\left(\frac{\Lom F(\bx,k)}{F(\bx,k)}- \sum_{i=1}^n p_if_i(\bx,k) \right) \\
+& V^{\theta}(\bx,k)\sum_{l\neq k}q_{kl}(\bx)\left(- \theta\left(\frac{F(\bx,l)}{F(\bx,k)}-1\right) +\left(\left(\frac{F(\bx,l)}{F(\bx,k)}\right)^{\theta}-1\right) \right)
\end{split}    
\end{equation}
Note that for any  $y\geq 0$ and $\theta\in(0,1)$ the following inequality holds
\begin{equation}\label{e:ineq_theta}
g(y)=-\theta (y-1) + y^\theta-1 \leq 0
\end{equation}
This holds since $g(0)=\theta-1<0, g(1)=0$, $g'(y)\geq 0$ for $y\leq 1$ and $g'(y)\leq 0$ for $y\geq 1$.
This implies if we set $y=\frac{F(\bx,l)}{F(\bx,k)}$ and note that $q_{kl}(\bx)\geq 0$ for $k\neq l$ that
\begin{equation}\label{theta_trick}
V^{\theta}(\bx,k)\sum_{l\neq k}q_{kl}(\bx)\left(- \theta\left(\frac{F(\bx,l)}{F(\bx,k)}-1\right) +\left(\left(\frac{F(\bx,l)}{F(\bx,k)}\right)^{\theta}-1\right) \right)\leq 0.    
\end{equation}
Combining this with \eqref{gentheta} and using \eqref{e:sup}, we have
\begin{equation}\label{et1.1}
\Lom V^\theta(\bx,k)\leq -\theta\gamma_bV^\theta(\bx,k)  \text{ if } \|x\|> M .
\end{equation}
Define
\begin{equation}\label{e:tau}
\tau=\inf\{t\geq0: \|\BX(t)\|\leq M\}.
\end{equation}
In view of \eqref{et1.1}, we can obtain from Dynkin's formula that
$$
\begin{aligned}
\E_{\bx,k}&\left[ \exp\left(\theta\gamma_b(\tau\wedge n^*T^*)\right)V^\theta(\BX(\tau\wedge n^*T^*),r(\tau\wedge n^*T^*))\right]\\
&\leq V^\theta(\bx,k) +\E_{\bx,k} \int_0^{\tau\wedge n^*T^*}\exp(\theta\gamma_b s)[\Lom V^\theta(\BX(s),r(s))+ \theta\gamma_bV^\theta(\BX(s),r(s))]ds\\
&\leq V^\theta(\bx,k).
\end{aligned}
$$
Thus,
\begin{equation}\label{et1.2}
\begin{aligned}
V^\theta(\bx,k)\geq&
\E_{\bx,k}\left[ \exp\left(\theta\gamma_b(\tau\wedge n^*T^*)\right)V^\theta(\BX(\tau\wedge n^*T^*),r(\tau\wedge n^*T^*))\right]\\
=&
 \E_{\bx,k} \left[\1_{\{\tau\leq (n^*-1)T^*\}}\exp\left(\theta\gamma_b(\tau\wedge n^*T^*)\right)V^\theta(\BX(\tau\wedge n^*T^*),r(\tau\wedge n^*T^*))\right]\\
 &+\E_{\bx,k} \left[\1_{\{ (n^*-1)T^*<\tau<n^*T^*\}}\exp\left(\theta\gamma_b(\tau\wedge n^*T^*)\right)V^\theta(\BX(\tau\wedge n^*T^*),r(\tau\wedge n^*T^*))\right]\\
&+ \E_{\bx,k} \left[\1_{\{\tau\geq n^*T^*\}}\exp\left(\theta\gamma_b(\tau\wedge n^*T^*)\right)V^\theta(\BX(\tau\wedge n^*T^*),r(\tau\wedge n^*T^*))\right]\\
\geq&
 \E_{\bx,k} \left[\1_{\{\tau\leq (n^*-1)T^*\}}V^\theta(\BX(\tau),r(\tau))\right]\\
 &+\exp\left(\theta\gamma_b (n^*-1)T^*\right)\E_{\bx,k} \left[\1_{\{ (n^*-1)T^*<\tau<n^*T^*\}}V^\theta(\BX(\tau),r(\tau))\right]\\
&+\exp\left(\theta\gamma_b n^*T^*\right) \E_{\bx,k} \left[\1_{\{\tau\geq n^*T^*\}}V^\theta(\BX(n^*T^*), r(n^*T^*))\right].\\
 \end{aligned}
\end{equation}
By the strong Markov property of $(\BX(t),r(t))$ and
Proposition \ref{prop2.1}, we obtain
\begin{equation}\label{et1.3}
\begin{aligned}
\E_{\bx,k}&\left[ \1_{\{\tau\leq (n^*-1)T^*\}}V^\theta(\BX(n^*T^*), r(n^*T^*))\right]\\
&\leq
 \E_{\bx,k} \left[\1_{\{\tau\leq (n^*-1)T^*\}}\left(K_{\theta}+ \exp\left(-0.5\theta p^*(n^*T^*-\tau)\right)\left(V^\theta(\BX(\tau),r(\tau))\right)\right)\right]\\
 &\leq \exp(-0.5\theta \rho^*T^*)\E_{\bx,k}\left[\1_{\{\tau\leq (n^*-1)T^*\}}\left(V^\theta(\BX(\tau),r(\tau))\right)\right] 
 +K_\theta\PP_{\bx,k}(\tau\leq (n^*-1)T^*)
 \end{aligned}
\end{equation}
By the strong Markov property of $(\BX(t),r(t))$ and
\eqref{e1-lm2.2}, we obtain
\begin{equation}\label{et1.4}
\begin{aligned}
\E_{\bx,k}&\left[ \1_{\{(n^*-1)T^*<\tau<n^*T^*\}}V^\theta(\BX(n^*T^*), r(n^*T^*))\right]\\
&\leq
 \E_{\bx,k} \left[\1_{\{(n^*-1)T^*<\tau<n^*T^*\}}\exp\left(\theta H(n^*T^*-\tau)\right)\left(V^\theta(\BX(\tau),r(\tau))\right)\right]\\
 &\leq \exp(\theta HT^*)\E_{\bx,k}\left[\1_{\{(n^*-1)T^*<\tau<n^*T^*\}}\left(V^\theta(\BX(\tau),r(\tau))\right)\right].
 \end{aligned}
\end{equation}
Applying \eqref{et1.3} and \eqref{et1.4} to \eqref{et1.2} yields
\begin{equation}\label{et1.5}
\begin{aligned}
V^\theta(x,k)
\geq&
 \E_{\bx,k} \left[\1_{\{\tau\leq (n^*-1)T^*\}}V^\theta(\BX(\tau),r(\tau))\right]\\
 &+\exp\left(\theta\gamma_b (n^*-1)T^*\right)\E_{\bx,k} \left[\1_{\{ (n^*-1)T^*<\tau<n^*T^*\}}V^\theta(\BX(\tau),r(\tau))\right]\\
&+\exp\left(\theta\gamma_b n^*T^*\right) \E_{\bx,k} \left[\1_{\{\tau\geq n^*T^*\}}V^\theta(\BX(n^*T^*), r(n^*T^*))\right]\\
\geq& \exp(0.5\theta \rho^*T^*)\E_{\bx,k}\left[\1_{\{\tau\leq (n^*-1)T^*\}}V^\theta(\BX(n^*T^*), r(n^*T^*))\right]-\exp(0.5\theta\rho^*T^*)K_\theta\\
 &+\exp(-\theta HT^*)\exp\left(\theta\gamma_b (n^*-1)T^*\right)\E_{\bx,k} \left[\1_{\{ (n^*-1)T^*<\tau<n^*T^*\}}V^\theta(\BX(n^*T^*), r(n^*T^*))\right]\\
&+\exp\left(\theta\gamma_b n^*T^*\right) \E_{\bx,k} \left[\1_{\{\tau\geq n^*T^*\}}V^\theta(\BX(n^*T^*), r(n^*T^*))\right]\\
\geq & \exp( m\theta T^*)\E_{\bx,k} V^\theta(\BX(n^*T^*), r(n^*T^*))-K_\theta\exp(0.5\theta\rho^*T^*)
 \end{aligned}
\end{equation}
where $m=\min\{0.5 \rho^*, \gamma_b n^*, \gamma_b (n^*-1)- H\}>0$ by \eqref{e:n*}.
The proof of \eqref{e:lya} is complete by taking $\kappa=\exp(-m\theta T^*)$ and $\tilde K=\exp(-m\theta T^*)K_\theta\exp(0.5\theta\rho^*T^*)$.

\begin{clm}\label{stoch_persist}
The process $(\BX(t),r(t))$ is persistent in probability.    
\end{clm}
\begin{proof}
    
By using \eqref{e:lya} together with the strong Markov property recursively one can see that if we let $\bar T = n^*T^*$
\[
\E_{\bx,k} V^\theta(\BX(2\bar T), r(2\bar T))\leq \kappa^2 V^\theta(\bx,k)+\tilde K(1+\kappa)
\]
and more generally that
\[
\E_{\bx,k} V^\theta(\BX(n\bar T), r(n\bar T))\leq \kappa^n V^\theta(\bx,k)+\tilde K(1+\kappa+\kappa^2+\dots+\kappa^{n-1})\leq \kappa^n V^\theta(\bx,k) + \frac{1}{1-\kappa}.
\]
Since we have by \eqref{e3.4_2}
\[
\sup_{s\in [0,\bar T]}\dfrac{\E_{\bx,k} V^\theta (\BX(s), r(s))}{V^\theta (\bx,k)}\leq  \exp( \theta H\bar T)
\]
we get that
\[
\limsup_{t\to\infty} \E_{\bx,k} V^\theta(\BX(t), r(t))\leq \frac{\exp( \theta H\bar T)}{1-\kappa}.
\]
This together with the fact that
\[
\lim_{\min_ix_i \to 0} V^\theta(\bx,k) = \infty
\]
implies that for any $\eps>0$ there exists $\delta>0$ such that for all $(\bx,k)\in\R^{n,\circ}_+\times\CN$
\[
\liminf_{t\to\infty} \PP_{\bx,k} \{\min_i X_i(t)\geq \delta\} \geq 1-\eps,
\]
which shows persistence in probability. 
\end{proof}

\begin{clm}
The process $(\BX(t),r(t))$ exhibits almost sure stochastic persistence. 
\end{clm}
\begin{proof}

By Lemma \ref{lm4.6} below for any initial condition $(\BX(0),r(0))=(\bx,k)\in\R^n_+\times \CN$,
the family $\left\{\wtd \Pi_t(\cdot), t\geq 1\right\}$ is tight in $\R^n_+\times \CN$,
and its weak$^*$-limit set, denoted by $\U=\U(\omega)$
is a family of invariant probability measures of $(\BX(t),r(t))$ with probability 1. If $\BX(0)\in \R_+^{n,\circ}$ we can see by the proofs of Lemmas \ref{lm4.4} and \ref{lm4.9} that if we take $t\to \infty$ in
\[
\sum_i p_i \frac{\ln X_i(t)}{t} = \sum_i p_i \frac{\ln x_i}{t} + \sum_i p_i \frac{1}{t} \int_0^t f_i(\BX(s),r(s))\,ds
\]
yields that for any limit point $\mu$ of $\wtd \Pi_t(\cdot)$ we have
\begin{equation}\label{e:sum1}
\sum_i p_i \lambda_i(\mu)\leq 0.
\end{equation}

Since $\R_+^n\times \CN$ and $\R_+^{n,\circ}\times \CN$ are both invariant we can see that there are invariant measures $\mu_1,\mu_2$ such that $\mu=\gamma \mu_1 + (1-\gamma)\mu_2$ for $\mu_1(\R_+^{n,\circ}\times \CN)=0$ and $\mu_2(\R_+^{n,\circ}\times \CN)=1$.
Then by Lemma \ref{lm2.3}
\begin{equation}\label{e:sum2}
\sum_i p_i \lambda_i(\mu_2)=0
\end{equation}
while by Assumption \ref{a2-pdm}
\begin{equation}\label{e:sum3}
\sum_i p_i \lambda_i(\mu_1)>0.
\end{equation}

Equations \eqref{e:sum1}, \eqref{e:sum2} and \eqref{e:sum3} imply that
\[
\gamma=0.
\]
This shows that if the process starts on $\R_+^{n,\circ}\times \CN$ then any weak$^*$-limit point of $\left\{\wtd \Pi_t(\cdot), t\geq 1\right\}$ is an invariant probability measure supported on $\R_+^{n,\circ}\times \CN$ i.e. the measure puts no mass on the extinction set. We next follow \cite{hairer2006ergodic, benaim2019persistence} to finish the proof. Let $M>0$. 
\begin{equation}
\begin{split}
\sum\int (V^\theta \wedge M)(\bx ,k)  \mu (d\bx ,k)  &= \sum\int P_{\bar T}(V^\theta \wedge M)  \mu (d\bx ,k) \\
&= \sum\int \E_{\bz,k}(V^\theta(\BX(\bar T), r(\bar T)) \wedge M)  \mu (d\bx ,k)\\
&\leq  \sum\int ( \kappa V^\theta(\bx,k)+\tilde K) \wedge M  \mu (d\bx ,k)
\end{split}
\end{equation}
where we first used the fact that $\mu$ is an invariant probability measure, followed by Jensen's inequality and then \eqref{e:lya}.
If we iterate the above $m$ times we get
\[
\sum\int (V^\theta \wedge M)(\bx ,k)  \mu (d\bx ,k) \leq \sum\int \left( \kappa^m V^\theta(\bx,k)+\frac{\tilde K}{1-\kappa}\right) \wedge M  \mu (d\bx ,k).
\]
Letting $m\to \infty$ and using the dominated convergence theorem yields
\[
\sum\int (V^\theta \wedge M)(\bx ,k)  \mu (d\bx ,k) \leq \frac{\tilde K}{1-\kappa}.
\]
Next, we let $M\to\infty$ and use the dominated convergence theorem again
\[
\E_\mu V^\theta (\BX(t) ,r(t)) = \sum\int V^\theta (\bx ,k)  \mu (d\bx ,k) \leq \frac{\tilde K}{1-\kappa}.
\]
Note that for any $\eta>0$ we can bound $V^\theta$ below on $S_\mu\cap \R_+^{n,\circ}$ where $S_\mu:=\{(\bx,k)\in \R_+^{n,\circ}\times \CN, \min_i x_i \leq \eta\} $ as follows
\[
\inf_{\min_i x_i \leq \eta} V^\theta (\bx,k) = \inf_{\min_i x_i \leq \eta} \dfrac{F^\theta (\bx,k)}{\prod_{i=1}^n x_i^{\theta p_i}}\geq c^\theta \eta^{-\theta (n-1) \max_i p_i}>0
\]
Thus
\[
\mu(S_\mu)\inf_{\bx\in S_\mu} V^\theta (\bx,k) \leq \sum \int V^\theta (\bx ,k)  \mu (d\bx ,k) \leq \frac{\tilde K}{1-\kappa}
\]
which finally implies
\[
\mu(S_\mu) \leq \frac{\tilde K}{1-\kappa}  c^{-\theta} \eta^{\theta (n-1) \max_i p_i}.
\]
This finishes the proof of the claim since
\[
\limsup_{t\to \infty} \wtd \Pi_t(S_\mu) \leq \mu(S_\mu) \leq \frac{\tilde K}{1-\kappa}  c^{-\theta} \eta^{\theta (n-1) \max_i p_i}.
\]
\end{proof}

\begin{clm}
Under Assumption \eqref{a3-pdm} for any $\BX(0)\in\R^{n,\circ}_+$ the transition function of the process $(\BX(t),r(t))$ converges in total variation, exponentially fast, as $t\to\infty$ to its unique invariant probability measure $\pi^*$.
\end{clm}
\begin{proof}

Under Assumption \eqref{a3-pdm},
it is shown implicitly in \cite[Theorem 4.4]{BBMZ15} that
the Markov chain
$\{\BX(\ell \bar T), r(\ell \bar T)):\ell\in\N\}$
is irreducible and aperiodic. Moreover, every compact subset $K$ of $\R^{n,\circ}_+\times\CN$
is petite.

Applying the second corollary of \cite[Theorem 6.2]{MT},
we deduce from \eqref{et1.5} that
\begin{equation}\label{e.cxr}
\|P(\ell \bar T,\bx,k,\cdot)-\pi^*(\cdot)\|_{TV}\leq C_{\bx,k} u^\ell
\end{equation}
  where $\pi^*$ is an invariant probability measure of $((\BX(\ell \bar T),r(\ell \bar T)), \ell\in\N)$ on $\R^{n,\circ}_+\times\CN$, for some $u\in(0,1)$ and $C_{\bx,k}>0$ a constant depending on $(\bx, k) \in\R^{n,\circ}_+\times\CN$.

On the other hand, it follows from \eqref{et1.5} and \cite[Theorem 6.2]{MT},
that for any compact set $K\subset\R^{n,\circ}_+\times\CN$,
we have $\E_\bx\tau_K^*<\infty$ where $\tau_K^*$ is the first time the Markov chain $((\BX(\ell \bar T),r(\ell \bar T)), \ell\in\N)$  enters $K$.
Thus, the process $(\BX(t),r(t))$ is positive recurrent,
or equivalently, $(\BX(t),r(t))$  has a unique invariant probability measure on $\R^{n,\circ}_+$
(see e.g. \cite[Chapter 4]{RK}).
Because of \eqref{e.cxr}, the unique invariant probability measure of the process $(\BX(t),r(t))$
must be $\pi^*$.
Moreover, it is well-known that $\|P(t,\bx,\cdot)-\pi^*(\cdot)\|_{TV}$ is decreasing in $t$
(it can be shown easily using the Kolmogorov-Chapman equation).
We therefore obtain an exponential upper bound for $\|P(t,\bx,\cdot)-\pi^*(\cdot)\|_{TV}$.

\end{proof}
This finishes the proof of the theorem.

\end{proof}
\section{Extinction}\label{s:extin}
This section is devoted to the study of conditions under which some of the species will go extinct with  positive probability.

We first give the intuition behind the proofs of some of the main results.

\begin{proof}[Intuition behind the proofs of Theorems \ref{thm4.1} and \ref{t:exxx}]
According to Assumption \ref{a4-pdm} we can show that $\R_+^I$ attracts the process, in the sense that every ergodic measure $\mu\in \M^{I,\circ}$ is an attractor, while the invariant probability measures on the boundary $\partial \R_+^I\times \CN$ are repellers. This last condition ensures that the process does not spend a lot of time close to $\partial \R_+^I$ where it could be pushed away from $\R_+^I$. 

It is key to note that our assumptions allows us to show that one can find constants $\hat p_i, \check p, \rho_e>0$ such that for all $\nu\in\M^I$,
\[
\sum_{i\in I}\hat p_i\lambda_i(\nu)-\check p\max_{i\in I^c}\left\{\lambda_i(\nu)\right\}>3\rho_e.
\]

This allows us to construct the Lyapunov function
$$U_\theta(\bx,k)=\sum_{i\in I^c}\left[(F(\bx,k))\dfrac{x_i^{\check p}}{\prod_{j\in I} x_j^{\hat p_j}}\right]^\theta, \bx\in\R^{n,\circ}_+,$$
which vanishes on $\R_+^{I,\circ}$. In Proposition \ref{prop4.1} we show that 
\[
\E_{\bx,k}U_\theta(\BX(T),r(T)) \leq U_\theta (\bx,k)
\]
if $\bx\in \R_+^{n,\circ}$ is in a compact set $\|\bx\|\leq M$ and close enough to $\R_+^I$ and $T$ is large. One can use this together with other estimates which ensure that $U_\theta$ does not grow too fast (see Lemma \ref{lm3.3}) to show that for $$\tilde U_\theta(\bx,k) = U_\theta(\bx,k) \wedge \varsigma$$
we have that for all $\bx \in \R_+^{n,\circ}$ close enough to $\R_+^I$ and for $T>0$ large enough
\[
\E_{\bx,k}\tilde U_\theta(\BX(T),r(T)) \leq \tilde U_\theta (\bx,k).
\]

This allows us to prove using the Markov property that the sequence
$\{Y(\ell): \ell\in\N\}$ where $Y(\ell):=\tilde U_\theta(\BX(\ell T))$ is a supermartingale. This then in turn will imply that for $\bx\in \R_+^{n,\circ}$ close enough to $\partial \R_+^i$
\[
\PP_{\bx,k}\left\{\lim_{t\to\infty}X_i(t)=0, i\in I^c\right\}\geq 1-\varepsilon.
\]
and
\begin{equation}
 \PP_{\bx,k}\left( \limsup_{t\to\infty}\frac{\ln(\BX_i(t))}{t} \leq -\frac{\alpha_I}{\theta \check p_i}  \right)\geq 1-\eps.   
\end{equation}
In particular one can also show there is no invariant measure on $\R_+^{n,\circ}\times \CN$.
The proofs of Theorems \ref{thm4.1} and \ref{t:exxx} can then be established easily.
\end{proof}

\begin{proof}[Intuition behind the proofs of Theorems \ref{thm4.2} and \ref{t:ex_one}]
Let $S$ be the family of subsets $I$ that satisfy Assumption \ref{a4-pdm}. The subspaces $\R_+^I$ have in their interiors only ergodic measures which are attractors. Then $S^c$ will be the subsets of $\{1,\dots,n\}$ which do not satisfy Assumption \ref{a4-pdm}. The key insight here is that we require in Assumption \ref{a5-pdm} that
$$\max_{i=1,\dots,n}\left\{\lambda_i(\nu)\right\}>0\text{ for any }\nu\in\Conv\left(\displaystyle\bigcup_{J\notin S}\M^{J,\circ}\right).$$

In terms of ergodic probability measures, we let $\M_1$ be the attractors and $\M_2=\M\setminus \M_1$ be the repellers.

The above condition ensures that the process, that is, the randomized occupation measures, cannot converge to an invariant probability measure that is not among the ergodic `attractors' from $\M_1$. Lemmas \ref{lm4.4} and \ref{lm4.6} show that the occupation measures are tight and have as weak* limits invariant probability measures of the process. Lemma \ref{lm4.5} shows that we can take limits of integrals with respect to converging sequences of occupation measures.

In Lemma \ref{lm4.7} we prove that for every $I$ satisfying Assumption \ref{a4-pdm} there is a compact set $\K_I^0 \subset \R_+^{I,\circ}$ such that for all invariant probability measures from $\R_+^{I,\circ}\times\CN$ one has
\[
\mu(\K_I^0\times \CN)>0.
\]
In Lemma \ref{lm4.9} we show that if a sequence of occupation measures converges to some invariant probability measure on $\R_+^I$, then because of Assumption \ref{a5-pdm} that invariant probability measure has to live on $\R_+^{I,\circ}$.

Define
$$\K^{\ell,\Delta}_I:=\{\bx\in\R^{n,\circ}_+, \ell^{-1}\leq x_i\leq \ell\text{ for } i\in I, x_i<\Delta\text{ for }i\in I^c\}.$$

One can find $\ell>\ell_0$ and $\Delta>0$ such that
\begin{equation}\label{e3-thm4.2}
\PP_{\bx,k}\left\{\lim_{t\to\infty} X_i(t)=0, i\in I^c\right\}>1-\eps
\end{equation}
for all $I\in \mathcal{S}$ and
$\bx\in\K_I^{\ell,\Delta}.$ Since $\bigcup_{I\in S} S^I_+$ is accessible we get by Theorem \ref{thm4.1} also that no invariant probability measure exists on $\R_+^{n,\circ}\times\CN$ so that all limit points of the families of occupation measures are in $\Conv(\M_1)\setminus\Conv(\M_2)$.
One can then show that since  $\pi_1(\bigcup_{I\in\mathcal{S}}\K_I^0)>0$ for any $\pi_1\in\Conv(\cup_{I\in \mathcal{S}}\M^{I,\circ})$ we get
\[
\PP_{\bx,k}\left\{\liminf_{t\to\infty}\dfrac1t\int_0^t\1_{\left\{\BX(s)\in \bigcup_{I\in\mathcal{S}}\K_I^{\ell,\Delta}\right\}}ds>0\right\}=1,\,\,\bx\in\R^{n,\circ}_+, k\in \CN.
\]
This implies that if $\BX(0)\in\R^{n,\circ}_+$ then $\BX(t)$ almost surely will enter the set $\bigcup_{I\in\mathcal{S}}\K_I^{\ell,\Delta}$. This means that the process gets close enough to the `attractors' so that extinction will happen with high probability according to \eqref{e3-thm4.2}.

It is not hard to see that this then implies
$$\sum_{I\in\mathcal{S}} P_{\bx,k}^I>1-\eps,\,\bx\in\R^{n,\circ}_+$$
where
$$P_{\bx,k}^I=\PP_{\bx,k}\left\{\U(\omega)\subset\Conv(\M^{I,\circ})\text{ and }\lim_{t\to\infty}\dfrac{\ln X_i(t)}t\in\left\{\lambda_i(\mu),\mu\in\Conv(\M^{I,\circ})\right\}, i\in I_\mu^c\right\}.$$
Letting $\eps\downarrow 0$ will yield the desired result.

\end{proof}

\begin{lm}\label{lm4.1}
For any $\mu\in\M$ and $i\in I_\mu$ we have
$\lambda_i(\mu)=0.$
\end{lm}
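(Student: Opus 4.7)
The plan is to imitate the strategy used in Lemma \ref{lm2.3} (and in fact to simply extract the needed piece from it). Since $i\in I_\mu$ means $\mu(\Se^i)=1$, so $X_i(t)>0$ for all $t\ge 0$ with $\PP_\mu$-probability one. Because the $i$th coordinate evolves according to the pure ODE $\dot X_i = X_i f_i(\BX,r)$ (no jump in $X_i$ at switching times), the chain rule gives the clean identity
\begin{equation*}
\frac{\ln X_i(t)}{t}=\frac{\ln X_i(0)}{t}+\frac{1}{t}\int_0^t f_i(\BX(s),r(s))\,ds.
\end{equation*}

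Next I would invoke ergodicity of $\mu$ for the process $(\BX(t),r(t))$: by Birkhoff's ergodic theorem, for $\mu$-almost every starting point $(\bx,k)$ on $\supp(\mu)\subset \R_+^{I_\mu,\circ}\times\CN$, the time average on the right converges $\PP_{\bx,k}$-almost surely to
\begin{equation*}
\sum_{k\in\CN}\int f_i(\bx,k)\,\mu(d\bx,k)=\lambda_i(\mu).
\end{equation*}
Therefore the claim reduces to showing that $\ln X_i(t)/t\to 0$ $\mu$-a.s. Once this is established, equating the two limits forces $\lambda_i(\mu)=0$.

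The remaining obstacle is the control of $\ln X_i(t)/t$. For the upper bound, Assumption \ref{a1-pdm} yields $F(\bx,k)\ge c(1+\|\bx\|)\ge c\,x_i$, so $\ln X_i(t)\le \ln F(\BX(t),r(t))-\ln c$; by the argument of Lemma \ref{lm2.3} (where it was shown that $\ln F(\BX(t),r(t))/t\to 0$ $\mu$-a.s., via martingale bounds coming from Assumption \ref{a:bound} combined with the estimate \eqref{e:H}), we obtain $\limsup_{t\to\infty}\ln X_i(t)/t\le 0$. The lower bound is the more delicate direction because $\ln X_i$ could a priori be non-integrable at $0$. I would handle it by applying Birkhoff's theorem to the truncated function $g_N(\bx,k):=\max\{\ln x_i,-N\}$, which is bounded and hence $\mu$-integrable, and then letting $N\to\infty$, using stationarity of $X_i(t)$ under $\PP_\mu$ to conclude $\liminf_{t\to\infty}\ln X_i(t)/t\ge 0$. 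Combining the two bounds yields $\ln X_i(t)/t\to 0$ and therefore $\lambda_i(\mu)=0$. The whole argument is exactly the proof of Lemma \ref{lm2.3} specialized to $i\in I_\mu$, so in the written proof one can simply cite that lemma.
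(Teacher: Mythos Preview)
Your overall strategy matches the paper's: write $\ln X_i(t)/t=\ln x_i/t+\tfrac1t\int_0^t f_i(\BX(s),r(s))\,ds$, use Birkhoff to get the time average to converge to $\lambda_i(\mu)$, and then argue that $\ln X_i(t)/t\to 0$. The paper's proof of Lemma~\ref{lm4.1} is indeed just a pointer back to Lemma~\ref{lm2.3}, exactly as you suggest in your last sentence.

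That said, your sketched argument for the \emph{lower} bound $\liminf_{t\to\infty}\ln X_i(t)/t\ge 0$ does not work as written. The truncation $g_N(\bx,k)=\max\{\ln x_i,-N\}$ satisfies $g_N\ge \ln x_i$, so any control of $g_N(\BX(t),r(t))/t$ yields an \emph{upper} bound on $\ln X_i(t)/t$, not a lower one. Moreover, Birkhoff applied to $g_N$ produces time averages $\tfrac1t\int_0^t g_N(\BX(s))\,ds$, not the pointwise quantity $g_N(\BX(t))/t$; these are different objects. The integrability concern you raise is real for the opposite truncation $\min\{\ln x_i,N\}$, so the route you indicate does not close.

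The paper (in the appendix proof of Lemma~\ref{lm2.3}) handles both directions at once with a recurrence argument: since $\mu$ is ergodic on $\R_+^{I_\mu,\circ}\times\CN$, Birkhoff guarantees that $\BX(t)$ visits a fixed compact set $\K\subset\R_+^{I_\mu,\circ}$ with $\mu(\K)>0$ infinitely often; on $\K$ the coordinate $x_i$ is bounded away from $0$ and from $\infty$, so $\ln X_i(t_n)/t_n\to 0$ along the visit times. Since we already know $\ln X_i(t)/t\to\lambda_i(\mu)$ exists, this forces $\lambda_i(\mu)=0$. An equivalent (and equally short) way to use stationarity directly is: if $\lambda_i(\mu)<0$ then $X_i(t)\to 0$ $\PP_\mu$-a.s., contradicting $\PP_\mu(X_i(t)>\epsilon)=\mu(\{x_i>\epsilon\})>0$ for small $\epsilon$; the case $\lambda_i(\mu)>0$ is ruled out by your upper bound via $\ln F/t\to 0$. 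Either of these replaces your truncation step cleanly.
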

\begin{proof}
In view of It\^o's formula,
$$
\dfrac{\ln X_i(t)}t=\dfrac{\ln X_i(0)}t+\dfrac1t\int_0^tf_i(\BX(s),r(s))ds
$$
In the same manner as in the second part of the proof of Lemma \ref{lm2.3},
we can show that if $\BX(0)=\bx_0\in\R_+^{\mu,\circ}$ and $i\in I_\mu$,
then
$$
\lim_{t\to\infty}\dfrac1t\int_0^tf_i(\BX(s),r(s))ds=\lambda_i(\mu)~~~\PP_{\bx_0,k}\text{-  a.s.}
$$

On the other hand, $X_i(t), i\in I_\mu$ can go to neither $0$ nor $\infty$ as $t\to\infty$. Thus
\[
\lim_{t\to\infty}\dfrac{\ln X_i(t)}t = 0, ~~~\PP_{\bx_0,k}\text{-  a.s.}, i \in I_\mu
\]
which implies the desired result.
\end{proof}

Condition \eqref{ae3.2} is equivalent to the existence of
$0<\hat p_i<\delta_0, i\in I$
such that for any $\nu\in\partial \M^I$ , we have
$$\sum_{i\in I}\hat p_i\lambda_i(\nu)>0.$$
By rescaling we can also assume that $\sum_{i\in I}\hat p_i\leq \delta_0$.

Thus, there is $\check p\in (0,\delta_0)$ sufficiently small such that
\begin{equation}\label{e3.2}
\begin{aligned}
\sum_{i\in I}\hat p_i\lambda_i(\nu)-\check p\max_{i\in I^c}\{\lambda_i(\nu)\}>0 \text{ for any }\nu\in\partial \M^{I}
\end{aligned}
\end{equation}
In view of \eqref{e3.2}, \eqref{ae3.1} and Lemma \ref{lm4.1}, there is $\rho_e>0$ such that for any $\nu\in\M^I$,
\begin{equation}\label{e3.3}
\sum_{i\in I}\hat p_i\lambda_i(\nu)-\check p\max_{i\in I^c}\left\{\lambda_i(\nu)\right\}>3\rho_e.
\end{equation}

Define 
\begin{equation}\label{e:bar_Phi}
\bar \Phi(\BX(s),r(s)):=\Lom(\ln(F(\BX(s),r(s)))).
\end{equation}
\begin{lm}\label{lm4.2}
Suppose that Assumption \ref{a4-pdm} holds.
Let $M$ be as in \eqref{a1-pdm}, $H$ as in \eqref{e:H} and $\hat p_i, \check p, \rho_e$ as in \eqref{e3.3}. Let $n_e\in\N$ such that $\gamma_b(n_e-1)>H+1$.
There are $T_e\geq 0$, $\delta_e>0$ such that, for any $T\in[T_e,n_eT_e]$, $\|\bx\|\leq M, x_i<\delta_e, i\in I^c$, we have
\begin{equation}\label{e3.4}
\begin{aligned}
\dfrac1T&\int_0^T\E_{\bx,k}\left(\bar \Phi(\BX(t),r(t))\right)dt-\sum_{i\in I}\hat p_i\dfrac1T\int_0^T\E_{\bx,k} f_i(\BX(t),r(t))dt\\
&+\check p\max_{i\in I^c}\left\{\dfrac1T\int_0^T\E_{\bx,k}f_i(\BX(t),r(t))dt\right\}
\leq -\rho_e.
\end{aligned}
\end{equation}
\end{lm}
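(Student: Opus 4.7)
The plan is to follow the contradiction strategy of Lemma \ref{lm3.1}, with an additional localization step extending the estimate from the invariant subspace $\R_+^I$ to a small neighborhood. First I would establish the inequality, with strengthened margin $-2\rho_e$ in place of $-\rho_e$, for $\bx\in\R_+^I$ satisfying $\|\bx\|\leq M$. Suppose this fails: there exist sequences $\bx_m\in\R_+^I$ with $\|\bx_m\|\leq M$, $k_m\in\CN$, and $T_m\to\infty$ along which the reversed inequality holds. Since $\R_+^I\times\CN$ is forward-invariant under $(\BX(t),r(t))$ by Lemma \ref{lm2.0}, each occupation measure $\Pi^{\bx_m,k_m}_{T_m}$ is supported on $\R_+^I\times\CN$. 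Tightness (Lemma \ref{lm2.2}) together with the standard Krylov--Bogolyubov argument used in the proof of Lemma \ref{lm3.1} yield a weak$^*$-limit $\mu$ which is an invariant probability measure of $(\BX(t),r(t))$ supported on $\R_+^I\times\CN$, hence $\mu\in\Conv(\M^I)$.

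Applying Lemma \ref{lm2.4} term by term---to $\bar\Phi$ via the growth bound of Lemma \ref{lm2.3} and Assumption \ref{a:bound}, to each $f_i$-average directly, and to the max via continuity of $\max$ of finitely many converging scalars---the contradiction hypothesis passes in the limit to
\[
\sum_{k\in\CN}\int\Lom\ln F\,d\mu \;-\; \sum_{i\in I}\hat p_i\lambda_i(\mu) \;+\; \check p\max_{i\in I^c}\lambda_i(\mu)\;\geq\; -2\rho_e.
\]
Lemma \ref{lm2.3} annihilates the first term. Writing $\mu=\int\nu\,d\pi(\nu)$ as an ergodic decomposition over $\nu\in\M^I$, linearity of $\lambda_i$ in the measure together with convexity of $\mu\mapsto\max_{i\in I^c}\lambda_i(\mu)$ (which provides $\max_{i\in I^c}\lambda_i(\mu)\leq\int\max_{i\in I^c}\lambda_i(\nu)\,d\pi(\nu)$) gives
\[
\sum_{i\in I}\hat p_i\lambda_i(\mu) - \check p\max_{i\in I^c}\lambda_i(\mu)\;\geq\;\int\Bigl[\sum_{i\in I}\hat p_i\lambda_i(\nu) - \check p\max_{i\in I^c}\lambda_i(\nu)\Bigr]\,d\pi(\nu)\;>\;3\rho_e
\]
by \eqref{e3.3}, contradicting the display above since $2\rho_e<3\rho_e$.

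Second, I would upgrade this boundary estimate to a neighborhood. The set $K=\{\bx\in\R_+^I:\|\bx\|\leq M\}$ is compact; combining the Feller property of Lemma \ref{lm2.2} with the moment bounds that dominate $|\bar\Phi|$ and $|f_i|$, the map $(\bx,T)\mapsto \frac{1}{T}\int_0^T\E_{\bx,k}[h(\BX(t),r(t))]\,dt$ is jointly continuous on $\{\|\bx\|\leq M\}\times[T_e,n_eT_e]$ for each $k\in\CN$ and for $h\in\{\bar\Phi,f_1,\dots,f_n\}$. Uniform continuity on this compact product set then yields $\delta_e>0$ such that the $2\rho_e$-margin on $K$ shrinks by at most $\rho_e$ on the $\delta_e$-neighborhood, giving the target inequality whenever $\|\bx\|\leq M$ and $x_i<\delta_e$ for $i\in I^c$. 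The main obstacle will be this uniform-continuity step: because $\bar\Phi$ and $f_i$ are only locally bounded and grow with $\|\bx\|$, establishing continuity of the time-averaged expectations requires truncating the integrands at a large level and dominating the tail via the exponential-in-time moment bounds of Lemma \ref{lm2.2} together with Assumption \ref{a:bound}.
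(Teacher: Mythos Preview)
Your proposal is correct and follows essentially the same two-step approach as the paper: first a contradiction argument on $\R_+^I\cap\{\|\bx\|\leq M\}$ mirroring Lemma~\ref{lm3.1} to obtain the inequality with margin $-2\rho_e$, then an extension to a $\delta_e$-neighborhood via the Feller property and compactness. The paper's proof is terse on both points; your version spells out the ergodic-decomposition/convexity argument needed to pass from the ergodic condition \eqref{e3.3} to the limiting (possibly non-ergodic) invariant measure $\mu\in\Conv(\M^I)$---a step the paper absorbs into the phrase ``using \eqref{e3.3}'' but which genuinely requires the convexity of $\mu\mapsto\max_{i\in I^c}\lambda_i(\mu)$ since the $\max$ term is not linear in $\mu$.
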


\begin{proof}
Analogously to Lemma \ref{lm3.1}, using \eqref{e3.3}, one can show there exists $T_e>0$ such that for any $T>T_e, \bx\in\R_+^\mu, \|\bx\|\leq M$, we have
\begin{equation}
\begin{aligned}
\dfrac1T&\int_0^T\E_{\bx,k}\left( \bar\Phi(\BX(t),r(t))\right)dt-\sum_{i\in I}\hat p_i\dfrac1T\int_0^T\E_{\bx,k} f_i(\BX(t),r(t))dt\\
&+\check p\max_{i\in I^c}\left\{\dfrac1T\int_0^T\E_{\bx,k}f_i(\BX(t),r(t))dt\right\}
\leq  -2\rho_e
\end{aligned}
\end{equation}
By the Feller property of $(\BX(t),r(t))$ and compactness of the set $\{\bx\in\R_+^\mu, \|\bx\|\leq M\}$, there is $\delta_e>0$ such that
for any $T\in[T_e,n_eT_e]$, $\|\bx\|\leq M, x_i<\delta_e, i\in I^c$ , the estimate
\eqref{e3.4} holds.
\end{proof}
\begin{prop}\label{prop4.1}
Suppose that Assumption \ref{a4-pdm} holds.
Let $\delta_0>0$ be as in \ref{a1-pdm}.
There is $\theta\in(0,\delta_0)$ such that for any $T\in[T_e,n_eT_e]$ and $\bx\in\R^{n,\circ}_+$ satisfying $ \|\bx\|\leq M,$ $x_i<\delta_e,$ $i\in I^c$ one has
$$\E_{\bx,k} U_\theta(\BX(T), r(T))\leq \exp(-0.5\theta \rho_eT) U_\theta(\bx,k)$$
where
$M, T_e, \hat p_i,\check p, \delta_e, n_e$ are as in Lemma \ref{lm4.2} and
$$U_\theta(\bx,k)=\sum_{i\in I^c}\left[(F(\bx,k))\dfrac{x_i^{\check p}}{\prod_{j\in I} x_j^{\hat p_j}}\right]^\theta, \bx\in\R^{n,\circ}_+.$$
\end{prop}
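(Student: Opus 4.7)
The proof mirrors that of Proposition \ref{prop2.1}, applied term by term. Write $U_\theta(\bx,k) = \sum_{i \in I^c} W_i(\bx,k)^\theta$ with
$$W_i(\bx,k) := F(\bx,k)\,\frac{x_i^{\check p}}{\prod_{j \in I} x_j^{\hat p_j}}.$$
Since expectation is linear, it suffices to exhibit a single $\theta \in (0,\delta_0)$ such that, for every $i \in I^c$ and every admissible $(T, \bx, k)$,
$$\E_{\bx,k} W_i^\theta(\BX(T), r(T)) \leq \exp\!\left(-\tfrac12 \theta \rho_e T\right) W_i^\theta(\bx,k),$$
and then sum over $i \in I^c$. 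Note that after a harmless rescaling we may assume $\check p + \sum_{j \in I} \hat p_j \leq \delta_0$ while preserving \eqref{e3.3}.

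Fix $i \in I^c$. It\^o's formula applied to $\ln W_i$ yields $\ln W_i(\BX(T), r(T)) = \ln W_i(\bx,k) + G_i(T)$, where
$$G_i(T) = \int_0^T \Psi_i(\BX(s), r(s))\,ds + M_i(T), \qquad \Psi_i := \bar\Phi + \check p\, f_i - \sum_{j \in I}\hat p_j f_j,$$
and $M_i$ is a jump martingale of the same structure as $M_G$ in Proposition \ref{prop2.1}. Since $\check p f_i \leq \check p \max_{\ell \in I^c} f_\ell$, Lemma \ref{lm4.2} gives $\E_{\bx,k} G_i(T) \leq -\rho_e T$ for all $(T, \bx, k)$ in the admissible range. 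To apply Lemma \ref{lm2.5} one needs two-sided exponential moment bounds on $G_i(T)$. A direct computation using \eqref{e:gen} gives
$$\Lom W_i(\bx,k) = W_i(\bx,k)\left[\frac{\Lom F(\bx,k)}{F(\bx,k)} + \check p\, f_i(\bx,k) - \sum_{j \in I}\hat p_j f_j(\bx,k)\right],$$
and the analogous computation for the reciprocal-type function
$$\hat W_i(\bx,k) := F(\bx,k)\,\frac{\prod_{j \in I} x_j^{\hat p_j}}{x_i^{\check p}}$$
(which satisfies $W_i \hat W_i = F^2$) yields $\Lom \hat W_i \leq \hat W_i\bigl[\Lom F/F - \check p f_i + \sum_{j \in I}\hat p_j f_j\bigr]$. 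Because $\check p + \sum_{j \in I} \hat p_j \leq \delta_0$, condition \eqref{e:sup} via \eqref{e:H} supplies a finite constant $H' > 0$ with $\Lom W_i \leq H' W_i$ and $\Lom \hat W_i \leq H' \hat W_i$ on $\R_+^{n,\circ} \times \CN$. Gronwall's inequality then yields $\E_{\bx,k} W_i(\BX(T), r(T)) \leq e^{H'T} W_i(\bx,k)$ and $\E_{\bx,k} \hat W_i(\BX(T), r(T)) \leq e^{H'T} \hat W_i(\bx,k)$; combined with the lower bound $F \geq c$ from \eqref{e:F_growth} and the identity $W_i \hat W_i = F^2$, and with Jensen's inequality for exponents in $(0,1)$, this translates into a uniform upper bound on $\E_{\bx,k} W_i^{\pm \theta_0}(\BX(T), r(T))/W_i^{\pm \theta_0}(\bx,k)$ for some fixed $\theta_0 \in (0, \delta_0)$ and all $\|\bx\| \leq M$, $T \in [T_e, n_e T_e]$.

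Lemma \ref{lm2.5} now shows that $\phi_{i,\bx,k,T}(\theta) := \ln \E_{\bx,k} \exp(\theta G_i(T))$ is $C^2$ on $[0, \theta_0/2)$ with $\phi_{i,\bx,k,T}'(0) = \E_{\bx,k} G_i(T) \leq -\rho_e T$ and $\phi_{i,\bx,k,T}''(\theta) \leq \tilde K_2$ uniformly in the data. A Taylor expansion gives $\phi_{i,\bx,k,T}(\theta) \leq -\rho_e T \theta + \tilde K_2 \theta^2$, so any $\theta \in (0, \theta_0/2)$ with $\theta < \rho_e T_e/(2\tilde K_2)$ produces $\phi_{i,\bx,k,T}(\theta) \leq -\tfrac12 \theta \rho_e T$, which is the desired per-summand bound; summing over $i \in I^c$ completes the proof. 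The main technical point is the negative-direction Laplace control: $W_i^{-\theta}$ explodes on the face $\{x_i = 0\}$, and it is only through the auxiliary function $\hat W_i$ together with the identity $W_i \hat W_i = F^2$ that the Lyapunov estimate \eqref{e:sup} can be brought to bear. This in turn forces the constraint $\check p + \sum_{j \in I} \hat p_j \leq \delta_0$, which is why the exponents were introduced small from the start.
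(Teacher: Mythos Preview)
Your proof is correct and follows essentially the same approach as the paper's own argument, which merely states that the proof uses ``arguments basically identical to those used in the proof of Proposition~\ref{prop2.1}'' applied to each summand $U_i^\theta$ and then sums. You have filled in those details faithfully: the It\^o decomposition of $\ln W_i$, the drift bound via Lemma~\ref{lm4.2}, the two-sided exponential moment control via the auxiliary function $\hat W_i$ (the exact analogue of $\hat V$ in the proof of Proposition~\ref{prop2.1}), and the Taylor expansion through Lemma~\ref{lm2.5}.
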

\begin{proof}
For $i\in I^c$, let
$U_i(\bx,k)=(F(\bx,k))\dfrac{x_i^{\check p}}{\prod_{j\in I} x_j^{\hat p_j}}.$
Using Ito's formula, we get that 
\begin{equation}
\ln(U_i(\BX(T),r(T)))= \ln(U_i(\bx,k))+\int_0^T \Lom(\ln(U_i(\BX(s),r(s)))) ds + M_U(T)    
\end{equation}
Where $M_U(t)$ is the martingale 
\begin{equation}
M_U(T)=\int_0^T\int_{\R_+}\ln\left[\frac{U_i(\BX(s),r(s)+h(\BX(s),r(s),y))}{U_i(\BX(s),r(s))}\right] \tilde{N}(dy, ds).    
\end{equation}

Using $\bar \Phi(\BX(s),r(s))=\Lom(\ln(F(\BX(s),r(s))))$ instead of $\Phi$, by Lemma \ref{lm4.2} and using arguments basically identical to those used in the proof of Proposition \ref{prop2.1} we have that one can find $\theta>0$ such that for $T\in[T_e,n_eT_e]$, $\bx\in\R^{n,\circ}_+$ with $\|\bx\|\leq M,$ and $x_i<\delta_e$ we have 
\begin{equation}
\begin{split}
\E_{\bx,k} U_i^\theta(\BX(T), r(T))\leq \exp(-0.5\theta \rho_eT) U_\theta(\bx,k)
\end{split}    
\end{equation}

The proof is complete by noting that
$$U_\theta(\bx,k)=\sum_{i\in I^c}U_i^\theta(\bx,k).$$
\end{proof}

\begin{lm}\label{lm3.3}
Let $H$ be defined by \eqref{e:H}. For $\theta\in[0,\frac12]$ we have
$$\E_{\bx,k} U_\theta(\BX(t),r(t))\leq \exp(\theta Ht) U_\theta(\bx,k)\,,\, (\bx,k)\in\R^{n,\circ}_+\times\CN.$$
\begin{equation}\label{u3}
\E_{\bx,k}\sup_{t\in[0,T]} U_\theta(\BX(t),r(t))\leq  \hat H_{t,\theta} U_\theta(\bx,k)\,,\, (\bx,k)\in\R^{n,\circ}_+\times\CN.
\end{equation}
\end{lm}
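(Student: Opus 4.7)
The plan is to first establish the pointwise generator inequality $\Lom U_i^\theta(\bx,k) \leq \theta H\,U_i^\theta(\bx,k)$ for each $i\in I^c$, and then deduce both bounds from it. Fix $i\in I^c$ and compute $\Lom U_i^\theta$ using the generator \eqref{e:gen}. The drift part of the generator yields
$$
\sum_\ell x_\ell f_\ell(\bx,k)\frac{\partial U_i^\theta}{\partial x_\ell}(\bx,k) = \theta\,U_i^\theta(\bx,k)\left[\frac{\sum_\ell x_\ell f_\ell(\bx,k)\partial_{x_\ell} F(\bx,k)}{F(\bx,k)} + \check p\,f_i(\bx,k) - \sum_{\ell\in I}\hat p_\ell f_\ell(\bx,k)\right],
$$
while the switching part equals
$$
U_i^\theta(\bx,k)\sum_{\ell\neq k}q_{k\ell}(\bx)\left[\left(\frac{F(\bx,\ell)}{F(\bx,k)}\right)^\theta - 1\right]\leq \theta\,U_i^\theta(\bx,k)\sum_{\ell\neq k}q_{k\ell}(\bx)\left[\frac{F(\bx,\ell)}{F(\bx,k)} - 1\right],
$$
where the inequality uses the concavity bound $y^\theta-1\leq\theta(y-1)$ valid for $y\geq 0$ and $\theta\in[0,1]$ (the same trick used in \eqref{theta_trick}). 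Summing the drift and switching contributions gives $\Lom U_i^\theta \leq \theta\,U_i^\theta\bigl[\Lom F/F + \check p f_i - \sum_{\ell\in I}\hat p_\ell f_\ell\bigr]$. After rescaling so that $\check p + \sum_{\ell\in I}\hat p_\ell\leq\delta_0$, the bracket is at most $\Lom F/F + \delta_0\max_\ell |f_\ell|\leq H$ by the definition \eqref{e:H} of $H$, and the pointwise inequality follows.

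For the first bound, apply Dynkin's formula to $U_i^\theta(\BX(t),r(t))$ localized along $\tau_n := \inf\{t\geq 0: U_i^\theta(\BX(t),r(t))>n\}$, invoke the generator inequality to get
$$
\E_{\bx,k} U_i^\theta(\BX(t\wedge\tau_n),r(t\wedge\tau_n)) \leq U_i^\theta(\bx,k) + \theta H\int_0^t \E_{\bx,k}U_i^\theta(\BX(s\wedge\tau_n),r(s\wedge\tau_n))\,ds,
$$
apply Gronwall, and then Fatou's lemma as $n\to\infty$. Summing over $i\in I^c$ and recalling $U_\theta = \sum_{i\in I^c} U_i^\theta$ produces $\E_{\bx,k}U_\theta(\BX(t),r(t))\leq e^{\theta H t}U_\theta(\bx,k)$.

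For the supremum bound, use the semimartingale decomposition $U_i^\theta(\BX(t),r(t)) = U_i^\theta(\bx,k) + \int_0^t \Lom U_i^\theta(\BX(s),r(s))\,ds + M_i(t)$, where $M_i$ is the pure-jump local martingale driven by the switches of $r$. Combining the pointwise bound $\Lom U_i^\theta\leq\theta H U_i^\theta$ with the first estimate,
$$
\E_{\bx,k}\sup_{s\in[0,t]}U_i^\theta \leq U_i^\theta(\bx,k) + \theta H\int_0^t e^{\theta H s}\,ds\cdot U_i^\theta(\bx,k) + \E_{\bx,k}\sup_{s\in[0,t]}|M_i(s)|.
$$
For the martingale term apply the Burkholder-Davis-Gundy inequality, $\E\sup_{s\leq t}|M_i(s)|\leq C_{\mathrm{BDG}}\sqrt{\E\langle M_i,M_i\rangle_t}$, whose predictable quadratic variation is
$$
\langle M_i,M_i\rangle_t = \int_0^t \sum_{\ell\neq r(s)}q_{r(s)\ell}(\BX(s))\bigl[U_i^\theta(\BX(s),\ell)-U_i^\theta(\BX(s),r(s))\bigr]^2 ds.
$$
Assumption \ref{a:bound} forces $U_i^\theta(\bx,\ell)/U_i^\theta(\bx,k)\in[M_F^{-\theta},M_F^\theta]$, so the squared increments are bounded by $C_0\,U_i^{2\theta}(\BX(s),r(s))$ for a constant $C_0$ depending only on $M_F$ and $\theta$. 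Together with boundedness of the rates $q_{k\ell}$ one obtains $\E\langle M_i,M_i\rangle_t \leq C_0 K_q\int_0^t \E U_i^{2\theta}(\BX(s),r(s))\,ds$.

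The main obstacle is controlling $\E U_i^{2\theta}$ in terms of the initial datum $U_i^\theta(\bx,k)$, and this is precisely where the hypothesis $\theta\in[0,\tfrac12]$ enters: since $2\theta\in[0,1]$ is itself admissible, applying the first bound with exponent $2\theta$ (the same concavity argument goes through) gives $\E U_i^{2\theta}(\BX(s),r(s))\leq e^{2\theta H s}U_i^{2\theta}(\bx,k)$. Consequently $\E\langle M_i,M_i\rangle_t\leq C_0 K_q \tfrac{e^{2\theta H t}-1}{2\theta H}U_i^{2\theta}(\bx,k)$, and taking the square root yields $\E\sup_{s\leq t}|M_i(s)|\leq C_1(t,\theta)\,U_i^\theta(\bx,k)$ with $C_1(t,\theta)$ independent of $\bx,k$. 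Summing over $i\in I^c$ and using $U_\theta=\sum_{i\in I^c} U_i^\theta$ delivers $\E_{\bx,k}\sup_{s\in[0,t]}U_\theta(\BX(s),r(s))\leq \hat H_{t,\theta}\,U_\theta(\bx,k)$, with $\hat H_{t,\theta}$ depending only on $t,\theta,M_F,K_q,H$. Localization along $\tau_n$ as above justifies passing from the local martingale $M_i$ to honest BDG estimates, completing the proof.
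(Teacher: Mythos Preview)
Your proof is correct. The first inequality is handled identically to the paper: the generator computation for $U_i^\theta$, the concavity bound $y^\theta-1\leq\theta(y-1)$, the identification of the bracket with $\Lom F/F+\delta_0\max_\ell|f_\ell|\leq H$, and Dynkin--Gronwall are exactly what the paper does.

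For the supremum bound your route differs from the paper's. The paper does not apply BDG directly to the $U_i^\theta$-martingale. Instead it writes It\^o's formula for $U_i^{2\theta}$, uses the algebraic identity $\Lom(U_i^{2\theta})=2U_i^\theta\,\Lom(U_i^\theta)+\sum_{j\neq r(s)}q_{r(s)j}\bigl|U_i^\theta(\cdot,j)-U_i^\theta(\cdot,r(s))\bigr|^2$, and applies BDG to the $U_i^{2\theta}$-martingale $M_{2U}$. The resulting $\langle M_{2U}\rangle_t$ involves $U_i^{4\theta}$, which the paper controls via $\int U_i^{4\theta}\leq(\sup U_i^{2\theta})\int U_i^{2\theta}$ and a Young's inequality absorption of the sup factor back into the left-hand side, before closing with Gronwall. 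The end product is a bound on $\E\sup U_i^{2\theta}$ in terms of $U_i^{2\theta}(\bx,k)$.

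Your argument is more direct: BDG on $M_i$ produces $\langle M_i\rangle_t\lesssim\int U_i^{2\theta}$, and the hypothesis $\theta\leq\tfrac12$ lets you invoke the first part of the lemma at exponent $2\theta$ to estimate $\E U_i^{2\theta}(\BX(s))\leq e^{2\theta Hs}U_i^{2\theta}(\bx,k)$; taking the square root gives a bound linear in $U_i^\theta(\bx,k)$. This sidesteps the absorption trick entirely and makes transparent why $\theta\leq\tfrac12$ is needed. The paper's approach yields the slightly stronger intermediate statement $\E\sup U_{2\theta}\leq C\,U_{2\theta}(\bx,k)$, but for the purposes of the lemma your shorter path suffices.
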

\begin{proof}
By the arguments from the proof of \eqref{e1-lm2.2}, for $\theta<1$, $i\in I^c$
we have
\begin{equation}\label{u1}
\E_{\bx,k} U_i^{\theta}(\BX(t),r(t))\leq \exp(\theta Ht) U_i^{\theta}(\bx,k)\,, \bx\in\R^{n,\circ}_+.
\end{equation}
From this estimate, we can take the sum over $I^c$ to obtain the first desired result.

Moreover,
$$
\begin{aligned}
d&U_i^{\theta}(\BX(t),r(t))=\Lom U_i^{\theta}(\BX(t),r(t))dt\\
&+\int_R \left(U_i^{\theta}(\BX(t),r(t)+h(\BX(t),r(t),y))-U_i^{\theta}(\BX(t),r(t))\right)\tilde N(dt,dy)
\end{aligned}
$$
The quadratic variation for the Martingale $M_U(t)$ is given by
\begin{equation}
\langle M_U,M_U\rangle_t =\int_0^t\int_{\R_+}\left|U_i^{\theta}(\BX(s),r(s)+h(\BX(s),r(s),y))-U_i^{\theta}(\BX(s),r(s))\right|^2 dy ds. 
\end{equation}
Which we can rewrite as 
\begin{equation}
\langle M_U,M_U\rangle_t =\int_0^t\sum_{j\neq r(s)}q_{r(s)j}(\BX(s),r(s))\left|U_i^{\theta}(\BX(s),j)-U_i^{\theta}(\BX(s),r(s))\right|^2 ds. 
\end{equation}

\begin{equation}
\begin{split}
\langle M_U,M_U\rangle_t \leq&\int_0^t\sum_{j\neq r(s)}q_{r(s)j}(\BX(s),r(s))U_i^{2\theta}(\BX(s),j) ds\\
+&\int_0^t \sum_{j\neq r(s)}q_{r(s)j}(\BX(s),r(s)) U_i^{2\theta}(\BX(s),r(s)) ds 
\end{split}    
\end{equation}
Then using \ref{a:bound} and the fact that 
\[
U_i^{\theta}(\bx,k)=F^{\theta}(\bx,k)\left(\frac{\check \bx_i}{\prod_{j\in I}\hat \bx_j }    \right),
\]
we get
\begin{equation}
\langle M_U,M_U\rangle_t \leq\int_0^t\sum_{j\neq r(s)}q_{r(s)j}(\BX(s),r(s))(1+M_F)U_i^{2\theta}(\BX(s),r(s) ds.
\end{equation}
Since we know that 
\[
\max_{ij}\sup_{(\bx,k)\in\R^n_+\times\CN}|q_{ij}(\bx)|\leq C_0
\]
we get 
\begin{equation}\label{qvarbound}
\langle M_U,M_U\rangle_t \leq C_0|\CN|(1+M_F)\int_0^t U_i^{2\theta}(\BX(s),r(s)) ds.    
\end{equation}

In view of the Burkholder-Davis-Gundy Inequality and \eqref{qvarbound}, and denoting\\
$\tilde{c}=\tilde{C}C_0|\CN|(1+M_F)$

\begin{equation}\label{BDG_2theta}
\E_{\bx,k}\sup_{s\in[0,t]} |M_U(s)|^2
\leq \tilde{C}\E_{\bx,k}(\langle M_U,M_U\rangle_t)\leq \tilde c \E_{\bx,k}\left(\int_0^t U_i^{2\theta}(\BX(t),r(t))dt\right) 
\end{equation}
Since we know that 
$$U_i^{\theta}(\bx,k)= (F(\bx,k))^{\theta}\left(\frac{\BX_i^{\check p_i}}{\prod_{j\in I}\BX_j^{\hat p_j}} \right)$$

\begin{equation}
\begin{split}
\Lom U_i^{\theta}(\BX(t),r(t))=& \theta U_i^{\theta-1}(\BX(t),r(t))\sum_{k=1}^n\BX_k(t)f_k(\BX(t),r(t))\frac{\partial U_i}{\partial \bx_k}(\BX(t),r(t))\\
+&\sum_{j\in\CN}q_{r(s)j}(\BX(t))U_i^{\theta}(\BX(t),j)    
\end{split}    
\end{equation}
We simplify the above to 
\begin{equation}
\begin{split}
\Lom U_i^{\theta}(\BX(t),r(t))=& \theta U_i^{\theta}(\BX(t),r(t))\sum_{k=1}^n \frac{\BX_k(t)f_k(\BX(t),r(t))}{F(\BX(t),r(t))}\frac{\partial F}{\partial x_k}(\BX(t),r(t))\\
+& \theta U_i^{\theta}(\BX(t),r(t))\left(\check p_if_i(\BX(t),r(t))-\sum_{j\in I}\hat p_j f_j(\BX(t),r(t))
\right)\\
+&\sum_{j\in\CN}q_{r(s)j}(\BX(t))U_i^{\theta}(\BX(t),j)
\end{split}    
\end{equation}

\begin{equation}
\begin{split}
\Lom U_i^{\theta}(\BX(t),r(t))
=&\left(\frac{\BX_i(t)^{\check p_i}}{\prod_{j\in I}\BX_j(t)^{\hat p_j}}\right)\left(\theta F^{\theta}(\BX(t),r(t))\sum_{k=1}^n \frac{\BX_k(t)f_k(\BX(t),r(t))}{F(\BX(t),r(t))}\frac{\partial F}{\partial x_k}(\BX(t),r(t))\right)\\
+&\left(\frac{\BX_i(t)^{\check p_i}}{\prod_{j\in I}\BX_j(t)^{\hat p_j}}\right)\left(\sum_{j\in\CN}q_{r(s)j}(\BX(t))F^{\theta}(\BX(t),j)        \right)\\
+&\left(\frac{\BX_i(t)^{\check p_i}}{\prod_{j\in I}\BX_j(t)^{\hat p_j}}\right)\theta F^{\theta}(\BX(t),r(t))\left(\check p_if_i(\BX(t),r(t))-\sum_{j\in I}\hat p_j f_j(\BX(t),r(t))
\right)
\end{split}    
\end{equation}
We can simplify the above equation to 
\begin{equation}
\begin{split}
\Lom U_i^{\theta}(\BX(t),r(t))
=&\left(\frac{\BX_i(t)^{\check p_i}}{\prod_{j\in I}\BX_j(t)^{\hat p_j}}\right)\Lom F^{\theta}(\BX(t),r(t))\\
+& \theta U_i^{\theta}(\BX(t),r(t))  )\left(\check p_if_i(\BX(t),r(t))-\sum_{j\in I}\hat p_j f_j(\BX(t),r(t))
\right)
\end{split}    
\end{equation}

Using Dynkin's Formula for $U_i^{\theta}(\BX(t),r(t))$, we get that 
\begin{equation}
\begin{split}
\E_{\bx,k}(U_i^{\theta}(\BX(t),r(t)))=& U_i^{\theta}(\bx,k) +\E_{\bx,k}\left(\int_0^t \Lom(U_i^{\theta}(\BX(s).r(s)) ds\right)\\
\E_{\bx,k}(U_i^{\theta}(\BX(t),r(t)))\leq&  U_i^{\theta}(\bx,k) +H\theta\cdot\E_{\bx,k}\left(\int_0^t  U^{\theta}_i(\BX(s),r(s)) ds\right)\\
\end{split}
\end{equation}
Using Gronwall's inequality, we get the first desired inequality
\begin{equation}\label{res_1}
\begin{split}
\E_{\bx,k}(U_i^{\theta}(\BX(t),r(t)))\leq& e^{H\theta t} U_i^{\theta}(\bx,k)\\
\E_{\bx,k}\left(U_{\theta}(\BX(t),r(t))\right)=&\sum_{i\in I^c}\E_{\bx,k}\left(U_i^{\theta}(\BX(t),r(t))\right)\leq \sum_{i\in I^c}e^{H\theta t}U_i^{\theta}(\bx,k)=e^{H\theta t}U_{\theta}(\bx,k)\\
\end{split} 
\end{equation}

We write Ito's Formula for $U_i^{2\theta}(\BX(s),r(s))$, 
\begin{equation}\label{Ito_2theta}
\begin{split}
U^{2\theta}_i(\BX(t),r(t))=&U^{2\theta}_i(\bx,k)+ \int_0^t \Lom U_i^{2\theta}(\BX(s),r(s)) ds+ M_{2U}(t)\\
\end{split}     
\end{equation}
Where $M_{2U}(t)$ denotes the martingale
\begin{equation}
M_{2U}(t)=\int_0^t\int_{\R_+}\left(U_i^{2\theta}(\BX(s),r(s)+h(\BX(s),r(s),y))-U_i^{2\theta}(\BX(s),r(s))\right)\tilde{N}(dy,ds)    
\end{equation}

Observe that we have the following identity
\begin{equation}
\begin{split}
&\Lom(U_i^{2\theta}(\BX(s),r(s)))- 2U^{\theta}_i(\BX(s),r(s))\Lom(U_i^{\theta}(\BX(s),r(s)))\\
=&\sum_{j\neq r(s)}q_{r(s)j}(\BX(s))|U_i^{\theta}(\BX(s),j)-U_i^{\theta}(\BX(s),r(s))|^2
\end{split}
\end{equation}
Hence as a result this gives us 
\begin{equation}\label{theta_reduc}
\begin{split}
&\int_0^t \Lom(U_i^{2\theta}(\BX(s),r(s))) ds = \int_0^t 2U^{\theta}_i(\BX(s),r(s))\Lom(U_i^{\theta}(\BX(s),r(s))) ds + \langle M_U,M_U\rangle_t     
\end{split}    
\end{equation}
Combining \eqref{Ito_2theta} and \eqref{theta_reduc} we have 
\begin{equation}\label{Ito_2theta2}
\begin{split}
U^{2\theta}_i(\BX(t),r(t))\leq &U^{2\theta}_i(\bx,k)+ \int_0^t 2U^{\theta}_i(\BX(s),r(s))\Lom(U_i^{\theta}(\BX(s),r(s))) ds + \langle M_U,M_U\rangle_t\\
+& M_{2U}(t)
\end{split}    
\end{equation}
We take supremum on both sides to get
\begin{equation}\label{Ito_sup}
\begin{split}
\sup_{s\in[0,t]}U^{2\theta}_i(\BX(s),r(s))\leq &U^{2\theta}_i(\bx,k)+ 2H\theta\cdot
\int_0^t U^{2\theta}_i(\BX(s),r(s)) ds + \langle M_U,M_U\rangle_t\\
+& \sup_{s\in[0,t]} M_{2U}(s)
\end{split}    
\end{equation}

Using Burkholder-Davis-Gundy Inequality, we get 
\begin{equation}
\E_{\bx,k}\left(\sup_{s\in[0,t]} |M_{2U}(s)|     \right)\leq \tilde{C}_1 \E_{\bx,k}\left(\left(\langle M_{2U},M_{2U}\rangle_t\right)^{1/2}    \right)\leq \tilde{C}_1\cdot\left(\E_{\bx,k} \langle M_{2U},M_{2U}\rangle_t   \right)^{1/2}   
\end{equation}

\begin{equation}\label{u_2Ueq}
\begin{split}
\E_{\bx,k}\left(\sup_{s\in[0,t]}U^{2\theta}_i(\BX(s),r(s))\right)\leq &U^{2\theta}_i(\bx,k)+ 2H\theta\cdot
\E_{\bx,k}\left(\int_0^t U^{2\theta}_i(\BX(s),r(s)) ds\right) + \E_{\bx,k}\left(\langle M_U,M_U\rangle_t\right)\\
+&\tilde{C}_1 \E_{\bx,k}\left(\left(\langle M_{2U},M_{2U}\rangle_t\right)^{1/2}    \right)
\end{split}    
\end{equation}

\begin{equation}
\langle M_{2U},M_{2U}\rangle_t= \int_0^t \int_{\R_+} \left|U_i^{2\theta}(\BX(s),r(s)+h(\BX(s),r(s),y))-U_i^{2\theta}(\BX(s),r(s))\right|^2 dy ds     
\end{equation}
Using the boundedness assumption \ref{a:bound}
\begin{equation}
\langle M_{2U},M_{2U}\rangle_t\leq \tilde{C}_2\int_0^t U_i^{4\theta}(\BX(s),r(s)) ds 
\end{equation}
By simple Holder inequality we have 
\[
\int_0^t U_i^{4\theta}(\BX(s),r(s)) ds\leq \left(\sup_{s\in[0,t]}U_i^{2\theta}(\BX(s),r(s))\right)\left(\int_0^t U_i^{2\theta}(\BX(s),r(s)) ds\right)
\]
Now putting the above in \eqref{u_2Ueq}
\begin{equation}
\begin{split}
\E_{\bx,k}\left(\sup_{s\in[0,t]}U^{2\theta}_i(\BX(s),r(s))\right)\leq &U^{2\theta}_i(\bx,k)+ 2H\theta\cdot
\E_{\bx,k}\left(\int_0^t U^{2\theta}_i(\BX(s),r(s)) ds\right) + \E_{\bx,k}\left(\langle M_U,M_U\rangle_t\right)\\
+&\tilde{C}_1\sqrt{\tilde{C}_2} \E_{\bx,k}\left(  \left(\sup_{s\in[0,t]}U_i^{2\theta}(\BX(s),r(s))\right)^{1/2}\left(\int_0^t U_i^{2\theta}(\BX(s),r(s)) ds\right)^{1/2}  \right)
\end{split}    
\end{equation}
Now using Young's inequality choose $\eps_1$ such that 
$\sqrt{ab}\leq \eps_1 a+ C_{\eps_1}b$, and $1-\tilde{C}_1\sqrt{\tilde{C}_2}\eps_1\geq 1/2$
As a result we get 
\begin{equation}
\begin{split}
\E_{\bx,k}\left(\sup_{s\in[0,t]}U^{2\theta}_i(\BX(s),r(s))\right)\leq &2U^{2\theta}_i(\bx,k)+ 4H\theta\cdot
\E_{\bx,k}\left(\int_0^t U^{2\theta}_i(\BX(s),r(s)) ds\right) + 2\E_{\bx,k}\left(\langle M_U,M_U\rangle_t\right)\\
+&2\tilde{C}_1\sqrt{\tilde{C}_2}\tilde{C}_{\epsilon} \E_{\bx,k} \left(\int_0^t U_i^{2\theta}(\BX(s),r(s)) ds  \right)    
\end{split}    
\end{equation}
Now denoting $H_0(\theta)=4H\theta+\tilde{c}+ 2\tilde{C}_1\sqrt{\tilde{C}_2}\tilde{C}_{\epsilon}$, we get the following estimate

\begin{equation}\label{Young_abs}
\E_{\bx,k}\left(\sup_{s\in[0,t]} U^{2\theta}_i(\BX(s),r(s))\right)\leq 2U^{2\theta}_i(\bx,k)+ H_0(\theta)\cdot\E_{\bx,k}\left(\int_0^t U^{2\theta}_i(\BX(s),r(s)) ds     \right).        
\end{equation}

From \eqref{Young_abs}, we get the following inequality
\begin{equation}
\E_{\bx,k}\left(\sup_{s\in[0,t]}U^{2\theta}_i(\BX(s),r(s))\right)\leq 2U^{2\theta}_i(\bx,k)+ H_0(\theta)\cdot \E_{\bx,k}\left(\int_0^t \left(\sup_{s\in[0,t]}U^{2\theta}_i(\BX(s),r(s))\right) ds   \right).    
\end{equation}
then by Gronwall's ineqaulity,  
\begin{equation}
\begin{split}
\E_{\bx,k}\left(\sup_{s\in[0,t]}U^{2\theta}_i(\BX(s),r(s))\right)\leq& 2e^{H_0(\theta)t} U_i^{2\theta}(\bx,k) \\
\E_{\bx,k}\left(\sup_{s\in[0,t]}U_{2\theta}(\BX(s),r(s))\right)\leq& \sum_{i\in I^c} \E_{\bx,k}\left(sup_{s\in[0,t]} U_i^{2\theta}(\BX(s),r(s))\right)\leq 2e^{H_0(\theta)t}\sum_{i\in I^c}U_i^{2\theta}(\bx,k)\\ 
\end{split}
\end{equation}
Now take $H_1(\theta)=H_0(\theta)+\ln(2)$, then for $t\geq 1$, we have 
\begin{equation}\label{res_2}
\E_{\bx,k}\left(\sup_{s\in[0,t]}U_{2\theta}(\BX(s),r(s))\right)\leq e^{H_1(\theta)t}U_{2\theta}(\bx,k)   
\end{equation}

This proves the desired inequalities.

\end{proof}
\begin{rmk}
It is key to note that the inequalities \eqref{A.1}, \eqref{e1-lm2.2} and \eqref{et1.1} hold if $|p_i|<\delta_0$ no matter if the $p_i$'s are negative or positive. This then allows us to have the same kind of estimates for $U_\theta$ and $V_\theta$.
\end{rmk}
\begin{thm}\label{thm4.1}
Suppose Assumptions \ref{a1-pdm}, \ref{a:bound} and  \ref{a4-pdm} hold and $\R_+^I$ is accessible. Then for any $\delta<\delta_0$
and any $\bx\in\R^{n,\circ}_+$ we have
\begin{equation}\label{e.extinction}
\lim_{t\to\infty}\E_{\bx,k} \bigwedge_{i=1}^n X_i^{\delta}(t)=0,
\end{equation}
where $\bigwedge_{i=1}^n a_i=\min_{i=1,\dots,n}\{a_i\}.$

\end{thm}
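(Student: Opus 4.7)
The plan is to promote the supermartingale machinery sketched in the intuition paragraph into almost-sure extinction of every species in $I^c$, and then to upgrade this to the stated $L^1$ convergence via the first-moment bound from Lemma \ref{lm2.2}. The first step is to extend Proposition \ref{prop4.1} from the compact window $\{\|\bx\|\leq M\}$ to the full region $\{\bx\in\R_+^{n,\circ}:x_i<\delta_e,\,i\in I^c\}$, in exactly the same fashion that \eqref{e:lya} is extended inside the proof of Theorem \ref{thm3.1}. The elementary inequality $y^\theta-1\leq\theta(y-1)$ for $\theta\in(0,1)$ applied to $y=F(\bx,l)/F(\bx,k)$ neutralises the jump part of $\Lom U_\theta$ (compare \eqref{e:ineq_theta}--\eqref{theta_trick}), while \eqref{e:sup} produces $\Lom U_\theta(\bx,k)\leq-\theta\gamma_b U_\theta(\bx,k)$ once $\|\bx\|>M$. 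Running Dynkin on $[0,\tau\wedge n_eT_e]$ with $\tau:=\inf\{t\geq 0:\|\BX(t)\|\leq M\}$ and invoking Proposition \ref{prop4.1} together with the strong Markov property (mirroring \eqref{et1.2}--\eqref{et1.5}) yields
\[
\E_{\bx,k}\,U_\theta(\BX(n_eT_e),r(n_eT_e))\leq \kappa_e U_\theta(\bx,k)+\tilde K_e,\qquad x_i<\delta_e,\,i\in I^c,
\]
for some $\kappa_e\in(0,1)$.

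Next, truncate via $\tilde U_\theta:=U_\theta\wedge\varsigma$ with $\varsigma$ large. Combined with the growth control of Lemma \ref{lm3.3}, the above contraction upgrades, for $U_\theta(\bx,k)$ sufficiently small, to the supermartingale inequality $\E_{\bx,k}\tilde U_\theta(\BX(n_eT_e),r(n_eT_e))\leq\tilde U_\theta(\bx,k)$; hence $Y_\ell:=\tilde U_\theta(\BX(\ell n_eT_e),r(\ell n_eT_e))$ is a bounded non-negative supermartingale. Doob's maximal inequality gives $\PP_{\bx,k}\{\sup_\ell Y_\ell\geq\varsigma\}\leq U_\theta(\bx,k)/\varsigma$, while on the complementary event the contraction acts at every step, forcing $Y_\ell\to 0$ geometrically and so $X_i(t)\to 0$ for all $i\in I^c$. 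Thus, for every $\varepsilon>0$ there is $u^*=u^*(\varepsilon)>0$ with
\[
\PP_{\bx,k}\{\lim_{t\to\infty}X_i(t)=0,\,i\in I^c\}\geq 1-\varepsilon\quad\text{whenever } U_\theta(\bx,k)<u^*.
\]
Accessibility of $\R_+^I$, the Feller property, and tightness from Lemma \ref{lm2.2} then allow a strong Markov iteration over successive returns of $\BX(t)$ to a fixed compact $K\subset\R_+^{n,\circ}$ (from which entry into $\{U_\theta<u^*\}$ has a uniform positive probability by accessibility and Feller continuity). This shows the process enters $\{U_\theta<u^*\}$ in finite time almost surely from every $\bx\in\R_+^{n,\circ}$, so letting $\varepsilon\downarrow 0$ yields $\PP_{\bx,k}\{\lim_{t\to\infty}X_i(t)=0,\,i\in I^c\}=1$.

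To deduce the $L^1$ statement, fix $\eta>0$ and split
\[
\E_{\bx,k}\!\bigwedge_{i=1}^n X_i^\delta(t)\leq\eta^\delta+\E_{\bx,k}\bigl[\|\BX(t)\|^\delta\mathbf{1}_{\{\min_i X_i(t)>\eta\}}\bigr].
\]
Uniform integrability of $\{\|\BX(t)\|^\delta\}_{t\geq 0}$ follows from the moment estimate $\E_{\bx,k}F(\BX(t))\leq H_1+F(\bx,k)e^{-\gamma_bt}$ of Lemma \ref{lm2.2} together with $F\geq c(1+\|\bx\|)$ (applying H\"older with conjugate exponents $p,q>1$ satisfying $p\delta\leq 1$, which is possible since $\delta<\delta_0$ can be taken less than $1$ without loss), and convergence in probability $\min_iX_i(t)\to 0$ is a consequence of the almost-sure statement from the previous step. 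Sending $t\to\infty$ and then $\eta\downarrow 0$ gives the result. The main obstacle I anticipate is precisely the iteration for accessibility: between successive visits to $\{U_\theta<u^*\}$ the process could in principle escape to infinity or be drawn towards the repellers on $\partial\M^I$ (the inner boundary of $\R_+^I$), and quantitatively ruling this out requires the Lyapunov-type tightness of Lemma \ref{lm2.2} combined with the Feller property and the accessibility hypothesis.
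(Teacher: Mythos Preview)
Your proposal and the paper's proof share the same initial phase: the Lyapunov-type estimate $\Lom U_\theta\leq -\theta\gamma_b U_\theta$ for large $\|\bx\|$, the truncation $\tilde U_\theta=U_\theta\wedge\varsigma$, and the supermartingale argument leading to $\PP_{\bx,k}\{\lim_{t\to\infty}X_i(t)=0,\,i\in I^c\}\geq 1-\varepsilon$ whenever $U_\theta(\bx,k)$ is small. From this point the two arguments diverge sharply.

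You attempt to upgrade this to $\PP_{\bx,k}\{\lim_{t\to\infty}X_i(t)=0,\,i\in I^c\}=1$ from \emph{every} interior starting point, by iterating over successive returns to a fixed compact $K\subset\R_+^{n,\circ}$. This is exactly the step you flag as the main obstacle, and the concern is well-founded: nothing in the paper's toolkit guarantees such recurrence. Lemma \ref{lm2.2} and the estimate \eqref{e1-lm4.7} only force the process to spend time in bounded regions of $\R_+^n$, not in compacta bounded away from $\partial\R_+^n$. Since $U_\theta$ blows up as $x_j\to 0$ for any $j\in I$, the process could drift toward $\partial\R_+^I$ and never re-enter $\{U_\theta<u^*(\varepsilon)\}$. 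Ruling this out amounts to controlling the dynamics near the repellers on $\partial\R_+^I$, which is precisely what Assumption \ref{a5-pdm} is designed to do in the later Theorem \ref{thm4.2}---but that assumption is not available here.

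The paper sidesteps this entirely by settling for a much weaker intermediate fact: accessibility of $\R_+^I$ gives only $\PP_{\bx,k}\{\tau_0<\infty\}>0$ (positive probability, via \cite[Lemma~3.2]{BBMZ15}), and hence $\PP_{\bx,k}\{\lim_{t\to\infty}X_i(t)=0\text{ for some }i\}>0$ for every $\bx\in\R_+^{n,\circ}$. This alone excludes any invariant probability measure on $\R_+^{n,\circ}\times\CN$. Then, since $\{P(t,\bx,k,\cdot)\}_{t\geq 0}$ is tight (Lemma \ref{lm2.2}) with all weak limit points being invariant measures supported on $\partial\R_+^n\times\CN$, where $\bigwedge_i x_i^\delta\equiv 0$, and since $\bigwedge_i x_i^\delta\leq K(1+\|\bx\|)^\delta$ provides the requisite uniform integrability, \eqref{e.extinction} follows directly. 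This indirect route never needs almost-sure extinction, and so never needs the recurrence you cannot establish.
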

\begin{proof}
Just as in \eqref{et1.1}, we have 
\begin{equation}\label{et3.1}
\Lom U_\theta(\bx,k)\leq -\theta\gamma_bU_\theta(\bx,k) \text{ if } \|x\|\geq M.
\end{equation}
Let
$$C_U:=\sup\left\{\dfrac{\prod_{i\in I} x_i^{\hat p_i}}{F(\bx,k)}: \bx\in\R^{n,\circ}_+\right\}<\infty,$$
$$\varsigma:=\dfrac{\delta_e^{\check p\theta}}{C_U^\theta},$$

and
$$\xi:=\inf\left\{t\geq0: U^\theta(\BX(t),r(t))\geq \varsigma\right\}.$$
Clearly, if $U_\theta(\bx,k)<\varsigma$, then $\xi>0$ and
for any $i\in I^c$, we get
\begin{equation}\label{e:ine}
X_i(t)\leq \delta_e\,, t\in [0,\xi).
\end{equation}
Let $$\tilde U_\theta(\bx,k):=\varsigma\wedge U_\theta(\bx,k).$$
We have from the concavity of $x\mapsto x\wedge \varsigma$ and Jensen's inequality that
$$\E_{\bx,k} \tilde U_\theta(\BX(T), r(T))\leq\varsigma\wedge \E U_\theta(\BX(T), r(T)).$$
Let $\tau$ be defined by \eqref{e:tau}. By \eqref{et3.1} and Dynkin's formula, we have that
$$
\begin{aligned}
\E_{\bx,k}&\left[ \exp\left(\theta\gamma_b(\tau\wedge\xi\wedge n_eT_e)\right)U_\theta(\BX(\tau\wedge\xi\wedge n_eT_e),r(\tau\wedge\xi\wedge n_eT_e))\right]\\
&\leq U_\theta(\bx,k) +\E_{\bx,k} \int_0^{\theta\gamma_b(\tau\wedge\xi\wedge n_eT_e)}\exp(\theta\gamma_b s)[\Lom U_\theta(\BX(s),r(s))+ \theta\gamma_bU_\theta(\BX(s),r(s))]ds\\
&\leq U_\theta(\bx,k).
\end{aligned}
$$
As a result,
\begin{equation}\label{et3.3}
\begin{aligned}
U_\theta(\bx,k)\geq&
\E_{\bx,k}\left[ \exp\left(\theta\gamma_b(\tau\wedge\xi\wedge n_eT_e)\right)U_\theta(\BX(\tau\wedge\xi\wedge n_eT_e),r(\tau\wedge\xi\wedge n_eT_e))\right]\\
\geq& \E_{\bx,k} \left[\1_{\{\tau\wedge\xi\wedge(n_e-1)T_e=\tau\}}U_\theta(\BX(\tau),r(\tau))\right]\\
&+ \E_{\bx,k} \left[\1_{\{\tau\wedge\xi\wedge(n_e-1)T_e=\xi\}}U_\theta(\BX(\xi)),r(\xi))\right]\\
 &+\exp\left(\theta\gamma_b (n_e-1)T_e\right) \E_{\bx,k} \left[\1_{\{(n_e-1)<\tau\wedge\xi<n_eT\}}U_\theta(\BX(\tau\wedge\xi),r(\tau\wedge\xi))\right]\\
&+\exp\left(\theta\gamma_b n_eT_e\right) \E_{\bx,k} \left[\1_{\{\tau\wedge\xi\geq n_eT_e\}}U_\theta(\BX(n_eT_e)),r(n_eT_e))\right].\\
 \end{aligned}
\end{equation}

By the strong Markov property of $(\BX(t),r(t))$ and
Proposition \ref{prop4.1} (which we can use because of \eqref{e:ine})
\begin{equation}\label{et3.4}
\begin{aligned}
\E_{\bx,k}&\left[ \1_{\{\tau\wedge\xi\wedge(n_e-1)T_e=\tau\}}U_\theta(\BX(n_eT_e)),r(n_eT_e))\right]\\
&\leq
 \E_{\bx,k} \left[\1_{\{\tau\wedge\xi\wedge(n_e-1)T_e=\tau\}}\exp\left(-0.5\theta p_e(n_eT_e-\tau)\right)U_\theta(\BX(\tau\wedge\xi),r(\tau\wedge\xi))\right]\\
 &\leq \exp(-0.5\theta p_eT_e)
 \E_{\bx,k}\left[\1_{\{\tau\wedge\xi\wedge(n_e-1)T_e=\tau\}}U_\theta(\BX(\tau\wedge\xi),r(\tau\wedge\xi))\right].
 \end{aligned}
\end{equation}
Similarly, by the strong Markov property of $(\BX(t),r(t))$ and
Lemma \ref{lm3.3}, we obtain
\begin{equation}\label{et3.5}
\begin{aligned}
\E_{\bx,k}&\left[ \1_{\{(n_e-1)T_e<\tau\wedge\xi<n_eT_e\}}U_\theta(\BX(n_eT_e)),r(n_eT_e))\right]\\
&\leq
 \E_{\bx,k} \left[\1_{\{(n_e-1)T_e<\tau\wedge\xi<n_eT_e\}}\exp\left(\theta H(n_eT_e-\tau)\right)U_\theta(\BX(\tau\wedge\xi),r(\tau\wedge\xi))\right]\\
 &\leq \exp(\theta HT_e)\E_{\bx,k}\left[\1_{\{(n_e-1)T_e<\tau\wedge\xi<n_eT_e\}}U_\theta(\BX(\tau\wedge\xi),r(\tau\wedge\xi))\right].
 \end{aligned}
\end{equation}
Since $\tilde U_\theta(\BX(n_eT_e)),r(n_eT_e))\leq U_\theta(\BX(n_eT_e\wedge\xi),r(n_eT_e\wedge \xi))$ one can note that
\begin{equation}\label{et3.6}
\E_{\bx,k} \left[\1_{\{\tau\wedge\xi\wedge(n_e-1)T_e=\xi\}}\tilde U_\theta(\BX(n_eT_e)),r(n_eT_e))\right]\leq \E_{\bx,k} \left[\1_{\{\tau\wedge\xi\wedge(n_e-1)T_e=\xi\}}U_\theta(\BX(\xi)),r(\xi))\right].
\end{equation}
If $U_\theta(\bx,k)<\varsigma$ then applying \eqref{et3.4}, \eqref{et3.5} and \eqref{et3.6} to \eqref{et3.3} yields
\begin{equation}\label{et3.8}
\begin{aligned}
\tilde U_\theta(\bx,k)=U_\theta(\bx,k)
\geq& \E_{\bx,k} \left[\1_{\{\tau\wedge\xi\wedge(n_e-1)T_e=\tau\}}U_\theta(\BX(\tau),r(\tau))\right]\\
&+ \E_{\bx,k} \left[\1_{\{\tau\wedge\xi\wedge(n_e-1)T_e=\xi\}}U_\theta(\BX(\xi)),r(\xi))\right]\\
 &+\exp\left(\theta\gamma_b (n_e-1)T_e\right) \E_{\bx,k} \left[\1_{\{(n_e-1)<\tau\wedge\xi<n_eT\}}U_\theta(\BX(\tau\wedge\xi),r(\tau\wedge\xi))\right]\\
&+\exp\left(\theta\gamma_b n_eT_e\right) \E_{\bx,k} \left[\1_{\{\tau\wedge\xi\geq n_eT_e\}}U_\theta(\BX(n_eT_e)),r(n_eT_e))\right]\\
\geq&\exp\left(0.5\theta p_eT_e)\right)\E_{\bx,k} \left[\1_{\{\tau\wedge\xi\wedge(n_e-1)T_e=\tau\}}U_\theta(\BX(n_eT_e)),r(n_eT_e))\right]\\
&+ \E_{\bx,k} \left[\1_{\{\tau\wedge\xi\wedge(n_e-1)T_e=\xi\}}\tilde U_\theta(\BX(n_eT_e)),r(n_eT_e))\right]\\
 &+\exp\left(\theta T_e\left(\gamma_b (n_e-1)- H\right)\right)\E_{\bx,k} \left[\1_{\{(n_e-1)<\tau\wedge\xi<n_eT\}}U_\theta(\BX(n_eT_e)),r(n_eT_e))\right]\\
&+\exp\left(\theta\gamma_b n_eT_e\right) \E_{\bx,k} \left[\1_{\{\tau\wedge\xi\geq n_eT_e\}}U_\theta(\BX(n_eT_e)),r(n_eT_e))\right]\\
\geq& \E_{\bx,k} \tilde U_\theta(\BX(n_eT_e)),r(n_eT_e)) \,\quad\text{ (since } \tilde U_\theta(\cdot)\leq U_\theta(\cdot))
 \end{aligned}
\end{equation}
Clearly, if $U_\theta(\bx,k)\geq\varsigma$ then
\begin{equation}\label{et3.8a}
\E_{\bx,k} \tilde U_\theta(\BX(n_eT_e)),r(n_eT_e)) \leq \varsigma=\tilde U_\theta(\bx,k).
\end{equation}
As a result of \eqref{et3.8}, \eqref{et3.8a} and the Markov property of $(\BX(t),r(t))$, the sequence
$\{Y(\ell): \ell\in\N\}$ where $Y(\ell):=\tilde U_\theta(\BX(\ell n_eT_e))$ is a supermartingale.
Let $\tilde\xi:=\inf\{\ell\in\N: U_\theta(\BX(t))\geq \varsigma \text{ for some } t\leq \ell n_eT_e. \}$.
If $U_\theta(\bx,k)\leq \varsigma\eps$ we have
\begin{equation}\label{e:EY_ineq}
\E_{\bx,k}  Y(\ell\wedge\tilde\xi)\leq\E_{\bx,k} Y(0)=U_\theta(\bx,k)\leq \varsigma\eps\,\text{ for all }\, \ell\in\N.
\end{equation}
 As a result  \eqref{e:EY_ineq} combined with the Markov inequality yields
$$\PP_{\bx,k}\{\tilde\xi<k\}=\PP_{\bx,k}(Y(\bar\xi\wedge k)\geq\varsigma)\leq \varsigma^{-1}\E_{\bx,k} Y(k\wedge\tilde\xi)\leq \eps.
$$
Next, let $k\to\infty$ to get
\begin{equation}\label{et3.7}
\PP_{\bx,k}\{\xi<\infty\}=\PP_{\bx,k}\{\tilde\xi<\infty\}\leq \eps.
\end{equation}

	We have from the second inequality of \eqref{et3.8} that
	$$
	\E_{\bx,k} \1_{\{\xi>n_eT_e\}} U_\theta(\BX(n_eT_e)),r(n_eT_e))\leq \max\{e^{-0.5\theta p_eT_e}, e^{-\theta T_e}, e^{-\gamma_b n_eT_e}\} U_\theta(\bx,k)
	$$
	which combined with the Markov property of $(\BX(t),r(t))$ implies
	$$
	\E_{\bx,k} \1_{\{\xi>\ell n_eT_e\}} U_\theta(\BX(\ell n_eT_e)),r(\ell n_eT_e))\leq \rho_0^\ell U_\theta(\bx,k)
	$$
	where 
	$\max\{e^{-0.5\theta p_eT_e}, e^{-\theta T_e}, e^{-\gamma_b n_eT_e}\}=\rho_0<1$.
Due to \eqref{u3}, we have
	$$
\E_{\bx,k} \1_{\{\xi>\ell n_eT_e\}}\sup_{t\in[\ell n_eT_e,(\ell+1) n_eT_e]} U_\theta(\BX(t)),r(t))\leq \rho_0^\ell \hat H_{n_eT_e,\theta} U_\theta(\bx,k)
$$
 Picking $\rho_0<\rho_1<1$, and using Markov's inequality we have
\begin{align*}
\PP_{\bx,k}\left\{\1_{\{\xi>\ell n_eT_e\}}\sup_{t\in[\ell n_eT_e,(\ell+1) n_eT_e]} U_\theta(\BX(t)),r(t)\geq\rho_1^{\ell+1}\right\}\leq \left(\dfrac{\rho_0}{\rho_1}\right)^{\ell+1}\hat H_{n_eT_e,\theta}U_\theta(\bx,k).
\end{align*}
Since $\sum_{k=0}^\infty\left(\frac{\rho_0}{\rho_1}\right)^{\ell+1}<\infty$, from an application of Borel-Cantelli lemma, we obtain
$$\lim_{\ell\to\infty} \1_{\{\xi>\ell n_eT_e\}}\sup_{t\in[\ell n_eT_e,(\ell+1) n_eT_e]} U_\theta(\BX(t)),r(t)=0 \,\,\text{ a.s.}  $$
which, in view of $\PP_{\bx,k}\{\xi<\infty\}\leq \epsilon,$ implies 
\begin{equation}\label{e:Utheta_lim}
\PP_{\bx,k}\{ \lim_{t\to\infty}U_{\theta}(\BX(t),r(t))=0\}\geq 1-\epsilon
\end{equation}
Since we know that by definition
\[
U_{\theta}(\bx,k)= \sum_{i\in I^c}\left(F(\bx,k)\left(\frac{ x_i^{\check p_i}}{\prod_{i\in I } x_j^{\hat p_j}}   \right)\right)^{\theta}
\]
Hence for each $i\in I^c$, we have 

\begin{equation}\label{U_i_def}
U_i^{\theta}(\bx,k)=\left(F(\bx,k)\left(\frac{ x_i^{\check p_i}}{\prod_{i\in I} x_j^{\hat p_j}}   \right)\right)^{\theta}\geq  \frac{x_i^{\theta\check p_i}}{C_U^{\theta}}
\end{equation}

This together with \eqref{e:EY_ineq} shows that for all initial conditions such that $U_\theta(\bx,k)\leq \varsigma\eps$ one has
\begin{equation}\label{e:ext_U_theta_I}
\PP_{\bx,k}\left\{\lim_{t\to\infty}X_i(t)=0, i\in I^c\right\}\geq 1-\varepsilon.
\end{equation}

Since we are assuming $\R^{I}_+$ to be accessible from any point $(\bx,k)\in \R^n_{+}\times\CN$ if we define the stopping time
\begin{equation}\label{e:tau_0}
\tau_0=\inf\left(t>0: X_i(t)\leq \eps\delta_e \text{ for all } i\in I^c \right)
\end{equation}
then from Lemma 3.2 in \cite{BBMZ15}, we know 
\begin{equation}\label{e:tau_0_bound}
\PP_{\bx,k}(\tau_0<\infty)>0, ~(\bx,k)\in \R^{n,\circ}\times\CN.
\end{equation}

Then $U_{\theta}(\BX(\tau_0),r(\tau_0))\leq \eps\varsigma$ so combining strong markov property and the supermartingale argument, we get that 

$$\PP_{\bx,k}\left\{\lim_{t\to\infty} X_i(t)=0, i\in I^c\right\}>0.$$
By Assumption \ref{a4-pdm} and noting that we also assumed that $\R_+^I$ is accessible we have that
$$\PP_{\bx,k}\left\{\lim_{t\to\infty} X_i(t)=0 \text{ for some } i\right\}>0, \forall\,\bx\in\R^{n,\circ}_+.$$
This implies that there is no invariant measure on $\R^{n,\circ}_+\times\CN$.

For any
$\pi$ with $\suppo(\pi)\subset\partial\R^n_+$,
we have
$$\sum_{k\in\CN}\int_{\R^n_+}\left(\bigwedge_{i=1}^n x_i^{\delta}\right)\pi(d\bx,k)=0.$$
Note that
\[
 \left(\bigwedge_{i=1}^n x_i^{\delta}\right)\leq K(1+\|\bx\|)^{\delta}.
\]
Then $\{P(t,\bx_0, k, \cdot,\cdot), t\geq 0\}$ is tight and every limit point is an invariant probability measure. By the above two equations we therefore get 
$$\lim_{t\to\infty}\E_{\bx,k} \bigwedge_{i=1}^n X_i^{\delta}(t)=\lim_{t\to\infty}\sum_{j\in\CN}\int_{\R^n_+}\left(\bigwedge_{i=1}^n x_i^{\delta}\right)P(t,\bx_0,k, d\bx,j)=0$$
as desired.
\end{proof}

Without assuming the accessibility of the boundary we can prove the following extinction result.
\begin{thm}\label{t:exxx}
Suppose Assumptions \ref{a1-pdm} and \ref{a:bound} hold. If $I$ satisfies Assumption \ref{a4-pdm} there exists
$\beta_I>0$ such that, for any a compact set $\K^I\subset
\Se^{I}_+$,
we have
$$
\lim_{\dist(\bz,\K^I)\to0, \bz\in
S^\circ}\PP_\bz\left\{\lim_{t\to\infty}\dfrac{\ln X_i(t)}t\leq
-\beta_I, i\in I^c\right\}=1
$$
\end{thm}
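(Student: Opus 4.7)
The plan is to reuse the Lyapunov machinery developed for Theorem \ref{thm4.1}: the function $U_\theta$ from Proposition \ref{prop4.1}, the truncation $\tilde U_\theta = U_\theta \wedge \varsigma$, and the supermartingale estimate \eqref{et3.8}. Accessibility of $\R_+^I$ entered Theorem \ref{thm4.1} only to push the process into the region $\{U_\theta < \varsigma\eps\}$ via the stopping time $\tau_0$; here that step is replaced by the hypothesis that $\bz$ already lies close to $\K^I \subset \R_+^{I,\circ}$. The genuinely new task is to extract an explicit exponential rate $\beta_I$, which is essentially encoded in the geometric factor $\rho_0^\ell$ appearing inside the proof of Theorem \ref{thm4.1}.

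Fix $\eps > 0$. Since $U_\theta$ is continuous on $\R_+^{n,\circ} \times \CN$, vanishes on $\R_+^{I,\circ} \times \CN$ (because $\check p\, \theta > 0$), and the denominator $\prod_{j\in I} x_j^{\hat p_j}$ is bounded below on the compact set $\K^I$, there exists $\eta = \eta(\eps, \K^I) > 0$ such that $U_\theta(\bz, k) \leq \varsigma\eps$ whenever $\bz \in \R_+^{n,\circ}$ satisfies $\dist(\bz, \K^I) < \eta$. Setting $\xi := \inf\{t \geq 0 : U_\theta(\BX(t), r(t)) \geq \varsigma\}$ and $Y(\ell) := \tilde U_\theta(\BX(\ell n_e T_e), r(\ell n_e T_e))$, the chain of inequalities leading to \eqref{et3.8} shows that $Y$ is a nonnegative supermartingale, and optional stopping combined with Markov's inequality gives $\PP_\bz(\xi < \infty) \leq U_\theta(\bz, k)/\varsigma \leq \eps$. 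Iterating \eqref{et3.8} on the event $\{\xi > \ell n_e T_e\}$ yields the geometric decay
\[
\E_\bz\bigl[\1_{\{\xi > \ell n_e T_e\}} U_\theta(\BX(\ell n_e T_e), r(\ell n_e T_e))\bigr] \leq \rho_0^\ell\, U_\theta(\bz, k), \qquad \rho_0 \in (0,1),
\]
with $\rho_0$ depending only on the structural constants $\rho_e, \gamma_b, H, n_e, T_e, \theta$. Combining with the block supremum bound \eqref{u3} of Lemma \ref{lm3.3}, choosing $\rho_1 \in (\rho_0, 1)$, and applying Markov's inequality plus Borel--Cantelli exactly as in the final paragraph of the proof of Theorem \ref{thm4.1}, we obtain that almost surely on $\{\xi = \infty\}$,
\[
\limsup_{t \to \infty} \frac{\ln U_\theta(\BX(t), r(t))}{t} \leq \frac{\ln \rho_1}{n_e T_e} < 0.
\]

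Since $U_\theta(\bx, k) \geq U_i^\theta(\bx, k) \geq x_i^{\theta \check p}/C_U^\theta$ for each $i \in I^c$, the previous bound implies
\[
\limsup_{t \to \infty} \frac{\ln X_i(t)}{t} \leq -\beta_I, \qquad \beta_I := \frac{-\ln \rho_1}{n_e T_e\, \theta \check p} > 0,
\]
on an event of $\PP_\bz$-probability at least $1 - \eps$. Letting $\eta \downarrow 0$ (equivalently $\eps \downarrow 0$) delivers the stated limit. The main obstacle is verifying that $\beta_I$ is genuinely independent of the starting point and does not degrade as $\bz$ approaches $\K^I$; this is immediate because $\rho_0, n_e, T_e, \theta, \check p$ and the threshold $\varsigma$ are all determined solely by Assumption \ref{a4-pdm}, Assumption \ref{a:bound}, and the global drift bound $M$ from \eqref{et3.1}, none of which depend on $\bz$. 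A secondary subtlety, namely the passage from the discrete skeleton $\{\ell n_e T_e\}$ to continuous time, is absorbed by the block supremum bound \eqref{u3}, exactly as in Theorem \ref{thm4.1}.
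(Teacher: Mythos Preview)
Your proposal is correct and follows essentially the same route as the paper: both proofs recycle the Lyapunov machinery of Theorem \ref{thm4.1} (the supermartingale bound for $\tilde U_\theta$, the geometric decay $\rho_0^\ell$, and Borel--Cantelli via \eqref{u3}) while replacing the accessibility step by the closeness hypothesis $\dist(\bz,\K^I)\to 0$ to force $U_\theta(\bz,k)\leq\varsigma\eps$. The only cosmetic difference is that the paper unpacks $\ln U_i^\theta$ as $\theta\ln F+\theta\check p\ln X_i-\theta\sum_{j\in I}\hat p_j\ln X_j$ and bounds the $F$/product term via $C_U$, whereas you use the equivalent single inequality $U_\theta\geq U_i^\theta\geq x_i^{\theta\check p}/C_U^\theta$; the resulting rates $\beta_I=-\ln\rho_1/(n_eT_e\theta\check p)$ and $\alpha_I/(\theta\check p)$ coincide under the paper's identification $\rho_1=e^{-\alpha_I n_eT_e}$.
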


\begin{proof}
Consider a sequence $z_n=(\bx_n,k_n)$ such that $\dist(z_n,\K^I)\to 0$. Then for $i\in I^c$, $(\bx_n)_i\to 0$ as $n\to\infty$, and $\|\bx_n\|$ is bounded .Following the same idea, as in Theorem \ref{thm4.1}, given $\eps>0$, there is an $N$ such that for all $n\geq N$ we have $(\bx_n)_i \leq \eps\delta_e$ for $i\in I^c$. Then we have for all $z_n=(\bx_n,k_n)$
\[
\PP_{\bx_n,k_n}\left(\lim_{t\to\infty} U^{\theta}_i(X(t),r(t))=0                   \right)\geq 1-\eps
\]
We can see even further from the proof of Theorem \ref{thm4.1} that for almost every $\omega\in \{\tilde{\xi}=\infty, (\BX(0),r(0)=(\bx_n,k_n)\}$, there is some $k\in \N$ and $\rho_1<1$ such that for all $\ell\geq k$, and $t\in [\ell n_eT_e,\infty)$

\begin{equation}
    U_i^{\theta}(\BX(t,\omega),r(t,\omega))\1_{t\in[\ell n_eT_e,(\ell+1)n_eT_e]}\leq \rho_1^{\ell +1}
\end{equation}
Now let $\alpha_I$ be a positive number such that $\displaystyle \rho_1= e^{-\alpha_{I}n_eT_e}$. then we get 

\begin{equation}
 U_i^{\theta}(\BX(t,\omega),r(t,\omega))\1_{t\in[\ell n_eT_e,(\ell+1)n_eT_e]}\leq \exp(-\alpha_I(\ell+1)n_eT_e)\leq e^{-\alpha_I t}    
\end{equation}
Hence as a result we have for each $i\in I^c$
\begin{equation}\label{lnU_i_ineq}
\PP_{\bx_n,k_n}\left(\lim_{t\to\infty}\frac{\ln(U_i^{\theta}(\BX(t),r(t))}{t} \leq -\alpha_I      \right)\geq 1-\eps
\end{equation}
From the definition of $U_i^{\theta}$ one can see that
\begin{equation}
  \frac{\ln(U_i^{\theta}(\BX(t),r(t))}{t}=\theta\left(\frac{\ln(F(\BX(t),r(t))}{t}\right)+\check{p_i}\theta \left(\frac{\ln(\BX_i(t))}{t}\right)-\sum_{j\in I}\theta \hat p_j \left(\frac{\ln(\BX_j(t))}{t}\right)
\end{equation}

\begin{equation}\label{limsup_lnX_i}
\limsup_{t\to\infty}\frac{\ln(\BX_i(t))}{t}=\frac{1}{\theta \check p}\left( \limsup_{t\to\infty}\frac{\ln(U_i^{\theta}(\BX(t),r(t))}{t}\right)+ \frac{1}{\theta}\left( \limsup_{t\to\infty} \frac{1}{t}\ln\left( \frac{\prod_{j\in I} \BX_j^{\hat p_j}(t)}{F(\BX(t),r(t))}\right)      \right).      
\end{equation}
Using the fact that $\frac{\prod_{j\in I} \BX_j^{\hat p_j}(t)}{F(\BX(t),r(t))}\leq C_U $, we clearly havbe
\[
\limsup_{t\to\infty} \frac{1}{t}\ln\left( \frac{\prod_{j\in I} \BX_j^{\hat p_j}(t)}{F(\BX(t),r(t))}\right)  \leq 0
\]
Now combining \eqref{lnU_i_ineq} and \eqref{limsup_lnX_i} we get for each $i\in I^c$ 

\begin{equation}
 \PP_{\bx_n,k_n}\left( \limsup_{t\to\infty}\frac{\ln(\BX_i(t))}{t} \leq -\frac{\alpha_I}{\theta \check p_i}  \right)\geq 1-\eps.   
\end{equation}
Taking $$\beta_I=\frac{\alpha_I}{\theta \max_{i\in I^c}\check p_i}$$ and letting $n\to\infty$ and 
\begin{equation}
\liminf_{n\to\infty}\PP_{\dist(z_n,\K^I)}\left( \limsup_{t\to\infty}\frac{\ln(\BX_i(t))}{t} \leq -\beta_I 
\text{ for all } i\in I^c\right)\geq  1-\eps.    
\end{equation}
Hence we get by taking $\eps\to 0$
\begin{equation}
\lim_{\dist(z,\K^I)\to 0} \PP_{z}\left( \limsup_{t\to\infty}\frac{\ln(\BX_i(t))}{t} \leq -\beta_I 
\text{ for all } i\in I^c\right)=1    
\end{equation}

\end{proof}

To strengthen our extinction results we need a series of lemmas.
We also need the following lemmas.
\begin{lm}\label{lm4.4}
For any $\delta_1<\delta_0$ there exists $\hat{K}>0$ such that for every $(\bx,k)\in\R^n_+\times \CN$  
$$
\PP_{\bx,k}\left\{\limsup_{t\to\infty}\dfrac1t\int_0^t(F(\BX(s),r(s)))^{\delta_1}\left(1+f^M(\BX(s),r(s)) \right) ds\leq\tilde K\right\}=1
$$
There is $\hat K_1>1$ such that
\begin{equation}\label{e1-lm4.7}
\PP_{\bx,k}\left\{\liminf_{t\to\infty} \dfrac{1}t\int_0^t\1_{\{\|\BX(s)\|\leq \hat K_1\}}ds\geq\dfrac12\right\}=1,\,\bx\in\R^n_+.
\end{equation}

\end{lm}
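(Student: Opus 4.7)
The plan is to apply It\^o's formula to $F^{\delta_1}(\BX(t),r(t))$, exploit the concavity of $y\mapsto y^{\delta_1}$ to absorb the jump contribution, and then invoke a strong law of large numbers for the resulting local martingale in order to upgrade an expected-value estimate to an almost sure one.

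First I would bound $\Lom F^{\delta_1}$. Since $0<\delta_1<1$, the tangent-line inequality $y^{\delta_1}-1\leq \delta_1(y-1)$ applied with $y=F(\bx,l)/F(\bx,k)$ and summed against the nonnegative rates $q_{kl}(\bx)$ for $l\neq k$ shows that the switching part of $\Lom F^{\delta_1}(\bx,k)$ is dominated by $\delta_1 F^{\delta_1-1}(\bx,k)$ times the switching part of $\Lom F(\bx,k)$. Combined with the chain rule for the transport part this yields
\[
\Lom F^{\delta_1}(\bx,k)\leq \delta_1 F^{\delta_1}(\bx,k)\,\frac{\Lom F(\bx,k)}{F(\bx,k)}.
\]
Assumption \ref{a1-pdm}, eq.\ \eqref{e:sup}, implies $\Lom F/F\leq -\gamma_b(1+f^M)$ outside some compact $\{\|\bx\|\leq M\}$, while on this compact set everything is continuous and $F\geq c>0$. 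Hence there exists $H_3>0$ such that for every $(\bx,k)\in\R^n_+\times\CN$,
\[
\Lom F^{\delta_1}(\bx,k)+\gamma_b\delta_1 F^{\delta_1}(\bx,k)\bigl(1+f^M(\bx,k)\bigr)\leq H_3.
\]

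Next I would apply It\^o's formula to write $F^{\delta_1}(\BX(t),r(t))=F^{\delta_1}(\bx,k)+\int_0^t\Lom F^{\delta_1}(\BX(s),r(s))\,ds+M(t)$, where $M(t)$ is the local martingale generated by the jumps of $r(t)$. Substituting the generator bound, dropping the nonnegative term $F^{\delta_1}(\BX(t),r(t))$ and dividing by $t$ gives
\[
\frac{\gamma_b\delta_1}{t}\int_0^t F^{\delta_1}(1+f^M)\,ds\leq H_3+\frac{F^{\delta_1}(\bx,k)}{t}+\frac{M(t)}{t},
\]
so the first assertion follows with $\tilde K=H_3/(\gamma_b\delta_1)$ provided $M(t)/t\to 0$ almost surely. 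To establish this martingale strong law I would use Assumption \ref{a:bound} to bound the predictable quadratic variation by $\langle M\rangle_t\leq C\int_0^t F^{2\delta_1}(\BX(s),r(s))\,ds$, and, assuming without loss of generality that $\delta_0\leq 1/2$ so that $2\delta_1<1$, rerun the generator computation with $2\delta_1$ in place of $\delta_1$ to obtain $\Lom F^{2\delta_1}\leq H_3'-2\gamma_b\delta_1 F^{2\delta_1}$. A Gronwall argument on $\phi(s):=\E_{\bx,k}F^{2\delta_1}(\BX(s),r(s))$ then gives $\phi(s)\leq F^{2\delta_1}(\bx,k)+H_3'/(2\gamma_b\delta_1)$ uniformly in $s$. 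Therefore $\E_{\bx,k}\int_0^\infty(1+s)^{-2}F^{2\delta_1}(\BX(s),r(s))\,ds<\infty$, so $\int_0^\infty(1+s)^{-2}\,d\langle M\rangle_s<\infty$ almost surely, and Liptser's SLLN for local martingales \citep{liptser1980strong} gives $M(t)/t\to 0$ a.s.

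Finally, for the second assertion the lower bound \eqref{e:F_growth} yields $F(\bx,k)\geq c(1+\hat K_1)$ whenever $\|\bx\|>\hat K_1$, hence $\mathbf 1_{\{\|\bx\|>\hat K_1\}}\leq F^{\delta_1}(\bx,k)/[c^{\delta_1}(1+\hat K_1)^{\delta_1}]$. Applying the first part (with the factor $(1+f^M)\geq 1$ dropped) gives
\[
\limsup_{t\to\infty}\frac1t\int_0^t \mathbf 1_{\{\|\BX(s)\|>\hat K_1\}}\,ds\leq \frac{\tilde K}{c^{\delta_1}(1+\hat K_1)^{\delta_1}}\quad\text{a.s.},
\]
which is at most $1/2$ once $\hat K_1$ is chosen large enough; passing to the complementary event yields the desired $\liminf\geq 1/2$. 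The main obstacle is the martingale SLLN step: one must carefully bound $\langle M\rangle$ using Assumption \ref{a:bound} and then bootstrap the generator estimate from $F^{\delta_1}$ to $F^{2\delta_1}$ so that Liptser's criterion applies and delivers the almost sure cancellation of $M(t)/t$.
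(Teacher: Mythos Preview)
Your proposal is correct and follows essentially the same route as the paper: bound $\Lom F^{\delta_1}\le \delta_1 F^{\delta_1}\,\Lom F/F$ via the concavity inequality $y^{\delta_1}-1\le\delta_1(y-1)$, apply It\^o's formula, drop the nonnegative terminal term, and reduce everything to $M(t)/t\to 0$, which both you and the paper deduce from the linear growth $\E_{\bx,k}\langle M\rangle_t\le C(F(\bx,k)+t)$ obtained via Assumption~\ref{a:bound} and the restriction $2\delta_1<1$. The only cosmetic difference is that the paper closes the SLLN step with Doob's maximal inequality on dyadic blocks plus Borel--Cantelli rather than invoking Liptser's criterion, and it appeals to Lemma~\ref{lm2.2} for the $F^{2\delta_1}$ moment bound where you use a direct Gronwall argument; the second assertion is proved identically in both.
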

\begin{lm}\label{lm4.7}
Under Assumption \ref{a1-pdm} \ref{a4-pdm}, there exists $\ell_0>0$ such that
$\mu(\K^0_I)>0$ for all $\mu\in\Conv(\M^{I,\circ})$ where $$\K^0_I=\{\bx\in\R^I_+: x_i\wedge x_i^{-1}\leq \ell_0, i\in I\}.$$
\end{lm}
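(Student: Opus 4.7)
The plan is to reduce the claim to a finite family of ergodic measures, handle each one by a tightness argument, and then extend to the convex hull by linearity. The cited results of \cite{B23} ensure that $\M$ is finite, so $\M^{I,\circ} = \{\mu_1,\dots,\mu_m\}$ is finite as well (if $\M^{I,\circ}=\emptyset$ the statement is vacuous).

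Fix an ergodic measure $\mu_j \in \M^{I,\circ}$. By the definition of the species support, $\mu_j(\R_+^{I,\circ}\times \CN) = 1$, hence $\mu_j(\{x_i = 0\}) = 0$ for each $i \in I$, and by countable additivity
\[
\mu_j\bigl(\{\bx : x_i \geq \ell^{-1} \text{ for all } i \in I\}\times\CN\bigr) \longrightarrow 1 \quad\text{as }\ell \to \infty.
\]
On the other hand, Lemma \ref{lm2.3} combined with the lower bound $F(\bx,k) \geq c(1+\|\bx\|)$ from Assumption \ref{a1-pdm} yields $\int \|\bx\|\,\mu_j(d\bx,k) \leq H_2/c < \infty$; by Markov's inequality $\mu_j(\{\|\bx\| > \ell\}) \to 0$ as $\ell\to\infty$, and in particular $\mu_j(\{x_i > \ell \text{ for some } i \in I\}) \to 0$. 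Combining these two facts, for each $j$ there exists $\ell_0(\mu_j)\geq 1$ such that the compact set $\{\bx \in \R_+^I : \ell^{-1}\leq x_i\leq \ell,\ i\in I\}$ with $\ell=\ell_0(\mu_j)$ carries $\mu_j$-mass at least $1/2$.

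Setting $\ell_0 := \max_{1\leq j \leq m} \ell_0(\mu_j)$ and noting that the family $\K^0_I$ is monotone in $\ell_0$, we obtain $\mu_j(\K^0_I) \geq 1/2$ for every $j$. For any $\mu = \sum_j c_j \mu_j \in \Conv(\M^{I,\circ})$ with $c_j \geq 0$ and $\sum_j c_j = 1$, linearity of the integral delivers
\[
\mu(\K^0_I) = \sum_j c_j\,\mu_j(\K^0_I) \geq \tfrac{1}{2}\sum_j c_j = \tfrac{1}{2} > 0,
\]
which is precisely the conclusion.

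There is no genuine obstacle here: the two ingredients needed are tightness of each $\mu_j$ near the boundary $\partial\R_+^I$, immediate from the support property, and tightness near infinity, provided by the moment bound of Lemma \ref{lm2.3}. Note that Assumption \ref{a4-pdm} is not actually used in the proof itself; it enters only through the later theorems (Theorems \ref{thm4.1} and \ref{thm5.3}) where this lemma is applied, by singling out which subsets $I$ and which measures $\mu\in\M^{I,\circ}$ are of interest.
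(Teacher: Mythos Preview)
Your argument is correct and complete: finiteness of $\M^{I,\circ}$ (inherited from the finiteness of $\M$ cited from \cite{B23}), individual tightness of each ergodic $\mu_j$ both at the boundary (support property) and at infinity (the moment bound from Lemma \ref{lm2.3}), then a max over finitely many $\ell_0(\mu_j)$, then linearity. It is also a genuinely different route from the paper's. The paper simply invokes Theorem \ref{thm3.1} on the subspace $\R^I_+$: condition \eqref{ae3.2} of Assumption \ref{a4-pdm} is precisely the persistence hypothesis (Assumption \ref{a2-pdm}) for the restricted system, so the Lyapunov inequality \eqref{e:lya} holds on $\R^{I,\circ}_+$, and the argument of Claim 5.2 then yields a bound $\int V^\theta\,d\mu\leq \tilde K/(1-\kappa)$ valid for \emph{every} invariant $\mu$ on $\R^{I,\circ}_+\times\CN$ simultaneously, from which a single $\ell_0$ drops out. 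The paper's approach thus actually uses Assumption \ref{a4-pdm} and does not need finiteness of $\M^{I,\circ}$; your approach is more elementary, bypasses the persistence machinery entirely, and your closing remark that Assumption \ref{a4-pdm} is inessential for the lemma itself is a fair observation about your proof, though not about the paper's.
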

\begin{proof}
	It follows straightforwardly from applying Theorem \ref{thm3.1} to the space $\R^I$.
\end{proof}
\begin{lm}\label{lm4.5}
Let Assumption \ref{a1-pdm} be satisfied.
Suppose we have a sample path  of $(\BX(t),r(t))$ satisfying
$$\limsup_{t\to\infty}\dfrac1t\int_0^t(F(\BX(s),r(s)))^{\delta_1}\left(1+f^M(\BX(s))\right)ds\leq \hat K$$
and that there is a sequence $(T_m)_{k\in\N}$ such that $\lim_{m\to\infty}T_m=\infty$ and
$\left(\wtd \Pi_{T_m}(\cdot)\right)_{k\in\N}$ converges weakly to an invariant probability measure $\pi$ of $\BX$
when $m\to\infty$ .
Then for this sample path, we have
$\sum_{k\in\CN}\int_{\R^n_+}h(\bx,k)\wtd\Pi^{\bx_m}_{T_m}(d\bx,k)\to \sum_{k\in\CN}\int_{\R^n_+}h(\bx,k)\pi(d\bx,k)$
for any continuous function $h:\R^n_+\to\R$ satisfying
$|h(\bx,k)|<K_h(F(\bx,k))^{\delta}(1+f^M(\bx,k)))\,,\,\bx\in \R^n_+$,
with $K_h$ a positive constant and $\delta\in[0,\delta_1)$.
\end{lm}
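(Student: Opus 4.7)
The plan is a standard truncation argument: split $h$ into a compactly supported piece handled by weak convergence and a tail piece controlled uniformly via the strict gap $\delta<\delta_1$ together with the polynomial lower bound $F(\bx,k)\geq c(1+\|\bx\|)$ from \eqref{e:F_growth}. Fix a continuous cutoff $\chi_R\colon\R^n_+\to[0,1]$ with $\chi_R(\bx)=1$ for $\|\bx\|\leq R$ and $\chi_R(\bx)=0$ for $\|\bx\|\geq R+1$, and write $h=h\chi_R+h(1-\chi_R)$. Since $\chi_R$ has compact support in $\bx$ and $\CN$ is finite, $h\chi_R$ is a bounded continuous function on $\R^n_+\times\CN$, so for each fixed $R$ the weak convergence $\wtd\Pi_{T_m}\Rightarrow\pi$ gives
\begin{equation*}
\sum_{k\in\CN}\int h(\bx,k)\chi_R(\bx)\,\wtd\Pi_{T_m}(d\bx,k)\longrightarrow \sum_{k\in\CN}\int h(\bx,k)\chi_R(\bx)\,\pi(d\bx,k)\qquad(m\to\infty).
\end{equation*}

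For the tail, I would exploit that $\delta-\delta_1<0$. On $\{\|\bx\|>R\}$ we have $F(\bx,k)\geq c(1+R)$, hence $F(\bx,k)^{\delta-\delta_1}\leq (c(1+R))^{\delta-\delta_1}$, so the hypothesis on $h$ yields the pointwise bound
\begin{equation*}
|h(\bx,k)|\,\1_{\{\|\bx\|>R\}}\leq K_h\,(c(1+R))^{\delta-\delta_1}\,F(\bx,k)^{\delta_1}\bigl(1+f^M(\bx,k)\bigr).
\end{equation*}
Integrating against $\wtd\Pi_{T_m}$ and using the sample-path assumption $\tfrac{1}{T_m}\int_0^{T_m} F^{\delta_1}(1+f^M)\,ds\leq \hat K+o(1)$, I obtain, for all $m$ large enough,
\begin{equation*}
\sum_{k\in\CN}\int |h|(1-\chi_R)\,d\wtd\Pi_{T_m}\leq K_h\,(c(1+R))^{\delta-\delta_1}\,(2\hat K),
\end{equation*}
which tends to $0$ as $R\to\infty$ uniformly in $m$.

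To transfer the bound to $\pi$ itself, I would apply the already established weak-convergence step to the bounded continuous nonnegative function $F^{\delta_1}(1+f^M)\chi_R$:
\begin{equation*}
\sum_{k\in\CN}\int F^{\delta_1}(1+f^M)\chi_R\,d\pi=\lim_{m\to\infty}\sum_{k\in\CN}\int F^{\delta_1}(1+f^M)\chi_R\,d\wtd\Pi_{T_m}\leq \hat K.
\end{equation*}
Sending $R\to\infty$ and invoking monotone convergence gives $\sum_k\int F^{\delta_1}(1+f^M)\,d\pi\leq\hat K$, and the same pointwise tail estimate then produces $\sum_k\int|h|(1-\chi_R)\,d\pi\leq K_h(c(1+R))^{\delta-\delta_1}\hat K\to 0$.

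Combining the three ingredients through a standard $\eps/3$ argument (first choose $R$ so that both tail contributions are $\leq\eps/3$, then choose $m$ large so that the $h\chi_R$ piece is within $\eps/3$) yields the claimed convergence. The only genuinely technical point, and the place where the hypothesis is really used, is converting the sample-path $L^1$-type bound at exponent $\delta_1$ into uniform tail integrability at the strictly smaller exponent $\delta$; this is exactly what the polynomial lower bound on $F$ and the strict inequality $\delta<\delta_1$ are designed to do, and no further ergodic or martingale input is needed beyond what is already embedded in the weak convergence of $\wtd\Pi_{T_m}$.
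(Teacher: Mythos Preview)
Your proposal is correct and follows essentially the same truncation argument the paper intends: the paper explicitly says this proof ``is almost the same as that of Lemma~\ref{lm2.4},'' and that proof uses exactly your cutoff decomposition $h=h\chi_R+h(1-\chi_R)$ together with the pointwise tail bound $|h|(1-\chi_R)\leq K_h(c(1+R))^{\delta-\delta_1}F^{\delta_1}(1+f^M)$ coming from \eqref{e:F_growth} and the strict gap $\delta<\delta_1$. Your extra step of first proving $\int F^{\delta_1}(1+f^M)\,d\pi\leq\hat K$ by applying weak convergence to the truncated integrand and then invoking monotone convergence is a clean self-contained way to get the $\pi$-tail bound, whereas the paper (in the analogous Lemma~\ref{lm2.4} context) relies on the a priori moment bound for invariant measures from Lemma~\ref{lm2.3}; your route avoids that external input.
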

The proofs of Lemmas \ref{lm4.4} and \ref{lm4.7} are given in Appendix \ref{extinction lemmas} while that of Lemma \ref{lm4.5} is almost the same as that of Lemma \ref{lm2.4} and is left for the reader.
\begin{lm}\label{lm4.6}Let Assumption \ref{a1-pdm} be satisfied.
For any initial condition $(\BX(0), r(0))=(\bx,k)\in\R^n_+\times\CN$,
the family $\left\{\wtd \Pi_t(\cdot), t\geq 1\right\}$ is tight in $\R^n_+\times \CN$,
and its weak$^*$-limit set, denoted by $\U=\U(\omega)$
is a family of invariant probability measures of $(\BX(t),r(t))$ with probability 1.
\end{lm}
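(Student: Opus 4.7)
The plan splits naturally into (i) showing tightness of the family $\{\wtd\Pi_t:t\geq 1\}$ on $\R^n_+\times\CN$, and (ii) showing that each weak$^*$-limit is an invariant probability measure. Tightness furnishes weak$^*$-limit points via Prokhorov, and these limit points are then shown to satisfy $\int P_s g\,d\mu=\int g\,d\mu$ for every $s>0$ and $g\in C_b(\R^n_+\times\CN)$.

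For tightness I would fix some $\delta_1\in(0,\delta_0)$ and exploit the moment bound provided by Lemma \ref{lm4.4}: almost surely,
\begin{equation*}
\limsup_{t\to\infty}\frac{1}{t}\int_0^t F(\BX(s),r(s))^{\delta_1}\,ds\leq \hat K.
\end{equation*}
Combined with the growth estimate $F(\bx,k)\geq c(1+\|\bx\|)$ from Assumption \ref{a1-pdm}, this yields the deterministic inequality $\1_{\{\|\bx\|>R\}}\leq F(\bx,k)^{\delta_1}/(c(1+R))^{\delta_1}$, whence
\begin{equation*}
\wtd\Pi_t\bigl(\{(\bx,k):\|\bx\|>R\}\bigr)\leq \frac{1}{c^{\delta_1}(1+R)^{\delta_1}}\cdot\frac{1}{t}\int_0^t F(\BX(s),r(s))^{\delta_1}\,ds.
\end{equation*}
Sending $t\to\infty$ and then $R\to\infty$ shows that, on a set of full probability, the masses that $\wtd\Pi_t$ places on the complements of the compact sets $\{\|\bx\|\leq R\}\times\CN$ can be made uniformly small for large $t$. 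Finitely many small values of $t$ are harmless because the trajectory $u\mapsto(\BX(u),r(u))$ is cadlag and $\CN$ is finite, so each individual $\wtd\Pi_t$ is already supported in a compact set. Thus $\{\wtd\Pi_t:t\geq 1\}$ is tight a.s., and Prokhorov supplies convergent subsequences.

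For invariance, let $\mu$ be a weak$^*$-limit of some (random) subsequence $\wtd\Pi_{T_m}$ with $T_m\to\infty$. It suffices to verify $\int P_s g\,d\mu=\int g\,d\mu$ for every $s>0$ and $g\in C_b(\R^n_+\times\CN)$. By the Markov--Feller property in Lemma \ref{lm2.2}, $P_s g$ is bounded and continuous, so weak convergence gives
\begin{equation*}
\int P_s g\,d\mu-\int g\,d\mu=\lim_{m\to\infty}\frac{1}{T_m}\int_0^{T_m}\bigl[P_s g(\BX(u),r(u))-g(\BX(u),r(u))\bigr]\,du.
\end{equation*}
Writing $P_s g(\BX(u),r(u))=\E[g(\BX(u+s),r(u+s))\mid\mathcal{F}_u]$ and shifting the variable of integration, a telescoping argument reduces the right-hand side to a boundary term of size $O(s/T_m)$ plus the time-average of the martingale differences $\xi_u:=g(\BX(u+s),r(u+s))-P_s g(\BX(u),r(u))$. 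A standard strong-law argument for bounded martingale differences (this is the content of \cite[Theorem 9.9]{EK09} and \cite[Proposition 6.4]{EHS15}, already invoked earlier in the paper) gives $T_m^{-1}\int_0^{T_m}\xi_u\,du\to 0$ a.s.\ uniformly in the choice of subsequence, so both terms vanish in the limit and $\mu$ is invariant.

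The main obstacle is precisely this a.s.\ law of large numbers for the nonstationary martingale-difference process $\xi_u$; once it is in place via the cited Ethier--Kurtz machinery, the invariance follows immediately. By contrast, the tightness part is a routine consequence of the $F^{\delta_1}$-moment bound of Lemma \ref{lm4.4} and the coercivity of $F$ from Assumption \ref{a1-pdm}.
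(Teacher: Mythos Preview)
Your proposal is correct and follows essentially the same approach as the paper: tightness via the almost-sure $F^{\delta_1}$-moment bound of Lemma \ref{lm4.4} combined with the coercivity $F(\bx,k)\geq c(1+\|\bx\|)$, and invariance of weak$^*$-limits via the standard Feller/martingale-difference argument that the paper defers to \cite[Theorems 4, 5]{SBA11}, \cite[Theorem 4.2]{EHS15}, and \cite{B23}. The paper's own proof is just a two-sentence pointer to these ingredients, so your write-up is in fact more explicit than the original while remaining substantively identical.
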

\begin{proof}
The tightness follows from Lemma \ref{lm4.4}.
The property of the weak$^*$-limit set of normalized occupation measures
was first proved in  \cite[Theorems 4, 5]{SBA11} for compact state spaces and then generalized to a locally compact state space in \cite[Theorem 4.2]{EHS15}. Similar results for general Markov processes can be found in \cite{B23}.
\end{proof}
\begin{lm}\label{lm4.9}
Suppose that $I$ satisfies Assumption \ref{a4-pdm}.
For any $\bx\in\R^{n,\circ}_+$,
$$\PP_{\bx,k}\Big\{\U(\omega)\subset\Conv(\M^I)\Big\}=\PP_{\bx,k}\Big\{\U(\omega)=\{\Conv(\M^{I,\circ}\}\Big\}$$
\end{lm}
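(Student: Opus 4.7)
The plan is to establish the nontrivial inclusion
$\{\U(\omega)\subset\Conv(\M^I)\}\subset\{\U(\omega)\subset\Conv(\M^{I,\circ})\}$
up to null sets; the reverse inclusion is immediate because $\M^{I,\circ}\subset\M^I$. The strategy is to assume for contradiction that on a positive-measure event some limit point $\pi\in\U(\omega)$ has a nontrivial ergodic component on $\partial\R_+^I$, and then use the weights supplied by Assumption \ref{a4-pdm} to force a weighted sum of log-densities to grow linearly along a subsequence, in contradiction with an almost sure upper bound on $\frac{\ln F(\BX(t),r(t))}{t}$.

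I would fix $\omega$ in the event $\{\U\subset\Conv(\M^I)\}$, take $\pi\in\U(\omega)$, and use the ergodic decomposition to write $\pi=\alpha\pi_1+(1-\alpha)\pi_2$ with $\pi_1\in\Conv(\M^{I,\circ})$ and $\pi_2\in\Conv(\partial\M^I)$. Assume for contradiction that $\alpha<1$ and pick a subsequence $T_m\to\infty$ with $\wtd\Pi_{T_m}(\omega)\to\pi$. By the rescaling that produced \eqref{e3.3}, there exist weights $\hat p_i>0$, $i\in I$, with $\sum_{i\in I}\hat p_i\lambda_i(\nu)>0$ for every $\nu\in\Conv(\partial\M^I)$; since Lemma \ref{lm2.3} forces $\lambda_i(\pi_1)=0$ for each $i\in I$, one gets
\[
\sum_{i\in I}\hat p_i\lambda_i(\pi)=(1-\alpha)\sum_{i\in I}\hat p_i\lambda_i(\pi_2)=:c>0.
\]
Integrating $\dot X_i=X_if_i(\BX,r)$ and taking the $\hat p_i$-weighted sum gives
\[
\sum_{i\in I}\hat p_i\frac{\ln X_i(T_m)-\ln X_i(0)}{T_m}=\frac{1}{T_m}\int_0^{T_m}\sum_{i\in I}\hat p_i f_i(\BX(s),r(s))\,ds.
\]
The integrand $h=\sum_{i\in I}\hat p_if_i$ satisfies $|h|\leq (\max_i\hat p_i)f^M\leq K_h F^\delta(1+f^M)$ (using $F\geq c$) for any $\delta\in(0,\delta_1)$, so Lemma \ref{lm4.5} applies, with the hypothesis on time-averages being supplied almost surely by Lemma \ref{lm4.4}. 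Passing to the limit yields $\sum_{i\in I}\hat p_i\frac{\ln X_i(T_m)}{T_m}\to c>0$.

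The hard part will be the almost sure bound
$\limsup_{t\to\infty}\frac{\ln F(\BX(t),r(t))}{t}\leq 0$,
which combined with $X_i\leq\|\BX\|\leq F/c$ forces $\limsup_{t\to\infty}\frac{\ln X_i(t)}{t}\leq 0$ for each $i$, whence by nonnegativity of the $\hat p_i$
\[
\limsup_{t\to\infty}\sum_{i\in I}\hat p_i\frac{\ln X_i(t)}{t}\leq 0,
\]
contradicting the subsequential limit $c>0$ and forcing $\alpha=1$. To prove the bound, Lemma \ref{lm2.2} yields $\sup_t\E F(\BX(t))\leq C_0$, so Markov and Borel--Cantelli imply $F(\BX(n))\leq e^{\varepsilon n}$ eventually a.s. on the integer grid. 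To promote this to all $t$, I would use that $e^{-(H-\gamma_b)t}F(\BX(t))$ is a nonnegative supermartingale (since \eqref{e:H} gives $\Lom F\leq(H-\gamma_b)F$), apply Doob's maximal inequality on $[n,n+1]$ conditionally on $\F_n$, take unconditional expectation to get a bound $\PP(\sup_{t\in[n,n+1]}F(\BX(t))\geq e^{\varepsilon n})\leq C e^{-\varepsilon' n}$ summable in $n$, and Borel--Cantelli once more to conclude $\sup_{t\in[n,n+1]}F(\BX(t))<e^{\varepsilon'' n}$ eventually, delivering the $\limsup\leq 0$. Since $\pi\in\U(\omega)$ was arbitrary, this proves the inclusion almost surely on $\{\U\subset\Conv(\M^I)\}$ and hence the lemma.
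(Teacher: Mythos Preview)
Your argument follows the same skeleton as the paper's proof: fix a weak-$*$ limit $\pi=\alpha\pi_1+(1-\alpha)\pi_2$ with $\pi_1\in\Conv(\M^{I,\circ})$ and $\pi_2\in\Conv(\partial\M^I)$, use the weights from \eqref{ae3.2} together with $\lambda_i(\pi_1)=0$ for $i\in I$ (Lemma \ref{lm2.3}) to get $\sum_{i\in I}\hat p_i\lambda_i(\pi)>0$, pass to the limit in the occupation measures via Lemmas \ref{lm4.4} and \ref{lm4.5}, and contradict the almost sure bound $\limsup_{t\to\infty}\frac{\ln X_i(t)}{t}\leq 0$.

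The one substantive difference is how you obtain $\limsup_{t\to\infty}\frac{\ln F(\BX(t),r(t))}{t}\leq 0$. The paper writes the It\^o decomposition $\ln F(\BX(t),r(t))=\ln F(\bx,k)+\int_0^t\Lom(\ln F)(\BX(s),r(s))\,ds+M_G(t)$, shows $M_G(t)/t\to 0$ a.s.\ via the quadratic-variation bound coming from Assumption \ref{a:bound}, and handles the drift term by combining the identity $\int\Lom(\ln F)\,d\mu=0$ for invariant $\mu$ (Lemma \ref{lm2.3}) with Lemma \ref{lm4.6}. Your route---uniform moment bound from Lemma \ref{lm2.2}, Markov plus Borel--Cantelli on the integer grid, and Doob's maximal inequality for the nonnegative supermartingale $e^{-(H-\gamma_b)t}F(\BX(t),r(t))$ to interpolate---is more elementary and avoids both the martingale SLLN and the invariant-measure identity. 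The paper's approach yields the sharper statement $\limsup=0$ and directly reuses machinery already assembled, whereas yours gives only $\limsup\leq 0$ (which is all that is needed) but is self-contained.
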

\begin{proof}
Since $I$ satisfies Assumption \ref{a4-pdm},
it follows from \eqref{ae3.2} that there are $p^I_i>0, i\in I$ such that
\begin{equation}\label{e2-lm4.9}
\sum_{i\in I} p^I_i\lambda_i(\nu)>0, \nu\in\Conv(\partial\M^I).
\end{equation}
Note that It\^o's yields
\begin{equation}\label{e:itos}
\ln(F(\BX(t),r(t)))=\ln(F(\BX(0),r(0)))+\int_0^t  \Lom(\ln(F(\BX(s),r(s)))) ds  +  M_G(t)
\end{equation}
where $M_G(t)$ is the $L^2$ martingale
$$M_G(t)=\int_0^t \int_{\R_+} \ln\left[\frac{F(\BX(s),r(s)+h(\BX(s),r(s),y))}{F(\BX(s),r(s))}\right]\tilde N(dt,dy).
$$
Arguments very similar to those from Claim \ref{c:B2} show that the strong law of large numbers holds for the $L^2$ martingale $M_G$, so that
\begin{equation}\label{e8-lm4.9}
\PP_{\bx,k}\left\{\lim_{t\to\infty}\dfrac{M_G(t)}t=0\right\}=1.
\end{equation}
As a result of Lemmas \ref{lm2.3} and \ref{lm4.6} one gets
\begin{equation}\label{e:Lom_lnF}
\PP_{\bx,k}\left\{\lim\limits_{t\to\infty}\dfrac1t\int_0^t  \Lom(\ln(F(\BX(s),r(s))) ds=0\right\}=1.
\end{equation}
In light of It\^o's formula \eqref{e:itos}, it follows \eqref{e8-lm4.9} and \eqref{e:Lom_lnF} that for any $\bx\in\R^{n,\circ}_+$
\begin{equation}\label{e:lnF_to_0}
\PP_{\bx,k}\left\{\limsup_{t\to\infty}\dfrac{\ln (F(\BX(t),r(t)))}t=0\right\}=1.
\end{equation}
Based on Assumption \ref{a1-pdm} we get by the above equation that for all $(\bx,k)\in\R^{n,\circ}_+\times \CN$ one has
\begin{equation}\label{e7-lm4.9}
\PP_{\bx,k}\left\{\limsup_{t\to\infty}\dfrac{\ln X_i(t)}t\leq0,\,\, i=1,\dots,n\right\}\geq\PP_{\bx,k}\left\{\limsup_{t\to\infty}\dfrac{\ln (F(\BX(t),r(t)))}t=0\right\}=1.
\end{equation}

In view of  \eqref{e7-lm4.9}, to prove the lemma, it suffices to show that if 
\begin{itemize}
\item[a)] $\U(\omega)\subset\Conv(\M^I)$
\item[b)] \begin{equation}\label{e5-lm4.9}
\limsup_{t\to\infty}\dfrac{\ln X_i(t)}t\leq0,\,\, i=1,\dots,n
\end{equation}
\end{itemize}
hold then $\U(\omega)\subset\Conv(\M^{I,\circ}).$

We argue by contradiction. Assume there is a
sequence $\{t_m\}$ with $\lim_{m\to\infty}t_m=\infty$ such that
 $\wtd \Pi_{t_m}(\cdot)$ converges weakly to an invariant probability of the form
$\pi=(1-\rho)\pi_1+\rho\nu$
where $\rho\in(0,1]$, $\pi_1\in\Conv(\M^{I,\circ})$ and $\nu\in\Conv (\partial\M^I)$.
It follows from Lemmas \ref{lm4.4}, \ref{lm4.5} and \eqref{e2-lm4.9} that
\begin{equation}\label{e4-lm4.9}
\begin{aligned}
\lim_{m\to\infty}\dfrac1{t_m}\sum_{i\in I} p^I_i\int_0^{t_m}f_i(\BX(s),r(s))ds
=&\sum_{i\in I} p^I_i\lambda_i(\pi)\\
=&(1-\rho)\sum_{i\in I}p^I_i\lambda_i(\pi_1)+\rho \sum_{i\in I}p^I_i\lambda_i(\nu)\\
=&(1-\rho)\sum_{i\in I}p^I_i\lambda_i(\pi_1) \,\,\text{ (due to Lemma \ref{lm4.1})}\\
>&0.
\end{aligned}
\end{equation}
As a result of  \eqref{e4-lm4.9} 
$$
\begin{aligned}
\lim_{m\to\infty}\sum_{i\in I}p^I_i\dfrac{\ln X_i(t_m)}{t_m}
=&\lim_{m\to\infty}\dfrac1{t_m}\sum_{i\in I} p^I_i\int_0^{t_m}f_i(\BX(s),r(s))ds>0
\end{aligned}
$$
which contradicts \eqref{e5-lm4.9}.
This finishes the proof.
\end{proof}
\begin{thm}\label{thm4.2}
Suppose that Assumptions \ref{a1-pdm}, \ref{a:bound}, \ref{a4-pdm} and \ref{a5-pdm} are satisfied and $\M^1\neq \emptyset$. Suppose furthermore that $\bigcup_{I\in S} S^I_+$ is accessible.
Then for any $\bx\in\R^{n,\circ}_+, k\in \CN$
\begin{equation}\label{e0-thm4.2}
\sum_{I\in S} P_{\bx,k}^I=1
\end{equation}
where for $\bx\in\R^{n,\circ}_+, k\in \CN, I \in S$

$$P_{\bx,k}^I:=\PP_{\bx,k}\left\{\emptyset\neq\U(\omega)\subset\Conv\left(\M^{I,+}\right)
~\text{and}~\lim_{t\to\infty}\frac{\ln
X_j(t)}{t}\in\left\{\lambda_j(\mu):\mu\in\Conv\left(\M^{I,+}\right)\right\},
j\notin I\right\}.$$
Moreover, if $\R_+^I $ is accessible from $(\bx,k)$ then $P_{\bx,k}^I>0$.
\end{thm}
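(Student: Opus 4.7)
The plan is to combine the tightness and invariance of weak$^*$-limit points of the randomized occupation measures with the extinction estimates near each attracting subspace $\R_+^I$, $I\in S$. I will work with an arbitrary $\eps>0$ and show $\sum_{I\in S}P^I_{\bx,k}\geq 1-\eps$, then let $\eps\downarrow 0$. By Lemma \ref{lm4.6}, with probability one the family $\{\wtd\Pi_t\}_{t\geq 1}$ is tight and its weak$^*$-limit set $\U(\omega)$ consists of invariant probability measures of $(\BX(t),r(t))$. The first key reduction is that almost surely $\U(\omega)\subset \Conv(\bigcup_{I\in S}\M^{I,\circ})$. Indeed, the assumption that $\bigcup_{I\in S}S^I_+$ is accessible lets us apply Theorem \ref{thm4.1} (or rather the extinction mechanism in its proof, used in conjunction with the absence of an interior invariant measure) to rule out any invariant measure on $\R_+^{n,\circ}\times\CN$; Assumption \ref{a5-pdm} together with an argument identical to Lemma \ref{lm4.9}, now applied to each $J\in S^c$ with weights $p^J_i>0$ given by the minmax principle, rules out $\Conv(\bigcup_{J\notin S}\M^{J,\circ})$; finally, Lemma \ref{lm4.9} rules out any mass on $\Conv(\partial\M^I)$ for $I\in S$. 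Hence limit points decompose as convex combinations of ergodic measures in $\bigcup_{I\in S}\M^{I,\circ}$.

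Next I localize around each attracting subspace. For each $I\in S$ let $\K^0_I\subset \R_+^{I,\circ}$ be the compact set provided by Lemma \ref{lm4.7} with $\mu(\K^0_I)>0$ for every $\mu\in\Conv(\M^{I,\circ})$. For $\ell\in\N$ and $\Delta>0$ set
\[
\K_I^{\ell,\Delta}:=\{\bx\in\R_+^{n,\circ}:\ell^{-1}\leq x_i\leq \ell\text{ for }i\in I,\ x_i<\Delta\text{ for }i\in I^c\},
\]
so that $\K_I^{\ell,\Delta}$ is a tube in $\R_+^{n,\circ}$ shrinking to $\K^0_I$ as $\ell\to\infty$, $\Delta\downarrow 0$. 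Using Theorem \ref{t:exxx} (equivalently the supermartingale estimate in the proof of Theorem \ref{thm4.1}), given $\eps>0$ I can choose $\ell$ large and $\Delta$ small so that for every $I\in S$ and every $\bx\in\K^{\ell,\Delta}_I$,
\[
\PP_{\bx,k}\!\left\{\lim_{t\to\infty}X_i(t)=0,\ i\in I^c,\ \text{and}\ \U(\omega)\subset \Conv(\M^{I,\circ})\right\}>1-\eps.
\]
Combined with \eqref{e7-lm4.9} from Lemma \ref{lm4.9}, i.e.\ $\limsup_{t\to\infty}t^{-1}\ln X_j(t)\leq 0$ a.s., and Lemma \ref{lm4.5} applied to $f_j(\bx,k)$ for $j\notin I$, the surviving coordinates satisfy $t^{-1}\ln X_j(t)\to \lambda_j(\mu)$ for some $\mu$ in the limit set; this gives the precise event defining $P^I_{\bx,k}$.

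To glue local and global information, let $\tau$ be the first time the process enters $\bigcup_{I\in S}\K_I^{\ell,\Delta}$. Because the limit set $\U(\omega)$ lies a.s.\ in $\Conv(\bigcup_{I\in S}\M^{I,\circ})$, and because every $\mu$ in this convex hull assigns strictly positive mass to $\bigcup_{I\in S}\K^0_I\subset \bigcup_{I\in S}\K_I^{\ell,\Delta}$ (by Lemma \ref{lm4.7} and continuity of the indicator against open neighborhoods), Portmanteau yields
\[
\PP_{\bx,k}\!\left\{\liminf_{t\to\infty}\frac{1}{t}\int_0^t \mathbf 1_{\{\BX(s)\in \bigcup_{I\in S}\K_I^{\ell,\Delta}\}}ds>0\right\}=1,
\]
so $\tau<\infty$ almost surely. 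By the strong Markov property at $\tau$ and the preceding local estimate,
\[
\sum_{I\in S}P^I_{\bx,k}\;\geq\;\E_{\bx,k}\Big[\,\PP_{\BX(\tau),r(\tau)}\big(\text{extinction event for the relevant }I\big)\,\Big]\;>\;1-\eps,
\]
and letting $\eps\downarrow 0$ gives \eqref{e0-thm4.2}. For the moreover part, if $\R_+^I$ is accessible from $(\bx,k)$ then by Lemma 3.2 in \cite{BBMZ15} the stopping time $\tau_I:=\inf\{t>0:\BX(t)\in \K_I^{\ell,\Delta}\}$ satisfies $\PP_{\bx,k}(\tau_I<\infty)>0$, and the local estimate applied at $\tau_I$ forces $P^I_{\bx,k}>0$.

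The main obstacle I expect is the first reduction: carefully excluding limit points supported on $\bigcup_{J\notin S}\M^{J,\circ}$ via Assumption \ref{a5-pdm}. One has to reproduce the Lyapunov/ergodic theorem step of Lemma \ref{lm4.9} in the opposite direction—using positive combinations of invasion rates to derive a contradiction with $\limsup t^{-1}\ln X_j(t)\leq 0$—and handle the fact that convex combinations of ergodic measures across different $J\notin S$ need not be ergodic, which is why Assumption \ref{a5-pdm} is stated for $\nu\in\Conv(\bigcup_{J\notin S}\M^{J,\circ})$ rather than just on ergodic measures.
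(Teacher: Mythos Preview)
Your proposal follows the paper's proof almost exactly: rule out an interior invariant measure via Theorem~\ref{thm4.1}, exclude pure $\M^2$ limit points via Assumption~\ref{a5-pdm} and a Lemma~\ref{lm4.9}-type contradiction, use Lemma~\ref{lm4.7} together with the local extinction estimate on $\K_I^{\ell,\Delta}$, then Portmanteau plus the strong Markov property to glue. One small overclaim to fix: in your first reduction you assert $\U(\omega)\subset\Conv\bigl(\bigcup_{I\in S}\M^{I,\circ}\bigr)$ a.s., but at that stage the Lemma~\ref{lm4.9}-type argument with the weights $\bq$ from Assumption~\ref{a5-pdm} only excludes \emph{pure} $\M^2$ limit points (since $\sum_i q_i\lambda_i(\pi_1)$ can be negative for $\pi_1\in\Conv(\M^1)$), so limit points may still be mixtures $(1-\rho)\pi_1+\rho\pi_2$ with $\rho<1$; this weaker conclusion is exactly what the paper obtains and is all you need for the Portmanteau step, since $\pi_1\bigl(\bigcup_I\K^0_I\bigr)>0$ already forces positive occupation. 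Lemma~\ref{lm4.9} itself is conditional on $\U(\omega)\subset\Conv(\M^I)$ and is invoked only at the end, on the extinction event for a given $I$, to upgrade to $\U(\omega)\subset\Conv(\M^{I,\circ})$.
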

\begin{proof}
First, suppose that Assumption \ref{a5-pdm} is satisfied with nonempty $\M^2$.
Then, there is $\bq=(q_1,\dots,q_n)\in\R^{n,\circ}_+$ such that
$\|\bq\|=1$ and
\begin{equation}\label{e1-thm4.2}
\min_{\nu\in\M^2}\left\{\sum_iq_i\lambda_i(\nu)\right\}>0.
\end{equation}
Using \eqref{e1-thm4.2} and arguing by contradiction, similarly to the argument from Lemma \ref{lm4.9},
we can show that with probability 1,
$\U(\omega)$ is a subset of $\Conv(\M)\setminus\Conv(\M^2)$.
In other words,  each invariant probability $\pi\in\U(\omega)$ has the form
$\pi=(1-\rho)\pi_1+\rho\pi_2$
where $\rho\in[0,1), \pi_1\in\Conv(\M^1), \pi_2\in\Conv(\M^2)$.
Let
$\ell_0>1$ be sufficiently large that $\mu(\K^0_I)>0$ for all $\mu\in\Conv(\M^{I,\circ})$ where $$\K^0_I=\{\bx\in\R^I_+: x_i\wedge x_i^{-1}\leq \ell_0, i\in I\}.$$
Define
$$\K^{\ell,\Delta}_I:=\{\bx\in\R^{n,\circ}_+, \ell^{-1}\leq x_i\leq \ell\text{ for } i\in I, x_i<\Delta\text{ for }i\in I^c\}.$$

By Theorem \ref{thm4.1}, there are $\ell>\ell_0$ and $\Delta>0$ such that
\begin{equation}\label{e3-thm4.2}
\PP_{\bx,k}\left\{\lim_{t\to\infty} X_i(t)=0, i\in I^c\right\}>1-\eps
\end{equation}
for all $I\in \mathcal{S}$ and
$\bx\in\K_I^{\ell,\Delta}, k\in \CN.$ Moreover, since  $\bigcup_{I\in S} S^I_+$ is accessible we get by Theorem \ref{thm4.1} also that no invariant probability measure exists on $\R_+^{n,\circ}\times\CN.$

Let $\psi(\cdot):\R^n_+\to[0,1]$ be a continuous function satisfying
$$
\psi(\bx,k)=
\begin{cases}
1 \text{ if }\bx\in\bigcup_{I\in\mathcal{S}}\K_I^0\\
0 \text{ if }\bx\in\R^{n,\circ}_+\setminus \left(\bigcup_{I\in\M^1}\K_I^{\ell,\Delta}\right).
\end{cases}
$$

Since, by Lemma \ref{lm4.7}, $\pi_1(\bigcup_{I\in\mathcal{S}}\K_I^0)>0$ for any $\pi_1\in\Conv(\cup_{I\in \mathcal{S}}\M^{I,\circ})$
and $\U(\omega)$ is a subset of $\Conv(\M)\setminus\Conv(\M^2)$ with probability 1,
we have from Lemma \ref{lm4.6} that
\begin{equation}\label{e4-thm4.2}
\PP_{\bx,k}\left\{\liminf_{t\to\infty}\dfrac1t\int_0^t\psi(\BX(s),r(s))ds>0\right\}=1,\,\,\bx\in\R^{n,\circ}_+, k\in \CN.
\end{equation}
Since $\psi(\bx,k)=0$ if $\bx\in\R^{n,\circ}_+\setminus \left(\bigcup_{I\in\mathcal{S}}\K_I^{\ell,\Delta}\right)$,
we deduce from \eqref{e4-thm4.2} that
\begin{equation}\label{e5-thm4.2}
\PP_{\bx,k}\left\{\liminf_{t\to\infty}\dfrac1t\int_0^t\1_{\left\{\BX(s)\in \bigcup_{I\in\mathcal{S}}\K_I^{\ell,\Delta}\right\}}ds>0\right\}=1,\,\,\bx\in\R^{n,\circ}_+, k\in \CN.
\end{equation}
Thus, if $\BX(0)\in\R^{n,\circ}_+$ then $\BX(t)$ will enter $\bigcup_{I\in\mathcal{S}}\K_I^{\ell,\Delta}$ with probability 1.
This fact, combined with \eqref{e3-thm4.2}, the strong Markov property of $\{\BX(s)\}$, together with Lemmas \ref{lm4.5}, \ref{lm4.6} and \ref{lm4.9} implies
that
$$\sum_{I\in\mathcal{S}} P_{\bx,k}^I>1-\eps,\,\bx\in\R^{n,\circ}_+, k\in \CN$$
where
$$P_{\bx,k}^I=\PP_{\bx,k}\left\{\U(\omega)\subset\Conv(\M^{I,\circ})\text{ and }\lim_{t\to\infty}\dfrac{\ln X_i(t)}t\in\left\{\lambda_i(\mu),\mu\in\Conv(\M^{I,\circ})\right\}, i\in I_\mu^c\right\}.$$
Letting $\eps\downarrow 0$ we obtain \eqref{e0-thm4.2}.

Next, we consider the case when $\mathcal{S}^c=\emptyset$.
Then, we claim that $\mathcal{S}=\{0\}$.
Indeed, if there exists $I\in\mathcal S$  with $\R_+^I\ne\{\0\}$,
then $\bdelta^*\in\partial\M^I$, where $\bdelta^*$ be measure where $\BX(t)$ all equal $\0$.
Since $I$ satisfies Assumption \ref{a4-pdm}, in view of \eqref{ae3.2} , $\bdelta^*\in\mathcal{S}^c$ which results in a contradiction.
Thus, $\mathcal{S}=\{0\}$.
As a result, $\U(\omega)=\{\bdelta^*\}$ with probability 1.
Then, we can easily deduce with probability 1 that
$$\lim_{t\to\infty}\dfrac{\ln X_i(t)}t=\lambda_i(\bdelta^*)<0, i=1,\dots,n$$
since $\bdelta^*$ satisfies \eqref{ae3.1}.
\end{proof}

If every subspace has at most one attractor we get the following result.
 \begin{thm}\label{t:ex_one}
Suppose that Assumptions \ref{a1-pdm}, \ref{a:bound}, \ref{a4-pdm} and \ref{a5-pdm} are satisfied and $\M^1\neq \emptyset$. Suppose furthermore that $\bigcup_{I \in S} S^I_+$ is accessible and that every subset $I\in S$ is such that $\M^{I,\circ}=\{\mu_I\}$. Then for all $(\bx,k)\in \R_+^{n,\circ}$ we have 
$$
\sum_{I\in S} P_{\bx,k}^I=1
$$
where
$$P_{\bx,k}^I:=\PP_{\bx,k}\left\{\U(\omega)=\{\mu_I\}\,\text{ and }\,\lim_{t\to\infty}\dfrac{\ln X_i(t)}t=\lambda_i(\mu_I)<0, i\in I^c\right\}.$$ 
Moreover, if $\R_+^I $ is accessible from $(\bx,k)$ then $P_{\bx,k}^I>0$.

\end{thm}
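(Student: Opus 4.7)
The plan is to derive Theorem \ref{t:ex_one} as a direct specialization of Theorem \ref{thm4.2}. The only new hypothesis is that for each $I \in S$ the set of ergodic interior invariant measures on $\R_+^I$ is a singleton, $\M^{I,\circ} = \{\mu_I\}$. Under this simplification the convex hull $\Conv(\M^{I,+})$ collapses to $\{\mu_I\}$ itself, so I would first unpack the general event
$$P_{\bx,k}^I = \PP_{\bx,k}\left\{\emptyset\neq\U(\omega)\subset \Conv(\M^{I,+}) \text{ and } \lim_{t\to\infty}\frac{\ln X_j(t)}{t}\in \{\lambda_j(\mu):\mu\in \Conv(\M^{I,+})\},\ j\notin I\right\}$$
appearing in Theorem \ref{thm4.2} and observe that it reduces exactly to the simpler event in the statement of Theorem \ref{t:ex_one}.

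The key step is the reduction. Because $\Conv(\M^{I,+}) = \{\mu_I\}$, the condition $\emptyset \neq \U(\omega)\subset \Conv(\M^{I,+})$ forces $\U(\omega) = \{\mu_I\}$, and the set $\{\lambda_j(\mu):\mu\in \Conv(\M^{I,+})\}$ is the single real number $\lambda_j(\mu_I)$. Hence
$$\lim_{t\to\infty}\frac{\ln X_j(t)}{t}\in \{\lambda_j(\mu):\mu\in \Conv(\M^{I,+})\} \quad\Longleftrightarrow\quad \lim_{t\to\infty}\frac{\ln X_j(t)}{t} = \lambda_j(\mu_I).$$
By Assumption \ref{a4-pdm} applied to $\mu_I \in \M^{I,\circ}$, every $j \in I^c = I_{\mu_I}^c$ satisfies $\lambda_j(\mu_I) < 0$, so the strict negativity asserted in the statement is automatic. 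Thus $P_{\bx,k}^I$ in Theorem \ref{t:ex_one} equals $P_{\bx,k}^I$ in Theorem \ref{thm4.2}, and summing over $I \in S$ gives $\sum_{I\in S} P_{\bx,k}^I = 1$ by Theorem \ref{thm4.2}.

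For the positivity claim, the corresponding statement is already part of Theorem \ref{thm4.2}: whenever $\R_+^I$ is accessible from $(\bx,k)$, the proof of Theorem \ref{thm4.2} (using Theorem \ref{thm4.1} together with the strong Markov property to pass through the set $\K_I^{\ell,\Delta}$) yields $P_{\bx,k}^I > 0$. Under the singleton assumption this is the same quantity as in the present theorem, so no new argument is needed. I do not anticipate any genuine obstacle; the only subtlety to be careful about is verifying that $\U(\omega)$ is almost surely nonempty so that the inclusion in a singleton really forces equality, but this follows from Lemma \ref{lm4.6} which guarantees that $\U(\omega)$ is a nonempty set of invariant probability measures with probability one.
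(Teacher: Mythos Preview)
Your proposal is correct and matches the paper's approach exactly: the paper's proof consists of the single sentence ``This is an immediate corollary of Theorem \ref{thm4.2}.'' You have simply spelled out the details of that corollary (the collapse of $\Conv(\M^{I,+})$ to $\{\mu_I\}$, the negativity of $\lambda_j(\mu_I)$ from Assumption \ref{a4-pdm}, and the nonemptiness of $\U(\omega)$ from Lemma \ref{lm4.6}), all of which are correct.
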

\begin{proof}
This is an immediate corollary of Theorem \ref{thm4.2}.   
\end{proof}
\textbf{Acknowledgments:} The authors thank Dang Nguyen for helpful suggestions which led to an improved manuscript. A. Hening acknowledges generous
support from the NSF through the CAREER grant DMS-2339000.

\appendix

\section{Proofs of Lemmas in Section 4}\label{a:inv measures}

\textit{Proof of Lemma \ref{lm2.0} :} Since the $f_i$'s are locally Lipschitz we know from \cite{BBMZ15} that the sample paths are differential inclusions and that there exist unique pathwise strong solutions. By uniqueness if $X_i(0)=0$, then $X_i(t)=0, t\geq 0$ almost surely. Further, if we have $X_i(0)>0$ then 
\[
X_i(t)= X_i(0)e^{\int_0^t f_i(X(s),r(s)) ds}
\]
which shows the solutions stay positive up to a possible blow up time. Since $f_i$ are locally Lipschitz, and solutions are continuous, with a bootstrap argument we can see that for any finite $t$, $X_i(t)$ is finite. If there exists for any sample path and $t_0>0$ such that $X_i(t_0)=0$, then the equality above implies that $X_i(0)=0$ which would be a contradiction. Hence the for any $I\subset\bigl\{{1,....n}\bigr\}$,  $\R^{I,o}_+$ is invariant under the process. 
\\
Using the definition of the generator \eqref{e:gen} and \eqref{e:V} we note that 
\begin{equation}\label{e:LV}
\mathscr{L}V(x,k)=\left( \frac{\mathscr{L}F(x,k)}{F(x,k)}- \sum_{i\in \CN}p_if_i(x,k)\right) V (x,k)
\end{equation}
The bound  \eqref{e:sup} yields
\begin{equation}\label{A.1}
\mathscr{L}V(x,k)\leq HV  (x,k), ~x\in \R_+^{n,\circ}.   
\end{equation}
Observe that, the function $V$ blows up on $\partial\R^n_+$ and 
\begin{equation}\label{e:boundary_time}
\tau_0:= \inf\biggl\{ t>0:\min_i X_i(t)=0\biggr\} = \infty
\end{equation}
almost surely. For any $K>0$ define the stopping time 
\[
\tau_K=\inf\left\{ t>0:\|\BX(t)\|\geq  K \vee \frac{1}{K} \right\}
\]
Note that by the above non-blowup discussion and \eqref{e:boundary_time} we have 
\begin{equation}\label{e:tauk_blowup}
\PP_{\bx,k}\left(\lim_{K\to \infty} \tau_K = \infty\right)=1.
\end{equation}

Next, apply Dynkin's Formula to the function $V$ and the Feller process $(\BX(\tau_K\wedge t), r( \tau_K\wedge t))_{t\geq 0}$ we get, using \eqref{e:tauk_blowup}
and the inequality \eqref{A.1} we have
\[
\E_{x,k}(V(\BX(\tau_K\wedge t),r(\tau_K\wedge t)))\leq  V(x,k) + H\cdot\int_0^{\tau_K\wedge t}V(\BX(s),r(s)) ds
\]
Now using Gronwall's Inequality we get 
\[
\E_{x,k}(V(\BX(\tau_K\wedge t),r(\tau_K\wedge t)))\leq e^{H(\tau_K\wedge t)}V(\bx,k)\leq e^{Ht}V(\bx,k).
\]
Since the right hand side does not depend on $K$ we let $K\to\infty$. Using Fatou's Lemma and \eqref{e:tauk_blowup} we have 
\[
\E_{x,k}(V(\BX(t),r(t)))\leq e^{Ht}V(\bx,k)
\]
which proves the result.

\begin{proof}[Proof of Lemma \ref{lm2.2}:]
By the Assumption from \eqref{e:sup} we have that  
for every $k\in \CN$ 
\[
\limsup_{\|x\|\to\infty} \left( \frac{\mathscr{L}F(x,k)}{F(x,k)}+ \delta_0\max_{i} |f_i(x,k)| \right)< 0.
\]

As a result, there exists $M>0$ such that for any $x$ with $\|x\|>M$ and for all $k\in\CN$ we have
\[
\left( \frac{\mathscr{L}F(x,k)}{F(x,k)}+\gamma_b + \delta_0\max_i |f_i(x,k)| \right)<0.
\] 
Remembering that 
\[
H= \sup_{x\in \mathbf{R^n_+}} \left( \frac{\mathscr{L}F(x,k)}{F(x,k)}+\gamma_b+ \delta_0\max_{i} |f_i(x,k)| \right) < \infty,
\]
 we get
\[
\frac{\Lom(F(\bx,k))}{F(\bx,k)}\leq H-\gamma_b-\delta_0\cdot\max_i|f_i(\bx,k)|.
\]
This implies
\begin{equation}\label{A.2}
  \mathscr{L}F(x,k)\leq HF(x,k)\mathbf{1}_{\|x\|\leq M}-\gamma_b F(x,k)-\delta_0F(x,k)\cdot(\max_i |f_i(x,k)| ).
\end{equation}
Define $H_1=H\cdot(\sup_{\|x\|\leq M} F(\bx,k))$ and note that
\[
\mathscr{L}F(x,k)\leq H_1-\gamma_bF(\bx,k)-\delta_0F(x,k)\max_i |f_i(x,k)| 
\]

We define the following stopping time; 
\[
\tau_{\ell}=\inf\left(t>0: \|X(t)\|\geq \ell \right)
\]
We next apply Dynkin's formula and get 
\begin{equation}
\begin{split}
&\E_{x,k}(e^{\gamma_b(\tau_{\ell}\wedge t)} F(X(\tau_{\ell}\wedge t),r(\tau_{\ell}\wedge t)))=F(x,k)+\E_{x,k}\left(\int_0^{\tau_{\ell}\wedge t} e^{\gamma_bs}\left(\gamma_b F(X(s),r(s))+ \mathscr{L}F(X(s),r(s))\right)     ds\right).  
\end{split}    
\end{equation}
It follows from \eqref{A.2}
\begin{equation}
 \E_{x,k}(e^{\gamma_b(\tau_{\ell}\wedge t)} F(X(\tau_{\ell}\wedge t),r(\tau_{\ell}\wedge t)))\leq F(x,k) + \int_0^{\tau_{\ell}\wedge t} H_1e^{\gamma_bs} ds  .     
\end{equation}

Hence by taking $H_1'=H_1/\gamma_b$, we get 
\[
\E_{x,k}(e^{\gamma_b(\tau_{\ell}\wedge t)} F(X(\tau_{\ell}\wedge t),r(\tau_{\ell}\wedge t)))\leq F(x,k) + H_1'e^{\gamma_bt}  
\]
now since the right hand side does not depend on $\tau_{\ell}$, we let $\ell\to\infty$ then by Fatou's Lemma
we get 
\[
\E_{x,k}(e^{\gamma_bt}F(X(t),r(t)))\leq F(x,k)+H_1'e^{\gamma_bt}.
\]
Hence we have the desired inequality
\begin{equation}\label{A.3}
\E_{x,k}(F(X(t),r(t)))\leq H_1'+F(x,k)e^{-\gamma_bt}.    
\end{equation}

Applying Dynkin's Fromula again yields
\[
\E_{x,k}( F(X(\tau_{\ell}\wedge t),r(\tau_{\ell}\wedge t)))=F(x,k) + \E_{x,k}\left(\int_0^{\tau_{\ell}\wedge t} \mathscr{L}F(X(s),r(s)) ds\right)
\]
using \ref{A.2} we get
\[
\E_{x,k}( F(X(\tau_{\ell}\wedge t),r(\tau_{\ell}\wedge t)))\leq F(x,k) + \int_0^{\tau_{\ell}\wedge t}H_1' ds 
\]
\[
-\delta_0\cdot\E_{x,k}\left(\int_0^{\tau_k\wedge t} F(X(s),r(s))(1+\max_i|f_i(X(s),r(s))|) ds\right) 
\]
\[
\delta_0\cdot\E_{x,k}\left(\int_0^{\tau_k\wedge t} F(X(s),r(s))(1+\max_i|f_i(X(s),r(s))|) ds\right) \leq 2\cdot(H_1't+F(\bx,k))
\]

as a result, combining Fatou's Lemma and \ref{A.3}
\[
\delta_0\E_{x,k}\left(\int_0^{t} F(X(s),r(s))(1+\max_i|f_i(X(s),r(s))|) ds\right)\leq 2\cdot(F(x,k)+H_1't)
\]
define $H_2'=\max(2/\delta_0, 2H_1'/\delta_0) $
and we get the desired inequality,
\begin{equation}\label{A.4}
\E_{x,k}\left(\int_0^{t} F(X(s),r(s))(1+\max_i|f_i(X(s),r(s))|) ds\right)\leq H_2'(F(x,k)+t)    
\end{equation}

Using the above estimates one can modify Proposition 2.1 from \cite{BBMZ15}, or use a truncation argument combined with a modification of Theorem 2.9.3 from \cite{MAO}, to show that $(X(t),r(t))$ is a Markov-Feller process, in the sense that the semigroup maps $C_b(\R_+^n\times\CN)$ into itself.
 \end{proof}
\begin{rmk}\label{R.1}
    We can see that \ref{A.3} shows that for all $(\bx,k)\in\R^n_+\times\CN$ the measures $\PP^t_{\bx,k}$, defined as 
\[
\PP^t_{\bx,k}(A)= \E_{\bx,k}\left(\1_{(\BX(t),r(t))\in A} \right),\text{  
     }  \forall A\in\mathscr{B}(\R^n_+\times\CN)
\]
define a tight family $(\PP^t_{\bx,k})_{t\geq 0}$. To see this, let's assume that the measures are not tight, hence there exists $\epsilon_0>0\in(0,1)$ such that for any $n$, there exists a sequence of times $(t_n)_{n=1}^{\infty}$, with the property  $\lim_{n\to\infty}t_n\to\infty$ such that
\[
\PP_{\bx,k}^{t_n}\biggl\{(\bx,k)\notin (\mathbf{B}(0,n)\cap\R^n_+)\times\CN \biggr\} \geq \epsilon_0
\]
however from \ref{A.3}, and Markov Inequality we get that for every $n$
\[
\epsilon_0(1+n)\leq \E_{x,k}(F(X(t_n),r(t_n)))\leq H_1'+F(x,k)
\]
which is clearly a contradiction. 

\end{rmk} 
\begin{proof}[Proof of Lemma \ref{lm2.3}:]  If $\mu$ in an invariant measure for the process, then if $(x,k)\in supp(\mu)$ then we have
\[
\lim_{t\to\infty} \|\mathbf{P}^t_{x,k}(.)-\mu(.)\|_{TV}=0
\]
and hence as a result for any $g\in L^{\infty}(\R^n_+\times\CN)$
\[
\lim_{t\to\infty}\sum_{k\in\CN}\int_{\R^n_+} g(x,k) \mathbf{P}^t_{x,k}(dx,k)= \sum_{k\in\CN}\int_{\R^n_+} g(x,k) \mu(dx,k)
\]
this implies that 
\[
\lim_{t\to\infty} \frac{1}{t}\int_0^t\left( \sum_{k\in\CN}\int_{\R^n_+} g(x,k) \mathbf{P}^s_{x,k}(dx,k)         \right) ds =  \sum_{k\in\CN}\int_{\R^n_+} g(x,k) \mu(dx,k)
\]
observe that 
\[
\sum_{k\in\CN}\int_{\R^n_+} g(x,k) \mathbf{P}^s_{x,k}(dx,k)=\E_{\bx,k}(g(\BX(s),r(s)))
\]
hence using Fubini's theorem we get that 
\[
\sum_{k\in\CN}\int g(x,k) \Pi^t_{x,k}(dx,k)=\E_{x,k}\left(\frac{1}{t}\int_0^{t} g(\BX(s),r(s)) ds\right)=\frac{1}{t}\int_0^t \E_{\bx,k}(g(\BX(s),r(s))) ds
\]
since we know \eqref{A.4}, we get 

\begin{equation}\label{A.5}
\sum_{k\in\CN}\int F(x,k)(1+\max_i |f_i(x,k)|) \Pi^t_{x,k}(dx,k)\leq H_2'\left(1+\frac{F(x,k)}{t}\right)    
\end{equation}

Define $\psi(\bx,k)=F(\bx,k)(1+\max_i|f_i(\bx,k)|)$, take an arbitrary constant $K>0$ and use \eqref{A.5} to get
\[
\lim_{t\to\infty} \E_{x,k}\left(\frac{1}{t}\int_0^{t} (\psi(\BX(s),r(s))\wedge K) ds\right)
=\sum_{k\in\CN}\int (\psi(\bx,k)\wedge K) \mu(dx,k)
 \leq H_2'
\]
now we can let $K\to\infty$ and by Fatou's Lemma we get the desired inequality. 
\begin{equation}\label{intFonmu}
\sum_{k\in\CN}\int_{\R^n_+}F(x,k)(1+\max_i |f_i(x,k)|) \mu(dx,k) \leq H_2'     
\end{equation}

Observe that \ref{A.5} implies there exists a $T_0>0$, such that for all $t\geq T_0$, and for any $M_0>0$ such that $\|x\|\leq M_0$ the measures $\bigl\{\Pi^t_{x,k}\bigr\}$ are tight. Well to prove this, let's assume that they are not tight; then there exists $\epsilon>0$ such that for every compact subset $V_n=B(0,n)\cap \mathbf{R^n_+}$, there exists $t_n\geq T_0$ such that $\Pi^{t_n}_{x,k}(\mathbf{R^n_+}\setminus V_n)>\epsilon$. As a result 
\[
\sum_{k\in\CN}\int_{\mathbf{R^n_+}\setminus V_n} F(x,k) \Pi^{t_n}_{x,k}(dx,k) > c(1+n)\epsilon 
\]
this contradicts \ref{A.5}, hence the measures $\{\Pi^t_{\bx,k} \}_{t\geq T_0}$ are tight. Next, we want to show that for any invariant measure $\mu$ we have 
\[
\sum_{k\in\CN}\int_{\R_+^n} \left( \frac{\mathscr{L}F(x,k)}{F(x,k)}-\sum_{l\in\CN}q_{kl}(x,k)\left(\frac{F(x,l)}{F(x,k)}-1-\ln\frac{F(x,l)}{F(x,k)}\right)\right)\mu(dx,k) =0.
\]

A direct computation shows that
\[
\mathscr{L}(\ln(F(\bx,k))=\frac{\mathscr{L}F(\bx,k)}{F(\bx,k)}-\sum_{l\in\CN}q_{kl}(\bx)\left(\frac{F(\bx,l)}{F(\bx,k)}-\ln\left(\frac{F(\bx,l)}{F(\bx,k)}\right)-\ln(F(\bx,k))\right).
\]
Since we have $\sum_{l\in\CN}q_{kl}(\bx)=0$, this reduces to 
\begin{equation}\label{e:gen_lnF}
\mathscr{L}(\ln(F(\bx,k))=\frac{\mathscr{L}F(\bx,k)}{F(\bx,k)}-\sum_{l\in\CN}q_{kl}(\bx)\left(\frac{F(\bx,l)}{F(\bx,k)}-\ln\left(\frac{F(\bx,l)}{F(\bx,k)}\right)-1\right)
\end{equation}
Note that again by using  $\sum_{l\in\CN}q_{kl}(\bx)=0$
\[
\mathscr{L}(\ln(F(\bx,k))= \frac{\mathscr{L}F(\bx,k)}{F(\bx,k)}-\sum_{l\neq k}q_{kl}(\bx)\left(\frac{F(\bx,l)}{F(\bx,k)}-1  \right) +  \sum_{l\neq k}q_{kl}(\bx)\left(\ln\left(\frac{F(\bx,l)}{F(\bx,k)}\right) \right)
\]
Using $\ln(1+x)\leq x$ one gets
\begin{equation*}
\begin{split}
\mathscr{L}(\ln(F(\bx,k))&\leq  \frac{\mathscr{L}F(\bx,k)}{F(\bx,k)}-\sum_{l\neq k}q_{kl}(\bx)\left(\frac{F(\bx,l)}{F(\bx,k)}-1  \right) +  \sum_{l\neq k}q_{kl}(\bx)\left(\frac{F(\bx,l)}{F(\bx,k)}\right)\\
& = \frac{\mathscr{L}F(\bx,k)}{F(\bx,k)} + \sum_{l\neq k}q_{kl}(\bx).
\end{split}
\end{equation*}
Note that
\[
\max_{i,j}\sup_{(\bx,k)\in\R^n_+\times\CN}|q_{ij}(\bx)|\leq C_0 
\]
Using this in conjunction with \eqref{A.2}, yields
\[
\mathscr{L}(\ln(F(\bx,k))\leq  H-\gamma_b-\delta_0\cdot\max_i|f_i(\bx,k)| + |\CN|\cdot C_0 \leq H + |\CN|\cdot C_0
\]
Since the given function is bounded above, and assuming without loss of generality, that $\mu$ is an ergodic invariant measure, we can use continuous time version of Birkhoff's ergodic theorem from Corrolary 25.9 in \cite{Kberg2021modprob}. Hence for a.e $(\bx,k)\in supp(\mu)$, $(\BX(0),r(0))=(\bx,k)$ we get 
\begin{equation}\label{e:birk_L_lnF}
\lim_{t\to\infty}\frac{1}{t}\int_0^t \Lom(\ln(F(\BX(s),r(s)))) ds =\sum_{k\in\CN}\int_{\R^n_+} \mathscr{\Lom}(\ln(F(\bx,k)) \mu(dx,k).
\end{equation}

Let $N(dt,dy) $ be the Poisson measure with intensity $dt\times dy$ on $(0,\infty)\times \R_+^n$ and $ \tilde N(dt,dy) =N(dt,dy) - dt\times dy$ the compensated Poisson measure.
From the generalized Ito's formula (see \cite{YZ10}) we have 
\begin{equation}\label{e:ln_F}
\ln(F(\BX(t),r(t)))=\ln(F(\BX(0),r(0)))+\int_0^t  \Lom(\ln(F(\BX(s),r(s)))) ds  +  M(t)
\end{equation}
where $M(t)$ is a local martingale, given by 
$$M(t)=\int_0^t \int_{\R_+} \ln\left[\frac{F(\BX(s),r(s)+h(\BX(s),r(s),y))}{F(\BX(s),r(s))}\right]\tilde N(dt,dy)
$$
with $h(\bx,i,z)=\sum_{j=1}^{n_0} (j-i)I_{\{z\in \Delta_{ij}(x)\}}$. Here, for $i\neq j$ we define $\Delta_{ij}(\bx)$ as the consecutive left-closed, right-open intervals of the real line, each having length $q_{ij}(\bx)$ (see \cite{YZ10}, page 29). Note that $h(\BX(t), r(t), z) = j-r(t)$ if $z \in \Delta_{r(t),j} (\BX(t)) $ for some $j$
and $0$ otherwise.

We next want to prove that $\lim_{t\to\infty}\frac{M(t)}{t}=0$. We first show the following.

\begin{clm}\label{c:a1}
    
$M(t)$ is an $L^2$ martingale.
\end{clm}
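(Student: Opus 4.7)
The plan is to establish the $L^2$-martingale property by controlling the predictable quadratic variation $\langle M,M\rangle_t$ using Assumption \ref{a:bound} and the uniform boundedness of the switching rates $q_{ij}$. First I would observe that by the construction of $h(\bx,i,z)$, the integrand inside the stochastic integral is supported on the disjoint intervals $\Delta_{r(s),j}(\BX(s))$ of total length $\sum_{j\neq r(s)}q_{r(s),j}(\BX(s))$, and on each such $\Delta_{r(s),j}(\BX(s))$ the integrand takes the deterministic (in $y$) value $\ln\left[F(\BX(s),j)/F(\BX(s),r(s))\right]$.

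The key step is to compute the predictable quadratic variation. Using the structure above, we have
\[
\langle M,M\rangle_t = \int_0^t \sum_{j\neq r(s)} q_{r(s),j}(\BX(s))\left(\ln\frac{F(\BX(s),j)}{F(\BX(s),r(s))}\right)^2 ds.
\]
Now Assumption \ref{a:bound} gives $M_F^{-1}\leq F(\bx,k)/F(\bx,\ell)\leq M_F$ uniformly in $(\bx,k,\ell)$, hence
\[
\left(\ln\frac{F(\bx,j)}{F(\bx,k)}\right)^2 \leq (\ln M_F)^2.
\]
Combined with the uniform bound $\max_{i,j}\sup_{\bx}|q_{ij}(\bx)|\leq C_0$ and $|\CN|=n_0<\infty$, this yields the deterministic estimate
\[
\langle M,M\rangle_t \leq n_0 C_0 (\ln M_F)^2\, t,
\]
so in particular $\E_{\bx,k}\langle M,M\rangle_t <\infty$ for every $t\geq 0$.

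The conclusion follows from standard stochastic calculus: a local martingale $M$ built from a compensated Poisson random measure, whose integrand has $\E_{\bx,k}\langle M,M\rangle_t<\infty$ for each $t$, is in fact a true martingale with $\E_{\bx,k}M(t)^2 = \E_{\bx,k}\langle M,M\rangle_t<\infty$. Concretely, one can localize by stopping times $\tau_\ell\uparrow\infty$, apply the $L^2$ isometry to $M(t\wedge\tau_\ell)$, use the deterministic bound above together with the dominated convergence theorem to pass to the limit, and identify $M$ as a square-integrable martingale. No real obstacle is expected here since the ratios $F(\bx,j)/F(\bx,k)$ are uniformly bounded away from $0$ and $\infty$ by Assumption \ref{a:bound}; this is precisely the role that assumption plays, and without it one would need the more delicate strong law for local martingales of \cite{liptser1980strong} flagged in the remark following Assumption \ref{a:bound}.
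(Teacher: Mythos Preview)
Your proposal is correct and follows essentially the same approach as the paper: both compute the predictable quadratic variation by unpacking the structure of $h$ into a finite sum over $j\neq r(s)$ with weights $q_{r(s),j}(\BX(s))$, then bound the squared log-ratio via Assumption \ref{a:bound} and the uniform bound on the rates to obtain the deterministic estimate $\langle M,M\rangle_t\leq |\CN|\,C_0\,(\ln M_F)^2\,t$. The paper phrases the localization step through $\E_{\bx,k}|M(t\wedge\tau_k)|^2$ and It\^o's isometry, while you work directly with $\langle M,M\rangle_t$, but the content and the resulting bound are identical.
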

\begin{proof}
    
 We know that $M(t)$ is a local martingale \cite{YZ10} Consider a sequence of stopping times $\tau_k$ such that $\lim_{k\to\infty}\tau_k\to\infty$ and $\tau_{k+1}>\tau_k$ almost surely for all $k$.

\[
\E_{\bx,k}(|M(t\wedge \tau_k)|^2)=\E_{\bx,k}\left(\left|\int_0^{t\wedge\tau_k}\int_{\R_+} \left(\ln(F(\BX(s),r(s)+h(\BX(s),r(s),y))-\ln(F(\BX(s),r(s)))\right) \tilde{N}(dy,ds)\right|^2 \right)
\]
Using Ito's isometry, see Proposition 2.4 from \cite{kunita2010ito}, we have 
\begin{equation}\label{e:M_isom}
\E_{\bx,k}(|M(t\wedge \tau_k)|^2)= \E_{\bx,k}\left(\int_0^{t\wedge\tau_k}\int_{\R_+}|\ln(F(\BX(s),r(0)+h(\BX(s),r(s),y))-\ln(F(\BX(s),r(s)))|^2 dyds\right)
\end{equation}
By Assumption \ref{a1-pdm} the transition rates are are uniformly bounded over $\R^n_+\times\CN$. As a result there is $C_0>0$ such that 
\begin{equation}\label{bound_rates}
0<\max_{i\neq j}\sup_{(\bx,k)\in\R^n_+\times\CN}q_{ij}(\bx)\leq C_0.
\end{equation}
Observe that from the definition of $h$ we get that 
\begin{equation}\label{jump_int}
\int_0^{\infty}\left|\ln\left(\frac{F(\BX(s),r(s)+h(\BX(s),r(s),y))}{\ln(F(\BX(s),r(s))}\right)\right|^2 dy = \sum_{j\neq r(s)}\left|\ln\left(\frac{F(\BX(s),j)}{F(\BX(s),r(s))}\right)\right|^2q_{r(s),j}(\BX(s))
\end{equation}
Since $|\ln(x)|=|-\ln(1/x)|=|\ln(1/x)|$ we can get from \eqref{e:M_isom} and \eqref{jump_int} that
\begin{equation}\label{L2mod}
\E_{\bx,k}(|M(t\wedge \tau_k)|^2)=\E_{\bx,k}\left(\int_0^{t\wedge\tau_k}\left(\sum_{j\neq r(s)}\left|\ln\left(\frac{F(\BX(s),r(s))}{F(\BX(s),j)}\right)\right|^2q_{r(s),j}(\BX(s))\right)  ds   \right)
\end{equation}
Using Assumption \ref{a:bound} and \eqref{bound_rates} and setting $\bar M = |\ln M_F|$ yields
\begin{equation}\label{e:ML2}
\begin{split}
\E_{\bx,k}(|M(t\wedge \tau_k)|^2)=&\E_{\bx,k}\left(\int_0^{t\wedge\tau_k}\left(\sum_{j\neq r(s)}\left|\ln\left(\frac{F(\BX(s),r(s))}{F(\BX(s),j)}\right)\right|^2q_{r(s),j}(\BX(s))\right)\,ds  \right)\\
&\leq C_0 \bar M ^2|\CN| t
\end{split}    
\end{equation}
\end{proof}

\begin{clm}\label{c:a2}
For almost every $(\bx, k)$ in the support of $\mu$ we have 

\[
\PP_{\bx,k}\left(\lim_{t\to\infty} \frac{M_t}{t}=0\right)=1.
\]
\end{clm}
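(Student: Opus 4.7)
\textbf{Proof plan for Claim \ref{c:a2}.}

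The plan is to invoke the strong law of large numbers for locally square-integrable martingales of \cite{liptser1980strong}, which states that if a local martingale $M$ has predictable quadratic variation $\langle M,M\rangle$ satisfying $\int_0^\infty (1+s)^{-2}\, d\langle M,M\rangle_s < \infty$ almost surely, then $M(t)/t \to 0$ a.s. The entire work therefore reduces to controlling $\langle M,M\rangle_t$, and Assumption \ref{a:bound} is designed precisely to make this control uniform.

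First I would identify the predictable quadratic variation of the pure-jump martingale $M$ from \eqref{e:ln_F}. Since the compensator of $N(dt,dy)$ is $dt\times dy$ and the integrand is adapted, the general theory (see e.g.\ \cite{YZ10}) yields
\[
\langle M, M\rangle_t = \int_0^t \sum_{j\neq r(s)} \left|\ln\frac{F(\BX(s),j)}{F(\BX(s),r(s))}\right|^2 q_{r(s),j}(\BX(s))\,ds.
\]
This is exactly the quantity computed in the proof of Claim \ref{c:a1}, up to the usual equivalence between predictable quadratic variation and the $L^2$ quadratic variation for Poisson-type martingales.

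Next, Assumption \ref{a:bound} gives $M_F^{-1}\leq F(\bx,j)/F(\bx,r(s))\leq M_F$, so $|\ln(F(\bx,j)/F(\bx,r(s)))|\leq \ln M_F$ pointwise. Combined with the bound $q_{ij}(\bx)\leq C_0$ from Assumption \ref{a1-pdm} and the fact that there are at most $|\CN|$ terms in the sum, I obtain the uniform pathwise bound
\[
\langle M, M\rangle_t \;\leq\; C_0\,|\CN|\,(\ln M_F)^2 \, t \;=:\; C^* t, \qquad t\geq 0.
\]
Consequently
\[
\int_0^\infty \frac{d\langle M, M\rangle_s}{(1+s)^2} \;\leq\; C^* \int_0^\infty \frac{ds}{(1+s)^2} \;<\; \infty
\]
identically in $\omega$, so Liptser's hypothesis is satisfied for every starting point $(\bx,k)$, and $\PP_{\bx,k}(\lim_{t\to\infty} M(t)/t = 0) = 1$ for all $(\bx,k)\in\R^n_+\times\CN$ (in particular, for $\mu$-a.e. such point).

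I do not anticipate a substantive obstacle in this step: the reason Assumption \ref{a:bound} was imposed in the first place (see the remark following it in the paper) is to make the jumps of $\ln F$ uniformly bounded, which immediately yields a linear-in-$t$ bound on $\langle M, M\rangle_t$ and hence the SLLN. The only minor care needed is that we are working with a pure-jump martingale, so one must use the predictable quadratic variation version of Liptser's criterion rather than the continuous-martingale Dambis–Dubins–Schwarz approach; this is covered directly by \cite{liptser1980strong}. Once this claim is in hand, the remainder of the proof of Lemma \ref{lm2.3} follows by plugging \eqref{e:birk_L_lnF} and the conclusion of Claim \ref{c:a2} into \eqref{e:ln_F} and dividing by $t$.
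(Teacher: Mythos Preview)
Your proposal is correct, and the core estimate you obtain,
\[
\langle M,M\rangle_t \;\leq\; C_0\,|\CN|\,(\ln M_F)^2\, t,
\]
is exactly the one the paper derives (their equation \eqref{e:QVM}, with $\bar M = |\ln M_F|$). The only difference lies in how the conclusion $M(t)/t\to 0$ is extracted from this linear bound. You invoke Liptser's SLLN criterion from \cite{liptser1980strong} as a black box, checking $\int_0^\infty (1+s)^{-2}\,d\langle M,M\rangle_s<\infty$. The paper instead proves this special case by hand: it applies Doob's $L^2$ maximal inequality on dyadic blocks $[2^n,2^{n+1}]$ to get $\PP_{\bx,k}(\sup_{t\in[2^n,2^{n+1}]}|M(t)|/t\geq\eps)\leq C\eps^{-2}2^{-n}$, and then uses Borel--Cantelli. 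Your route is shorter and avoids the dyadic bookkeeping; the paper's route is more self-contained and makes explicit that nothing beyond Doob and Borel--Cantelli is needed. Substantively there is no gap in either, and both actually establish the conclusion for \emph{every} $(\bx,k)\in\R_+^n\times\CN$, not just $\mu$-a.e.\ ones.
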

\begin{proof} For the martingale $M(t)$, by Claim \ref{c:a1}, we can use \cite{kunita2010ito} Proposition 2.4 to get the quadratic variation 
\[
[M,M]_t=\int_0^t\int_0^{\infty}\left( \ln\left[ \frac{F(\BX(s),r(s)+h(\BX(s),r(s),y))}{F(\BX(s),r(s))}\right]\right)^2 N(dy,ds)
\]
as well as the predictable quadratic variation
\[
\langle M,M\rangle _t=\int_0^t\int_0^{\infty}\left( \ln\left[ \frac{F(\BX(s),r(s)+h(\BX(s),r(s),y))}{F(\BX(s),r(s))}\right]\right)^2 dyds.
\]

By \eqref{jump_int}, and the property that $|\ln(x)|=|\ln(1/x)|$ we get
\begin{equation}\label{e:QV}
\langle M,M\rangle _t=\int_0^t\left( \sum_{j\neq r(s)}q_{r(s)j}(\BX(s))\left(\ln\left[\frac{F(\BX(s),r(s))}{F(\BX(s),j)}\right]       \right)^2 \right) ds  
\end{equation}
Then by the computations from \eqref{e:ML2}, which can be done for \eqref{e:QV} just as they were done for \eqref{L2mod}, we get

\begin{equation}\label{e:QVM}
\langle M,M\rangle _t \leq C_0 \bar M^2|\CN| t
\end{equation}
We finish the proof as follows using an argument from \cite{B23}.  For any $n\in\N$ and $\eps>0$ define the event $$E_{n,\eps}:= \left\{\sup_{t\in [2^n, 2^{n+1}]}\frac{|M(t)|}{t}\geq \eps\right\}.$$
As $M(t)$ is a right continuous martingale we can use Doob's inequality to get
\begin{equation}\label{e:doob}
    \begin{split}
    \PP_{\bx,k}(E_{n,\eps}) &= \PP_{\bx,k} \left( \sup_{t\in [2^n, 2^{n+1}]}\frac{|M(t)|}{t}\geq \eps\right) \\
    &\leq   \PP_{\bx,k} \left( \sup_{t\leq  2^{n+1}}|M(t)|\geq 2^n \eps\right) \\
        &\leq \frac{1}{\eps^2 2^{2n}}\E_{\bx,k}\langle M,M\rangle_{2^{n+1}}\\
        &\leq \frac{1}{\eps^2 2^{2n}} C_0 \bar M^2|\CN| 2^{n+1}\\
        &= \frac{ 2C_0 \bar M^2|\CN| }{\eps^2} \frac{1}{2^n}
    \end{split}
\end{equation}
where the last inequality comes from \eqref{e:QVM}.
Since 
\[
 \sum_{n\geq 1}\frac{1}{2^n} <\infty
\]
we get that $\sum_{n\geq 1} \PP_{\bx,k}(E_{n,\eps})<\infty$. By Borel-Cantelli 
\begin{equation}\label{e:BC}
\PP_{\bx,k}\{\omega: \omega \in E_{n,\eps} ~\text{i.o.}\} = 0.
\end{equation}
Since this holds for arbitrarily small $\eps>0$ we get that
\[
\PP_{\bx,k}\left(\limsup_{t\to\infty} \frac{|M(t)|}{t}=0    \right)
=1\]
which then clearly implies that
\[
\PP_{\bx,k}\left(\lim_{t\to\infty} \frac{M(t)}{t}=0    \right)
=1.\]
This finishes the proof of the claim.

\end{proof}

Let $\Lambda_1$ and $\Lambda_2$ be subsets of $supp(\mu)$ defined as
\[
\Lambda_1=\left( (\bx,k)=(\BX(0),r(0))\in supp(\mu) : \lim_{t\to\infty}\frac{1}{t}\int_0^t \Lom(\ln(F(\BX(t),r(t)))) ds=\sum_{k\in\CN}\int_{\R^n_+} \mathscr{\Lom}(\ln(F(\bx,k)) \mu(dx,k)\right)
\]
\[
\Lambda_2=\left( (\bx,k)=(\BX(0),r(0))\in supp(\mu) : \lim_{t\to\infty}\frac{1}{t}\int_0^t (F(\BX(s),r(s))^{\delta} ds=\sum_{k\in\CN}\int_{\R^n_+} (F(\bx,k))^{\delta} \mu(dx,k)\right)
\]
Since $\mu(\Lambda_1)=\mu(\Lambda_2)=1$, taking $(\BX(0),r(0))\in\Lambda_1\cap\Lambda_2$, and note that by \eqref{e:ln_F} we have
\[
\frac{\ln(F(\BX(t),r(t)))}{t}=\frac{\ln(F(\BX(\bx,k))}{t}+\frac{1}{t}\int_0^t  \Lom(\ln(F(\BX(s),r(s)))) ds  +  \frac{M(t)}{t}
\]

Letting $t\to \infty$ we get by  Claims \ref{c:a1}, \ref{c:a2} and equation \eqref{e:birk_L_lnF} that there exists $\ell\geq 0$ such that almost surely
\begin{equation}\label{e:lnF}
\lim_{t\to\infty} \frac{\ln(F(\BX(t),r(t)))}{t} = \sum_{k\in\CN}\int_{\R^n_+} \mathscr{\Lom}(\ln(F(\bx,k)) \mu(dx,k)=\ell\geq 0.
\end{equation}

We claim that $\ell=0$. We argue by contradiction. Suppose $\ell>0$. Then
\begin{equation}\label{e:F1}
\lim_{t\to\infty} \frac{\ln(F(\BX(t),r(t)))}{t} = \ell>0
\end{equation}

By the Birkhoff ergodic theorem we have that the process $\BX(t)$ has to visit a given compact subset $\K$, where $x_i\geq \gamma>0$ for all $\bx\in\K$, and $\mu(\K)\geq 1/2$,  of the state space infinitely often for $\mu$ almost all $(\bx,k)$, $\PP_{\bx,k}$ almost surely. Suppose $t_n(\omega)$ are increasing such that $t_n\uparrow \infty$ and such that $(X(t_n))\in \K$. This will contradict \eqref{e:F1} as $\ln F$ is bounded on $\K$. As such $\ell=0$. This together with \eqref{e:gen_lnF} finishes the proof of the first part of the lemma.

Using similar arguments one can show that

\begin{equation}\label{e:lambda}
\lim_{t\to \infty } \frac{\ln X_i(t) }{t}=\lambda_i(\mu).
\end{equation}

By the Birkhoff ergodic theorem we have that the process $\BX(t)$ has to visit a given compact subset $\K$, where $x_i\geq \gamma>0$ for all $\bx\in\K$ and $\mu(\K)\geq 1/2$,  of the state space infinitely often for $\mu$ almost all $(\bx,k)$, $\PP_{\bx,k}$ almost surely. 

This implies that almost surely
\[
\lim_{t\to \infty } \frac{\ln X_i(t) }{t} = \lambda_i(\mu)=0,
\]
as otherwise $X_i(t)\to\infty$ or $X_i(t)\to 0$, contradicting that $\BX(t)$ visits $\K$ infinitely often.

\end{proof}

\textit{Proof of Lemma \ref{lm2.4}:} From the assumptions and \ref{A.4} we get;
\[
\sum_{k\in\CN}\int |h(x,k)| \Pi^{x_k,k_m}_{T_m}(dx,k) \leq H_2'(1+\max_{k\in\CN}\sup_{\|x\|\leq M}F(x,k))<\infty
\]
Consider continuous functions $\phi_j$ such that $\phi_j(x,k)=1$ for $x\in B(0,j)$ and $\phi_j(x,k)=0$ for $x\in B(0,2j)^c$, and $0\leq \phi_j\leq 1$.
then we have
\begin{equation}
\begin{split}
&\limsup_{m\to\infty}\left|\sum_{k\in\CN}\int h(x,k) \Pi^{x_k,k_m}_{T_m}(dx,k)- \sum_{k\in\CN}\int h(x,k)\mu(dx,k)\right|\\
&\leq \lim_{m\to\infty} \left|\sum_{k\in\CN}\int h(x,k)\phi_j(x,k) \Pi^{x_k,k_m}_{T_m}(dx,k)- \sum_{k\in\CN}\int h(x,k)\phi_j(x,k)\mu(dx,k)\right|\\
&+\limsup_{m\to\infty}\sum_{k\in\CN}\int |h(x,k)|(1-\phi_j(x,k)) \Pi^{x_k,k_m}_{T_m}(dx,k)+\sum_{k\in\CN}\int |h(x,k)|(1-\phi_j(x,k))\mu(dx,k)
\end{split}    
\end{equation}
for $\epsilon>0$ we choose $j$ to be large enough so that $\sum_{k\in\CN}\int |h(x,k)|(1-\phi_{j}(x,k))\mu(dx,k)<\epsilon$ 
and
\[
\sum_{k\in\CN}\int |h(x,k)|(1-\phi_{j}(x,k)) \Pi^{x_k,k_m}_{T_m}(dx,k)<\epsilon
\]
this follows since we have $|h(x,k)|(1-\phi_j(x,k))< K_h\frac{F(x,k)}{c^{1-\delta}(1+\|j\|)^{1-\delta}}(1+\max_i |f_i(x,k)|)$
hence the integrals are uniformly small hence we have for every $\epsilon>0$
\[
\limsup_{m\to\infty}\left|\sum_{k\in\CN}\int h(x,k) \Pi^{x_k,k_m}_{T_m}(dx,k)- \sum_{k\in\CN}\int h(x,k)\mu(dx,k)\right|\leq 2\epsilon
\]
hence the result is proved. 
\\
\\
\textit{Proof of Lemma \ref{lm2.5} :} This has been proven in \cite{B23, HN18}.\\
\\
\section{Proof of Lemmas in Section 6}\label{extinction lemmas}

\begin{proof}[Proof of Lemma \ref{lm4.4}:] 
\[
\Lom(F(\bx,k))^{\delta_1}=\delta_1(F(\bx,k))^{\delta_1}\left(\sum_{i=1}^n\frac{x_if_i(\bx,k)}{F(\bx,k)}\frac{\partial F}{\partial x_i}(\bx,k)+ \dfrac{1}{\delta_1}\sum_{l\in\CN}q_{kl}(\bx)\left(\frac{F(\bx,l)}{F(\bx,k)}\right)^{\delta_1}\right)
\]
\[
\Lom(F(\bx,k))^{\delta_1}=\delta_1(F(\bx,k))^{\delta_1}\left(\frac{\Lom(F(\bx,k))}{F(\bx,k)}-\sum_{l\in\CN}\frac{q_{kl}(\bx)F(\bx,l)}{F(\bx,k)}+ \dfrac{1}{\delta_1}\sum_{l\in\CN}q_{kl}(\bx)\left(\frac{F(\bx,l)}{F(\bx,k)}\right)^{\delta_1}\right)
\]
\begin{equation}
\Lom((F(\bx,k))^{\delta_1})=(F(\bx,k))^{\delta_1}\left(\delta_1\left(\frac{\Lom(F(\bx,k))}{F(\bx,k)}\right)-\delta_1\sum_{l\in\CN}\frac{q_{kl}(\bx)F(\bx,l)}{F(\bx,k)}+ \sum_{l\in\CN}q_{kl}(\bx)\left(\frac{F(\bx,l)}{F(\bx,k)}\right)^{\delta_1}  \right)     
\end{equation}
We simplify the above equation to
\begin{equation}
\begin{split}
\frac{\Lom((F(\bx,k))^{\delta_1})}{(F(\bx,k))^{\delta_1}}=&\delta_1\left(\frac{\Lom(F(\bx,k))}{F(\bx,k)}\right)-\delta_1\sum_{l\neq k}q_{kl}(\bx)\left(\frac{F(\bx,l)}{F(\bx,k)}-1\right)\\
+&\sum_{l\neq k}q_{kl}(\bx)\left(\left(\frac{F(\bx,l)}{F(\bx,k)}\right)^{\delta_1}-1\right).
\end{split}    
\end{equation}
Now using \eqref{e:ineq_theta}, since $\delta_1<\delta_0<1$, and \eqref{e:sup2} we get that 
\begin{equation}\label{B.1}
 \Lom((F(\bx,k))^{\delta_1})\leq \delta_1H(\sup_{\|x\|\leq M}(F(\bx,k))^{\delta}) -\delta_1\gamma_b(F(\bx,k)^{\delta_1}(1+\max_i\{|f_i(\bx,k)|\})   
\end{equation}

Now using generalized Ito's Formula from \cite{YZ10}, we have 
\begin{equation}\label{B.2}
0\leq (F(\BX(t),r(t)))^{\delta_1}=(F(\BX(0),r(0)))^{\delta_1}+ \int_0^t \Lom(F(\BX(s),r(s))^{\delta_1}) ds + M(t).   
\end{equation} 
$M(t)$ is local martingale given by 
\begin{equation}
 M(t)= \int_0^t\int_{\R_+} \left((F(X(s),r(s)+h(X(s),r(s),y)))^{\delta_1}-(F(\BX(s),r(s)))^{\delta_1}\right)  \tilde{N}(dy,ds)
 \end{equation}
 
where $\tilde{N}(dy,ds)$ is the compensated Poisson random measure given by  $\tilde{N}(ds,dy)=N(ds,dy)-ds\times dy$.\\
If we combine \eqref{B.1} and \eqref{B.2} while taking $(X(0),r(0))=(\bx,k)$, we get
\begin{equation}\label{B.3}
\int_0^t (F(\BX(s),r(s))^{\delta_1}(1+\max_i\{|f_i(\BX(s),r(s))|\}) ds\leq \frac{F(\bx,k)}{\delta_1\gamma_b} +\frac{M(t)}{\delta_1\gamma_b}+H^*t.     
\end{equation}
where $\displaystyle H^*=\frac{H}{\gamma_b}\left(\sup_{\|x\|\leq M}(F(\bx,k))^{\delta})\right)$.

\begin{clm}\label{c:b1} If $\delta_1<\frac{1}{2}$, then  $M(t)$ is an $L^2$ Martingale
\end{clm}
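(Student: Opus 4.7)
The plan is to reduce the $L^2$ bound on $M(t)$ to a moment estimate on $F(\BX(s),r(s))^{2\delta_1}$ that is integrable in time, using the hypothesis $2\delta_1<1$ together with Assumption \ref{a:bound} and Lemma \ref{lm2.2}. The proof will parallel Claim \ref{c:a1} above (which handled the case of $\ln F$ jumps), but exploiting the concavity that comes from the exponent $2\delta_1<1$.

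First, I would introduce a localizing sequence of stopping times $\tau_k\uparrow\infty$ (for example, $\tau_k=\inf\{t\ge 0:\|\BX(t)\|\ge k\}$) that turn $M(\cdot\wedge\tau_k)$ into a genuine martingale, and then apply It\^o's isometry (Proposition 2.4 of \cite{kunita2010ito}, exactly as done in Claim \ref{c:a1}) to write
\[
\E_{\bx,k}\!\left[M(t\wedge\tau_k)^2\right]
=\E_{\bx,k}\!\int_0^{t\wedge\tau_k}\!\!\sum_{j\neq r(s)}q_{r(s),j}(\BX(s))\left[(F(\BX(s),j))^{\delta_1}-(F(\BX(s),r(s)))^{\delta_1}\right]^2 ds.
\]

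Next, I would use Assumption \ref{a:bound}, which gives $F(\bx,j)/F(\bx,k)\le M_F$ uniformly, together with the uniform bound $\sup_{i\neq j,\bx}q_{ij}(\bx)\le C_0$, to dominate each jump size:
\[
\left|(F(\bx,j))^{\delta_1}-(F(\bx,k))^{\delta_1}\right|^2\le (M_F^{\delta_1}+1)^2\,(F(\bx,k))^{2\delta_1}.
\]
Inserting this gives
\[
\E_{\bx,k}\!\left[M(t\wedge\tau_k)^2\right]\le |\CN|\,C_0\,(M_F^{\delta_1}+1)^2\,\E_{\bx,k}\!\int_0^t (F(\BX(s),r(s)))^{2\delta_1}\,ds,
\]
where the bound on the right is independent of $k$.

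Finally, since $2\delta_1<1$ I would use the elementary bound $y^{2\delta_1}\le 1+y$ for $y\ge 0$ and invoke Lemma \ref{lm2.2} to conclude
\[
\E_{\bx,k}\!\int_0^t(F(\BX(s),r(s)))^{2\delta_1}ds\le t+\int_0^t\!\left(H_1+F(\bx,k)e^{-\gamma_b s}\right)ds<\infty.
\]
Passing to the limit $k\to\infty$ via Fatou's lemma yields $\E_{\bx,k}[M(t)^2]<\infty$ for every $t\ge 0$, so that the local martingale $M$ is in fact a true $L^2$ martingale. There is no real obstacle here: the argument is routine once one exploits $2\delta_1<1$ (to avoid needing moments of $F$ of order greater than one) and Assumption \ref{a:bound} (to control the jump amplitudes pointwise by $F(\bx,k)^{\delta_1}$). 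The only point to be careful about is that we are bounding $F^{2\delta_1}$ and not $F^{\delta_1}$, which is exactly why the hypothesis $\delta_1<\tfrac12$ is imposed.
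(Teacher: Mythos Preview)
Your proposal is correct and follows essentially the same route as the paper: localize, apply It\^o's isometry to reduce to an integral of $F^{2\delta_1}$ via Assumption \ref{a:bound} and the uniform bound on the rates, and then control $\E_{\bx,k}\int_0^t F^{2\delta_1}\,ds$ using $2\delta_1<1$ and Lemma \ref{lm2.2}. The only cosmetic differences are that the paper uses the constant $(M_F^{\delta_1}-1)^2$ rather than $(M_F^{\delta_1}+1)^2$ and invokes \eqref{e3-lm2.2} directly instead of the pointwise bound $y^{2\delta_1}\le 1+y$, but these are immaterial.
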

\begin{proof}
 Let $(\tau_k)_{k=1}^{\infty}$ be a sequence of stopping times, such that $\tau_k<\tau_{k+1}$ and $\lim_{k\to\infty}\tau_k\to\infty$ almost surely for all $k$. 

\begin{equation}
\begin{split}
&\E_{\bx,k}\left(|M(t\wedge\tau_k)|^2\right)\\
&=\E_{\bx,k}\left(\left|\int_0^{\tau_k\wedge t}\int_{\R_+} \left((F(X(s),r(s)+h(X(s),r(s),y)))^{\delta_1}-(F(\BX(s),r(s)))^{\delta_1}\right) \tilde{N}(dy,ds)\right|^2\right).
\end{split}
\end{equation}
Using Ito's Isometry, from \cite{kunita2010ito}, we have 
\begin{equation}\label{isodelta1}
\E_{\bx,k}\left(|M(t\wedge\tau_k)|^2\right)=\E_{\bx,k}\left(\int_0^{\tau_k\wedge t}\int_{\R_+} \left|(F(\BX(s),r(s)+h(X(s),r(s),y)))^{\delta_1}-(F(\BX(s),r(s)))^{\delta_1}\right|^2 dyds\right).   
\end{equation}
We proceed identically as in \eqref{jump_int} to rewrite \eqref{isodelta1} in the following form
\begin{equation}
 \E_{\bx,k}\left(|M(t\wedge\tau_k)|^2\right)=\E_{\bx,k}\left(\int_0^{\tau_k\wedge t}\left(\sum_{j\neq r(s)} \left|(F(\BX(s),j))^{\delta_1}-(F(\BX(s),r(s)))^{\delta_1}\right|^2q_{r(s)j}(\BX(s))\right) ds\right).   
\end{equation}
Using assumption \ref{a:bound} we can see that if $M_{F,\delta}=M_F^\delta$ we get
\begin{equation}\label{e:M2_B}
\begin{split}
 \E_{\bx,k}\left(|M(t\wedge\tau_k)|^2\right)&=\E_{\bx,k}\left(\int_0^{\tau_k\wedge t}\left(\sum_{j\neq r(s)} \left|(F(\BX(s),j))^{\delta_1}-(F(\BX(s),r(s)))^{\delta_1}\right|^2q_{r(s)j}(\BX(s))\right) ds\right)\\
 &=\E_{\bx,k}\left(\int_0^{\tau_k\wedge t}\left(\sum_{j\neq r(s)} (F(\BX(s),r(s)))^{2\delta_1}\left|\frac{(F(\BX(s),j))^{\delta_1}}{(F(\BX(s),r(s)))^{\delta_1}}-1\right|^2q_{r(s)j}(\BX(s))\right) ds\right)\\
 &\leq C_0|\CN|(M_{F,\delta}-1)^2 \E_{\bx,k} \int_0^t(F(\BX(s),r(s)))^{2\delta_1}\,ds
\end{split}
\end{equation}
If $2\delta_1<1$ by Lemma \eqref{lm2.2} note that
\begin{equation}
\E_{\bx,k}\int_0^t(F(\BX(s),r(s)))^{2\delta_1}\,ds \leq H_2(F(\bx,k)+t)
\end{equation}

This together with \eqref{e:M2_B} yields
\begin{equation}\label{e:M2_B2}
\E_{\bx,k}\left(|M(t\wedge\tau_k)|^2\right)\leq C_0|\CN|(M_{F,\delta}-1)^2 H_2(F(\bx,k)+t)
\end{equation}

\end{proof}
 
\begin{clm}\label{c:B2} For all $(\bx,k)\in \R^n_+\times\CN$ we have 

\[
\PP_{\bx,k}\left(\lim_{t\to\infty} \frac{M(t)}{t}=0\right)=1.       
\]

\end{clm}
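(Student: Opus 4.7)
The plan is to mirror the strategy used in Claim \ref{c:a2} for the analogous martingale in the proof of Lemma \ref{lm2.3}: reduce the almost sure convergence $M(t)/t \to 0$ to a summable Borel--Cantelli bound obtained via Doob's maximal inequality applied on dyadic intervals, using the $L^2$ control already established in Claim \ref{c:b1}.

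First, I would identify the predictable quadratic variation $\langle M, M\rangle_t$ of $M$. Since $M$ is the compensated jump integral
\[
M(t) = \int_0^t\int_{\R_+} \left((F(\BX(s), r(s)+h(\BX(s), r(s), y)))^{\delta_1} - (F(\BX(s), r(s)))^{\delta_1}\right)\tilde N(dy, ds),
\]
the usual PRM formula yields
\[
\langle M, M\rangle_t = \int_0^t \sum_{j\neq r(s)} q_{r(s)j}(\BX(s))\bigl|(F(\BX(s), j))^{\delta_1} - (F(\BX(s), r(s)))^{\delta_1}\bigr|^2\, ds.
\]
Exactly as in the calculation \eqref{e:M2_B} within Claim \ref{c:b1}, combining the uniform bound $q_{ij}\leq C_0$ with Assumption \ref{a:bound} gives
\[
\langle M, M\rangle_t \leq C_0|\CN|(M_{F,\delta}-1)^2 \int_0^t (F(\BX(s), r(s)))^{2\delta_1}\, ds,
\]
so by Lemma \ref{lm2.2} (using $2\delta_1<1$, which we may assume by taking $\delta_1$ small; the general case reduces to this by monotonicity) one has the linear growth estimate $\E_{\bx,k} \langle M, M\rangle_t \leq C_1(F(\bx,k) + t)$ for some constant $C_1>0$.

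Next, for $n\in\N$ and $\eps>0$ define the event
\[
E_{n,\eps} := \left\{\sup_{t\in[2^n, 2^{n+1}]} \frac{|M(t)|}{t} \geq \eps\right\}.
\]
Since $M$ is a right-continuous $L^2$ martingale (Claim \ref{c:b1}), Doob's maximal inequality applied on $[0, 2^{n+1}]$ gives
\[
\PP_{\bx,k}(E_{n,\eps}) \leq \PP_{\bx,k}\!\left(\sup_{t\leq 2^{n+1}}|M(t)| \geq \eps\, 2^n\right) \leq \frac{\E_{\bx,k} \langle M, M\rangle_{2^{n+1}}}{\eps^2\, 2^{2n}} \leq \frac{C_1(F(\bx,k) + 2^{n+1})}{\eps^2\, 2^{2n}}.
\]
The right-hand side is bounded by $C_2(\bx,k,\eps)\, 2^{-n}$, which is summable in $n$. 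Borel--Cantelli then yields $\PP_{\bx,k}(E_{n,\eps}\ \text{i.o.})=0$, and letting $\eps\downarrow 0$ along a countable sequence gives $\limsup_{t\to\infty} |M(t)|/t = 0$ almost surely, as desired.

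The argument is essentially routine given the groundwork already laid; no step is a serious obstacle. The only mild technicality is ensuring the quadratic variation bound fits Lemma \ref{lm2.2} (requiring $2\delta_1<1$), which one handles either by restricting $\delta_1$ at the outset or by a pointwise monotonicity remark transferring the conclusion from small $\delta_1$ to the range $\delta_1\in(0,\delta_0)$. Everything else is a faithful transcription of the Doob-plus-Borel--Cantelli template of Claim \ref{c:a2}.
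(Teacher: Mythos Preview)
Your proposal is correct and follows essentially the same approach as the paper: identify the predictable quadratic variation, bound its expectation by $C(F(\bx,k)+t)$ via Assumption~\ref{a:bound} and Lemma~\ref{lm2.2} (using $2\delta_1<1$), then apply the dyadic Doob--Borel--Cantelli argument from Claim~\ref{c:a2}. The paper's proof is slightly terser, simply noting that $\E_{\bx,k}\langle M,M\rangle_t \leq C_{\bx,k}t$ for large $t$ and referring back to \eqref{e:doob}--\eqref{e:BC}, but the substance is identical.
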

\begin{proof}

The predictable quadratic variation of $M(t)$ is given by 
\begin{equation}\label{}
\langle M,M\rangle_t= \int_0^t\int_{\R_+} \left|(F(X(s),r(s)+h(X(s),r(s),y)))^{\delta_1}-(F(\BX(s),r(s)))^{\delta_1}\right|^2  dy ds
\end{equation}

We simplify the above equation to 
\begin{equation}\label{qvarfull}
\begin{split}
\langle M,M\rangle_t=& \int_0^t \sum_{j\neq r(s)}q_{r(s)j}(\BX(s)) \left|(F(X(s),j))^{\delta_1}-(F(\BX(s),r(s)))^{\delta_1}\right|^2   ds.
\end{split}
\end{equation}
By calculations identical to those done to get \eqref{e:M2_B2} we will get that if $2\delta_1<1$ then
\begin{equation}\label{e:QVM_B2}
\E_{\bx,k}\left(\langle M,M\rangle_t\right)\leq C_0|\CN|(M_{F,\delta}-1)^2 H_2(F(\bx,k)+t)
\end{equation}
This implies that for fixed $\bx, k$ if $t$ is large enough then
\[
\E_{\bx,k}\left(\langle M,M\rangle_t\right)\leq C_{\bx,k}t 
\]
for some $C_{\bx,k}>0$. We can then use Doob's inequality and Borel-Cantelli as in \eqref{e:doob} and \eqref{e:BC} to get that 
\[
\PP_{\bx,k}\left(\lim_{t\to\infty} \frac{M(t)}{t}=0\right)=1 
\]
\end{proof}

Now from \eqref{B.3} we get 
\begin{equation}
\frac{1}{t}\int_0^t (F(\BX(s),r(s))^{\delta_1}(1+\max_i\{|f_i(\BX(s),r(s))|\}) ds\leq \frac{F(\bx,k)}{t\delta_1\gamma_b} +\frac{M(t)}{t\delta_1\gamma_b}+H^*.      
\end{equation}
By Claim \ref{c:B2} and letting $t\to \infty$ in the above we conclude that for all $(\bx,k)\in\R^n\times\CN$
\[
\PP_{\bx,k}\left(\limsup_{t\to\infty}\frac{1}{t}\int_0^t (F(\BX(s),r(s))^{\delta_1}(1+\max_i\{|f_i(\BX(s),r(s))|\}) ds \leq H^{*}\right)=1
\]
This proves the first part of the Lemma.

Observe that if $\|\bx\|\geq (2c\tilde{K})^{1/\delta_1}$ then clearly $(F(\bx,k))^{\delta_1}\geq 2\tilde{K}$. Therefore, if we let $\tilde{K}>H^*$, then we have for all $(\bx,k)$ such that $(\BX(0),r(0))=(\bx,k)$ that
\begin{equation}
\PP_{\bx,k}\left(\limsup_{t\to\infty}\frac{1}{t}\int_0^t \1_{\|\BX(s)\|> (2c\tilde{K})^{1/\delta_1}} ds \leq \frac{1}{t}\int_0^t \frac{(F(\BX(s),r(s))^{\delta_1}}{2\tilde{K}} ds \leq 1/2\right)=1   
\end{equation}
which is equivalent to 
\begin{equation}
\PP_{\bx,k}\left(\limsup_{t\to\infty}\frac{1}{t}\int_0^t \1_{\|\BX(s)\|\leq (2c\tilde{K})^{1/\delta_1}} ds \leq \frac{1}{t}\int_0^t \frac{(F(\BX(s),r(s))^{\delta_1}}{2\tilde{K}} ds \geq 1/2\right)=1   
\end{equation}
and the proof of the lemma is complete.

\end{proof}

\bibliographystyle{plainnat}
\bibliography{Kolmogorov}
\end{document}